\numberwithin{equation}{section}
\newcommand{\nc}{\newcommand}
\nc{\rnc}{\renewcommand}
\rnc{\P}{\mathbf P}
\nc{\R}{\mathbf R}
\rnc{\rm}{\mathrm}
\rnc{\bf}{\mathbf}
\nc{\sm}{\text{sm}}
\nc{\C}{\mathbf C}
\nc{\Q}{\mathbf Q}
\nc{\Z}{\mathbf Z}
\nc{\N}{\mathbf N}
\nc{\A}{\mathbf A}
\nc{\an}{\operatorname{an}}
\nc{\perfd}{\operatorname{perfd}}
\nc{\perf}{\operatorname{new, perf}}
\nc{\diam}{\diamondsuit}
\nc{\cal}{\mathcal}
\nc{\htt}{\operatorname{ht}}
\nc{\Nm}{\operatorname{Nm}}
\nc{\Ker}{\operatorname{Ker}}
\nc{\mmod}{\operatorname{mod}}
\nc{\End}{\operatorname{End}}
\nc{\Tor}{\operatorname{Tor}}
\nc{\coker}{\operatorname{coker}}
\nc{\Tr}{\operatorname{Tr}}
\nc{\Cat}{\cal{C}\rm{at}}
\nc{\hocolim}{\operatorname{hocolim}}
\nc{\Aut}{\operatorname{Aut}}
\nc{\cont}{\text{cont}}
\nc{\sep}{\text{sep}}
\nc{\Hom}{\mathrm{Hom}}
\nc{\Gal}{\mathrm{Gal}}
\nc{\Spec}{\text{Spec}\,}
\nc{\RZ}{\operatorname{RZ}}
\rnc{\sp}{\operatorname{sp}}
\rnc{\t}{\tau}
\nc{\mm}{\pmb{\mu}}
\rnc{\a}{\alpha}
\nc{\n}{\mathfrak n}
\nc{\m}{\mathfrak m}
\nc{\mfs}{\mathfrak s}
\nc{\pp}{\mathfrak p}
\nc{\e}{\varepsilon}
\nc{\dd}{\delta}
\nc{\Imm}{\operatorname{Im}}
\nc{\p}{\mathfrak p}
\nc{\q}{\mathfrak q}
\nc{\Sym}{\operatorname{Sym}}
\nc{\codim}{\operatorname{codim}}
\nc{\rk}{\operatorname{rk}}
\nc{\GL}{\operatorname{GL}}
\nc{\SL}{\operatorname{SL}}
\nc{\Lie}{\operatorname{Lie}}
\nc{\Ind}{\operatorname{Ind}}
\nc{\Div}{\underline{Div}}
\nc{\Pic}{\mathbf{Pic}}
\nc{\uPic}{\underline{ \mathbf{Pic}}}
\nc{\rH}{\mathrm{H}}
\nc{\Spf}{\operatorname{Spf}}
\nc{\Frac}{\operatorname{Frac}}
\nc{\colim}{\operatorname{colim}}
\nc{\Spa}{\operatorname{Spa}}
\rnc{\an}{\operatorname{an}}
\nc{\et}{\text{\'et}}
\nc{\Ett}{\text{\'Et}}
\rnc{\Tr}{\operatorname{Tr}}
\rnc{\et}{\text{\'et}}
\nc{\proet}{\text{pro\'et}}
\nc{\xr}{\xrightarrow}
\nc{\eps}{\epsilon}
\nc{\ov}{\overline}
\nc{\ud}{\underline}
\nc{\wdh}{\widehat}
\nc{\F}{\mathcal F}
\nc{\G}{\mathcal G}
\nc{\E}{\mathcal E}
\nc{\K}{\mathcal K}
\nc{\I}{\mathcal I}
\nc{\sQ}{\mathcal Q}
\nc{\X}{\mathfrak X}
\nc{\Y}{\mathfrak Y}
\nc{\T}{\mathfrak T}
\nc{\mf}{\mathfrak f}
\nc{\mg}{\mathfrak g}
\nc{\M}{\mathfrak M}
\nc{\LL}{\mathcal{L}}
\rnc{\S}{\mathcal S}
\nc{\sU}{\mathfrak U}
\nc{\V}{\mathfrak V}
\nc{\ra}{\rangle}
\nc{\os}{\overset}
\rnc{\O}{\mathcal O}
\nc{\J}{\mathcal J}
\theoremstyle{definition}
\newtheorem{thm}{Theorem}[subsection]
\newtheorem{lemma}[thm]{Lemma}
\newtheorem{defn}[thm]{Definition}
\newtheorem{lemma-defn}[thm]{Lemma-Definition}
\newtheorem{warning}[thm]{Warning}
\newtheorem{prop}[thm]{Proposition}
\newtheorem{rmk}[thm]{Remark}
\newtheorem{examples}[thm]{Examples}
\newtheorem{notation}[thm]{Notation}
\newtheorem{thm-def}[thm]{Theorem/Definition}
\newtheorem{cor}[thm]{Corollary}
\newtheorem{construction}[thm]{Construction}
\newtheorem{thm1}{Theorem}[section]
\newtheorem{lemma1}[thm1]{Lemma}
\newtheorem{defn1}[thm1]{Definition}
\newtheorem{lemma-defn1}[thm1]{Lemma-Definition}
\newtheorem{rmk1}[thm1]{Remark}
\newtheorem{cor1}[thm1]{Corollary}
\begin{document}

\title{Mod-$p$ Poincar\'{e} Duality in $p$-adic Analytic Geometry}
\author{Bogdan Zavyalov}
\maketitle

\begin{abstract} We show Poincar\'e Duality for $\bf{F}_p$-\'etale cohomology of a smooth proper rigid-analytic space over a non-archimedean field $K$ of mixed characteristic $(0, p)$. It positively answers the question raised by P. Scholze in \cite{Sch1}. We prove duality via constructing Faltings' trace map relating Poincar\'e Duality on the generic fiber to (almost) Grothendieck Duality on the mod-$p$ fiber of a formal model. We also formally deduce Poincar\'e Duality for $\Z/p^n\Z$, $\Z_p$, and $\Q_p$-coefficients. 
\end{abstract}
\tableofcontents

\section{Introduction}
\subsection{Historical overview}

An important result in the classical theory of singular cohomology of complex manifolds is the Poincar\'e Duality Theorem that says that, for a compact complex manifold $X$ of pure dimension $d$, there is a trace map 
\[
t_X \colon \rm{H}^{2d}_{\rm{sing}}(X, K) \to K
\]
satisfying:

\begin{thm}\label{thm:intro-poincare-classical}(Classical Poincar\'e Duality) Let $X$ be a compact complex manifold $X$ of pure dimension $d$. Then the pairing 
\[
\rm{H}^{i}_{\rm{sing}}(X, K) \otimes_K \rm{H}^{2d-i}_{\rm{sing}}(X, K) \xr{-\cup -} \rm{H}^{2d}_{\rm{sing}}(X, K) \xr{t_X} K
\]
is perfect for any $i\geq 0$. 
\end{thm}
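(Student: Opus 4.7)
The plan is a local-to-global argument via Mayer--Vietoris, bootstrapping from the elementary duality for an open ball. To construct $t_X$, I would use the canonical orientation of $X$: a compact complex $d$-manifold is a closed oriented real $2d$-manifold, so there is a fundamental class $[X] \in \rm{H}_{2d}(X, \Z)$, and evaluation on $[X]$ gives the isomorphism $t_X \colon \rm{H}^{2d}(X, K) \xr{\sim} K$. Via the identity $\langle \alpha \cup \beta, [X] \rangle = \langle \alpha, \beta \cap [X] \rangle$, perfectness of the theorem's pairing is equivalent to the assertion that cap product with $[X]$ induces isomorphisms $\rm{H}^j(X, K) \to \rm{H}_{2d-j}(X, K)$ for every $j$.

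The local input is the direct calculation on an open ball $U \subset \R^{2d}$: both sides of the pairing
\[
\rm{H}^j_c(U, K) \otimes_K \rm{H}^{2d-j}(U, K) \to \rm{H}^{2d}_c(U, K) \cong K
\]
vanish unless $j = 2d$, in which case the pairing is the canonical isomorphism $K \otimes_K K \xr{\sim} K$. I would then fix a finite good cover $\{U_1, \dots, U_n\}$ of $X$ in which every nonempty finite intersection is diffeomorphic to an open ball; compactness of $X$ combined with a Riemannian metric whose geodesically convex balls form such a basis guarantees the existence of such a cover.

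Induction on $n$ then globalizes the duality. With $V = U_1 \cup \cdots \cup U_{n-1}$ and $W = U_n$, I would compare the Mayer--Vietoris long exact sequences for $\rm{H}^*(-, K)$ on $V$, $W$, $V \cap W$ against those for $\rm{H}^*_c(-, K)$, linked vertically by the cup-product pairings, and invoke the five lemma. The main obstacle, and the only delicate point, is checking that these pairings commute, up to sign, with the two Mayer--Vietoris connecting homomorphisms; this is the concrete content of saying that the trace map on $X$ restricts coherently to the local trace maps on the $U_i$. The cleanest route is to recast the argument sheaf-theoretically: verify locally the Verdier-duality statement $\omega_X \simeq \ud{K}[2d]$ from the ball computation, and then deduce the theorem from Verdier duality for the compact space $X$, which absorbs all the sign and compatibility bookkeeping into the functoriality of $Rf_!$ and $f^!$.
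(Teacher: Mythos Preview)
The paper does not prove this theorem. Theorem~\ref{thm:intro-poincare-classical} appears in the historical overview of the introduction as a classical result quoted for motivation, with no proof given; the paper's actual work concerns the $p$-adic analogue (Theorem~\ref{thm:intro-poincare-p}). Your outline is a correct sketch of a standard proof of classical Poincar\'e duality: the good-cover Mayer--Vietoris induction is essentially the argument in Bott--Tu, and the recasting via $\omega_X \simeq \ud{K}[2d]$ and Verdier duality is the sheaf-theoretic proof. There is nothing to compare against in the paper itself.
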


It is a meaningful question to ask if Poincar\'e Duality holds for other cohomology theories and other ``geometries''. In fact, Poincar\'e Duality is one of the axioms of a Weil cohomology theory (see \cite[\textsection 1.2]{Kleiman}). So it is expected that interesting cohomology theories should satisfy (a version) of Poincar\'e Duality. We now discuss examples of other cohomology theories satisfying Poincar\'e Duality and then formulate the main result of this paper. \smallskip

For a fixed field  $k$ and a prime number $\ell$, A.\,Grothendieck has defined (geometric) \'etale cohomology groups of any $k$-variety. More precisely, there is a functorial assignment 
\[
X \mapsto \rm{H}^i\left(X_{\ov{k}}, \bf{F}_\ell\right)
\]
that sends a $k$-variety $X$ to its $i$-th geometric \'etale cohomology group with coefficients in $\bf{F}_\ell$. It turns out that the theory of \'etale cohomology groups behaves especially well if $\ell$ is coprime to the characteristic of $k$. In this situation, one can show analogues of many results in the theory of singular cohomology of complex-analytic spaces. In particular, it turns out that the groups $\rm{H}^i(X_{\ov{k}}, \bf{F}_\ell)$ are finite for qcqs $X$, and the following version of Poincar\'e Duality holds:

\begin{thm}\label{thm:intro-poincare-varieties}(\cite[Exp.\,XVIII, Th\'eor\`eme 3.2.5]{SGA4_3}) Let $X$ be a smooth proper $k$-variety of pure dimension $d$, and $\ell$ a prime number coprime to the characteristic of $k$. Then there is a Galois-equivariant trace map\footnote{As always, $\bf{F}_\ell(d)$ is short for the \'etale sheaf $\mu_\ell^{\otimes d}$. In particular, the groups $\rm{H}^{i}_{\et}(X_{\ov{k}}, \bf{F}_\ell)$ and $\rm{H}^{i}_{\et}(X_{\ov{k}}, \bf{F}_\ell(d))$ are non-canonically isomorphic as abstract groups.} 
\[
t_X \colon \rm{H}^{2d}_{\et}\left(X_{\ov{k}}, \bf{F}_\ell(d)\right) \to \bf{F}_\ell
\]
such that the induced pairing 
\[
\rm{H}^{i}_{\et}\left(X_{\ov{k}}, \bf{F}_\ell\right) \otimes_{\bf{F}_\ell} \rm{H}^{2d-i}_{\et}\left(X_{\ov{k}}, \bf{F}_\ell(d)\right) \xr{-\cup -} \rm{H}^{2d}_{\et}\left(X_{\ov{k}}, \bf{F}_\ell(d)\right) \xr{t_X} \bf{F}_\ell
\]
is perfect for any  $i\geq 0$. 
\end{thm}

Now we go into the realm of rigid-analytic spaces. These spaces are non-archimedean analogues of complex-analytic varieties and, for the purpose of this introduction, it suffices to think about them as some kind of analytic spaces over a non-archimedean field $K$. The foundations of such spaces were extensively (separately) studied by R.\,Huber and V.\,Berkovich in \cite{H3} and \cite{Ber}. In particular, they were able to define the (geometric) \'etale cohomology groups
\[
\rm{H}^i_\et\left(X_{\wdh{\ov{K}}}, \bf{F}_\ell\right)
\]
for any rigid-analytic $K$-variety $X$ and a prime number $\ell$. One of the major results about this theory of \'etale cohomology groups is the following version of Poincar\'e Duality:

\begin{thm}\label{thm:intro-poincare-Huber-Berkovich}(\cite[Theorem 7.5.3]{H3}, \cite[Theorem 7.3.1]{Ber}) Let $X$ be a smooth proper rigid-analytic space of pure dimension $d$ over a non-archimedean field $K$, and $\ell$ a prime number invertible in the ring of integers $\O_K$. Then there is a Galois-equivariant trace map 
\[
t_X \colon \rm{H}^{2d}_{\et}\left(X_{\widehat{\ov{K}}}, \bf{F}_\ell(d)\right) \to \bf{F}_\ell
\]
such that the induced pairing 
\[
\rm{H}^{i}_{\et}\left(X_{\widehat{\ov{K}}}, \bf{F}_\ell\right) \otimes_{\bf{F}_\ell} \rm{H}^{2d-i}_{\et}\left(X_{\widehat{\ov{K}}}, \bf{F}_\ell(d)\right) \xr{-\cup -} \rm{H}^{2d}_{\et}\left(X_{\widehat{\ov{K}}}, \bf{F}_\ell(d)\right) \xr{t_X} \bf{F}_\ell
\]
is perfect for any $i\geq 0$. 
\end{thm}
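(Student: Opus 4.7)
The plan is to deduce the duality from a six-functor formalism for \'etale cohomology of rigid/adic spaces, as developed by Huber. First I would invoke Huber's construction: for any separated, taut, finite-type morphism $f \colon X \to Y$ of adic spaces, there is a functor $Rf_!$ admitting a right adjoint $Rf^!$, satisfying proper base change, the projection formula, and compatibility with composition. Concretely, $Rf_!$ is built by choosing a partial compactification $j \colon X \hookrightarrow \ov{X} \to Y$ of $f$ and setting $Rf_! = R\ov{f}_* \circ j_!$.

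The essential analytic input is cohomological purity: for $f$ smooth of pure relative dimension $d$, one needs a canonical isomorphism $Rf^! \bf{F}_\ell \simeq \bf{F}_\ell(d)[2d]$. By factoring smooth morphisms locally as \'etale maps composed with projections from a relative ball $\bf{B}^d_Y$, and using smooth and \'etale base change, this reduces to the case of the relative disc, and then by induction on $d$ to the one-dimensional case. In dimension one, purity comes down to computing the cohomology of (punctured) discs and of $\bf{G}_m$ with $\mu_\ell$-coefficients, which can be done explicitly via the Kummer sequence $1 \to \mu_\ell \to \bf{G}_m \to \bf{G}_m \to 1$. The assumption $\ell \in \O_K^\times$ is crucial here, since otherwise this sequence is not exact on the \'etale site and the entire strategy breaks down.

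Granted the formalism and purity, the theorem is then a formal consequence. Let $f \colon X \to \Spa K$ be the structure morphism of the smooth proper rigid space. Properness gives $Rf_! = Rf_*$, and purity gives $Rf^! \bf{F}_\ell \simeq \bf{F}_\ell(d)[2d]$. The counit $Rf_! Rf^! \to \rm{id}$ yields, upon passing to $\rm{H}^0$ after base change to $\widehat{\ov{K}}$, the required trace map
\[
t_X \colon \rm{H}^{2d}_{\et}\bigl(X_{\widehat{\ov{K}}}, \bf{F}_\ell(d)\bigr) \to \bf{F}_\ell,
\]
which is $G_K$-equivariant since everything was constructed canonically over $K$. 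Applying the adjunction $(Rf_!, Rf^!)$ to $\F = \bf{F}_\ell$ and $\G = \bf{F}_\ell$ produces a canonical quasi-isomorphism
\[
R\Hom_{\bf{F}_\ell}\bigl(R\Gamma(X_{\widehat{\ov{K}}}, \bf{F}_\ell),\, \bf{F}_\ell\bigr) \simeq R\Gamma\bigl(X_{\widehat{\ov{K}}}, \bf{F}_\ell(d)\bigr)[2d],
\]
and taking cohomology degree by degree gives the stated perfect pairing; a standard unwinding of the unit/counit identifies this pairing with cup product against $t_X$.

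The hard part is cohomological purity in the rigid setting. Once it is available, the rest is pure six-functor bookkeeping. In the adic-space framework one must contend with non-noetherian points and establish a suitable \'etale-local structure theorem for smooth morphisms factoring through relative balls; this is the content of \cite{H3}, with an alternative treatment in Berkovich's analytic geometry \cite{Ber}.
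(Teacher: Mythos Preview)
Your outline is correct, but note that the paper does not actually prove this statement: it is stated in the introduction as a known result and attributed to Huber \cite{H3} and Berkovich \cite{Ber}. So there is no ``paper's own proof'' to compare against beyond the citation.

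That said, your sketch accurately summarizes the approach of the cited references. Huber's treatment in \cite{H3} builds the six-functor formalism for \'etale cohomology of adic spaces and establishes the purity isomorphism $Rf^!\Lambda \simeq \Lambda(d)[2d]$ for smooth $f$ of pure relative dimension $d$ when $\ell$ is invertible in $\O_K$; Berkovich does the analogous thing in his analytic category. The reduction to the one-dimensional case via \'etale-local factorization through relative discs, and the use of the Kummer sequence there, is exactly the mechanism, and you have correctly identified why the hypothesis $\ell \in \O_K^\times$ enters. The point of the present paper is precisely that this mechanism fails for $\ell = p$, forcing the completely different strategy via $\O_X^+/p$-coefficients, the primitive comparison theorem, and almost Grothendieck duality on formal models.
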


Let us point out one little but very important difference between Theorem~\ref{thm:intro-poincare-varieties} and Theorem~\ref{thm:intro-poincare-Huber-Berkovich}. The former result allows the prime number $\ell$ to be any prime number different to the characteristic of $k$, while the latter result puts the stronger assumption on $\ell$ of being invertible in $\O_K$. For example, if $K$ is a non-archimedean field of mixed characteristic $(0, p)$, Theorem~\ref{thm:intro-poincare-varieties} holds for {\it all} prime numbers $\ell$, but Theorem~\ref{thm:intro-poincare-Huber-Berkovich} needs to assume that $\ell\neq p$. \smallskip

So Theorem~\ref{thm:intro-poincare-Huber-Berkovich} still leaves open the question whether Poincar\'e Duality holds for $\bf{F}_p$-\'etale cohomology of rigid-analytic spaces over non-archimedean fields of mixed characteristic $(0, p)$. The question of proving this version of Poincar\'e Duality was raised by P.\,Scholze in \cite[Introduction, page 2]{Sch1}. We give a positive answer to this question by proving the following theorem: 

\begin{thm}\label{thm:intro-poincare-p}(Theorem~\ref{thm:poincare-duality-mod-p}) Let $X$ be a smooth proper rigid-analytic space of pure dimension $d$ over a non-archimedean field $K$ of mixed characteristic $(0, p)$. Then there is a Galois-equivariant trace map 
\[
t_X \colon \rm{H}^{2d}_{\et}\left(X_{\widehat{\ov{K}}}, \bf{F}_p(d)\right) \to \bf{F}_p
\]
such that the induced pairing 
\[
\rm{H}^{i}_{\et}\left(X_{\widehat{\ov{K}}}, \bf{F}_p\right) \otimes_{\bf{F}_p} \rm{H}^{2d-i}_{\et}\left(X_{\widehat{\ov{K}}}, \bf{F}_p(d)\right) \xr{-\cup -} \rm{H}^{2d}_{\et}\left(X_{\widehat{\ov{K}}}, \bf{F}_p(d)\right) \xr{t_X} \bf{F}_p
\]
is perfect for any $i\geq 0$. 
\end{thm}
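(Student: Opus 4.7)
The plan is to deduce mod-$p$ Poincar\'e duality from Serre duality for the Hodge sheaves $\Omega^j_X$ on $X$, using Scholze's primitive comparison theorem and the Hodge--Tate decomposition as bridges between $\bf{F}_p$-\'etale cohomology and coherent cohomology of $X$. Write $C = \widehat{\ov{K}}$ and $X_C$ for the base change. Primitive comparison gives an almost isomorphism
\[
\rm{H}^i_{\et}(X_C, \bf{F}_p) \otimes_{\bf{F}_p} \O_C/p \overset{a}{\simeq} \rm{H}^i_{\et}(X_C, \O_X^+/p),
\]
and the Hodge--Tate filtration on the right-hand side has graded pieces $\rm{H}^{i-j}(X, \Omega^j_X)\{-j\}$ (Breuil--Kisin--Fargues twist by $-j$). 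Serre duality for smooth proper rigid spaces provides perfect pairings on each Hodge graded piece, so Poincar\'e duality with $\bf{F}_p$-coefficients should follow after assembling these perfect pairings on the coherent side and faithfully-flat descending from $\O_C/p$-coefficients back to $\bf{F}_p$-coefficients.

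\textbf{Trace map.} In top degree $i=2d$ only the term $\rm{H}^d(X, \Omega^d_X)\{-d\}$ survives the Hodge--Tate filtration, since $\rm{H}^i(X,-)$ vanishes for $i>d$ on a $d$-dimensional rigid space and $\Omega^j_X=0$ for $j>d$. The Serre trace $\rm{H}^d(X, \Omega^d_X)\to K$ therefore produces, after Tate twist and primitive comparison, an almost map
\[
\rm{H}^{2d}_{\et}(X_C, \bf{F}_p(d)) \otimes_{\bf{F}_p} \O_C/p \to \O_C/p.
\]
The trace $t_X$ is then obtained by checking that this map is Galois-equivariant and factors through the tautological inclusion $\bf{F}_p \hookrightarrow \O_C/p$; this follows from Galois-equivariance of the Hodge--Tate filtration together with the fact that the Serre trace is already defined over $K$.

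\textbf{Perfectness.} Primitive comparison is multiplicative, so the cup product pairing
\[
\rm{H}^i_{\et}(X_C, \bf{F}_p) \otimes_{\bf{F}_p} \rm{H}^{2d-i}_{\et}(X_C, \bf{F}_p(d)) \to \bf{F}_p
\]
becomes, after tensoring with $\O_C/p$, compatible with the Hodge pairings $\rm{H}^{i-j}(X, \Omega^j_X) \otimes \rm{H}^{d-(i-j)}(X, \Omega^{d-j}_X) \to \rm{H}^d(X, \Omega^d_X)$ on the graded pieces. Each of these is perfect by Serre duality for $X/K$, so a spectral sequence argument shows the assembled pairing on $\rm{H}^i_{\et}(X_C, \bf{F}_p) \otimes_{\bf{F}_p} \O_C/p$ is almost perfect. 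Since the $\rm{H}^i_{\et}(X_C, \bf{F}_p)$ are finite-dimensional $\bf{F}_p$-vector spaces by Scholze's finiteness theorem and $\O_C/p$ is faithfully flat as an $\bf{F}_p$-module, perfectness of the original pairing over $\bf{F}_p$ follows once the $\O_C/p$-side pairing is honestly, and not merely almost, perfect.

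\textbf{Main obstacle.} The principal technical difficulty is precisely this last passage from almost to honest perfectness: an almost-isomorphism of $\O_C/p$-modules can differ from a genuine isomorphism by almost-zero kernels and cokernels, and for pairings descending to $\bf{F}_p$ this discrepancy is not negligible, since $\bf{F}_p$ has no divisibility to absorb it. The intended remedy is to produce the duality on the $\O_X^+/p$-side as a genuine quasi-isomorphism of dualizing complexes on $X_C$, or equivalently to establish an integral/pro\'etale refinement of the Hodge--Tate filtration with explicit control over the almost defect. A d\'evissage through the Hodge--Tate filtration combined with the Five Lemma then upgrades the graded-piece Serre dualities to full perfectness of the assembled pairing, and the finiteness of $\bf{F}_p$-cohomology eliminates the remaining almost ambiguity at the final step.
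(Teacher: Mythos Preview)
Your strategy has real gaps that are not merely cosmetic.

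\textbf{The Hodge--Tate filtration you invoke does not exist mod $p$.} Scholze's Hodge--Tate spectral sequence, with $E_2$-page $\rm{H}^{i-j}(X,\Omega^j_X)(-j)$, is a \emph{rational} statement about $\rm{H}^i_{\et}(X_C,\bf{Q}_p)\otimes_{\bf{Q}_p} C$. There is no known filtration on $\rm{H}^i(X_C,\O_X^+/p)$ whose graded pieces are $\rm{H}^{i-j}(X,\Omega^j_X)\otimes_{\O_K}\O_C/p\{-j\}$ in general; the integral refinements in the literature (e.g.\ BMS) require a smooth proper formal model over $\O_C$, which need not exist. So the object on which you want to run Serre duality plus Five Lemma is not available, and your final paragraph's ``integral refinement with explicit control over the almost defect'' is precisely the hard content you have not supplied.

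\textbf{Your trace map construction does not produce an $\bf{F}_p$-valued map.} From the almost map $\rm{H}^{2d}_{\et}(X_C,\bf{F}_p(d))\otimes_{\bf{F}_p}\O_C/p \to \O_C/p$ there is no mechanism by which it ``factors through $\bf{F}_p\hookrightarrow\O_C/p$'': an $\O_C/p$-linear endomorphism of $\O_C/p$ has no reason to land in $\bf{F}_p$. Galois-equivariance alone does not force this either, since $(\O_C/p)^{G_K}$ is larger than $\bf{F}_p$.

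The paper avoids both problems by taking a different route. The $\bf{F}_p$-trace $t_X$ is taken directly from Berkovich's construction, not manufactured from coherent duality; one then shows separately that $t_X$ is an isomorphism in top degree on each connected component, so perfectness of the pairing becomes a question purely about cup products, checkable after $-\otimes_{\bf{F}_p}\O_C/p$. The almost duality for $\O_X^+/p$ is then established \emph{not} via Hodge--Tate plus Serre, but by passing to an admissible formal model $\X$, constructing a Faltings trace $\bf{R}\nu_*(\O_X^+/p)^a\to\omega^{\bullet,a}_{\X_0}(-d)[-2d]$ into the Grothendieck dualizing complex of the special fiber, and verifying locally (after local uniformization to polystable models) that this trace is an almost perfect pairing, ultimately by an explicit Koszul computation for continuous cohomology of $\bf{Z}_p(1)^d$. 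In short, the paper replaces your Hodge--Tate/Serre input by Grothendieck duality on formal models plus an explicit local computation; your outline does not engage with either of these ingredients.
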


\begin{rmk} Theorem~\ref{thm:intro-poincare-p} settles the last remaining open case of Poincar\'e Duality for smooth proper rigid-analytic spaces. Namely, Theorem~\ref{thm:intro-poincare-Huber-Berkovich} and Theorem~\ref{thm:intro-poincare-p} cover all primes $\ell$ that are coprime to the characteristic of $K$. In contrast, for a proper rigid-analytic $K$-space $X$ of dimension $>0$ and a prime $\ell=\rm{char}\,K$, one can see $\rm{H}^{2d}_{\et}(X_{\wdh{\ov{K}}}, \bf{F}_\ell)=0$ showing that there is no chance to have duality in this case.
\end{rmk}

We also give a version of Theorem~\ref{thm:intro-poincare-p} for $\bf{F}_p$-local systems (see Theorem~\ref{thm:local-systems}), and for $\Z/p^n\Z$, $\Z_p$, and $\Q_p$-coefficients (see Theorems~\ref{thm:Poincare-duality-mod-pn}, \ref{thm:poincare-duality-integral}, and \ref{thm:poincare-duality-rational}). \medskip

A proof of Theorem~\ref{thm:intro-poincare-p} was also announced by O.\,Gabber in 2015. But a written account of his proof has never appeared since then. Our proof was motivated by Gabber's  \href{https://www.youtube.com/watch?v=uREGEdFJPsk}{Youtube lecture} on the subject. In particular, both proofs use the perspective of formal models to eventually reduce Poincar\'e Duality on the generic fiber to Grothendieck Duality on the special fiber. However, it seems that our proof is quite different in many aspects. We use a refined version of Temkin's local uniformization theorem from \cite{Z1} and Berkovich's construction of the trace map from \cite{Ber} to simplify the proof. We also use some techniques\footnote{These techniques are not explicitly used in this particular paper. However, they play an important role in our companion paper \cite{Z3}, especially in the proof of \cite[Theorem 6.13.5]{Z3} that plays a crucial role for our construction of Faltings' trace map.} (e.g. the theory of diamonds \cite{Sch2} and a generalization of the almost purity theorem from \cite{BS3}) that were not available at the time.

\subsection{Main steps of our proof} In this section, we explain the plan of the proof of Theorem~\ref{thm:intro-poincare-p}. Roughly, the main idea is to reduce Poincar\'e Duality for $\bf{F}_p$-coefficients to almost duality for $\O_X^+/p$-coefficients by using the Primitive Comparison Theorem \cite[Theorem 5.1]{Sch1} and then, after choosing a formal model $\X$, we use ``$p$-adic nearby cycles'' to reduce duality to (the almost version of) Grothendieck duality on the mod-$p$ fiber $\X_0$. \smallskip

More precisely, we choose a formal model $\X$ of $X$ with the mod-$p$ fiber $\X_0$. Then we use the natural morphism of ringed sites $\nu\colon (X_\proet, \O_X^+/p) \to (\X_0, \O_{\X_0})$ to rewrite the complex $\bf{R}\Gamma(X_\proet, \O_X^+/p)$ in the following form:
\[
\bf{R}\Gamma(X_\proet, \O_X^+/p) \simeq \bf{R}\Gamma\left(\X_0, \bf{R}\nu_*\left(\O_X^+/p\right)\right).
\]
The advantage of this formula is that it allows to decompose the question of proving almost duality for $\bf{R}\Gamma(X_\proet, \O_X^+/p)$ into proving almost duality for the ``nearby cycles'' $\bf{R}\nu_*\left(\O_X^+/p\right)$ and (the almost version of) Grothendieck duality on $\X_0$.\smallskip

Before we discuss the main steps of this argument in more detail, we wish to emphasize that this proof seriously depends on our other paper \cite{Z3} where we developed the theory of almost coherent sheaves on nice (formal) schemes. In particular, \cite[\textsection{5.5} and Theorem 6.13.6]{Z3} play the crucial role in our proof. \smallskip

Now we explain the main steps of our proof in more detail\footnote{We warn the reader that the step order below does not coincide with the order of exposition in this paper.}. In what follows, we always consider cohomology of the sheaf $\O_X^+/p$ with respect to the pro-\'etale topology on $X$.

\begin{enumerate}\itemsep0.5em

\item(Section~\ref{section:berkovich-trace}) We translate the construction of the trace map 
\[
t_X\colon \rm{H}^{2d}_{\et}\left(X_{\widehat{\ov{K}}}, \bf{F}_p(d)\right) \to \bf{F}_p
\]
from \cite[\textsection 7.2]{Ber} to the world of rigid-analytic spaces. This trace map is Galois-equivariant by design, so it allows us to replace $K$ with $C\coloneqq \wdh{\ov{K}}$ and $X$ with $X_C$ to assume that $K=C$ is algebraically closed. Then a small argument also allows us to assume that $X$ is connected. 

\item(Section~\ref{section:poincare-duality-mod-p}) Using the Primitive Comparison Theorem
\[
\rm{H}^{i}_{\et}\left(X, \bf{F}_p(d)\right)\otimes_{\bf{F}_p} \O_C/p \simeq^a \rm{H}^i\left(X, (\O_X^+/p)(d)\right),
\]
we reduce Poincar\'e Duality to showing that the natural pairing
\begin{equation}\label{diagram:intro-pairing}
\begin{tikzcd}
\rm{H}^{i}\left(X, \O_X^{+, a}/p\right) \otimes_{\O_C/p} \rm{H}^{2d-i}\left(X, (\O_X^{+, a}/p)(d)\right) \arrow{r}{-\cup -} & \rm{H}^{2d}\left(X, (\O_X^{+, a}/p)(d)\right)  \arrow{d}{\wr}\\
 \O_C^a/p & \arrow{l}{t_X\otimes \O_C/p} \rm{H}^{2d}_{\et}\left(X, \bf{F}_p(d)\right)\otimes_{\bf{F}_p} \O_C^a/p
\end{tikzcd}
\end{equation}
is almost perfect for all $i$. Using surjectivity of the trace map $t_X$ already established by Berkovich, we can reduce even further to showing that the pairing
\[
\rm{H}^{i}\left(X, \O_X^{+, a}/p\right) \otimes_{\O_C/p} \rm{H}^{2d-i}\left(X, (\O_X^{+, a}/p)(d)\right) \xr{-\cup -}  \rm{H}^{2d}\left(X, (\O_X^{+, a}/p)(d)\right) \xr{\rm{Tr}_X} \O_C^a/p
\]
is almost perfect for {\it any} choice of a trace morphism $\rm{Tr}_X \colon \rm{H}^{2d}\left(X, (\O_X^{+, a}/p)(d)\right) \to \O_C^a/p$. From now on, we never need to use $t_X$ anymore.

\item(Derived reformulation) So far, we have reduced the question to constructing a trace morphism
\begin{equation}\label{diagram:intro-trace}
\rm{Tr}_X\colon \rm{H}^{2d}\left(X, (\O_X^{+, a}/p)(d)\right) \to \O_C^a/p
\end{equation}
that induces an almost perfect pairing between $\rm{H}^{i}(X, \O_X^{+, a}/p)$ and $\rm{H}^{2d-i}(X, (\O_X^{+, a}/p)(d))$. \smallskip 

For technical reasons, it will be convenient to reformulate the question in the derived world. For this, we recall that \cite[Theorem 6.13.5]{Z3} guarantees that $\bf{R}\Gamma(X, \O_X^+/p)$ is almost concentrated in degrees $[0, 2d]$, so the map~(\ref{diagram:intro-trace}) is equivalent (up to a twist) to a map
\[
\rm{Tr}_X \colon \bf{R}\Gamma(X, \O_X^{+, a}/p) \to (\O_C^a/p)(-d)[-2d].
\]
Moreover, almost perfectness of the pairing induced by (\ref{diagram:intro-trace}) is equivalent to almost perfectness (in the derived sense) of the pairing
\[
\bf{R}\Gamma(X, \O_X^{+, a}/p) \otimes^L_{\O_C/p} \bf{R}\Gamma(X, \O_X^{+, a}/p) \xr{-\cup -} \bf{R}\Gamma(X, \O_X^{+, a}/p) \xr{\rm{Tr}_X} (\O_C^a/p)(-d)[-2d].
\]

Before we move further, we want to emphasize that almost duality for $\bf{R}\Gamma(X, \O_X^{+, a}/p)$ boils down to two essentially independent problems of constructing a trace morphism and showing that the induced pairing is almost perfect. 

\item(Section~\ref{section:poor-coherent-duality}, digression) As a preliminary work to construct $\rm{Tr}_X$, we have to develop part of the theory of dualizing sheaves and complexes on admissible formal $\O_C$-schemes. We do not develop a general theory in this paper, and only prove some results that we need. Here, we mention some results and constructions that will play an important role in the construction of $\rm{Tr}_X$. \smallskip 

For any separated admissible formal $\O_C$-scheme $\X$ with mod-$p$ fiber $\X_0$, we define its {\it dualizing complex} $\omega^\bullet_\X$ (see Definition~\ref{defn:dualizing-complex}). This complex lives in $\bf{D}_{coh}^{[-\dim \X_0, 0]}(\X)$ and its bottom cohomology sheaf $\omega_\X \coloneqq \cal{H}^{-\dim \X_0}\left(\omega^\bullet_\X\right)$ is called the {\it dualizing module}. \smallskip

The mod-$p$ fiber admits its own (relative) dualizing complex $\omega^\bullet_{\X_0}\coloneqq \mf_0^!\left(\O_C/p\right)$, where $\mf_0\colon \X_0 \to \Spec \O_C/p$ is the structure morphism and $\mf_0^!$ is the twisted inverse image from Grothendieck duality. It turns out that $\omega^\bullet_\X$ is compatible with $\omega^\bullet_{\X_0}$ on $\X_0$ via the formula
\[
\omega^\bullet_\X\otimes^L_{\O_{\X}} \O_{\X_0} \cong \omega^\bullet_{\X_0}.
\]

Another important result we show is a version of Hartogs' extension for $\omega_\X$. More precisely, for a separated admissible formal $\O_C$-scheme with smooth generic fiber $\X_C$ and reduced special fiber $\ov{\X}$, the natural morphism
\begin{equation}\label{equation:extend-from-codim-2}
\omega_\X \to j_{*}(\omega_\X|_{\X^\sm}) \cap ({\omega_\X})_C
\end{equation}
is an isomorphism, where $j\colon \X^\sm \to \X$ is the open immersion of the smooth locus of $\X$ into $\X$. 

\item(Section~\ref{section:construction-faltings-trace} and Section~\ref{section:global-almost-duality}) To construct a trace morphism $\rm{Tr}_X$, we choose an admissible formal $\O_C$-model $\X$ of $X$ with the structure morphism $\mathfrak{f} \colon \X \to \Spf \O_C$, and denote its mod-$p$ fiber by 
\[
\mathfrak{f}_0\colon \X_0 \to \Spec \O_C/p.
\]
The $\O_C/p$-scheme $\X_0$ is automatically proper by Lemma~\ref{lemma:proper-adic-formal}, and we can assume that the special fiber $\ov{\X}$ is reduced by the Reduced Fiber Theorem. The almost version of Grothendieck duality (see \cite[Theorem 5.5.5]{Z3}) ensures that the functor
\[
\bf{R}\mf_{0, *} \colon \bf{D}^+_{aqc}(\X_0)^a \to \bf{D}^+_{aqc}(\O_C/p)^a
\]
admits a right adjoint $\mf_0^! \colon  \bf{D}^+_{aqc}(\O_C/p)^a \to \bf{D}^+_{aqc}(\X_0)^a$ that is compatible with the usual quasi-coherent adjoint $\mf_0^!\colon \bf{D}^+_{qc}(\O_C/p) \to \bf{D}^+_{qc}(\X_0)$ under the almostification functors $\bf{D}^+_{qc} \to \bf{D}^+_{aqc}$. Using the isomorphism
\[
\bf{R}\Gamma\left(X, \O_X^+/p\right) \simeq \bf{R}\Gamma\left(\X_0, \bf{R}\nu_*\, \O_X^+/p\right),
\]
we conclude that constructing $\rm{Tr}_X$ is equivalent to constructing Faltings' trace
\[
\rm{Tr}_{F, \X} \colon \bf{R}\nu_*\left(\O_X^{+, a}/p\right) \to \mathfrak{f}_0^!\left(\O_C/p\right)(-d)[-2d],
\]
where $\mathfrak{f}_0^!\left(\O_C/p\right)$ is simply the almostification $\omega^{\bullet, a}_{\X_0}$ of the dualizing complex of $\X_0$. Using the compatibility between $\omega^\bullet_\X$ and $\omega^\bullet_{\X_0}$, and cohomological bounds coming from \cite[Theorem 6.13.5, Theorem 6.13.6]{Z3} and Theorem~\ref{thm:dualizing-complex}, we eventually reduce the question of constructing $\rm{Tr}_{F, \X}$ to the question of constructing the {\it integral Faltings' trace}
\[
\rm{Tr}_{F, \X}^{d, +} \colon \rm{R}^d\nu_*\,\wdh{\O}_X^{+, a} \to \omega^a_\X(-d). 
\]
The key step is to use (\ref{equation:extend-from-codim-2}) to ensure that it suffices to define $\rm{Tr}_{F, \X}^{d, +}$ on the generic fiber and on the smooth locus of $\X$ in a compatible manner. We essentially use the map from \cite[Proposition 3.23]{Sch1} on the generic fiber, and we adapt the map from \cite[\textsection 8.2]{BMS1} on the smooth locus, and check their compatibility. This finishes the construction of $\rm{Tr}_{F, \X}^{d, +}$ and, therefore, of $\rm{Tr}_X$. \smallskip

We want to emphasize that even though we are mostly interested in the ``mod-$p$ nearby cycles'' $\bf{R}\nu_* \left(\O_X^+/p\right)$, it is crucial to work with the ``integral'' complex $\rm{R}^d\nu_*\,\wdh{\O}_X^{+}$ to have access to the ``extend from codimension-$2$'' type argument used above. \smallskip

A priori, the construction of $\rm{Tr}_X$ depends on the choice of an admissible model $\X$ (with reduced special fiber), but we show that it is canonically independent of this choice in Lemma~\ref{lemma:global-trace-independent}.

\item(Section~\ref{section:local-duality} and Section~\ref{section:global-almost-duality}) The final step is to show that the trace map $\rm{Tr}_X$ constructed in (5) induces an almost perfect pairing on $\bf{R}\Gamma(X, \O_X^{+, a}/p)$. Again, using the almost version of Grothendieck Duality, we reduce the question to showing almost perfectness of the pairing
\begin{equation}\label{diagram:intro-duality-local}
\bf{R}\nu_*\left(\O_X^{+, a}/p\right) \otimes^L_{\O_{\X_0}} \bf{R}\nu_*\left(\O_X^{+, a}/p\right) \xr{-\cup -} \bf{R}\nu_*\left(\O_X^{+, a}/p\right) \xr{\rm{Tr}_{F, \X}} \left(\omega^{\bullet, a}_{\X_0}\right)(-d)[-2d].
\end{equation}
This is now a {\it local} question on $\X$. Thus we are in a good shape to use the local uniformization theorem \cite[Theorem 1.4]{Z1} that roughly says that any admissible formal $\O_C$-model of $X$ locally looks like a ``nice'' finite group quotient of a polystable formal $\O_C$-scheme up to some rig-isomorphisms. So it suffices to justify almost perfectness of (\ref{diagram:intro-duality-local}) for polystable models, and then show that almost perfectness of (\ref{diagram:intro-duality-local}) descends through rig-isomorphisms and ``nice'' quotients by finite groups. \smallskip

In the case of polystable models, we argue by explicit computations eventually reducing the claim to almost duality in continuous group cohomology of the profinite group $\bf{Z}_p(1)^d$. Then the almost version of Grothendieck Duality and duality between homotopy invariants and coinvariants for an action of a finite group $G$ ensure that almost perfectness of (\ref{diagram:intro-duality-local}) descends through rig-isomorphisms and nice finite group quotients.
\end{enumerate}

\subsection{Acknowledgements}
We are very grateful to B.\,Conrad for suggesting the problem and reading the first draft of this paper and making useful suggestions on how to improve the paper. The author benefited a lot from various fruitful discussions with B.\,Bhatt, B.\,Conrad, and S.\,Petrov. We express additional gradititude to B.\,Bhatt for inviting the author to the University of Michigan for one semester and numerous illuminating discussions there. We also thank the University of Michigan for their hospitality.  A big part of this work was carried out there. Finally, we thank the referees for their very careful reading. Their comments have greatly improved the paper.

\subsection{Conventions}\label{section:notation}

Let $K$ be a complete rank-$1$ valued field. We denote its ring of integers by $\O_K$, its maximal ideal by $\m_K$, and the residue field by $k\coloneqq \O_K/\m_K$. We denote by $C\coloneqq \wdh{\ov{K}}$ the completed algebraic closure of $K$. \smallskip

In this paper, we always write {\it qcqs} as a shortcut for quasi-compact quasi-separated. It applies to adic spaces, formal schemes, and schemes. \smallskip

An {\it admissible} formal $\O_K$-scheme is a flat topologically finitely presented\footnote{We recall that a topologically finitely presented formal $\O_K$-scheme is assumed to be qcqs.} formal $\O_K$-scheme $\X$. If $\varpi \in \O_K$ is a fixed pseudo-uniformizer, we define 
\[
    \X_i\coloneqq \X \times_{\Spf \O_K} \Spec \O_{K}/\varpi^{i+1}\O_K
\]
to be the mod-$\varpi^{i+1}$ fiber of $\X$. We denote the {\it special fiber} by 
\[
\ov{\X} \coloneqq \X \times_{\Spf \O_K} \Spec \O_K/\m_K.
\]

In this paper, a {\it rigid-analytic $K$-space} will always mean an adic space locally of finite type over $\Spa(K, \O_K)$. We recall that \cite[\textsection 4]{H1} constructs a fully faithful functor $r_K$ from the category of (classical) Tate rigid $K$-spaces to the category of rigid $K$-spaces in our definition. This functor induces an equivalence between quasi-separated Tate rigid $K$-spaces and quasi-separated rigid $K$-spaces by \cite[Proposition 4.5]{H1}. \smallskip

We recall the notion of the {\it cotangent complex} $L_f=L_{X/S}=L_{\O_X/\O_S}$  for a morphism of ringed sites $f\colon (X, \O_X) \to (S, \O_S)$. We refer to \cite[\href{https://stacks.math.columbia.edu/tag/08UT}{Tag 08UT}]{stacks-project} for the definition and a self-contained systematic development of this theory. We follow Stacks Project and use cohomological notations for the cotangent complex, in particular, $L_f\in \bf{D}^{\leq 0}(\O_X)$ for a morphism $f$. If $\mf\colon \X \to \Y$ is a morphism of topologically finitely presented formal $\O_C$-schemes, we define $\wdh{L}_{\X/\Y}$ as the derived $p$-adic completion of the complex $L_{\X/\Y}$. This definition coincides with the definition of the {\it analytic cotangent complex} $L_{\X/\Y}^{\text{an}}$ from \cite[Definition 7.2.3]{GR}. In particular, \cite[Proposition 7.2.10]{GR} implies that $\wdh{L}_{\X/\Y}\in \bf{D}^{\leq 0}_{coh}(\X)$ and the natural morphism $\wdh{\Omega}^1_{\X/\Y} \to \mathcal{H}^0(\wdh{L}_{\X/\Y})$ is an isomorphism for any $\mf$. \smallskip

In this paper, we always do almost mathematics with respect to the ideal $\m\subset \O_C$. It is straightforward to see that $\m$ is $\O_C$-flat and $\m^2=\m$, so this does define the ideal of almost mathematics. We also extensively use the formalism of {\it almost coherent} sheaves on (formal) $\O_C$-schemes as developed in \cite{Z3}. In particular, for a (topologically) finitely presented $\O_C$-algebra $R$, an $R$-module is {\it almost coherent} if, for every finitely generated ideal $\m_0\subset \m$, there is a finitely presented $R$-module $N_{\m_0}$ and an $R$-linear morphism
\[
f_{\m_0} \colon N_{\m_0} \to M
\]
such that $\ker(f_{\m_0})$ and $\coker(f_{\m_0})$ are annihilated by $\m_0$. It turns out that this notion appropriately globalizes to the notion of almost coherent sheaves on (topologically) finitely presented (formal) $\O_C$-schemes. We refer to \cite[\textsection 4]{Z3} for a thorough discussion of this notion. 
\smallskip

We define {\it the pro-\'etale site} of a rigid-space $X$ similarly to \cite{Sch1} and \cite{Sch-err}, but we fix a cardinal $\kappa$ and consider only $\kappa$-small pro-systems and coverings by $\kappa$-small sets of objects. This does not change the results of this paper. The pro-\'etale site comes with the {\it completed integral structure sheaf} $\wdh{\O}_X^+$ defined in \cite[Definition 4.1(ii)]{Sch1}. \smallskip

We also consider the morphisms of ringed topoi $\lambda\colon (X_{\proet}, \wdh{\O}^+_X) \to (X_{\et}, \O^+_X)$ and $t\colon  (X_{\et}, \O_X^+) \to (\X, \O_\X)$ for any rigid-analytic space $X$ with an admissible formal model $\X$. These morphisms fit into the commutative triangle  
\[
\begin{tikzcd}
\left(X_{\proet}, \wdh{\O}^+_X\right) \arrow{d}{\lambda} \arrow{dr}{\nu} &  \\
\left(X_{\et}, \O_X^+\right) \arrow{r}{t} & \left(\X, \O_\X\right) 
\end{tikzcd}
\]
that also induces the commutative diagram
\[
\begin{tikzcd}
\left(X_{\proet}, \O_X^+/p\right) \arrow{d}{\lambda} \arrow{dr}{\nu} & \\
\left(X_{\et}, \O_X^+/p\right) \arrow{r}{t} & \left(\X, \O_\X/p\right)=\left(\X_0, \O_{\X_0}\right).
\end{tikzcd}
\]
\medskip

We point out that the morphism denoted by $\nu$ above differs from the morphism denoted by $\nu$ in \cite{Sch1}. The morphism $\nu$ from \cite{Sch1} instead coincides with the morphism $\lambda$ above. \smallskip

Whenever we talk about cohomology of $\bf{Z}_p$ or $\bf{Q}_p$ local systems on a rigid space $X$, we always mean pro-\'etale cohomology groups. In particular, if $X$ is a (quasi-compact, quasi-separated) rigid space over a non-archimedean field $K$ with $\wdh{\ov{K}}=C$, 
\[
\bf{R}\Gamma(X_C, \bf{Z}_p)\coloneqq \bf{R}\Gamma_\proet(X_C, \wdh{\bf{Z}}_p) \text{ and }
\]
\[
\bf{R}\Gamma(X_C, \bf{Q}_p)\coloneqq \bf{R}\Gamma_\proet(X_C, \wdh{\bf{Q}}_p),
\]
where $\wdh{\bf{Z}}_p \coloneqq \lim_n \ud{\Z/p^n\Z}$ in $X_\proet$ and $\wdh{\bf{Q}}_p=\wdh{\bf{Z}}_p[1/p]$. \smallskip

If $(\cal{C}, \otimes, \bf{1})$ is a closed symmetric monoidal category with the inner Hom-functor $\ud{\Hom}$, we say that a pairing
\[
A\otimes B \to C
\]
is {\it perfect} if both duality morphisms
\[
B \to \ud{\Hom}_{\cal{C}}(A, C)
\]
\[
A \to \ud{\Hom}_{\cal{C}}(B, C)
\]
are isomorphisms. Throughout the paper, we are mostly interested in the cases $\cal{C}$ is equal to one of the following symmetric monoidal categories
\[
\left(\bf{Mod}_{R}, \otimes_{R}\right), \ \left(\bf{D}(R), \otimes^L_{R}\right), \ \left(\bf{Mod}^a_{R}, \otimes_{R}\right), \ \left(\bf{D}(R)^a, \otimes^L_{R}\right)
\]
for some $\O_C$-algebra $R$, or
\[
\left(\bf{Mod}_{X}, \otimes_{\O_X}\right), \ \left(\bf{D}(X), \otimes^L_{\O_X}\right), \ \left(\bf{Mod}^a_{X}, \otimes_{\O_X}\right), \ \left(\bf{D}(X)^a, \otimes^L_{\O_X}\right)
\]
for an $\O_C$-ringed space $(X, \O_X)$. If the symmetric monoidal category $(\cal{C}, \otimes)$ is one of $\left(\bf{Mod}^a_{R}, \otimes_{R}\right)$,  $\left(\bf{D}(R)^a, \otimes^L_{R}\right)$, $\left(\bf{Mod}^a_{X}, \otimes_{\O_X}\right)$, or $\left(\bf{D}(X)^a, \otimes^L_{\O_X}\right)$, we say that a pairing is {\it almost perfect} instead of just perfect. 
\medskip

We say that a diagram of the form
\[
\begin{tikzcd}
A \otimes A'\arrow{r}{\alpha} \arrow[d, shift left=2ex, "g"] & A'' \arrow{d}{h}  \\
B \otimes  B' \arrow[u, shift left=2ex, "f"] \arrow{r}{\beta} & B''
\end{tikzcd}
\]
{\it commutes} if the two natural morphisms 
\[
B\otimes A' \xr{f\otimes \rm{id}} A\otimes A' \xr{\alpha} A'' \xr{h} B'',
\]
\[
B\otimes A' \xr{\rm{id}\otimes g} B\otimes B' \xr{\beta} B''
\]
are equal. \smallskip

We also recall the definitions of the Tate and Breuil-Kisin twists. For this, we assume now that $K$ has mixed characteristic $(0, p)$ and consider the natural continuous action of the absolute Galois group $G_K\coloneqq \Gal(\ov{K}/K)$ on $C=\wdh{\ov{K}}$ and $\O_C$.\smallskip

The {\it Tate twist} $\bf{Z}_p(1)$ is defined as 
\[
\bf{Z}_p(1) \coloneqq \rm{T}_p(\bf{G}_{m, K})=\lim \mu_{p^n}(\ov{K}),
\]
where $\rm{T}_p(\bf{G}_{m, K})$ stands for the Tate module of $\bf{G}_{m, K}$. This is a rank-$1$ free $\bf{Z}_p$-module with a continuous action of $G_K$ that does not admit any canonical trivialization. We define $\Z_p(n)$, for $n\geq 0$, as the tensor product $\Z_p(n)\coloneqq \Z_p(1)^{\otimes n}$. We extend this definition for $n\leq -1$ by the formula $\Z_p(n) \coloneqq \Hom_{\Z_p}(\Z_p(-n), \Z_p)$. We also define the $G_K$-modules $\O_C(n)$ as $\O_C\otimes_{\Z_p} \Z_p(n)$ for any integer $n$. \smallskip

If $\mathcal C$ is a site, and $\F\in \mathbf{Shv}(\mathcal C)$ is a sheaf of $\O_C$-modules, we define its Tate twist $\F(n)$ as $\F\otimes_{\O_C}\O_C(n)$. It is straightforward to see that there is a canonical isomorphism of $G_K$-modules $\rm{H}^i(U, \F(n))\cong \rm{H}^i(U, \F)(n)$ for $U\in \mathcal C$ and $i\geq 0$. We will freely use this isomorphism in this paper. \smallskip

We also recall the definition of the {\it Breuil-Kisin twist} $\O_C\{1\}$ from \cite[Definition 8.2]{BMS1}. We set 
\[
\O_C\{1\}\coloneqq \rm{T}_p(\Omega^1_{\O_C/\bf{Z}_p})=\footnote{To get this equality, one notes that \cite[Theorem 6.5.8]{GR} implies that $L_{\O_C/\Z_p}\simeq \Omega^1_{\O_C/\Z_p}$ while \cite[Remark 3.1.12]{bhatt-arizona} implies that $\wdh{L}_{\O_C/\Z_p}$ is concentrated in degree $-1$. This formally implies that $\wdh{L}_{\O_C/\Z_p}\simeq \rm{T}_p(\Omega^1_{\O_C/\Z_p})[1]$.}\wdh{L}_{\O_C/\Z_p}[-1],
\]
where $\wdh{L}$ stands for the derived $p$-adically complete cotangent complex. We define the $\rm{dlog}_{\Z_p}$ map $\rm{dlog}\colon \bf{Z}_p(1) \to \O_C\{1\}$ as the morphism induced by the map 
\[
\mu_{p^\infty}(\ov{K})\to \Omega^1_{\O_C/\Z_p}
\]
that sends $f\in \mu_{p^\infty}(\ov{K})$ to $\frac{df}{f}$. The theorem of Fontaine \cite[Theorem 1]{Fon82}, \cite[\textsection 1.3]{Bei12}, \cite[Theorem 3.1]{Sza} says $\O_C\{1\}$ is a free rank-$1$ $\O_C$-module, and the $\rm{dlog}$ map, after an $\O_C$-linearization, 
\[
\rm{dlog} \colon \O_C(1) \to \O_C\{1\}
\]
is injective with image being equal to $(\zeta_p-1)\O_C\{1\}$ for any choice of a primitive $p$-th root of unity $\zeta_p$ (we review this result in Theorem~\ref{thm:Fontaine}). We define the Breuil-Kisin twists $\O_C\{n\}$ and $\F\{n\}$ similarly to the case of analogous Tate twists. 

\section{Poor man's version of duality on formal schemes}\label{section:poor-coherent-duality}

\subsection{Finitely presented compactifications}

In this section, we recall some basic facts about compactifications of (not necessarily noetherian) schemes. In particular, we show that, for a qcqs scheme $Y$ and a separated finitely presented morphism $f\colon X \to Y$, the category of finitely presented compactifications of $f$ is cofiltered. This will be an important technical role in our extension of Grothendieck duality to the universally coherent case. 

\begin{defn}\label{defn:fp-compactifications} For a morphism of schemes $f\colon X \to Y$, the {\it category of finitely presented compactifications} $\rm{Comp}^{\rm{fp}}(X/Y)$ (also denoted by $\rm{Comp}^{\rm{fp}}(f)$) is defined as follows:
\begin{enumerate}
    \item Objects are open immersions $j\colon X \hookrightarrow \ov{X}$ over $Y$ with $\ov{X} \to Y$ being proper and finitely presented;
    \item Morphisms $(j'\colon X \hookrightarrow \ov{X}') \to (j\colon X \hookrightarrow \ov{X})$ are $Y$-morphisms $g\colon \ov{X}' \to \ov{X}$ such that $g \circ j' = j$.
\end{enumerate}
    We say that a morphism $(j'\colon X \hookrightarrow \ov{X}') \xrightarrow{g} (j\colon X \hookrightarrow \ov{X})$ is {\it strict} if $g^{-1}(j(X))=j'(X)$.
\end{defn}

\begin{rmk}\label{rmk:noetherian-the-same} Definition~\ref{defn:fp-compactifications} coincides with the one given in \cite[\href{https://stacks.math.columbia.edu/tag/0ATT}{Tag 0ATT}]{stacks-project} if $Y$ is noetherian and $f$ is a separated finite type morphism. 
\end{rmk}


Now we note that, for every morphism of schemes $g\colon Y' \to Y$, the pullback functor induces a functor $g^*\colon \rm{Comp}^{\rm{fp}}(X/Y) \to \rm{Comp}^{\rm{fp}}(X_{Y'}/Y')$ via the formula
\[
\big(X \xhookrightarrow{j} \ov{X}\big) \mapsto \big(X_{Y'} \xhookrightarrow{j_{Y'}} \ov{X}_{Y'}\big). 
\]

\begin{rmk}\label{rmk:preserves-strict-morphisms} The pullback functor $g^*\colon \rm{Comp}^{\rm{fp}}(X/Y) \to \rm{Comp}^{\rm{fp}}(X_{Y'}/Y')$ sends strict morphisms to strict morphisms. 
\end{rmk}

\begin{lemma}\label{lemma:approximate-compactifications} Let $I$ be a directed set with the smallest element $0\in I$, let $(Y_i, u_{ij})_{i, j\in I}$ be an $I$-indexed inverse system of qcqs schemes with affine transition map, and let $f_0 \colon X_0 \to Y_0$ be a separated finitely presented $Y_0$-scheme. Let $Y=\lim_I Y_i$. Then the natural functor
\[
    \colim_I \rm{Comp}^{\rm{fp}}(X_0\times_{Y_0} Y_i/ Y_i) \to \rm{Comp}^{\rm{fp}}(X_0 \times_{Y_0} Y/ Y),
\]
is an equivalence of categories. 
\end{lemma}
\begin{proof}
    This follows directly from \cite[\href{https://stacks.math.columbia.edu/tag/01ZM}{Tag 01ZM}]{stacks-project}, \cite[\href{https://stacks.math.columbia.edu/tag/0EUU}{Tag 0EUU}]{stacks-project}, and \cite[\href{https://stacks.math.columbia.edu/tag/081F}{Tag 081F}]{stacks-project}.
\end{proof}

\begin{cor}\label{cor:compactifications-super-cofiltered} Let $Y$ be a qcqs scheme, and let $f\colon X \to Y$ be a separated finitely presented morphism. Then
\begin{enumerate}
    \item\label{cor:compactifications-super-cofiltered-1} the category $\rm{Comp}^{\rm{fp}}(X/Y)$ is non-empty;
    \item\label{cor:compactifications-super-cofiltered-2} for every pair of objects $a,b \in \rm{Comp}^{\rm{fp}}(X/Y)$ there exist an object $c\in \rm{Comp}^{\rm{fp}}(X/Y)$ and {\it strict} morphisms $c \to a$ and $c\to b$;
    \item\label{cor:compactifications-super-cofiltered-3} for every pair of objects $a,b \in \rm{Comp}^{\rm{fp}}(X/Y)$ and every pair of morphisms $\alpha, \beta \colon a\to b$ there exists a {\it strict} morphism $\gamma\colon c\to a$ such that $\alpha \circ \gamma = \beta\circ \gamma$.
\end{enumerate}
    In particular, $\rm{Comp}^{\rm{fp}}(X/Y)$ is a cofiltered category. 
\end{cor}
\begin{proof}
    First, \cite[\href{https://stacks.math.columbia.edu/tag/01ZA}{Tag 01ZA}]{stacks-project} ensures that we can find a directed set $I$, and an $I$-indexed inverse system of {\it noetherian} schemes $(Y_i, u_{ij})_{i,j \in I}$ with affine transition maps $u_{i, j}$ such that $Y = \lim_I Y_i$. Furthermore, \cite[\href{https://stacks.math.columbia.edu/tag/01ZM}{Tag 01ZM}]{stacks-project} implies that there is an index $i_0 \in I$ and a separated finite type $Y_0$-scheme $f_{i_0} \colon X_{i_0} \to Y_{i_0}$ such that $X_{i_0} \times_{Y_{i_0}} Y \simeq X$ as $Y$-schemes. Therefore, we may replace $I$ with $I_{\geq i_0}$ and rename $i_0$ as $0$ to assume that we are in the situation of Lemma~\ref{lemma:approximate-compactifications}. Then Lemma~\ref{lemma:approximate-compactifications} and Remark~\ref{rmk:preserves-strict-morphisms} guarantee that it suffices to assume that $Y=Y_i$ is a noetherian scheme. In this case, the result follows directly from \cite[\href{https://stacks.math.columbia.edu/tag/0ATU}{Tag 0ATU}]{stacks-project} and Remark~\ref{rmk:noetherian-the-same}.  
\end{proof}


\subsection{Grothendieck duality for universally coherent schemes}

The formalism of quasi-coherent Grothendieck duality is usually developed only in the context of noetherian schemes. In this section, we sketch the extension of this formalism to the context of universally coherent schemes. It turns out that the approach taken in \cite[\href{https://stacks.math.columbia.edu/tag/0DWE}{Tag 0DWE}]{stacks-project} works almost verbatim in this more general setup. For this reason, we do not give full proofs and just explain the main changes compared to the proofs in \cite[\href{https://stacks.math.columbia.edu/tag/0DWE}{Tag 0DWE}]{stacks-project}. \smallskip

\begin{defn} A scheme $S$ is {\it universally coherent} if any finitely presented $S$-scheme $X$ is coherent.
\end{defn}

For the rest of the section, we fix a qcqs universally coherent scheme $S$. We denote by $\rm{FPS}_S$ the category of separated, finitely presented $S$-schemes. In what follows, we will freely use that any morphism $f\colon X \to Y$ in $\rm{FPS}_S$ is pseudo-coherent in the sense of \cite[\href{https://stacks.math.columbia.edu/tag/067Z}{Tag 067Z}]{stacks-project}, and $f$ is perfect in the sense of \cite[\href{https://stacks.math.columbia.edu/tag/0687}{Tag 0687}]{stacks-project} if and only if it is of finite Tor dimension. We will also freely use that, for a scheme $X\in \rm{FPS}_S$ and an object $\F\in \bf{D}_{qcoh}(X)$, $\F$ is pseudo-coherent in the sense of \cite[\href{https://stacks.math.columbia.edu/tag/08CB}{Tag 08CB}]{stacks-project} if and only if $\F\in \bf{D}^-_{coh}(X)$. \smallskip

Now we recall the main technical ingredient that will allow us to extend Grothendieck duality to the universally coherent case: 

\begin{thm}\label{thm:finitess-Kiehl} \cite[Corollary I.8.1.4]{FujKato}, \cite[Theorem 2.9]{Kiehl} Let $f\colon X \to Y$ be a proper morphism in $\rm{FPS}_S$. Then $\bf{R}f_*$ maps $\bf{D}^*_{coh}(X)$ to $\bf{D}^*_{coh}(Y)$ for $*=``\text{ ''}, +, -, b$.
\end{thm}


\begin{thm}\label{thm:existence-!} There is a pseudo-functor $(-)^! \colon \rm{FPS}_R \to \Cat$ such that
\begin{enumerate}[label=\textbf{(\arabic*)}]
    \item $(X)^!=\bf{D}^+_{qc}(X)$,
    \item for an open immersion $f\colon X \to Y$, $f^! \simeq \bf{L}f^*(-)$,
    \item for a proper morphism $f\colon X \to Y$, $f^!$ is right adjoint to $\bf{R}f_*\colon \bf{D}^+_{qc}(X)\to \bf{D}^+_{qc}(Y)$.
\end{enumerate}
\end{thm}
\begin{proof}
    The proof of \cite[\href{https://stacks.math.columbia.edu/tag/0ATY}{Tag 0ATY}]{stacks-project} adapts to the universally coherent situation with minor changes. Namely, the main reason the StacksProject assumes noetheriannity of $S$ is to have access to the open base change result \cite[\href{https://stacks.math.columbia.edu/tag/0A9P}{Tag 0A9P}]{stacks-project}. The proof of \cite[\href{https://stacks.math.columbia.edu/tag/0A9P}{Tag 0A9P}]{stacks-project}, in turn, has noetherian hypothesis in order to ensure that derived pushforward along a proper finitely presented morphism preserves pseudo-coherent objects. In the universally coherent case, this is true due to Theorem~\ref{thm:finitess-Kiehl}. Besides this, one needs to use finitely presented compactifications from Definition~\ref{defn:fp-compactifications} as opposed to finite type compactifications used in the StacksProject. Corollary~\ref{cor:compactifications-super-cofiltered} implies that all geometric constructions with compactifications used in \cite[\href{https://stacks.math.columbia.edu/tag/0A9Y}{Tag 0A9Y}]{stacks-project} can be performed in the category of finitely presented compactifications.\footnote{We note that the proofs of \cite[\href{https://stacks.math.columbia.edu/tag/0AA0}{Tag 0AA0}]{stacks-project} and \cite[\href{https://stacks.math.columbia.edu/tag/0ATX}{Tag 0ATX}]{stacks-project} use that certain auxiliary morphisms between compactifications are strict. The same strictness claims can be achieved in the finitely presented setting due to Corollary~\ref{cor:compactifications-super-cofiltered}.}
\end{proof}

Now we study some of the properties of the upper shriek functor. 

\begin{lemma}\label{lemma:flat-base-change-!} Let 
\[
\begin{tikzcd}
X' \arrow{d}{f'} \arrow{r}{g'} & X \arrow{d}{f} \\
Y'\arrow{r}{g} & Y
\end{tikzcd}
\]
be a cartesian diagram in $\rm{FPS}_S$ with flat $g$. Then there is a canonical base change isomorphism
\[
\bf{L}g'^*\circ f^! \xr{\sim} f'^! \circ \bf{L}g^*.
\]
\end{lemma}
\begin{proof}
    The proof is essentially identical to that of \cite[\href{https://stacks.math.columbia.edu/tag/0E9U}{Tag 0E9U}]{stacks-project}. In {\it loc.\,cit.\,}, the noetherian hypothesis is only used to ensure that derived pushforward along a proper finitely presented morphism preserves pseudo-coherent complexes. In the universally coherent case, this is true due to Theorem~\ref{thm:finitess-Kiehl}.
\end{proof}

\begin{lemma}\label{lemma:coprojection-formula} Let $f\colon X \to Y$ be a perfect morphism in $\rm{FPS}_S$. Then there is a canonical isomorphism
\[
\mu_{K, f}\colon \bf{L}f^*K \otimes^L_{\O_X} f^!\O_Y \to f^!K
\]
for any $K\in \bf{D}^+_{qc}(Y)$.
\end{lemma}
\begin{proof}
    The construction of $\mu_{K, f}$ is similar to the construction of $\mu_{K, f}$ in \cite[\href{https://stacks.math.columbia.edu/tag/0B6T}{Tag 0B6T}]{stacks-project}. To prove that it is an isomorphism if $f$ is of finite Tor dimension, we can reduce to the case of a perfect finitely presented closed immersion $f$ and of the relative affine line $\bf{A}^1_Y \to Y$. The case of the relative affine line reduces to the case of the relative projective line $\bf{P}^1_Y \to Y$. Therefore, it suffices to prove the claim for a perfect proper $f$. In this case, the proof is essentially identical to that of \cite[\href{https://stacks.math.columbia.edu/tag/0A9U}{Tag 0A9U}]{stacks-project} with the changes similar to the ones already discussed in Theorem~\ref{thm:existence-!} and Lemma~\ref{lemma:flat-base-change-!}.
\end{proof}

\begin{lemma}\label{lemma:dualizing-complex-affine-space} Let $Y$ be an element in $\rm{FPS}_S$, and let $f\colon \bf{A}^d_Y \to Y$ be a relative affine space over $Y$. Then there is a (non-canonical) isomorphism $f^! \O_Y \cong \O_{\bf{A}^d_Y}[d]$.
\end{lemma}
\begin{proof}
    It suffices to show that, for the morphism $\ov{f}\colon \bf{P}^d_Y \to Y$, we have $\ov{f}^! \O_Y \simeq \O_{\bf{P}^d_Y}(-d-1)[d]$. For this, we note that the proof of \cite[\href{https://stacks.math.columbia.edu/tag/0A9W}{Tag 0A9W}]{stacks-project} applies verbatim in our situation.
\end{proof}

\begin{lemma}\label{lemma:finiteness-properties-!} Let $f\colon X \to Y$ be a morphism in $\rm{FPS}_S$. Then 
\begin{enumerate}
    \item $f^!$ sends $\bf{D}^+_{coh}(Y)$ to $\bf{D}^+_{coh}(X)$,
    \item if $f$ is of finite Tor dimension, $f^!$ sends $\bf{D}^b_{coh}(X)$ to $\bf{D}^b_{coh}(Y)$,
    \item if $f$ is flat, $f^!\O_Y$ is $Y$-perfect in the sense of \cite[\href{https://stacks.math.columbia.edu/tag/0DI0}{Tag 0DI0}]{stacks-project}.
\end{enumerate}
\end{lemma}
\begin{proof}
    For $(1)$ (resp. $(2)$), we can reduce to the case of a finitely presented closed immersion (resp. perfect closed immersion) and of the relative affine line. The former case can be seen using \cite[\href{https://stacks.math.columbia.edu/tag/0A9X}{Tag 0A9X}]{stacks-project} and standard properties of the $\bf{R}\ud{\cal{H}om}$-functor, the latter case follows from Lemma~\ref{lemma:coprojection-formula} and Lemma~\ref{lemma:dualizing-complex-affine-space}. \smallskip
    
    Now we show $(3)$. We already know that $f^!\O_Y\in \bf{D}^b_{coh}(X)$ and thus is pseudo-coherent. So we only need to show that it is of finite Tor dimension over $Y$. The claim is local, so we can assume that $X=\Spec B$ and $Y=\Spec A$. Then the natural morphism $A \to B$ can be factored as $A \to P=A[x_1, \dots, x_n] \to B$, where $A \to P$ is the natural inclusion and $P \to B$ is a surjective morphism. Using \cite[\href{https://stacks.math.columbia.edu/tag/0A9X}{Tag 0A9X}]{stacks-project}, \cite[\href{https://stacks.math.columbia.edu/tag/0A6H}{Tag 0A6H}]{stacks-project}, and Lemma~\ref{lemma:dualizing-complex-affine-space} it suffices to show that the object
    \[
    \rm{RHom}_P(B, P)[n] \in \bf{D}(P)
    \]
    has finite Tor dimension over $A$. Now \cite[\href{https://stacks.math.columbia.edu/tag/068Y}{Tag 068Y}]{stacks-project} implies $B$ considered as an element of $\bf{D}(P)$ is a perfect complex, and so $\rm{RHom}_P(B, P)[n]\in \bf{D}(P)$ is perfect. Thus, it is of finite Tor dimension over $A$. 
\end{proof}

\begin{construction} Let $f\colon X\to S$ be a flat, separated, finitely presented morphism with the diagonal morphism $\Delta\colon X \to X\times_S X$ and the projection morphisms $p_{i}\colon X \times_S X \to X$. Then there is the natural morphism 
\[
\xi \colon \Delta_* \O_X \to \bf{L}p_1^*f^!\O_S
\]
defined as the adjoint to the isomorphism
\[
\O_X \simeq \Delta^! p_2^! \bf{L}f^* \O_S \simeq \Delta^! \bf{L} p_1^* f^! \O_S,
\]
where the second isomorphism comes from Lemma~\ref{lemma:flat-base-change-!}. 
\end{construction}

\begin{lemma}\label{lemma:relative-dualizing} Let $f\colon X \to S$ be a flat, separated, finitely presented morphism. Then the pair $(f^!\O_S, \xi)$ is a relative dualizing complex in the sense of \cite[\href{https://stacks.math.columbia.edu/tag/0E2T}{Tag 0E2T}]{stacks-project}.
\end{lemma}
\begin{proof}
    The proof is analogous to that of \cite[\href{https://stacks.math.columbia.edu/tag/0E9W}{Tag 0E9W}]{stacks-project} using the facts already established in this section. 
\end{proof}

\begin{cor}\label{cor:base-change-dualizing-object} Let $g\colon S' \to S$ be a morphism of qcqs universally coherent schemes, and 
\[
\begin{tikzcd}
X' \arrow{d}{f'} \arrow{r}{g'} & X \arrow{d}{f} \\
S'\arrow{r}{g} & S
\end{tikzcd}
\]
a cartesian diagram with $f$ flat, separated, and finitely presented. Then there is a canonical isomorphism
\[
\beta_g\colon \bf{L}g'^*\,f^!\O_S \xr{\sim} f'^!\O_{S'}.
\]
\end{cor}
\begin{proof}
    This follows directly from Lemma~\ref{lemma:relative-dualizing}, \cite[\href{https://stacks.math.columbia.edu/tag/0E2Y}{Tag 0E2Y}]{stacks-project}, and \cite[\href{https://stacks.math.columbia.edu/tag/0E2W}{Tag 0E2W}]{stacks-project}. 
\end{proof}

\begin{lemma}\label{lemma:smooth-morphism-dualizing-complex} Let $f\colon X \to S$ be a smooth, separated, finitely presented morphism of pure relative dimension $d$. Then there is a canonical isomorphism
\[
\alpha_f\colon f^!\,\O_S \xr{\sim} \Omega^d_{X/S}[d]
\]
that commutes with an arbitrary base change $g\colon S' \to S$ of qcqs universally coherent schemes. 
\end{lemma}
\begin{proof}
    We first note that the diagonal morphism $\Delta\colon X \to X\times_S X$ is a regular closed immersion by \cite[\href{https://stacks.math.columbia.edu/tag/067U}{Tag 067U}]{stacks-project}. In particular, it is a perfect closed immersion by \cite[\href{https://stacks.math.columbia.edu/tag/068C}{Tag 068C}]{stacks-project}. \smallskip
    
    Now we construct a structure of a relative dualizing complex (in the sense of \cite[\href{https://stacks.math.columbia.edu/tag/0E2T}{Tag 0E2T}]{stacks-project}) on $\Omega^d_{X/S}[d]$. Clearly, $\Omega^d_{X/S}[d]$ is $S$-perfect since $\Omega^d_{X/S}$ is a line bundle. Thus, it suffices to construct a morphism $\eta_f\colon \Delta_*\O_X \to \bf{L}p_1^*\,\Omega^d_{X/S}[d]$ inducing an isomorphism (see \cite[\href{https://stacks.math.columbia.edu/tag/0A9X}{Tag 0A9X}]{stacks-project})
    \[
    \Delta_*\O_X \to \bf{R}\ud{\cal{H}om}_{\O_{X\times_S X}}(\Delta_*\O_X, \bf{L}p_1^*\Omega^d_{X/S}[d]) \simeq \Delta_*\,\Delta^!\,\bf{L}p_1^*\,\Omega^d_{X/S}[d].
    \]
    By adjunction, it suffices to construct an isomorphism
    \[
    \eta'_f\colon \O_{X} \xr{\sim} \Delta^! \,\bf{L}p_1^*\,\Omega^d_{X/S}[d].
    \]
    We define $\eta'_f$ as the following composition of isomorphisms:
    \begin{equation}\label{eqn:eta-prime}
        \Delta^! \,\bf{L}p_1^* \,\Omega^d_{X/S}[d] \simeq \Delta^!\, \O_{X\times_S X} \otimes^L_{\O_X} \Omega^d_{X/S}[d] \simeq \Omega^{d, \vee}_{X/S}[-d] \otimes^L_{\O_X} \Omega^d_{X/S} [d] \simeq \O_X,
    \end{equation}
    where the first isomorphism comes from Lemma~\ref{lemma:coprojection-formula}, and the second isomorphism comes from \cite[\href{https://stacks.math.columbia.edu/tag/0BQZ}{Tag 0BQZ}]{stacks-project} and \cite[\href{https://stacks.math.columbia.edu/tag/08S2}{Tag 08S2}]{stacks-project}. \smallskip
    
    Furthermore, we note that $\eta'_f$ (and thus $\eta_f$) commutes with arbitrary base change $g\colon S' \to S$ of qcqs universally coherent schemes. Indeed, the first isomorphism in Diagram~(\ref{eqn:eta-prime}) commutes with base change by \cite[\href{https://stacks.math.columbia.edu/tag/0B6P}{Tag 0B6P}]{stacks-project} and \cite[\href{https://stacks.math.columbia.edu/tag/0CTA}{Tag 0CTA}]{stacks-project}, while the second one commutes with base change by inspection (see the proof of \cite[\href{https://stacks.math.columbia.edu/tag/0BR0}{Tag 0BR0}]{stacks-project}). \smallskip

    Now we note that Lemma~\ref{lemma:relative-dualizing} and \cite[\href{https://stacks.math.columbia.edu/tag/0E2W}{Tag 0E2W}]{stacks-project} imply that there is a unique isomorphism $\alpha_f\colon (f^!\,\O_S, \xi_f) \simeq (\Omega^d_{X/S}[d], \eta_f)$. We wish to show that this isomorphism commutes with base change $g\colon S' \to S$. We denote by $f'\colon X' \to S'$, $g'\colon X' \to X$ the base change morphisms. \smallskip 
    
    Now we note that the uniqueness part of \cite[\href{https://stacks.math.columbia.edu/tag/0E2W}{Tag 0E2W}]{stacks-project} implies that the diagram
    \[
    \begin{tikzcd}
        \bf{L}g^*(f^!\O_S, \xi_f) \arrow{d}{\beta_g} \arrow{r}{\alpha_f} & \bf{L}g^*(\Omega^d_{X/S}[d], \eta_f) \arrow{d}{\wr} \\
        (f'^!\O_{S'}, \xi_{f'}) \arrow{r}{\alpha_{f'}} & (\Omega^d_{X'/S'}[d], \eta_{f'})
    \end{tikzcd}
    \]
    must commute, where $\beta_g$ comes from Corollary~\ref{cor:base-change-dualizing-object} and the right vertical arrow is induced by the canonical isomorphism $g^*\Omega^1_{X/S} \simeq \Omega^1_{X'/S'}$. This implies that $\alpha_f$ commutes with base change. 
\end{proof}

\begin{cor}\label{cor:etale-morphism-!} Let $f\colon X \to Y$ be an \'etale morphism in $\rm{FPS}_S$. Then there is a canonical isomorphism of functors
\[
f^!(-) \simeq \bf{L}f^*(-).
\]
\end{cor}
\begin{proof}
    Without loss of generality, we can assume that $Y=S$. Then the result a direct consequence of Lemma~\ref{lemma:coprojection-formula} and Lemma~\ref{lemma:smooth-morphism-dualizing-complex}.
\end{proof}

\begin{lemma}\label{lemma:degree-dualizing-complex} Let $f\colon X \to S$ be a separated, flat, finitely presented morphism with all fibers of dimension less or equal to $d$. Then $f^!\O_S \in \bf{D}^{[-d, 0]}_{coh}(X)$ and $f^!\O_S$ has Tor amplitude $[-d, 0]$ as an object of $\bf{D}(X, f^{-1}\O_S)$.
\end{lemma}
\begin{proof}
We already know that $f^!\O_S \in \bf{D}^b_{coh}(X)$ due to Lemma~\ref{lemma:finiteness-properties-!}. Now we note \cite[\href{https://stacks.math.columbia.edu/tag/08CI}{Tag 08CI}]{stacks-project} implies that it suffices to show that $f^!\O_S$ has Tor amplitude $[-d, 0]$ as an object of $\bf{D}(X, f^{-1}\O_S)$.  For this, we note that a standard approximation argument (see \cite[\href{https://stacks.math.columbia.edu/tag/01YT}{Tag 01YT}]{stacks-project}) and Corollary~\ref{cor:base-change-dualizing-object} allow us to reduce to the case when $S$ is noetherian. In this situation, the result follows directly from \cite[\href{https://stacks.math.columbia.edu/tag/0BV6}{Tag 0BV6}]{stacks-project}.
\end{proof}

For the next definition, we fix a flat, finitely presented, separated morphism $f\colon X \to S$.

\begin{defn}\label{defn:dualizing-complex-scheme} The {\it (relative) dualizing complex} of $X$ is the object $\omega^\bullet_X\coloneqq f^!\O_S \in \bf{D}^b_{coh}(X)$.

If $f$ is of pure relative dimension $d$, the {\it dualizing sheaf} of $X$ is the coherent sheaf 
\[
\omega_X \coloneqq \cal{H}^{-d}(\omega^\bullet_X)\in \bf{Coh}(X).
\]
\end{defn}

\begin{rmk} The relative dualizing complex is usually denoted by $\omega^\bullet_{X/S}$. However, for the purposes of this paper, it will be more convenient to denote it simply by $\omega^\bullet_X$ as this will never cause any confusion.
\end{rmk}

Now note that, for a finite morphism $f\colon X\to Y$ between quasi-compact smooth separated $S$-schemes of pure relative dimension $d$, there is a canonical trace morphism
\[
\rm{Tr}_f\colon f_*\Omega^d_{X/S} \to \Omega^d_{Y/S}
\]
coming from Lemma~\ref{lemma:smooth-morphism-dualizing-complex} and the $(\bf{R}f_*, f^!)$-adjunction. For future reference, we show that this morphism is quite explicit for a flat $f$. \smallskip

For any finite, locally free morphism $f\colon X \to Y$, we can also construct a trace morphism $\rm{Tr}_f\colon f_*\O_X \to \O_Y$. Locally on $Y$, $f$ looks like $f\colon \Spec B \to \Spec A$ where $B$ is an $A$-algebra that is finite projective as an $A$-module. Therefore we have the trace morphism $\rm{Tr}_{B/A} \colon B \to A$. Since $\rm{Tr}_{B/A}$ commutes with flat base change, this morphism glues (see \cite[\href{https://stacks.math.columbia.edu/tag/0BVH}{Tag 0BVH}]{stacks-project}) to a well-defined morphism 
\[
\rm{Tr}_f\colon f_*\O_X \to \O_Y.
\]

\begin{lemma}\label{lemma:finite-flat-trace-scheme} Let $f\colon X \to Y$ be a finite flat morphism between quasi-compact smooth separated $S$-schemes of pure relative dimension $d$. Then the diagram 
\[
\begin{tikzcd}
f_*f^*\Omega^d_{Y/S} \arrow{r} \arrow{d}{\wr} & f_*\Omega^d_{X/S} \arrow{d}{\rm{Tr}_f} \\
f_*\O_X\otimes_{\O_Y} \Omega^d_{Y/S} \arrow{r}{\rm{Tr}_{f}\otimes \rm{id}} & \Omega^d_{Y/S}
\end{tikzcd}
\]
commutes, where the top horizontal map is induced by the canonical morphism $f^*\Omega^d_{Y/S} \to \Omega^d_{X/S}$, the left vertical map comes from the projection formula, the right vertical map is the trace morphism from Grothendieck duality, and the bottom horizontal morphism is the trace of a finite locally free morphism.
\end{lemma}
\begin{proof}
    If $S$ is noetherian, this follows from \cite[Theorem 9.2.14(i)]{Nayak-Sastry} (or \cite[pp.\,97-99, (2.7.41)]{base-change}). In general, one uses a standard approximation argument (see \cite[\href{https://stacks.math.columbia.edu/tag/01YT}{Tag 01YT}]{stacks-project}) and \cite[\href{https://stacks.math.columbia.edu/tag/0B6J}{Tag 0B6J}]{stacks-project} to reduce to the noetherian case. 
\end{proof}

\subsection{Reflexivity of the dualizing sheaf}

The main goal of this section is to study the reflexivity property of the (relative) dualizing sheaf from Definition~\ref{defn:dualizing-complex-scheme}. In this section, we restrict our attention to flat, finitely presented schemes over a rank-$1$ valuation ring. \smallskip

Throughout this section, we fix a base scheme $S=\Spec \O_K$ for a rank-$1$ valuation ring $\O_K$ with a pseudo-uniformizer $\varpi$ and fraction field $K$. \smallskip


We first recall the notion of depth from \cite[\textsection 10.4]{GRfoundations} as this notion will play a crucial role in the rest of this section.  

\begin{defn}\label{defn:depth}\cite[(10.4.19)]{GRfoundations} Let $X$ be a scheme, $Z\subset X$ a closed subset, and $\F\in \bf{D}(X)$. The {\it depth of $\F$ along $Z$} is the number
\[
\rm{depth}_X(Z, \F)\coloneqq \sup \{ n\in \Z \ |\ \cal{H}^i_{Z}(\F)=0 \text{ for all } i< n\} \in \bf{Z} \cup \infty,
\]
where $\cal{H}^i_Z(\F)$ is the sheaf of compactly supported cohomology as defined in \cite[\href{https://stacks.math.columbia.edu/tag/0G6Y}{Tag 0G6Y}]{stacks-project}. 

If $x\in X$ is a (not necessarilly closed) point of $X$, the {\it depth of $\F$ at $x$} is the number
\[
\delta_X(x, \F) \coloneqq \rm{depth}_{\Spec \O_{X,x}}(\{x\}, \F|_{\Spec \O_{X,x}}) \in \bf{Z} \cup \infty.
\]
\end{defn}

\begin{rmk} We refer to \cite[Proposition 10.4.25]{GRfoundations} for the relation to the more usual definition of depth.
\end{rmk}

\begin{lemma}\label{lemma:S1-noetherian-top-space} Let $X$ be a scheme, $\F$ a quasi-coherent $\O_X$-module, and $x\in X$. Then $x \in \Spec \O_{X,x}$ is a weakly associated prime of $M_x\coloneqq \F_x$ if and only if $\delta_{X}(x, \F)=0$.
\end{lemma}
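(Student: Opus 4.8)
The plan is to reduce the statement to a standard characterization of weakly associated primes in terms of depth, after unwinding the definitions of $\delta_X(x,\F)$ and $\mathrm{depth}$. By definition, $\delta_X(x,\F) = \mathrm{depth}_{\Spec \O_{X,x}}(x, \F|_{\Spec \O_{X,x}})$, and since $\{x\}$ is the closed point of the local scheme $\Spec \O_{X,x}$, this is $\sup\{n \mid \mathcal{H}^i_{\{x\}}(\F|_{\Spec \O_{X,x}}) = 0 \text{ for } i < n\}$. So $\delta_X(x,\F) = 0$ if and only if $\mathcal{H}^0_{\{x\}}(M_x) = H^0_{\m_x}(M_x) \neq 0$, i.e. the submodule of $\m_x$-power-torsion elements of $M_x$ is nonzero. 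Thus I need to show: $H^0_{\m_x}(M_x) \neq 0$ if and only if $\m_x$ is a weakly associated prime of $M_x$.

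First I would recall the definition: a prime $\p$ of a ring $R$ is weakly associated to an $R$-module $M$ if there is $m \in M$ such that $\p$ is a minimal prime over the annihilator $\mathrm{Ann}_R(m)$ (see \cite[\href{https://stacks.math.columbia.edu/tag/0546}{Tag 0546}]{stacks-project}). In our situation $R = \O_{X,x}$ is local with maximal ideal $\m_x$, so $\m_x$ being weakly associated to $M_x$ means there exists $m \in M_x$ with $\m_x$ minimal over $\mathrm{Ann}(m)$ — but $\m_x$ is maximal, so this simply says $\mathrm{Ann}(m)$ is $\m_x$-primary, equivalently $\sqrt{\mathrm{Ann}(m)} = \m_x$, equivalently $m$ is killed by a power of $\m_x$ (using that $R$ need not be Noetherian, one instead says every element of $\m_x$ is nilpotent modulo $\mathrm{Ann}(m)$; but what we really want is $m \in H^0_{\m_x}(M_x)$). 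The subtlety in the non-Noetherian setting is exactly the difference between "$m$ is annihilated by a power of $\m_x$" and "$\sqrt{\mathrm{Ann}(m)} = \m_x$"; these need not coincide if $\m_x$ is not finitely generated. This is where the hypothesis that $|X|$ is a noetherian topological space enters: I would use it to conclude that $\O_{X,x}$, while possibly non-Noetherian as a ring, has the property that $\Spec \O_{X,x}$ is a noetherian topological space, hence $\m_x$ (being the radical of some finitely generated ideal, or by the catenary/dimension considerations) behaves well enough that $\sqrt{\mathrm{Ann}(m)} = \m_x$ does force $m \in H^0_{\m_x}(M_x)$. Concretely: if $\sqrt{\mathrm{Ann}(m)} = \m_x$ then $V(\mathrm{Ann}(m)) = \{\m_x\}$ is a single (closed) point of $\Spec \O_{X,x}$, and the local cohomology $H^0_{\m_x}(M_x)$ of the module $M_x$ supported at that single point computes the sections supported at $\m_x$, which contains $m$.

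The cleanest route may therefore be to cite the Stacks Project directly. I expect \cite[\href{https://stacks.math.columbia.edu/tag/0G6Y}{Tag 0G6Y}]{stacks-project} and the surrounding material (on local cohomology and depth, especially the tags relating $\mathrm{depth}$ to associated/weakly associated primes, e.g. around \href{https://stacks.math.columbia.edu/tag/0EV5}{Tag 0EV5} or \href{https://stacks.math.columbia.edu/tag/0BK3}{Tag 0BK3}) to contain a statement of the form: for a quasi-coherent sheaf $\F$ on a scheme with noetherian underlying space and a point $x$, $\delta_X(x,\F) = 0$ iff $x$ is weakly associated to $\F$, or equivalently $\mathrm{depth}_{\O_{X,x}}(\m_x, M_x) = 0$ iff $\m_x \in \mathrm{WeakAss}(M_x)$. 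So the proof would run: (i) unwind $\delta_X(x,\F) = 0$ to $H^0_{\m_x}(M_x) \neq 0$; (ii) note $H^0_{\m_x}(M_x) = \{m \in M_x : \Spec(\O_{X,x}/\mathrm{Ann}(m)) = \{\m_x\} \text{ set-theoretically}\}$, using that $\Spec \O_{X,x}$ is noetherian so "supported at the closed point" is the same as "annihilated by a power of a finitely generated ideal whose radical is $\m_x$"; (iii) observe this set is nonzero precisely when some $m$ has $\mathrm{Ann}(m)$ with radical $\m_x$, i.e. $\m_x$ minimal over $\mathrm{Ann}(m)$, i.e. $\m_x \in \mathrm{WeakAss}(M_x)$.

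The main obstacle will be step (ii): carefully justifying, in the possibly non-Noetherian local ring $\O_{X,x}$ (which arises since $X$ is only assumed to have noetherian underlying topological space, not to be locally Noetherian), that "$\mathcal{H}^0_{\{x\}}(\F|_{\Spec \O_{X,x}})$ nonzero" is equivalent to "$\mathrm{WeakAss}(M_x) \ni \m_x$". The delicate point is that the local cohomology functor $\mathcal{H}^0_Z$ for $Z = \{x\}$ a closed point is the subfunctor of sections supported on $Z$, and for $Z$ cut out by a finitely generated ideal with radical $\m_x$ this agrees with the naive $\m_x$-power-torsion; noetherianity of $|\Spec \O_{X,x}|$ guarantees $\{x\}$ is the vanishing locus of a finitely generated ideal, which is what makes $\mathcal{H}^0_{\{x\}}$ well-behaved (this is the whole reason \cite[\href{https://stacks.math.columbia.edu/tag/0G6Y}{Tag 0G6Y}]{stacks-project}'s formalism is invoked rather than a Noetherian local cohomology statement). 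Once that is in hand, the equivalence with weak association is essentially formal and I would cite the relevant Stacks tag to conclude.
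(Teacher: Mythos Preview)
Your proposal is correct and follows essentially the same approach as the paper: reduce to the local ring $\O_{X,x}$, unwind $\delta_X(x,\F)=0$ to nonvanishing of $\mathcal{H}^0_{\{x\}}(\F|_{\Spec\O_{X,x}})$, and identify this with the existence of a nonzero $m\in M_x$ whose annihilator has radical $\m_x$, which is exactly the weak-association condition (Tag 0566).

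One point of comparison worth noting: you introduce the interpretation $H^0_{\m_x}(M_x)$ as ``$\m_x$-power torsion'' and then worry about reconciling this with the support condition in the non-Noetherian setting, viewing that reconciliation as the place where the noetherian hypothesis on $|X|$ enters. The paper sidesteps this entirely. In the Stacks formalism (Tag 0G6Y) that the paper invokes, $\mathcal{H}^0_Z(\F)$ is \emph{defined} as the subsheaf of sections supported on $Z$, so its global sections are already $\{s\in M_x:\rm{rad}(\rm{Ann}(s))=\m_x\}$ with no power-torsion intermediary. The noetherian hypothesis is instead used (via Tag 0G72) to guarantee that the local cohomology sheaves $\mathcal{H}^i_{\{x\}}(\F)$ are quasi-coherent, so that the sheaf-level vanishing in the definition of $\delta$ can be tested on global sections of the affine $\Spec\O_{X,x}$. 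Your route through ``$\{x\}$ is cut out by a finitely generated ideal'' would also work, but the paper's argument is shorter and avoids the detour.
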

\begin{proof}
Without loss of generality, we can replace $X$ with $\Spec \O_{X,x}$ to assume that $X$ is a spectrum of a local ring, and $x$ is its (unique) closed point. \smallskip

Now, for each integer $i$, the sheaf $\cal{H}^i_x(X, \F)$ is supported on the closed point $\{x\}$. Therefore, $\cal{H}^i_x(X, \F) =0$ if and only if $\rm{H}^i_x(X, \F) \coloneqq \Gamma(\{x\}, \cal{H}^i_x(X, \F))=0$. So we conclude that 
\[
    \delta_X(x, \F) = \sup \{ n\in \Z \ |\ \rm{H}^i_{x}(X, \F)=0 \text{ for all } i< n\},
\]
By construction,
\[
    \rm{H}^0_{x}(X, \F) = \{ s\in \F_x \ | \ \rm{Supp}(s) \subset \{x\}\} = \{s\in M_x \ | \ \rm{rad}(\rm{Ann}_{\O_{X,x}}(s)) \supset \m_x\}. 
\]
Since $\rm{rad}(\rm{Ann}_{\O_{X,x}}(s)) \supsetneq \m_x$ only if $s=0$, \cite[\href{https://stacks.math.columbia.edu/tag/0566}{Tag 0566}]{stacks-project} implies that $\rm{H}^0_{x}(X, \F) \neq 0$ if and only if $x$ is a weakly associated prime of $M_x=\F_x$.
\end{proof}

\begin{lemma}\label{lemma:torsion-free-modules} Let $R$ be a reduced ring such that the topological space $|\Spec R|$ is noetherian, and $M$ a torsion-free $R$-module\footnote{Recall that this means that, for every regular element $f\in R$ and every non-zero element $m\in M$, $fm\neq 0$.}. Then the set of weakly associated ideals of $M$ is contained in the set of minimal ideals of $R$.
\end{lemma}
\begin{proof}
    First, we note that \cite[\href{https://stacks.math.columbia.edu/tag/00ES}{Tag 00ES}]{stacks-project} and \cite[\href{https://stacks.math.columbia.edu/tag/0052}{Tag 0052}]{stacks-project} ensure that $R$ has finitely many minimal primes. Let us denote them by $\{\p_1, \dots, \p_n\}$. Then \cite[\href{https://stacks.math.columbia.edu/tag/0EMA}{Tag 0EMA}]{stacks-project} and \cite[\href{https://stacks.math.columbia.edu/tag/05C3}{Tag 05C3}]{stacks-project} (applied to $M=R$) imply that
    \[
    \{\text{zerodivisors in }R\} = \bigcup_{i=1}^n \p_i. 
    \]
    Now we apply \cite[\href{https://stacks.math.columbia.edu/tag/05C3}{Tag 05C3}]{stacks-project} to the $R$-module $M$ to conclude that, for any weakly associated prime $\q$ of $M$ and an element $f\in R$, there is a non-zero element $m\in M$ such that $fm=0$. Since $M$ is torsion-free, we conclude that $f$ must be a zerodivisor in $R$, so
    \[
    \q \subset \bigcup_{i=1}^n \p_i.
    \]
    Now the Prime Avoidance Theorem (see \cite[\href{https://stacks.math.columbia.edu/tag/00DS}{Tag 00DS}]{stacks-project}) implies that $\q\subset \p_i$ for some $i$. Since $\p_i$ is a minimal prime ideal, we conclude that $\q=\p_i$.
\end{proof}

\begin{lemma}\label{lemma:dualizing-module-flat} Let $X$ be a separated, flat, finitely presented $\O_K$-scheme of pure relative dimension $d$. Then the dualizing module $\omega_X$ is flat over $\O_K$.
\end{lemma}
\begin{proof}
The question is local on $X$, so we can assume that $X=\Spec A$ is affine. Then Lemma~\ref{lemma:degree-dualizing-complex} implies that we have an isomorphism
\[
\omega^\bullet_{X_0} \simeq \omega^\bullet_{X}\otimes^L_{\O_K} \O_K/\varpi \O_K,
\]
where $X_0$ is the mod-$\varpi$ fiber of $X$. Lemma~\ref{lemma:degree-dualizing-complex} ensures that both $\omega^\bullet_X$ and $\omega^\bullet_{X_0}$ are concentrated in degrees $[-d, 0]$. This implies that multiplication by $\varpi$ is injective on $\mathcal{H}^{-d}(\omega^\bullet_\X)$. To finish the proof, we recall that $\O_K$-flat modules are exactly $\varpi$-torsionfree modules. 
\end{proof}

\begin{lemma}\label{lemma:dualizing-module-torsionfree} Let $k$ be a field, and let $X$ be a separated finite type $k$-scheme of pure dimension $d$. Then the dualizing sheaf $\omega_X$ is a torsion-free $\O_X$-module, i.e., for any open affine subscheme $U\subset X$, $\omega_X(U)$ is a torsion-free $\O_X(U)$-module. 
\end{lemma}
\begin{proof}
    The question is clearly local on $X$, so we can assume that $X=\Spec A$ is affine. In this situation, we pick a regular element $f\in A$ and wish to show that multiplication by $f$ induces an injective morphism $\omega_X \xrightarrow{f} \omega_X$. For brevity, we denote by $i\colon Y=\Spec A/(f) \to X=\Spec A$ the natural closed immersion. Then we note that \cite[\href{https://stacks.math.columbia.edu/tag/0A9X}{Tag 0A9X}]{stacks-project} and the short exact sequence
    \[
    0 \to R \xrightarrow{f} R \to R/f \to 0
    \]
    imply that we have the following distinguished triangle
    \begin{equation}\label{eqn:dualizing-complexes}
    i_*\omega^\bullet_{Y} \to \omega^\bullet_X \xrightarrow{f} \omega^\bullet_X.
    \end{equation}
    Since $f$ is a regular element, we conclude that $Y$ is a $k$-scheme of pure dimension $d-1$. Therefore, Lemma~\ref{lemma:degree-dualizing-complex} ensures that $i_*\omega^\bullet_{Y}$ is concentrated in degrees $[-d+1, 0]$. Thus, we apply $\cal{H}^{-d}$ to Equation~(\ref{eqn:dualizing-complexes}) to conclude that $\omega_X \xrightarrow{f} \omega_X$ is an injective morphism. 
\end{proof}

We recall that a coherent module $\F$ on a scheme $X$ is called {\it reflexive} if the natural morphism $\F \to \F^{\vee \vee}$ to its double-dual is an isomorphism. \smallskip

\begin{thm}\label{thm:dualizing-reflexive-schemes} Let $X$ be a separated flat finitely presented $\O_K$-scheme of pure relative dimension $d$. If the generic fiber $X_K$ is smooth and the special fiber $\ov{X}$ is geometrically reduced, then the relative dualizing sheaf $\omega_X$ is a reflexive coherent $\O_X$-module.
\end{thm}
To motivate this theorem, we note that if the ring $\O_K$ is discretely valued, then the assumptions on $X$ imply that it is normal. Then it is a classical result that a dualizing sheaf on a normal scheme is reflexive. 
\begin{proof}
    We start the proof by noting that Lemma~\ref{lemma:dualizing-module-flat} shows that $\omega_X$ is $\O_K$-flat.

    {\it Step $1$: $\omega_{X}/\m_K\omega_X \subset \omega_{\ov{X}}$}. The short exact sequence
    \[
        0 \to \m_K \to \O_K \to \O_K/\m_K \to 0
    \]
    and Corollary~\ref{cor:base-change-dualizing-object} imply that we have the following  distinguished triangle:
    \[
        \m_K \otimes^L_{\O_K} \omega^\bullet_X \to \omega^\bullet_X \to \omega^\bullet_{\ov{X}}.
    \]
    Using that $\m_K$ is $\O_K$-flat and that both $\omega^\bullet_X$ and $\omega^\bullet_{\ov{X}}$ are concentrated in degrees $[-d, 0]$ (see Lemma~\ref{lemma:degree-dualizing-complex}), we conclude that there is an exact sequence
    \[
        0 \to \m_K \otimes_{\O_K} \omega_X \to \omega_X \to \omega_{\ov{X}}.
    \]
    Now we use that $\omega_X$ is $\O_K$-flat to conclude that $\m_K \otimes_{\O_K}\omega_X \simeq \m_K\omega_X$ to see that $\omega_X/\m_K\omega_X \subset \omega_{\ov{X}}$. \smallskip

    {\it Step $2$: The $\O_{\ov{X}}$-module $\omega_X/\m_K\omega_X$ is torsion-free.} Since $\omega_X/\m_K\omega_X \subset \omega_{\ov{X}}$, it is sufficient to show $\omega_{\ov{X}}$ is torsion-free. This follows directly from Lemma~\ref{lemma:dualizing-module-torsionfree}. \smallskip

    {\it Step $3$: The $\O_{X_K}$-module $(\omega_X)_K$ is a line bundle.} This follows from the assumption that $X_K$ is $K$-smooth and the sequence of isomorphisms:
    \[
        (\omega_X)_K \simeq \omega_{X_K} \simeq \Omega^d_{X_K}.
    \]

    {\it Step $4$: Some cohomological considerations.} We choose  $x\in X$ a non-generic point in the special fiber. We roughly want to show that $\omega_X$ is at least $(S_2)$ at this point and $\omega_X^{\vee\vee}$ is at least $(S_1)$. However, it is a bit tricky to make this argument precise because the involved rings are not necessarily noetherian. For this reason, we will need to use some extra input from \cite{GRfoundations}. \smallskip

    First, we wish to show that $\delta_{X}(x, \omega^{\vee\vee}_X)\geq 1$. For this, we can assume that $X=\Spec R$ is affine. Then we note that $\omega^{\vee\vee}_X$ is a torsion-free $R$-module for a reduced ring $R$ with the noetherian underlying topological space $|X|$. Therefore, Lemma~\ref{lemma:S1-noetherian-top-space} and Lemma~\ref{lemma:torsion-free-modules} imply that $\delta_{X}(x, \omega^{\vee\vee}_X)\geq 1$. \smallskip
    
    Now we wish to show that $\delta_X(x, \omega_X)\geq 2$. Step~$2$ implies $\omega_X/\m_K\omega_X$ is torsion-free, and so Lemma~\ref{lemma:S1-noetherian-top-space} and Lemma~\ref{lemma:torsion-free-modules} guarantee that $\delta_{\ov{X}}(x, \omega_X/\m_K\omega_X) \geq 1$. Next, we apply \cite[Corollary 10.4.46]{GRfoundations} to the finitely presented morphism $X \to \Spec \O_K$, $\O_K$-flat coherent $\O_X$-module $\omega_X$, and the quasi-coherent $\O_{\Spec \O_K}$-module $\O_{\Spec \O_K}$ to conclude that 
    \begin{equation*}\label{eqn:1}
        \delta_X(x, \omega_X)=\delta_{\ov{X}}(x, \omega_X/\m_K\omega_X) + \delta_{\Spec \O_K}({\{\m_K\}, \O_{\Spec \O_K}}) \geq 2. 
    \end{equation*}

    {\it Step $5$: $\omega_X$ is reflexive.} We need to show that the natural morphism 
    \[
        \alpha\colon \omega_X \to \omega_X^{\vee\vee}
    \]
    is an isomorphism. It is clearly an isomorphism over the smooth locus $X^{\rm{sm}}\subset X$ because $(\omega_X)|_{X^{\rm{sm}}}$ is a line bundle (see Lemma~\ref{lemma:smooth-morphism-dualizing-complex}), and both $\omega_X$ and $\omega_X^{\vee\vee}$ commute with open immersions. In particular, $\alpha$ is an isomorphism over $X_K$ and over the generic points in the special fiber. \smallskip
    
    Now we note that $\alpha$ is injective because $\alpha_K$ is an isomorphism (due to the argument above) and $\omega_X$ is $\O_K$-flat. \smallskip
    
    Finally, we show that $\alpha_x$ is an isomorphism for any non-generic point $x$ in the special fiber $\ov{X}$. Since $\alpha$ is injective, we have a short exact sequence
    \[
        0 \to \omega_X \to \omega_X^{\vee\vee} \to \cal{Q} \to 0
    \]
    for some coherent $\O_X$-module $\cal{Q}$. Then we know that $\delta_{X}(x, \omega_X)\geq 2$ and $\delta_{X}(x, \omega_X^{\vee\vee})\geq 1$ by Step~$4$. So the long exact sequence for $\cal{H}_Z^i$ implies that $\delta_X(x, \cal{Q})\geq 1$ for any non-generic point $x$ in the special fiber. We also know that $\cal{Q}_y=0$ for any generic point $y$ in the special fiber and for any $y$ in the generic fiber. This implies that $\delta_X(y, \cal{Q}) = \infty$ at these points. Therefore, $\delta_X(x, \cal{Q})\geq 1$ for {\it any} $x\in X$. Therefore, $\cal{Q}$ is a coherent sheaf that has no weakly associated points. Thus, \cite[\href{https://stacks.math.columbia.edu/tag/05AP}{Tag 05AP}]{stacks-project} guarantees that $\cal{Q} \simeq 0$. 
\end{proof}

\subsection{Dualizing modules on separated admissible formal schemes}

In this section, we define a ``naive dualizing complex'' on separated admissible formal $\O_K$-schemes. This object will not be quite functorial because its construction will involve certain derived limits that are not functorial. In order to deal with this problem, we mainly restrict our attention to the study of a {\it dualizing module} that is the bottom cohomology sheaf of the naive dualizing complex. This turns out to be a more functorial object, and this functoriality is sufficient for all our purposes. \smallskip

Throughout this section, we fix a base formal scheme $\S=\Spf \O_K$ for a complete rank-$1$ valuation ring $\O_K$ with a pseudo-uniformizer $\varpi$ and fraction field $K$. \smallskip

For each integer $i\geq 0$, we denote by $S_i$ the scheme $\Spec \O_K/\varpi^{i+1}\O_K$. Likewise, for any  formal $\O_K$-scheme $\X$, we denote by $\X_i$ the fiber product 
\[
\X_i\coloneqq \X\times_{\S} S_i.
\]

Recall that, for a separated admissible formal $\O_K$-scheme $\X$, Definition~\ref{defn:dualizing-complex-scheme} produces a compatible sequence of dualizing complexes $\omega^\bullet_{\X_n} \in \bf{D}^b_{coh}(\X_n)$. 

\begin{defn}\label{defn:dualizing-complex} A {\it naive dualizing complex} $\omega^\bullet_{\X}$ of a separated admissible formal $\O_K$-scheme $\X$ is the derived limit 
\[
\omega^\bullet_{\X} \coloneqq \bf{R}\lim_n \omega^{\bullet}_{\X_n} \in \bf{D}(\X)
\]
with the transition map coming from Corollary~\ref{cor:base-change-dualizing-object}. 

If $\X$ is of pure dimension\footnote{This means that the underlying noetherian topological space $|\X|$ is of pure Krull dimension $d$.} $d$, the {\it dualizing sheaf} of $\X$ is the $\O_\X$-module 
\[
\omega_\X \coloneqq \lim_n \omega_{\X_n} \in \bf{Mod}_X.
\]
\end{defn}

\begin{warning} The naive dualizing complex of a separated admissible formal $\O_K$-scheme is well-defined up to an isomorphism (but not up to a unique isomorphism). 
\end{warning}

\begin{rmk} The naive dualizing complex will play the role of an intermediate device in our paper. So the lack of functoriality of $\omega^\bullet_\X$ will not be a major problem for our purposes. The actual thing that will be important for us is the dualizing sheaf $\omega_\X$ that is, indeed, functorial in $\X$.
\end{rmk}

\begin{thm}\label{thm:dualizing-complex} Let $\X$ be a separated admissible formal $\O_K$-scheme of pure dimension $d$. Then 
\begin{enumerate}
    \item the dualizing complex $\omega^\bullet_\X$ lies in $\bf{D}^{[-d, 0]}_{coh}(\X)$ and the canonical morphism $\omega^{\bullet}_\X \otimes^{L}_{\O_\X} \O_{\X_n} \to \omega^\bullet_{\X_n}$ is an isomorphism for any $n\geq 0$;
    \item there is an isomorphism $\mathcal H^{-d}(\omega^\bullet_\X)\cong \omega_\X$, so the dualizing module $\omega_\X$ is a coherent sheaf on $\X$;
    \item there is a canonical isomorphism $r_{\X^{\text{sm}}}\colon \wdh{\Omega}_{\X^\text{sm}}^d[d] \to \omega^\bullet_{\X^\text{sm}}$, where $\X^{\sm}$ is the smooth locus of $\X$. In particular, this induces a canonical isomorphism $r_{\X^{\text{sm}}}\colon \wdh{\Omega}_{\X^\text{sm}}^d \to \omega_\X|_{\X^\text{sm}}$.
\end{enumerate}
\end{thm}
\begin{proof}
The first two claims are local on $\X$, so we can assume that $\X=\Spf A$ for a flat, topologically finitely presented $\O_K$-algebra $A$. \smallskip

{\it Step 1. The dualizing complex $\omega^\bullet_\X$ is in $\bf{D}^{\leq 0}_{coh}(\X)$ and commutes with base change}: We first note that $\omega^\bullet_{\X_0}\in \bf{D}^{[-d, 0]}_{coh}(\X_0)$ due to Lemma~\ref{lemma:degree-dualizing-complex}. In particular, $\omega^\bullet_{\X_0}$ is pseudo-coherent as an object of $\bf{D}(\X_0)$ because $\X_0$ is a coherent scheme. Thus there is a resolution of $\omega^\bullet_{\X_0}$ by finite free $\O_{\X_0}$-modules $\mathcal E^\bullet_0 \to \omega^\bullet_{\X_0}$. 

Now we recall that Corollary~\ref{cor:base-change-dualizing-object} provides us with the canonical isomorphisms
\[
\omega^\bullet_{\X_n} \otimes^{L}_{\O_{\X_n}}\O_{\X_{n-1}} \to \omega^\bullet_{\X_{n-1}}.
\]
So \cite[\href{https://stacks.math.columbia.edu/tag/0BCB}{Tag 0BCB}]{stacks-project} ensures the existence of resolutions $\mathcal E^\bullet_n \to \omega^\bullet_{\X_n}$ by finite free modules such that $\mathcal E^\bullet_n \otimes_{\O_{\X_n}} \O_{\X_{n-1}} \simeq \mathcal E^{\bullet}_{n-1}$. Then we see that
\[
\omega^\bullet_{\X} \cong \bf{R}\lim_n \omega^\bullet_{\X_n} \cong \bf{R}\lim_n \mathcal E^\bullet_n \cong \lim_n \mathcal E^\bullet_n
\]
where the last equality comes from \cite[\href{https://stacks.math.columbia.edu/tag/0D60}{Tag 0D60}]{stacks-project} as cohomology of coherent sheaves vanishes on affine schemes. In order to conclude that $\omega^\bullet_{\X}\in \bf{D}^{\leq 0}_{coh}(\X)$, it suffices to notice that, for any $i\leq 0$, the limit $\lim_n \mathcal E_n^i$ is a finite free $\O_\X$-module of the same rank as $\E_0^i$. Furthermore, the base change morphism 
\[
\omega^{\bullet}_\X \otimes^{L}_{\O_\X} \O_{\X_n} \to \omega^\bullet_{\X_n}
\]
is an isomorphism as it has a representative $(\lim_n \mathcal E^\bullet_n) \otimes_{\O_\X} \O_{\X_n} \to \mathcal E^\bullet_n$ that is an isomorphism by the construction. \smallskip

{\it Step 2. The dualizing complex $\omega^\bullet_\X$ is in $\bf{D}^{[-d, 0]}_{coh}(\X)$ and there is an isomorphism $\mathcal H^{-d}(\omega^\bullet_\X)\cong \omega_\X$}: Throughout this step, we still assume that $\X=\Spf A$ is affine. We note that we already know that $\omega^\bullet_\X\in \bf{D}^{\leq 0}_{coh}(\X)$ by Step~$1$, so \cite[Proposition I.3.2.1]{FujKato} and \cite[Lemma 4.8.13]{Z3} ensure that it suffices to show that 
\[
\rm{H}^m(\X, \omega^\bullet_\X)\coloneqq \rm{H}^m\mathbf{R}\Gamma(\X, \omega^\bullet_\X) \simeq \Gamma(\X, \mathcal{H}^m(\omega^\bullet_\X)) =0
\]
for $m<-d$ and the natural morphism $\rm{H}^{-d}(\X, \omega^\bullet_\X) \to \lim_n \rm{H}^{-d}(\X_{n}, \omega^\bullet_{\X_n})$ is an isomorphism. Lemma~\ref{lemma:degree-dualizing-complex} implies that $\rm{H}^m(\X_{n}, \omega^\bullet_{\X_n}) = 0$ for $m<-d$. Therefore, the Milnor exact sequence (see \cite[\href{https://stacks.math.columbia.edu/tag/0D60}{Tag 0D60}]{stacks-project}) 
\[
0 \to \rm{R}^1\lim_n \rm{H}^{m-1}(\X_{n}, \omega^\bullet_{\X_n}) \to \rm{H}^m(\X, \omega^\bullet_\X) \to \lim_n \rm{H}^m(\X_{n}, \omega^\bullet_{\X_n}) \to 0
\]
implies that $\rm{H}^m(\X, \omega^\bullet_\X)\cong 0$ for $m<-d$ and the natural morphism $\rm{H}^{-d}(\X, \omega^\bullet_\X) \to \lim_n \rm{H}^{-d}(\X_{n}, \omega^\bullet_{\X_n})$ is an isomorphism. This finishes the proof. \smallskip 

{\it Step 3. Construction of the canonical isomorphism $r_{\X^{\text{sm}}}\colon \wdh{\Omega}_{\X^\text{sm}}^d[d] \to \omega_{\X^\text{sm}}$}: As $\X$ is flat over $\O_K$, we conclude that $(\X^{\text{sm}})_n = (\X_n)^{\text{sm}}$ for every integer $n\geq 0$. So Lemma~\ref{lemma:smooth-morphism-dualizing-complex} provides us with canonical isomorphisms $r_{\X_n^{\text{sm}}}\colon \Omega_{\X_n^\text{sm}}^d[d] \to \omega^\bullet_{\X_n^\text{sm}}$ that commute with base change. In particular, each dualizing complex $ \omega^\bullet_{\X^{\text{sm}}_n}$ is concentrated in degree $-d$ and locally free. The fact that $r_{\X_n^{\text{sm}}}$ commutes with base change means that the diagram 
\[
\begin{tikzcd}[column sep=17ex]
\Omega_{\X_n^\text{sm}}^d[d]  \otimes_{\O_{\X^{\text{sm}}_n}} \O_{\X^{\text{sm}}_{n-1}} \arrow{r}{r_{\X_n^{\text{sm}}} \otimes_{\O_{\X^{\text{sm}}_n}} \O_{\X^{\text{sm}}_{n-1}}} \arrow{d} & \omega^\bullet_{\X^{\text{sm}}_n} \otimes^\bf{L}_{\O_{\X^{\text{sm}}_n}} \O_{\X^{\text{sm}}_{n-1}} \arrow{d} \\
\Omega_{\X_{n-1}^\text{sm}}^d[d] \arrow{r}{r_{\X_{n-1}^{\text{sm}}}} & \omega^\bullet_{\X^{\text{sm}}_{n-1}} \ ,
\end{tikzcd}
\]
where the vertical maps are canonical base change isomorphisms, is commutative. In particular, we see that the canonical morphisms $\lim_n \Omega_{\X_n^\text{sm}}^d[d] \to \bf{R}\lim_n \Omega_{\X_n^\text{sm}}^d[d] \to \bf{R}\lim_n \omega^\bullet_{\X^{\text{sm}}_n}\cong \omega^\bullet_{\X^\text{sm}}$ are isomorphisms. Now we apply $\mathcal H^{-d}(-)$ to get a canonical isomorphism $r_{\X^{\text{sm}}}\colon \wdh{\Omega}_{\X^\text{sm}}^d \to \omega_\X|_{\X^\text{sm}}$
\end{proof}

\begin{cor}\label{cor:dualizing-module-flat} Let $\X$ be a separated admissible formal $\O_K$-scheme of pure dimension $d$. Then the dualizing module $\omega_\X$ is flat over $\O_K$.
\end{cor}
\begin{proof}
Theorem~\ref{thm:dualizing-complex} implies that we have a distinguished triangle
\[
\omega^\bullet_\X \xr{\varpi} \omega^\bullet_\X \to \omega^\bullet_{\X_0}
\]
in $\bf{D}^b_{coh}(\X)$. We note that Theorem~\ref{thm:dualizing-complex} and Lemma~\ref{lemma:degree-dualizing-complex} ensure that both complexes $\omega^\bullet_\X$ and $\omega^\bullet_{\X_0}$ are concentrated in degrees $[-d, 0]$. This implies that multiplication by $\varpi$ is injective on $\mathcal{H}^{-d}(\omega^\bullet_\X)$. To finish the proof, we recall that $\O_K$-flat modules are exactly $\varpi$-torsionfree modules. 
\end{proof}

Now we study some basic properties of dualizing complexes on admissible formal $\O_K$-schemes. We will need the following preliminary lemma: 

\begin{lemma}\label{lemma:dbcoh-complete} Let $\X$ be an admissible formal $\O_K$-scheme, and $\F\in \bf{D}^b_{coh}(\X)$. Then $\F$ is derived $\varpi$-adically complete (see \cite[\href{https://stacks.math.columbia.edu/tag/0999}{Tag 0999}]{stacks-project}).
\end{lemma}
\begin{proof}
    As derived complete $\O_\X$-modules are closed under taking cones, it suffices to show that a coherent sheaf $\F\in \bf{Coh}_\X$ is derived $\varpi$-adically complete. Let us denote by $\F_n\in \bf{Coh}_{\X_n}$ the tensor product $\F\otimes_{\O_\X} \O_{\X_n}$. Then \cite[Definition I.3.1.3, Theorem I.7.1.1]{FujKato} and \cite[\href{https://stacks.math.columbia.edu/tag/0BKS}{Tag 0BKS}]{stacks-project} guarantee that 
    \[
    \F \simeq \lim_n \F_n \simeq \bf{R}\lim_n \F_n.
    \]
    Each $\O_\X$-module $\F_n$ is derived $\varpi$-adically complete as it is $\varpi^{n+1}$-torsion, so the derived limit is also derived $\varpi$-adically complete.
\end{proof}

\begin{lemma}\label{lemma:dualizing-etale-base-change} Let $\mf\colon \X \to \Y$ be an \'etale morphism of separated admissible formal $\O_K$-schemes. Then there is an isomorphism $\bf{L}\mf^*\,\omega^\bullet_\Y \cong \omega^\bullet_\X$. If $\X$ and $\Y$ are of pure dimension $d$, then this isomorphism induces a canonical isomorphism $\mf^*\omega_\Y \xr{\sim} \omega_\X$.
\end{lemma}
\begin{proof}
We first note that $\bf{L}\mf^*\,\omega^\bullet_\Y \in \bf{D}^b_{coh}(\X)$ because $\omega^\bullet_\Y \in \bf{D}^b_{coh}(\Y)$ and $\mf$ is \'etale. So the complex $\bf{L}\mf^*(\omega^\bullet_\Y)$ is derived $\varpi$-adically complete by Lemma~\ref{lemma:dbcoh-complete}. Therefore, we use \cite[\href{https://stacks.math.columbia.edu/tag/0A0E}{Tag 0A0E}]{stacks-project} to see that 
\[
    \bf{L}\mf^*\,\omega^\bullet_\Y \cong 
    \bf{R}\lim_n \left(\bf{L}\mf^*\, \omega^\bullet_\Y \otimes^L_{\O_\X} \O_{\X_n}\right).
\]
Here we used that $\X$ is flat over $\O_K$ to conclude that $\varpi$ is a regular element of $\O_\X$, so $(\O_\X \xr{\varpi^{n+1}} \O_\X)\cong \O_{\X_n}[0]$. Now 
\[
    \bf{L}\mf^*\,\omega^\bullet_\Y \otimes^L_{\O_\X} \O_{\X_n} \simeq \bf{L}\mf^*_n\left(\omega^\bullet_\Y \otimes^L_{\O_\Y} \O_{\Y_n}\right) \simeq \bf{L}\mf^*_n\,\omega^\bullet_{\Y_n}
\]
as $\omega^\bullet_\Y \otimes^L_{\O_\Y} \O_{\Y_n} \simeq \omega^\bullet_{\Y_n}$ by Theorem~\ref{thm:dualizing-complex}. Finally, Corollary~\ref{cor:etale-morphism-!} guarantees \'etale base change preserves dualizing complexes, so we have a (non-canonical) isomorphism
\[
\bf{L}\mf^*\,\omega^\bullet_\Y \cong \bf{R}\lim_n \bf{L}\mf^*_n\,\omega^\bullet_{\Y_n} \cong \bf{R}\lim_n \omega^{\bullet}_{\X_n} \cong \omega^\bullet_{\X}.
\]

To see canonicity of the induced map on dualizing modules, we just note that it coincides with the morphism
\[
\mf^* \omega_{\Y} \simeq \mf^* \,\lim_n \omega_{\Y_n} \simeq \lim_n \mf_n^*\,\omega_{\Y_n} \xrightarrow{\sim} \lim_n \omega_{\X_n} \simeq \omega_\X 
\]
that is functorial due to functoriality of $\lim_n$ (as opposed to $\bf{R}\lim_n$). 
\end{proof}

Now let $X$ be a flat finitely presented $\O_K$-scheme. We denote by $\wdh{X}$ its $\varpi$-adic completion (considered as a formal $\O_K$-scheme). In what follows, we denote by $c\colon \wdh{X} \to X$ the completion morphism, and by $i'_n\colon X_n \to X$ the closed immersion $X\times_{\Spec \O_K} \Spec \O_K/\varpi^{n+1}\O_K \to X$. 

\begin{lemma}\label{lemma:complete-pullback} Let $X$ be a flat, finitely presented $\O_K$-scheme, $c \colon \wdh{X} \to X$ the $\varpi$-adic completion morphism, and $\F\in \bf{D}^b_{coh}(X)$. Then there is a functorial (in $\F$) isomorphism $\bf{L}c^* \F \xr{\sim} \bf{R}\lim_n \bf{L}i'^*_n \F$. 
\end{lemma}
\begin{proof}
First, we note that $\bf{L}c^*\F$ lies in $\bf{D}^b_{coh}(\wdh{X})$ by \cite[Remark I.9.1.1]{FujKato}, so it is derived complete due to Lemma~\ref{lemma:dbcoh-complete}. The commutative square
\[
\begin{tikzcd}
\wdh{X}_n \arrow{d}{i_n} \arrow{r}{\rm{id}} & X_n \arrow{d}{i'_n} \\
\wdh{X} \arrow{r}{c} & X \ ,
\end{tikzcd}
\]
implies that $\bf{L}i_n^*\bf{L}c^*\F \cong \bf{L}i'^*_n\F$. Using that $\wdh{X}$ is $\O_K$-flat and that $\bf{L}c^*\F$ is derived complete, we conclude that
\[
\bf{L}c^*\F \simeq \bf{R}\lim_n \left(\bf{L}c^*\F\otimes^L_{\O_{\wdh{X}}} \O_{\wdh{X}_n}\right) \simeq \bf{R}\lim_n \bf{L}i_n^*\bf{L}c^*\F \simeq \bf{R}\lim_n \bf{L}i'^*_n \F.\qedhere
\]
\end{proof}

\begin{cor}\label{cor:complete} Let $X$ be a flat, separated, finitely presented $\O_K$-scheme, and $c \colon \wdh{X} \to X$ the $\varpi$-adic completion morphism. Then, there is an isomorphism $\bf{L}c^* \omega^\bullet_X \xr{\sim} \omega^\bullet_{\wdh{X}}$. If $X$ is of relative pure dimension $d$, this isomorphism induces a canonical isomorphism $c^*(\omega_X) \xr{\sim} \omega_{\wdh{X}}$.
\end{cor}
\begin{proof}
    Lemma~\ref{lemma:complete-pullback}, Lemma~\ref{lemma:finiteness-properties-!}, and Corollary~\ref{cor:base-change-dualizing-object} imply that 
    \[
    \bf{L}c^*\omega^\bullet_X \cong \bf{R}\lim_n \omega^\bullet_{\wdh{X}_n}.
    \]
    This provides us with an isomorphism $\bf{L}c^* \omega^\bullet_X \to \omega^\bullet_{\wdh{X}}$ of complexes concentrated in degrees $[-d, 0]$ (see Lemma~\ref{lemma:degree-dualizing-complex} and Theorem~\ref{thm:dualizing-complex}). In case $X$ is of pure relative dimension $d$, we use that the map $c$ is flat to see that this morphism is canonical in the bottom degree $-d$ and induces an isomorphism $c^*(\omega_X) \to \omega_{\wdh{X}}$ that comes as the limit of the base change morphisms $c^*(\omega_X) \to \omega_{X_n} = \omega_{\wdh{X}_n}$.  
\end{proof}

\subsection{Extension properties of dualizing sheaf}

The main goal of this section is to show that, under some assumptions on an admissible formal $\O_K$-scheme $\X$, the dualizing sheaf $\omega_\X$ satisfies Hartogs' style extension principle in a precise sense. \smallskip

Throughout this section, we fix a base formal scheme $\S=\Spf \O_K$ for a complete rank-$1$ valuation ring $\O_K$ with a pseudo-uniformizer $\varpi$ and fraction field $K$. \smallskip

We start with the following definition that is supposed to formalize pushforward from $``\X_K\cup \sU \text{''}$ for an open $\sU\subset \X$ in an admissible formal $\O_K$-scheme $\X$:

\begin{defn}\label{defn:intersection} Let $\X$ be an admissible formal $\O_K$-scheme with an open formal subscheme $\mathfrak U \subset \X$, and $\F$ a coherent sheaf on $\X$. We define the $\O_\X$-module $j_{\mathfrak U, *}\F|_\mathfrak U \cap \F_K$ as the kernel of the map 
\[
j_{\X_K, *}\F_K \oplus j_{\mathfrak U, *}\F|_{\mathfrak U} \xr{f-g} j_{\mathfrak U_K, *}(\F|_{\mathfrak U})_K,
\]
where $j_\mathfrak U\colon \mathfrak U \to \X$ (resp. $j_{\X_K, *}\colon \X_K \to \X$, resp. $j_{\mathfrak U_K}\colon \mathfrak U_K \to \X$) is the natural open immersion (resp. generic fiber map) and $f, g$ are the natural morphisms that come from the adjunction. 
\end{defn}

\begin{warning} The notation $j_{\mathfrak U, *}\F|_\mathfrak U \cap \F_K$ could be a bit dangerous as it is not necessarily a subsheaf of $j_{\mathfrak U, *}\F|_\mathfrak U$. However, $j_{\mathfrak U, *}\F|_\mathfrak U \cap \F_K$ is indeed an intersection if both $f\colon j_{\X_K, *}\F_K \to j_{\mathfrak U_K, *}(\F|_{\mathfrak U})_K$ and $g\colon j_{\mathfrak U, *}\F|_{\mathfrak U} \to j_{\mathfrak U_K, *}(\F|_{\mathfrak U})_K$ are injective. Since we will use this definition only in this situation, this notation should not cause any confusion.
\end{warning}

\begin{lemma}\label{lemma:injectivity-vector-bundles} Let $\X$ be an admissible formal $\O_K$-scheme with reduced special fiber, and $\cal{E}$ a vector bundle on $\X$. Then, for any dense open $\sU \subset \X$, the natural morphism $\cal{E} \to j_{\sU, *}\left(\cal{E}|_{\sU}\right)$ is injective.
\end{lemma}
\begin{proof}
The claim is local on $\X$, so we can assume that $\X=\Spf A$ is a connected affine formal $\O_K$-scheme and $\cal{E}=\O_\X$. Since connected open affine subsets form a base of topology on $\X$, it suffices to show that the morphism $\O_\X \to j_{\sU, *} \O_{\sU}$ is injective on global sections. Therefore, it is enough to show that the restriction morphism
\[
r\colon A \to \O_\X(\sU)
\]
is injective. In this case, Lemma~\ref{lemma:connected} ensures that $\X_K$ is connected as well. Then \cite[Lemma 2.1.4]{C} implies that $r_K \colon A_K \to \O_{\X_K}(\sU_K)$ is injective. This implies injectivity of $r$ since $A$ is $\O_K$-flat. 
\end{proof}

\begin{lemma}\label{intersection-reducedfiber} Let $\X$ be an admissible formal $\O_K$-scheme,  $\F\in \bf{Coh}(\X)$ be a reflexive coherent module, and let $\sU\subset \X$ be a dense open subset of $\X$. Suppose that the special fiber $\ov{\X}$ is reduced. Then the natural morphism $\F \to j_{\mathfrak U, *}\F|_\mathfrak U \cap \F_K$ is an isomorphism.
\end{lemma}
\begin{proof}
The question is local, so we can assume that $\X=\Spf A$ is a connected affine formal $\O_K$-scheme and $\sU$ is a dense open formal subscheme of $\X$. In particular, Lemma~\ref{lemma:connected} implies that the generic fiber $\X_K$ is connected. \smallskip

The dual sheaf $\F^{\vee}\coloneqq \ud{\cal{H}om}_\X(\F, \O_\X)$ is coherent, so there is a presentation
\[
\O_\X^n \to \O_\X^m \to \F^{\vee} \to 0.
\]
Thus we can use reflexivity of $\F$ to get a ``co-presentation''
\[
0 \to \F \to \O_\X^m \to \O_\X^n.
\]
Using that the functor $\mathcal G \mapsto j_{\mathfrak U, *}\G|_\mathfrak U \cap \G_K$ is left exact, we get a commutative diagram
\[
\begin{tikzcd}
0 \arrow{r} \arrow{d}& \F \arrow{r} \arrow{d}& \O_\X^m \arrow{r} \arrow{d} & \O_\X^n \arrow{d} \\
0 \arrow{r} & j_{\mathfrak U, *}\F|_\mathfrak U \cap \F_K \arrow{r} & j_{\mathfrak U, *}(\O_\X^m)|_\mathfrak U \cap (\O_\X^m)_K \arrow{r}  &j_{\mathfrak U, *}(\O^n_\X)|_\mathfrak U \cap (\O_\X^n)_K
\end{tikzcd}
\]
that shows that it is sufficient to show the result for $\O_\X^m$ for any $m$. As the functor $\mathcal G \mapsto j_{\mathfrak U, *}\G|_\mathfrak U \cap \G_K$ clearly commutes with finite direct sums, it is enough to show the claim for $\O_\X$. \\

{\it Injectivity of the map $\O_\X \to j_{\mathfrak U, *}\O_\sU$}: This follows from Lemma~\ref{lemma:injectivity-vector-bundles} applied to $\cal{E}=\O_\X$. \\

{\it The map $\O_\X \to j_{\mathfrak U, *}\O_\mathfrak U \cap (\O_\X)_K$ is an isomorphism}: The previous step clearly implies that the map is injective, so we only need to check surjectivity. Since connected open affine subsets form a base of topology on $\X$, it suffices to show that the morphism 
\[
\O_\X(\X) \to \left(j_{\mathfrak U, *}\O_{\sU} \cap (\O_\X)_K\right)(\X)
\]
is surjective. Now note that the natural morphism $\O_{\X_K}(\X_K) \to \O_{\X_K}(\sU_K)$ is injective by connectedness of $\X_K$ and \cite[Lemma 2.1.4]{C}, and the morphism $\O_{\sU}(\sU) \to \O_{\sU_K}(\sU_K)$ is injective by $\O_K$-flatness of $\sU$. Therefore, $\left(j_{\mathfrak U, *}\O_{\sU} \cap (\O_\X)_K\right)(\X)$ is identified with the intersection $\O_{\X}(\sU)\cap A_K$, and we have to show that the natural morphism
\[
A \to \O_\X(\mathfrak U) \cap A_K
\]
is surjective. We pick any element $f\in \O_\X(\mathfrak U) \cap A_K\subset A_K$ and want to show that $f$ lives in $A$. Without loss of generality, we may assume that $f$ is non-zero. Then we note that Lemma~\ref{lemma:enough-constants} implies that there is an element $c\in K^\times$ such that $cf\in A \subset A_K$ and the residue class $\ov{cf}\in A/\m A$ is non-zero. Now we note that, for the purpose of proving $f\in A$, it suffices to show that $|c|\geq 1$. \smallskip

Suppose to the contrary that $|c|<1$, i.e., $c\in \m$. Now we consider the commutative diagram
\[
\begin{tikzcd}
A \arrow{d}{\pi_A} \arrow{r}{r} & \O_{\X}(\sU) \arrow{d}{\pi_{\sU}} \\
A/\m A=\O_{\ov{\X}}(\ov{\X}) \arrow{r}{\ov{r}} & \O_{\X}(\sU)/\m\O_{\X}(\sU)=\O_{\ov{\X}}(\ov{\sU}),
\end{tikzcd}
\]
where $r$ is the natural restriction map. Our assumptions that $\ov{\X}$ is reduced and $\ov{\sU} \subset \ov{\X}$ is dense imply that $\ov{r}$ is injective. Therefore, we conclude that
\[
\ov{r(cf)} = \ov{r}(\ov{cf}) \neq 0 \in \O_{\X}(\sU)/\m\O_{\X}(\sU).
\]
However, this clearly leads to the contradiction since $f\in \O_{\X}(\sU)$ and $c\in \m$. Thus, we conclude that $|c|\geq 1$ finishing the proof. 
\end{proof}

\begin{thm}\label{thm:dualizing-reflexive-formal-schemes} Let $\X$ be a separated admissible formal $\O_K$-scheme of pure dimension $d$. If the generic fiber $\X_K$ is smooth and the special fiber $\ov{\X}$ is geometrically reduced, then 
\begin{enumerate}
    \item the dualizing sheaf $\omega_\X$ is a reflexive coherent $\O_\X$-module, and
    \item the natural morphism $\omega_\X \to j_{\mathfrak U, *}(\omega_\X|_\mathfrak U) \cap (\omega_\X)_K$ is an isomorphism for any open dense formal subscheme $j_{\mathfrak U}\colon \mathfrak U\hookrightarrow \X$. 
\end{enumerate} 
\end{thm}
\begin{proof}
Lemma~\ref{intersection-reducedfiber} implies that it suffices to show that $\omega_\X$ is reflexive. This question is local on $\X$, so we can assume that $\X=\Spf B$ is affine. Then Lemma~\ref{lemma:pure-algebraization} implies that there is a flat, finitely presented $\O_K$-algebra $A$ such that $X=\Spec A$ is of pure relative dimension $d$, $A_K$ is $K$-smooth, and $\wdh{A}\simeq B$. Denote the completion morphism by $c\colon \X \to X$, then Corollary~\ref{cor:complete} implies that $\omega_\X=c^*(\omega_X)$. Since the special fiber of $X$ coincides with the special fiber of $\X$, we conclude that it is geometrically reduced as well. Therefore, Theorem~\ref{thm:dualizing-reflexive-schemes} implies that the $\O_X$-module $\omega_X$ is reflexive. The fact that its pullback $c^*(\omega_X)$ is reflexive boils down to the fact that $M\otimes_A \wdh{A}$ is reflexive over $\wdh{A}$, if $M$ is a coherent and reflexive $A$-module. This is easily seen to hold true, as flatness of the morphism $A\to \wdh{A}$ implies that the natural morphism
\[
\Hom_A (K, A)\otimes_A \wdh{A} \to  \Hom_{\wdh{A}}(K\otimes_A \wdh{A}, \wdh{A})
\]
is an isomorphism for any finitely presented $A$-module $K$. 
\end{proof}

\subsection{Trace map for proper morphisms of separated admissible formal schemes}\label{trace-formal}

In this section, we discuss the definition of the trace map $\rm{Tr}_{\mf}\colon \mf_*\omega_\X \to \omega_\Y$ for a proper morphism $\mf\colon \X \to \Y$ of separated, admissible formal $\O_K$-schemes of the same pure dimension $d$. Of course, it would be nice to develop a good theory of $\mf^!$ functors for admissible formal $\O_K$-schemes such that $\rm{Tr}_{\mf}$ comes from the $(\bf{R}\mf_*, \mf^!)$-adjunction. But to the best of our knowledge such theory is not present in the literature and we do not develop this formalism here. \smallskip 

Throughout this section, we fix a base formal scheme $\S=\Spf \O_K$ for a complete rank-$1$ valuation ring $\O_K$ with a pseudo-uniformizer $\varpi$ and fraction field $K$. \smallskip

Before starting the construction, we recall that $\omega_\Y\coloneqq \lim_n \omega_{\Y_n}$ comes with the canonical $\O_\X$-linear base-change morphisms $\omega_\Y \to \omega_{\Y_n}$ for each $n\geq 0$. We denote the induced $\O_{\Y_n}$-linear morphisms by $\rm{BC}^n_{\omega_\Y}\colon \omega_\Y\otimes_{\O_\Y} \O_{\Y_n} \to \omega_{\Y_n}$. Analogously, we have canonical morphisms $\rm{BC}^n_{\mf_*\omega_\X}\colon \mf_*\,\omega_\X \otimes_{\O_\Y} \O_{\Y_n} \to \mf_{n, *}\,\omega_{\X_n}$.

\begin{lemma}\label{lemma:trace-coherent} Let $\mf\colon \X \to \Y$ be a proper morphism of separated admissible formal $\O_K$-schemes of the same pure dimension $d$. Then there is a unique trace map $\rm{Tr}_{\mf}\colon \mf_*\,\omega_\X \to \omega_\Y$ such that, for any $n\geq 0$, the diagram 
\begin{equation}\label{diagram:1}
\begin{tikzcd}[column sep =8ex, row sep = 8 ex]
\mf_*\omega_\X \otimes_{\O_\Y} \O_{\Y_n} \arrow[d, swap, "\rm{Tr}_{\mf}\otimes_{\O_\Y} \O_{\Y_n}"] \arrow{r}{\rm{BC}^n_{\mf_*\omega_{\X}}} & \mf_{n,*}\omega_{\X_n} \arrow{d}{\mathcal {H}^{-d}(\rm{Tr}_{\mf_n})}\\
\omega_\Y  \otimes_{\O_\Y} \O_{\Y_n}  \arrow{r}{\rm{BC}^n_{\omega_\Y}} & \omega_{\Y_n}, \ 
\end{tikzcd}
\end{equation}
where $\rm{Tr}_{\mf_n}\colon \bf{R}\mf_{n, *}\,\omega^\bullet_{\X_n} \to \omega^\bullet_{\Y_n}$ is the trace map in coherent duality, is commutative. 
\end{lemma}
\begin{proof}
The uniqueness part is easy. (The proof of) Corollary~\ref{cor:dualizing-module-flat} implies that $\rm{BC}^n_{\omega_\Y}$ is injective. Then $\rm{Tr}_{\mf}\otimes_{\O_\Y} \O_{\Y_n}$ is uniquely defined from the diagram~(\ref{diagram:1}). Using that $\omega_{\Y}=\lim_n \omega_{\Y_n}$, we conclude that $\rm{Tr}_{\mf}=\lim_n \rm{Tr}_{\mf} \otimes_{\O_\Y} \O_{\Y_n}$ is also unique. \smallskip

Now we show existence. We recall that $\mf_*$ commutes with all limits, so the natural morphism $\mf_*\,\omega_\X\, \to \lim_n \mf_*\,\omega_{\X_n}$ is an isomorphism. Thus, we can define $\rm{Tr}_{\mf}\colon \mf_*\,\omega_\X \to \omega_\Y$ as $\lim_n \mathcal H^{-d}(\rm{Tr}_{\mf_n})$. In order for this formula to make sense, we need to show that $\rm{Tr}_{\mf_n}$ are compatible for different $n$. This follows from \cite[\href{https://stacks.math.columbia.edu/tag/0B6J}{Tag 0B6J}]{stacks-project} as $\Y_{n-1}$ and $\X_n$ are tor-independent over $\Y_n$ by $\O_K$-flatness of both $\X$ and $\Y$. This construction defines the desired map $\rm{Tr}_\mf\colon \mf_*\,\omega_\X \to \omega_\Y$.  
\end{proof}

\begin{lemma}\label{almost-commutative-coherent} Let $\mf\colon \X'' \to \X'$ and $\mg\colon \X' \to \mathfrak X$ be two proper morphisms of separated admissible formal $\O_K$-schemes of the same pure dimension $d$. Then the diagram
\[
\begin{tikzcd}[column sep = 10 ex]
\mg_*\,\mf_*\omega_{\X''} \arrow{r}{\mg_*(\rm{Tr}_{\mf})} \arrow{d}{\wr} & \mg_*\, \omega_{\X'} \arrow{r}{\rm{Tr}_{\mg}} & \omega_\X \\
(\mg\circ \mf)_*\,\omega_{\X''} \arrow[rru, swap, "\rm{Tr}_{\mg\circ \mf}"], & & 
\end{tikzcd}
\]
where the vertical arrow is the canonical identification of $\mg_* \circ \mf_*$ with $(\mg\circ \mf)_*$, is commutative. 
\end{lemma}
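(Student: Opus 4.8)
The plan is to reduce the compatibility of trace maps for the composite $\mg \circ \mf$ to the analogous, already-known, statement for the schemes $\X''_n, \X'_n, \X_n$ in classical coherent duality, and then use the uniqueness clause of Lemma~\ref{trace-coherent} to upgrade it to the formal setting. First I would invoke that $\omega_\X$, $\omega_{\X'}$, $\omega_{\X''}$ are all $\O_K$-flat by Lemma~\ref{lemma:dualizing-module-flat}, so the base-change maps $\rm{BC}^n_{\omega_\X}$, $\rm{BC}^n_{\mg_*\omega_{\X'}}$, etc., are injective. Hence, just as in the uniqueness argument of Lemma~\ref{trace-coherent}, to check equality of two maps $(\mg\circ\mf)_*\omega_{\X''} \to \omega_\X$ it suffices to check equality after $\otimes_{\O_\X}\O_{\X_n}$ for all $n$, i.e.\ it suffices to check that both composites agree after applying the base-change functor and composing with $\rm{BC}^n_{\omega_\X}$.

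The next step is to translate each of the three trace maps along the relevant $\rm{BC}^n$ isomorphisms. By the defining diagram~(\ref{diagram:1}) of Lemma~\ref{trace-coherent}, the reduction mod $\varpi^{n+1}$ of $\rm{Tr}_{\mg}$ is identified (via the base-change isomorphisms, which use $\O_K$-flatness of $\X$ and $\X'$) with $\cal H^{-d}(\rm{Tr}_{\mg_n})$ where $\mg_n\colon \X'_n\to\X_n$, and similarly for $\rm{Tr}_{\mf}$, $\rm{Tr}_{\mg\circ\mf}$; one must also note $(\mg\circ\mf)_n = \mg_n\circ\mf_n$. So after base change the outer triangle of the lemma becomes the triangle
\[
\begin{tikzcd}[column sep = 8 ex]
\mg_{n,*}\mf_{n,*}\,\cal H^{0}\!\left(\omega^\bullet_{\X''_n}\right) \arrow{r}{\mg_{n,*}\cal H^{-d}(\rm{Tr}_{\mf_n})} \arrow{d} & \mg_{n,*}\,\cal H^{-d}(\omega^\bullet_{\X'_n}) \arrow{r}{\cal H^{-d}(\rm{Tr}_{\mg_n})} & \cal H^{-d}(\omega^\bullet_{\X_n}) \\
(\mg_n\circ \mf_n)_*\,\cal H^{-d}\!\left(\omega^\bullet_{\X''_n}\right) \arrow[rru, swap, "\cal H^{-d}(\rm{Tr}_{\mg_n\circ \mf_n})"] & & \,,
\end{tikzcd}
\]
up to the care that $\mf_{n,*}$ and $\cal H^{-d}$ need not commute; this is handled because $\omega^\bullet_{\X'_n},\omega^\bullet_{\X''_n}\in \bf D^{[-d,0]}_{coh}$ together with the cohomological-dimension bound from Lemma~\ref{lemma:degree-dualizing-complex} forces the relevant spectral-sequence edge maps to be the ones appearing in the construction of $\cal H^{-d}(\rm{Tr}_{\mf_n})$ in Lemma~\ref{trace-coherent}. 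Granting this, commutativity now follows by applying $\cal H^{-d}$ to the standard composition-of-trace-maps identity $\rm{Tr}_{\mg_n}\circ \bf R\mg_{n,*}(\rm{Tr}_{\mf_n}) = \rm{Tr}_{\mg_n\circ\mf_n}$ in coherent duality, e.g.\ \cite[\href{https://stacks.math.columbia.edu/tag/0AU3}{Tag 0AU3}]{stacks-project} (and using functoriality of the identification $\bf R\mg_{n,*}\circ\bf R\mf_{n,*}\simeq \bf R(\mg_n\circ\mf_n)_*$, which is compatible with the analogous identification for $\mf_*,\mg_*$ in the limit).

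Finally, I would check the bookkeeping that the vertical identification $\mg_*\circ\mf_*\simeq (\mg\circ\mf)_*$ in the formal world is compatible, after $\otimes\O_{\X_n}$, with the analogous identification for $\mg_{n,*}\circ\mf_{n,*}\simeq(\mg_n\circ\mf_n)_*$; this is immediate since $\mf_*=\lim_n\mf_{n,*}$ and pushforward commutes with the limits in play, as was already used in the proof of Lemma~\ref{trace-coherent}. Taking the limit over $n$ of the (now verified) base-changed triangles, and using injectivity of $\rm{BC}^n_{\omega_\X}$ plus coherence of all modules involved (so that $\rm{Tr}_{?}=\lim_n \rm{Tr}_{?}\otimes\O_{\X_n}$), yields the desired commutativity. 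The main obstacle I expect is not the algebraic content but the careful identification of $\cal H^{-d}$ of the classical composition law with the maps built by hand in Lemma~\ref{trace-coherent}: one has to make sure that ``take $\cal H^{-d}$'' interacts correctly with $\bf R\mg_{n,*}$ applied to $\rm{Tr}_{\mf_n}$, which requires knowing $\bf R\mg_{n,*}$ does not shift cohomology below degree $-d$ (again Lemma~\ref{lemma:degree-dualizing-complex}) so that $\cal H^{-d}\bf R\mg_{n,*}(\rm{Tr}_{\mf_n})$ genuinely equals $\mg_{n,*}\cal H^{-d}(\rm{Tr}_{\mf_n})$ on the nose.
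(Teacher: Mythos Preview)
Your proposal is correct and takes essentially the same approach as the paper: both reduce to the composition identity $\rm{Tr}_{\mg_n}\circ \bf{R}\mg_{n,*}(\rm{Tr}_{\mf_n}) = \rm{Tr}_{(\mg\circ\mf)_n}$ in classical coherent duality and pass to the limit, using that all $\omega^\bullet_{?_n}$ lie in $\bf{D}^{[-d,0]}$ to identify $\cal{H}^{-d}\bf{R}\mg_{n,*}(\rm{Tr}_{\mf_n})$ with $\mg_{n,*}\cal{H}^{-d}(\rm{Tr}_{\mf_n})$. The only cosmetic difference is that the paper writes out the limit computation directly from the construction $\rm{Tr}_{?}=\lim_n\cal{H}^{-d}(\rm{Tr}_{?_n})$, whereas you package it via the uniqueness clause of Lemma~\ref{trace-coherent}; these are equivalent.
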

\begin{proof}
We firstly note the canonical identification $\bf{R}\mg_{n, *} \circ \bf{R}\mf_{n, *} \simeq \bf{R}(\mg \circ \mf)_{n, *}$ implies via the usual adjunction properties that 
\[
    \rm{Tr}_{\mg_n}\circ \bf{R}\mg_{n, *}\left(\rm{Tr}_{\mf_n}\right) \simeq \rm{Tr}_{(\mg \circ \mf)_n}.
\]

Now we use the formal properties of derived limits to write: \begin{align*}
\rm{Tr}_{\mg} \circ \mg_*\left(\rm{Tr}_{\mf}\right)& \simeq \lim_n \mathcal H^{-d}\left(\rm{Tr}_{\mg_n}\right) \circ  \mg_*\left(\lim_n \mathcal H^{-d}\left(\rm{Tr}_{\mf_n}\right) \right)  \\
& \simeq \lim_n \mathcal H^{-d}\left(\rm{Tr}_{\mg_n}\right)\circ \lim_n\left(\mg_{n, *}\left(\mathcal H^{-d}\left( \rm{Tr}_{\mf_n}\right) \right) \right)  \\
&\simeq \lim_n \left(\mathcal H^{-d}\left( \rm{Tr}_{\mg_n}\right) \circ \mathcal H^{-d} \left(\bf{R}\mg_{n, *}\left(\rm{Tr}_{\mf_n}\right) \right)  \right)\\
&\simeq\lim_n\left(\mathcal H^{-d} \left(\rm{Tr}_{\left(\mg\circ \mf\right)_n} \right) \right) \\
&\simeq \rm{Tr}_{\mg\circ \mf} \ . 
\end{align*}
We emphasize that in the above equalities we crucially used that all complexes $\omega^\bullet_{\X_n}, \omega^\bullet_{\X'_n}$ and $\omega^\bullet_{\X''_n}$ are concentrated in degrees $[-d, 0]$. 
\end{proof}

\subsection{Dualizing complexes on smooth, separated rigid-analytic spaces}

Throughout this section, we fix a base formal scheme $\S=\Spf \O_K$ for a complete rank-$1$ valuation ring $\O_K$ with a pseudo-uniformizer $\varpi$ and {\it algebraically closed} fraction field $K$. For an admissible formal $\O_K$-scheme $\X$, we denote its smooth locus by $\X^{\rm{sm}}$. \smallskip

The reason we need to assume that $K$ is algebraically closed is due to the following fact:

\begin{thm}\label{thm:cofinal-reduced-family} (Bosch--L\"utkebohmert--Raynaud) Let $\O_K$ be a complete rank-$1$ valuation ring with algebraically closed fraction field $K$. Then the generic fiber functor
\[
\X \mapsto \X_K
\]
gives rise to an equivalence between 
\begin{enumerate}
    \item the category of admissible formal $\O_K$-schemes with reduced special fiber, localized by rig-isomorphisms, and
    \item the category of qcqs reduced rigid-analytic $K$-spaces.
\end{enumerate}
\end{thm}
\begin{proof}
    We first note that \cite[Theorem 4.1]{BL1} implies that the functor for the generic fiber induces an equivalence between the category of admissible formal $\O_K$-schemes, localized by admissible blow-ups, and the category of qcqs rigid-analytic $K$-spaces. Then $\X \mapsto \X_K$ induces equivalence of the desired full subcategories due to Lemma~\ref{lemma:special-fiber-reduced} and Theorem~\ref{thm:reduced-fiber-theorem}.
\end{proof}

\begin{rmk} In what follows, we will freely (and usually without reference) use Lemma~\ref{lemma:proper-adic-formal} and Corollary~\ref{cor:pure-dimension-rigid-formal}. The former result says that any formal model of a proper morphism is also necessarily proper. The latter result says that an admissible formal $\O_K$-model with generic fiber of pure dimension $d$ is itself pure of dimension $d$. 
\end{rmk}

\begin{lemma}\label{lemma:dualizing-forms} Let $\X$ be a separated admissible formal $\O_K$-scheme with smooth generic fiber $\X_K$ of pure dimension $d$, and geometrically reduced special fiber $\ov{\X}$. Then there is a canonical isomorphism $s_{\X}\colon \Omega^d_{\X_K} \to (\omega_\X)_K$ that coincides with the generic fiber of $r_{\X^{\text{sm}}}\colon \wdh{\Omega}_{\X^\text{sm}}^d \to \omega_{\X^\text{sm}}$ on $(\X^{\text{sm}})_K$ (see Theorem~\ref{thm:dualizing-complex}). 
\end{lemma}
\begin{proof}
{\it Step 1: Uniqueness of $s_{\X}$}. First of all, we can assume that $\X_K$ is connected by Lemma~\ref{lemma:connected}. Furthermore, we note that $\X^{\rm{sm}}$ is non-empty due to the assumption that $\ov{X}$ is geometrically reduced. Now suppose we have two {\it isomorphisms} $f, g\colon \Omega^d_{\X_K} \xr{\sim} (\omega_\X)_K$ extending the isomorphism
\[
(r_{\X^{\text{sm}}})_K\colon (\X^{\text{sm}})_K\colon \Omega^d_{\X^{\rm{sm}}_K} \xr{\sim} (\omega_{\X^{\rm{sm}}})_K.
\]
This, in particular, implies that $(\omega_\X)_K$ is a line bundle. So $\varphi\coloneqq f-g$ is a morphism of line bundles, and thus its vanishing locus $\rm{V}(\varphi)$ is a Zariski-closed subset of $\X_K$. By construction,  $\rm{V}(\varphi)$ contains a non-empty, quasi-compact open $(\X^{\text{sm}})_K$, so \cite[Lemma 2.1.4]{C} implies that $\rm{V}(\varphi)=\X_K$.   \smallskip

{\it Step 2: Existence of $s_{\X}$}. By Step~$1$, we can construct $s_{\X}$ locally on $\X$, so we can assume that $\X=\Spf B$ is affine. Then Lemma~\ref{lemma:pure-algebraization} implies that there is a flat, finitely presented $\O_K$-algebra $A$ such that $X=\Spec A$ is of pure relative dimension $d$, $A_K$ is $K$-smooth, and $\wdh{A}\simeq B$. Let $\omega_{A}$ denote the coherent $A$-module corresponding to $\omega_{X}$, the same for $\omega_B$ and $\omega_\X$. Then the claim follows from the sequence of isomorphisms\footnote{In the formula below, we write $\Omega^d_{B_K/K}$ for the module of continuous differentials, see \cite[(1.6.2)]{H3}.}
\[
\omega_B \otimes_B B_K \simeq \omega_A \otimes_A B_K \simeq (\omega_A)_K\otimes_{A_K} B_K \simeq \Omega^d_{A_K/K} \otimes_{A_K} B_K \simeq \Omega^d_{B_K/K}
\]
satisfying the desired compatibility over $\X^{\rm{sm}}$. Indeed, the first isomorphism follows from Corollary~\ref{cor:complete}, the second isomorphism is formal, the third one follows from Lemma~\ref{lemma:smooth-morphism-dualizing-complex}, Corollary~\ref{cor:base-change-dualizing-object}, and $K$-smoothness of $A_K$, and the third isomorphism follows from the classical isomorphism $\Omega^d_{A_K/K} \otimes_{A_K} B_K \simeq \Omega^d_{B_K/K}$.
\end{proof}

\begin{rmk} The proof of Lemma~\ref{lemma:dualizing-forms} actually shows a stronger claim that $\Omega^d_X[d]$ is canonically isomorphic to $(\omega^\bullet_\X)_K$. 
\end{rmk}

For the rest of the section, we fix a qcqs separated smooth rigid-analytic $K$-space $X$ of pure dimension $d$. The next goal is to show that the isomorphisms $s_\X$ are ``compatible'' for different admissible $\O_K$-models $\X$ of $X$. \smallskip

Recall that, for any rig-isomorphism $\mf\colon \X' \to \X$ of admissible formal $\O_K$-schemes, there is a canonical isomorphism $\alpha_{\X', \X}\colon \left(\mf_* \omega_{\X'}\right)_K \xr{\sim} \left(\omega_{\X'}\right)_K$ due to Lemma~\ref{generic-fiber-commutes} and coherence of $\omega_\X$ (see Theorem~\ref{thm:dualizing-complex}). 

\begin{lemma}\label{trace-first} Let $X$ be a qcqs separated smooth rigid-analytic $K$-space $X$ of pure dimension $d$, and $\mf\colon \X' \to \X$ a morphism of two admissible formal $\O_K$-models of $X$ with geometrically reduced special fibers, i.e. the diagram
\[
\begin{tikzcd}
X \arrow{r} \arrow{rd}& \X'\arrow{d}{\mf} \\
& \X
\end{tikzcd}
\]
is commutative. Then the diagram
\[
\begin{tikzcd}
\Omega^d_X \arrow{r}{s_{\X'}} \arrow{rd}{s_{\X}} & \left(\omega_{\X'}\right)_K \arrow{d}{\left(\rm{Tr}_\mf\right)_K} \\
&\left(\omega_\X\right)_K 
\end{tikzcd}
\]
is commutative, where $(\rm{Tr}_\mf)_K$ is the composition 
\[
\left(\omega_{\X'}\right)_K \xr{\alpha_{\X', \X}^{-1}} \left(\mf_* \omega_{\X'}\right)_K \xr{\left(\rm{Tr}_{\mf}\right)_K} (\omega_\X)_K.
\] 
\end{lemma}
\begin{proof}
    By Lemma~\ref{lemma:connected}, we can assume that $X$ is connected. Then we consider the composition
    \[
    \begin{tikzcd}
    \Omega^d_X \arrow{r}{s_{\X'}} \arrow[rrr, bend left, "g"]& \left(\omega_{\X'}\right)_{K} \arrow{r}{\left(\rm{Tr}_{\mf}\right)_K} & \left(\omega_\X\right)_K \arrow{r}{s_\X^{-1}} & \Omega^d_X.
    \end{tikzcd}
    \]
    This is a morphism of line bundles on $X$, and we wish to show that this is the identity morphism. Now \cite[Lemma 2.1.4]{C} and connectedness of $X$ imply that this can be checked on any non-empty open $U\subset X$. Then Corollary~\ref{cor:iso-on-open} implies that we can take $U=\sU_K$ where $\sU\subset \X$ is a dense open such that $\sU\subset \X^{\rm{sm}}$ and $\mf|_{\mf^{-1}(\sU)}\colon \mf^{-1}(\sU) \to \sU$ is an isomorphism (that must be equal to $\rm{id}_\sU$ because it is $\rm{id}_U$ on the generic fiber). Then clearly $g|_U=\rm{id}$ finishing the proof. 
\end{proof}

\begin{thm-def}\label{thm-def:rigid-analytic-trace} Let $f \colon X \to Y$ be a proper morphism of qcqs separated smooth rigid-analytic $K$-spaces of pure dimension $d$. Then there is a unique {\it trace morphism} 
\[
    \rm{Tr}_{f}\colon f_*\left(\Omega^d_{X}\right) \to \Omega^d_Y
\]
such that, for each morphism $\mf\colon \X \to \Y$ of admissible formal $\O_K$-models with geometrically reduced special fibers satisfying $\mf_K=f$, the following diagram
\[
\begin{tikzcd}[column sep = 5em]
f_*\left(\Omega^d_{X}\right) \arrow{r}{f_*\left(s_{\X}\right)} \arrow{d}{\rm{Tr}_f} & f_* \left(\omega_{\X}\right)_K \arrow{d}{\left(\rm{Tr}_{\mf}\right)_K} \\
\Omega^d_{Y} \arrow{r}{s_{\Y}} &\left(\omega_{\Y}\right)_K
\end{tikzcd}
\]
commutes (see Lemma~\ref{lemma:trace-coherent} for the definition of $\rm{Tr}_{\mf}$).
\end{thm-def}
\begin{proof}
Uniqueness is clear because $s_\X$ and $s_\Y$ are isomorphisms. In order to prove existence, Theorem~\ref{thm:cofinal-reduced-family} implies that it suffices to show that, for a commutative diagram of admissible formal $\O_K$-schemes with reduced special fibers 
\[
\begin{tikzcd}
\X \arrow{d}{\mf} & \X' \arrow{l}{\pi'} \arrow{d}{\mf'} \\
\Y & \Y' \arrow{l}{\pi} 
\end{tikzcd}
\]  
satisfying the equalities $\mf_K=f=\mf'_K$, $\pi_K=\rm{id}_Y$, and $\pi'_K=\rm{id}_X$, the diagram
\[
\begin{tikzcd}[row sep = 7ex, column sep = 11ex]
f_*\left(\Omega^d_{X}\right) \arrow[rr, bend left, "f_{*}\left(s_{\X}\right)"] \arrow{r}{f_*\left(s_{\X'}\right)} & f_*\left(\omega_{\X'}\right)_K \arrow{d}{\left(\rm{Tr}_{\mf'}\right)_K} \arrow{r}{f_*\left(\left(\rm{Tr}_{\pi'}\right)_K\right)} & f_*\left(\omega_{\X}\right)_K \arrow{d}{\left(\rm{Tr}_{\mf}\right)_K}\\
\Omega^d_{Y} \arrow[rr, bend right, "s_{\Y}"] \arrow{r}{s_{\Y'}} & \left(\omega_{\Y'}\right)_K \arrow{r}{\left(\rm{Tr}_{\pi}\right)_K} & \left(\omega_{\Y}\right)_K
\end{tikzcd}
\]
is commutative. Indeed, the top and bottom ``triangles'' commute by Lemma~\ref{trace-first}, while the right square commutes by Lemma~\ref{almost-commutative-coherent}.
\end{proof}

The last thing we discuss is the explicit model for the trace map in the case of a finite \'etale morphism $f\colon X \to Y$ of smooth rigid-analytic $K$-spaces of the same pure dimension $d$. We use (the easy version of) the projection formula for finite morphisms and \'etaleness of $f$ to get a canonical isomorphism
\[
f_*\O_X \otimes_{\O_Y} \Omega^d_Y \xr{\sim} f_*\left(f^*\Omega^d_Y\right) \xr{\sim} f_* \Omega^d_X.
\]

In order to define the explicit trace map $f_*\O_X \otimes_{\O_Y} \Omega^d_Y \simeq f_*\Omega^d_X \to \Omega^d_Y$, we first define the trace morphism $\rm{Tr}_f\colon f_*\O_X \to \O_Y$. Locally on $Y$, $f$ looks like $f\colon \Spa (B, B^\circ) \to \Spa(A, A^\circ)$ where $B$ is a finite \'etale $A$-algebra. Thus, $B$ is a finite projective $A$-module, so we have the trace morphism $\rm{Tr}_{B/A} \colon B \to A$. Since $\rm{Tr}_{B/A}$ commutes with flat base change, this morphism glues to a well-defined morphism
\[
\rm{Tr}_f\colon f_*\O_X \to \O_Y.
\]
\begin{defn} For $f\colon X \to Y$ as above, the {\it explicit trace map} $\rm{Tr}^{\rm{expl}}_f\colon f_*\,\Omega^d_X \to \Omega^d_Y$ is the morphism
\[
\rm{Tr}_f \otimes_{\O_Y} \rm{id}_{\Omega^d_Y}\colon f_*\,\Omega^d_X \simeq f_*\O_X \otimes_{\O_Y} \Omega^d_Y \to \O_Y \otimes_{\O_Y} \Omega^d_Y \simeq \Omega^d_Y.
\]
\end{defn}

Locally on $Y$, in \'etale coordinates $U=\Spa(A, A^\circ) \xr{(z_1, \dots, z_d)} \bf{D}^d$, the maps looks like 
\[
    f \rm{d} z_1 \wedge \dots \wedge \rm{d} z_d \mapsto \rm{Tr}_{B/A}(f)\rm{d} z_1 \wedge \dots \wedge \rm{d} z_d,
\]
where $\rm{Tr}_{B/A}$ is the trace map for $A \to B$ that comes from $\Spa(B, B^\circ) =f^{-1}\left(U\right) \to \Spa(A, A^\circ)=U$.

\begin{rmk} We note that, if $\mf\colon \X \to \Y$ is a finite flat morphism of admissible formal $\O_K$-schemes, one can similarly define the (explicit) trace morphism
\[
\rm{Tr}_{\mf}\colon \mf_{*}\O_\X \to \O_\Y
\]
by gluing $\rm{Tr}_{B/A}$ over affines $\Spf A\subset \Y$. 
\end{rmk}

\begin{lemma}\label{lemma:trace-explicit-rigid} Let $f\colon X \to Y$ be a finite \'etale morphism of qcqs separated smooth rigid-analytic $K$-spaces of pure dimension $d$. Then the morphisms 
\[
\rm{Tr}_{f}, \rm{Tr}^{\rm{expl}}_f \colon f_*\,\Omega^d_X \to \Omega^d_Y
\]
coincide.
\end{lemma}
\begin{proof}
{\it Step 1: Reduce to the case when $f$ admits a finite, locally free $\O_K$-model $\mf\colon \X \to \Y$ with smooth $\X$ and $\Y$}. We first note $\rm{Tr}_{f}$ and $\rm{Tr}_{f}^{\rm{expl}}$ are morphisms between vector bundles on $Y$, so \cite[Lemma 2.1.4]{C} guarantees that it suffices to check that these two maps coincide over some open subset $V\subset Y$ that meets each connected (or, equivalently, irreducible) component of $Y$. Therefore, we may assume that $Y=\Spa(A, A^\circ)$ is affinoid, and thus $X=\Spa(B, B^\circ)$ is affinoid as well.\smallskip

In this situation, \cite[Corollary 6.4.1/5]{BGR} implies that both $A^\circ$ and $B^\circ$ are topologically finite type and the morphism $A^\circ \to B^\circ$ is finite. Furthermore, Corollary~\ref{cor:normalization} ensures that $\Spf A^\circ$ and $\Spf B^\circ$ have reduced special fibers. Therefore, Corollary~\ref{cor:super-good-open} (applied to $\mf\colon \Spf B^\circ \to \Spf A^\circ$) ensures that we can find an open affine $\sU \subset \Spf A^\circ$ such that $\mf|_{\mf^{-1}(\sU)} \colon \mf^{-1}(\sU) \to \sU$ is finite flat, and both $\sU$ and $\mf^{-1}(\sU)$ are $\O_K$-smooth. \smallskip

Furthermore, Corollary~\ref{cor:normalization} implies that $\sU=\Spf B'^{\circ}$ and $\mf^{-1}(\sU) = \Spf A'^{\circ}$ for some $K$-affinoid algebras $A'$ and $B'$. In other words, after replacing $X$ with $\sU_K$, we can further assume that $B^\circ$ is a finite, locally free $A^\circ$-algebra, and both $A^\circ$ and $B^\circ$ are (topologically) $\O_K$-smooth. \smallskip

{\it Step $2$. Computation of $\rm{Tr}_f$ when $f$ admits a finite, locally free model $\mf\colon \X \to \Y$ with smooth $\X$ and $\Y$}. By construction, it suffices to show that the diagram 
\[
\begin{tikzcd}
\mf_*\left(\mf^*\wdh{\Omega}^d_{\Y}\right) \arrow{r} \arrow{d}{\wr} & \mf_*\,\wdh{\Omega}^d_{\X}   \arrow{d}{\rm{Tr}_{\mf}} \\
\mf_*\O_{\X} \otimes_{\O_\Y} \wdh{\Omega}^d_{\Y} \arrow{r}{\rm{Tr}_{\mf}\otimes \rm{id}}  & \wdh{\Omega}^d_{\Y}
\end{tikzcd}
\]
commutes. This follows from Lemma~\ref{lemma:finite-flat-trace-scheme} (and Lemma~\ref{lemma:trace-coherent}). 
\end{proof}

\newpage

\section{Faltings' Trace Map}\label{section:construction-faltings-trace}
\subsection{Idea of the construction}
Throughout this section, we fix a complete rank-$1$ valuation ring $\O_C$ of mixed characteristic $(0, p)$ with the algebraically closed fraction field $C$.\smallskip

The main goal of this section is the construction of Faltings' trace map 
\[
    \Tr_{F, \X} \colon \bf{R}\nu_*\O_X^{+, a}/p \to \omega^{\bullet, a}_{\X_0}(-d)[-2d]
\]
for an admissible separated formal $\O_C$-scheme $\X$ with smooth generic fiber $X=\X_C$ of pure dimension $d$ and reduced special fiber.  \smallskip 

In case $\X$ is also proper, this construction will be combined with the almost version of the Grothendieck duality on $\X_0$ to get the global trace map
\[
\mathbf{R}\Gamma(X, \O_X^{+, a}/p) \to \left(\O_C/p(-d)[-2d]\right)^a.
\]
The construction of this trace map is one of the crucial steps in our proof of Poincar\'e Duality. \smallskip

Now we summarize the main ideas behind our construction of Faltings' trace map: 

\begin{enumerate}
    \item We note that \cite[Theorem 6.13.5]{Z3} and Theorem~\ref{thm:dualizing-complex} ensure that $\bf{R}\nu_* \O_X^{+, a}/p\in \bf{D}^{[0, d]}_{acoh}(\X_0)^a$ and $\omega^\bullet_{\X_0} \in \bf{D}^{[-d, 0]}_{coh}(\X_0)$. So, for the purpose of constructing Faltings' trace, it suffices to construct a map
    \[
    \rm{Tr}^d_{F, \X}\colon \rm{R}^d\nu_*\O_X^{+, a}/p \to \omega^a_{\X_0}(-d).
    \]
    Furthermore, we also have $\bf{R}\nu_*\wdh{\O}_X^{+, a}\in \bf{D}^{[0, d]}_{acoh}(\X)^a$ and $\omega^\bullet_{\X} \in \bf{D}^{[-d, 0]}_{coh}(\X)$, so it is actually sufficient to construct an ``integral'' morphism
    \[
    \rm{Tr}_{F, \X}^{d,+}\colon \rm{R}^d\nu_* \wdh{\O}_X^{+, a} \to \omega_{\X}(-d)^a
    \]
    as then $\rm{Tr}^d_{F, \X}$ can be constructed as the composition
    \[
    \rm{R}^d\nu_*\O_X^{+, a}/p \xr{\sim}  \rm{R}^d\nu_*\wdh{\O}_X^{+, a} \otimes_{\O_\X} \O_{\X_0} \xr{\rm{Tr}_{F, \X}^{d,+} \otimes_{\O_\X} \rm{Id}} \omega_\X(-d)^a \otimes_{\O_\X} \O_{\X_0} \xr{\rm{BC}_{\omega_\X(-d)}^a} \omega^a_{\X_0}(-d).
    \]
    
    \item Then we actually define (under the assumption that $\X$ has reduced special fiber) an {\it honest} morphism of $\O_\X$-modules
    \[
     \rm{Tr}_{F, \X}^{d,+}\colon \rm{R}^d\nu_*\,\wdh{\O}_X^+ \to \omega_{\X}(-d).
    \]
    The key idea is to use Theorem~\ref{thm:dualizing-reflexive-formal-schemes} to reduce the question of constructing $\rm{Tr}_{F, \X}^{d, +}$ to the question of constructing this morphism on the smooth locus and on the generic fiber in a compatible way. On the smooth locus, we use a variation of the map from \cite{BMS1} and, on the generic fiber, we use a variation of the analogous map from \cite{Sch1}.
\end{enumerate}


\subsection{The BMS map for general admissible formal schemes}

Throughout this section, we fix an admissible formal $\O_C$-scheme $\X$ with generic fiber $X=\X_C$. \smallskip

The main goal of this section is to recall the construction of the maps 
\[
\Phi^n_\X\colon \wdh{\Omega}^n_{\X}\{-n\} \to \frac{\rm{R}^n\nu_* \wdh{\O}_X^+}{\left(\rm{R}^n\nu_* \wdh{\O}_X^+\right)[(\zeta_p-1)^\infty]} = \frac{\rm{R}^n\nu_* \wdh{\O}_X^+}{\left(\rm{R}^n\nu_* \wdh{\O}_X^+\right)[p^\infty]} 
\]
essentially introduced in \cite{BMS1}. The construction in \cite{BMS1} is written under the assumption that the formal model $\X$ is smooth, however it will be convenient for us to use this construction for more general $\X$. For brevity, we introduce the following notation: 

\begin{defn} For $\X$ as above and an integer $n$, we define 
\[
\widetilde{\rm{R}^n\nu_* \wdh{\O}_X^+} \coloneqq \frac{\rm{R}^n\nu_* \wdh{\O}_X^+}{\left(\rm{R}^n\nu_* \wdh{\O}_X^+\right)[(\zeta_p-1)^\infty]} \in \bf{D}(\X).
\]
\end{defn}

In order to define $\Phi_\X^n$, we will need to use the cup-product, so we start the section by discussing some of its properties. Firstly, we recall that \cite[\href{https://stacks.math.columbia.edu/tag/0FKU}{Tag 0FKU}]{stacks-project} construct, for each integer $n\geq 0$, the cup-product map $\cup^n \colon \left(\rm{R}^1\nu_*\wdh{\O}^+_X\right)^{\otimes n} \xr{\cup^n} \rm{R}^n\nu_*\wdh{\O}^+_X$ that can be seen to (uniquely) descend to the morphism
\[
\cup^n \colon \left(\widetilde{\rm{R}^1\nu_*\wdh{\O}^+_X}\right)^{\otimes n} \xr{\cup^n} \widetilde{\rm{R}^n\nu_*\wdh{\O}^+_X}.
\]

\begin{lemma}\label{lemma:cup-product-well-defined} There is a unique $\O_\X$-linear morphism $\cup^n \colon \bigwedge^n\left( \widetilde{\rm{R}^1\nu_*\wdh{\O}^+_X}\right) \to \widetilde{\rm{R}^n\nu_*\wdh{\O}^+_X}$ such that the diagram
\[
\begin{tikzcd}
\left(\widetilde{\rm{R}^1\nu_*\wdh{\O}^+_X}\right)^{\otimes n} \arrow{d} \arrow{rd}{\cup^n} & \\
\bigwedge^n \left(\widetilde{\rm{R}^1\nu_*\wdh{\O}^+_X}\right) \arrow{r}{\cup^n} & \widetilde{\rm{R}^n\nu_*\wdh{\O}^+_X}
\end{tikzcd}
\]
commutes.
\end{lemma}
\begin{proof}
    By the universal property of the wedge powers, it suffices to show that $x\cup x=0$ for any local section $x\in \widetilde{\rm{R}^1\nu_*(\wdh{\O}^+_X)}$. The cup product is always anti-commutative\footnote{For this implication, use that the braiding morphism on $\wdh{\O}^+_X[0]$ is equal to $\rm{id}$, and the braiding morphism on $\rm{R}^1\nu_*\wdh{\O}_X^+[-1]$ is equal to $-\rm{id}$.} by \cite[\href{https://stacks.math.columbia.edu/tag/0FP5}{Tag 0FP5}]{stacks-project}, i.e. $x\cup x = -x\cup x$ for any $x\in \widetilde{\rm{R}^1\nu_*(\wdh{\O}^+_X)}$. Therefore, if $2$ is invertible in $\bf{Z}_p$, this implies that $x\cup x = 0$. Therefore, we only need to deal with $p=2$. In this case, $-2=\zeta_2-1$ so $\widetilde{\rm{R}^n\nu_*\wdh{\O}^+_X}$ are all $2$-torsion free. Thus, $x\cup x = -x \cup x$ implies that $x\cup x=0$. 
\end{proof}

Now we start the construction of $\Phi_\X^n$. Functoriality of the cotangent complex provides us with the map
\[
L_{\O_\X/\bf{Z}_p} \to \bf{R}\nu_* L_{\wdh{\O}^+_{X}/\bf{Z}_p}
\]
that passing to the derived $p$-adic completions gives the map
\[
\Phi'_\X\colon \wdh{L}_{\O_\X/\bf{Z}_p} \to \bf{R}\nu_*\wdh{L}_{\wdh{\O}^+_{X}/\bf{Z}_p}.
\]
The morphism $\Phi_\X^1$ essentially comes from $\Phi'_\X$ via the following two computations: 

\begin{lemma}\label{lemma:ident-1} There is a natural isomorphism $\wdh{\Omega}^1_\X \simeq \cal{H}^0(\wdh{L}_{\O_\X/\bf{Z}_p})$. 
\end{lemma}
\begin{proof}
We consider the morphisms $\X \to \Spf \O_C$ and $\Spf \O_C \to \Spf \Z_p$ that give rise to the associated distinguished triangle
\[
L_{\O_C/\Z_p} \otimes^{L}_{\O_C} \O_\X \to L_{\O_\X/\Z_p} \to L_{\O_\X/\O_C}
\]
that, after taking its derived $p$-adic completion, induces the distinguished triangle
\begin{equation}\label{formula:dist-triangle}
\wdh{L}_{\O_C/\Z_p} \wdh{\otimes}^{L}_{\O_C} \O_\X \to \wdh{L}_{\O_\X/\Z_p} \to \wdh{L}_{\O_\X/\O_C}.
\end{equation}
Now \cite[Remark 3.19]{bhatt-arizona} implies that $\wdh{L}_{\O_C/\Z_p}\simeq \O_C\{1\}[1]$ is a free $\O_C$-module concentrated in degree $-1$. Moreover, $\O_\X$ is clearly a $p$-adically derived complete module on $\X$, so $\wdh{L}_{\O_C/\Z_p} \wdh{\otimes}^L_{\O_C}\O_\X \simeq \O_\X\{1\}[1]$. Thus, the distinguished triangle~(\ref{formula:dist-triangle}) implies that  $\mathcal{H}^0(\wdh{L}_{\O_\X/\Z_p}) \xr{\sim} \mathcal{H}^0(\wdh{L}_{\O_\X/\O_C})$. Finally, \cite[Section 7.2.8]{GR} guarantees that the natural morphism $\wdh{\Omega}^1_{\X} \to \mathcal{H}^0(\wdh{L}_{\O_\X/\O_C})$ is an isomorphism. Combining these two facts, we get the natural isomorphism 
\[
   \wdh{\Omega}^1_\X \xr{\sim} \mathcal{H}^0(\wdh{L}_{\O_\X/\bf{Z}_p}).     \qedhere
\]
\end{proof}

\begin{lemma}\label{lemma:ident-2} There is a natural isomorphism 
\[
\mathcal H^0\left(\bf{R}\nu_*\wdh{L}_{\wdh{\O}^+_{X}/\bf{Z}_p}\right) \simeq \rm{R}^1\nu_*\wdh{\O}^+_X\{1\}.
\] 
\end{lemma}
\begin{proof}
 We do a similar trick here: we consider the following morphisms of ringed spaces:
\[
\left(X_{\proet}, \wdh{\O}^+_X\right) \to \left(\Spa(C, \O_C)_{\rm{an}}, \O_C\right) \to \left(\Spa(\bf{Q}_p, \Z_p)_{\rm{an}}, \Z_p\right).
\]
This induces the distinguished triangle 
\[
L_{\O_C/\Z_p} \otimes^{L}_{\O_C} \wdh{\O}^+_X \to L_{\wdh{\O}^+_X/\Z_p} \to L_{\wdh{\O}^+_X/\O_C}
\]
that, after taking derived $p$-adic completion, induces the distinguished triangle
\[
\wdh{L}_{\O_C/\Z_p} \wdh{\otimes}^{L}_{\O_C} \wdh{\O}^+_X \to \wdh{L}_{\wdh{\O}^+_X/\Z_p} \to \wdh{L}_{\wdh{\O}^+_X/\O_C}.
\]
Using derived completeness of $\wdh{\O}^+_X$ (see \cite[Remark 5.5]{BMS1}), the same argument as in Lemma~\ref{lemma:ident-1} implies that $\wdh{L}_{\O_C/\Z_p} \wdh{\otimes}^{L}_{\O_C} \wdh{\O}_X^+ \simeq \wdh{\O}^+_X\{1\}[1]$. Now \cite[Corollary 3.28]{bhatt-arizona} guarantees\footnote{\cite[Corollary 3.28]{bhatt-arizona} is written under the assumption that $X$ is smooth. However, the same proof works as long as one uses \cite[Proposition 4.8]{Sch1} in place of \cite[Corollary 4.7]{Sch1}.} that $\wdh{L}_{\wdh{\O}^+_X/\O_C} \simeq 0$, so we have a natural isomorphism $\wdh{\O}^+_X\{1\}[1] \xr{\sim} \wdh{L}_{\wdh{\O}^+_X/\Z_p}$. Thus, we get an isomorphism  
\[
\rm{R}^1\nu_*\wdh{\O}^+_X\{1\}  \simeq \mathcal H^0\left(\bf{R}\nu_*\wdh{L}_{\wdh{\O}^+_{X}/\bf{Z}_p}\right)=\rm{R}^0\nu_*\wdh{L}_{\wdh{\O}^+_{X}/\bf{Z}_p}.  \qedhere
\]
\end{proof}

Now we use Lemma~\ref{lemma:ident-1} and Lemma~\ref{lemma:ident-2} to define 
\[
\Phi'^1_\X\coloneqq \mathcal H^0(\Phi'_\X)\{-1\} \colon \wdh{\Omega}^1_\X\{-1\} \to \rm{R}^1\nu_*\wdh{\O}^+_X.
\]
And then we define $\Phi^1_\X \colon  \wdh{\Omega}^1_\X\{-1\} \to \widetilde{\rm{R}^1\nu_*\wdh{\O}^+_X}$ as the composition
\[
\wdh{\Omega}^1_\X\{-1\} \xr{\Phi'^1_\X} \rm{R}^1\nu_*\wdh{\O}^+_X \to \widetilde{\rm{R}^1\nu_*\wdh{\O}^+_X},
\]
where the second map is the natural quotient morphism. 

\begin{defn}\label{defn:phi-map} For each integer $n\geq 1$, we define the morphism $\Phi^n_\X \colon \wdh{\Omega}^n_{\X}\{-n\} \to \widetilde{\rm{R}^n\nu_*\wdh{\O}^+_X}$ as the following composition:
\[
    \wdh{\Omega}^n_\X\{-n\} \xr{\bigwedge^n \left(\Phi^1_\X\right)} \bigwedge^n \left( \widetilde{\rm{R}^1\nu_*\wdh{\O}^+_X}\right) \xr{\cup^n} \widetilde{\rm{R}^n\nu_*\wdh{\O}^+_X},  
\]
where $\cup^n$ is the cup-product map from Lemma~\ref{lemma:cup-product-well-defined}. 
\end{defn}

\begin{rmk}\label{rmk:functoriak-phi} By construction, the morphism $\Phi^n_\X$ is functorial with respect to flat morphisms. More precisely, if $\pi\colon \X' \to \X$ is flat morphism of admissible formal $\O_C$-schemes with generic fiber $\pi_C\colon X' \to X$, then the diagram
\begin{equation}\label{eqn:functorial-phi}
\begin{tikzcd}
    \pi^* \wdh{\Omega}^n_\X\{-n\} \arrow{d} \arrow{r}{\pi^*\left(\Phi_\X^n\right)} & \pi^*\widetilde{\rm{R}^n\nu_{\X, *}\wdh{\O}^+_X} \arrow{d} \\
    \wdh{\Omega}^n_{\X'}\{-n\}\arrow{r}{\Phi^n_{\X'}} & \widetilde{\rm{R}^n\nu_{\X', *}\wdh{\O}^+_{X'}}
\end{tikzcd}
\end{equation}
commutes for any $n\geq 1$. 
\end{rmk}

\begin{rmk}\label{rmk:etale-local-phi} If $\pi\colon \X' \to \X$ is \'etale, then the vertical arrows in Diagram~(\ref{eqn:functorial-phi}) are isomorphisms (the right vertical arrow is an isomorphism by \cite[Theorem 6.13.6(3)]{Z3}). Therefore, we conclude that $\Phi_{\X'}=\pi^*(\Phi_\X)$, i.e., $\Phi_\X$ is \'etale-local on $\X$.
\end{rmk}

\begin{rmk}\label{rmk:kunneth-phi} We apply Remark~\ref{rmk:functoriak-phi} to $\X'=\wdh{\bf{G}}^d_m$, the projection morphisms $\pi_i\colon \wdh{\bf{G}}^d_m \to \wdh{\bf{G}}_m$, and $n=1$ to conclude that the diagram
\[
\begin{tikzcd}[column sep = 6em]
    \bigoplus_{i=1}^d\pi^*_i\,\wdh{\Omega}^1_{\wdh{\bf{G}}_m}\{-1\} \arrow{d} \arrow{r}{\bigoplus_{i=1}^d \pi^*_i\left(\Phi_{\wdh{\bf{G}}_m}^1\right)} & \bigoplus_{i=1}^d \pi^*_i\,\widetilde{\rm{R}^1\nu_{\wdh{\bf{G}}_m, *}\wdh{\O}^+_{\bf{T}^1_C}} \arrow{d} \\
    \wdh{\Omega}^1_{ \wdh{\bf{G}}^d_m}\{-1\}\arrow{r}{\Phi^1_{\wdh{\bf{G}}^d_m}} & \widetilde{\rm{R}^1\nu_{\wdh{\bf{G}}^d_m, *}\wdh{\O}^+_{\bf{T}^d_C}}
\end{tikzcd}
\]
commutes for any $d\geq 1$.
\end{rmk}

\subsection{Faltings' trace for a smooth model}

The main goal of this section is to construct the integral Faltings' trace
\[
\Psi^d_\X \colon \rm{R}^d\nu_*\wdh{\O}_X^+ \to \omega_\X(-d)
\]
for a smooth admissible formal $\O_C$-scheme $\X$. The essential idea of the construction is to (slightly) modify the BMS map $\Phi^d_\X$ to simultaneously reverse its direction and change Breuil-Kisin twists to Tate twists. In order to do this, we need to recall the results from \cite{BMS1} regarding the map $\Phi^n_{\X}$ for a smooth admissible formal $\O_C$-scheme $\X$. \smallskip

\begin{lemma}\label{lemma:BMS-vector-bundle} Let $\X$ be a smooth admissible formal $\O_C$-scheme with generic fiber $X=\X_C$. Then the $\O_\X$-module $\frac{\rm{R}^n\nu_* \wdh{\O}_X^+}{\left(\rm{R}^n\nu_* \wdh{\O}_X^+\right)[\zeta_p-1]}$ is a finite, locally free $\O_\X$-module for any integer $n\geq 0$. In particular, the natural morphism
\[
\frac{\rm{R}^n\nu_* \wdh{\O}_X^+}{\left(\rm{R}^n\nu_* \wdh{\O}_X^+\right)[\zeta_p-1]} \to \frac{\rm{R}^n\nu_* \wdh{\O}_X^+}{\left(\rm{R}^n\nu_* \wdh{\O}_X^+\right)[(\zeta_p-1)^\infty]} = \widetilde{\rm{R}^n\nu_*\wdh{\O}_X^+}
\]
is an isomorphism. 
\end{lemma}
\begin{proof}
    We first note that \cite[Theorem 8.3]{BMS1} guarantees that the complex $L\eta_{\zeta_p-1}\left(\bf{R}\nu_*\wdh{\O}^+_{X}\right)$ has finite, locally free cohomology sheaves. On the other hand, \cite[Lemma 6.4]{BMS1} (applied to a K-injective resolution of $\bf{R}\nu_*\wdh{\O}^+_{X}$) implies that we have isomorphisms
    \[
    \mathcal H^n \left(L\eta_{\zeta_p-1}\left(\bf{R}\nu_*\wdh{\O}^+_{X}\right)\right) \xrightarrow{\sim}  \frac{\rm{R}^n\nu_* \wdh{\O}_X^+}{\left(\rm{R}^n\nu_* \wdh{\O}_X^+\right)[\zeta_p-1]}.
    \]
    Combining these two results, we conclude that the $\O_\X$-module $\frac{\rm{R}^n\nu_* \wdh{\O}_X^+}{\left(\rm{R}^n\nu_* \wdh{\O}_X^+\right)[\zeta_p-1]}$ is finite, locally free for any $n\geq 0$. The ``in particular'' part is a formal consequence of the fact that $\frac{\rm{R}^n\nu_* \wdh{\O}_X^+}{\left(\rm{R}^n\nu_* \wdh{\O}_X^+\right)[\zeta_p-1]}$ has already no $(\zeta_p-1)^\infty$-torsion.  
\end{proof}

\begin{lemma}\label{lemma:BMS} Let $\X$ be a smooth admissible formal $\O_C$-scheme with generic fiber $X=\X_C$. Then the morphism
\[
\Phi^n_{\X}\colon \wdh{\Omega}^n_{\X}\{-n\} \to \widetilde{\rm{R}^n\nu_*\wdh{\O}_X^+}
\]
is an isomorphism onto $(\zeta_p-1)^n \left(\widetilde{\rm{R}^n\nu_*\wdh{\O}_X^+}\right)$ for $n\geq 1$. 
\end{lemma}
\begin{proof}
In this proof we will freely use that $\frac{\rm{R}^n\nu_* \wdh{\O}_X^+}{\left(\rm{R}^n\nu_* \wdh{\O}_X^+\right)[\zeta_p-1]} = \widetilde{\rm{R}^n\nu_*\wdh{\O}_X^+}$ without explicitly referring to Lemma~\ref{lemma:BMS-vector-bundle}.

{\it Step 1. The map $\Phi^1_{\X}$ is an isomorphism onto $(\zeta_p-1)\left(\widetilde{\rm{R}^1\nu_*\wdh{\O}_X^+}\right)$}: We first note that \cite[Lemma 6.10]{BMS1} implies that there is a canonical morphism
\[
\alpha\colon L\eta_{\zeta_p-1}\left(\bf{R}\nu_*\wdh{\O}^+_{X}\right) \to \bf{R}\nu_*\wdh{\O}^+_{X}.
\]
Now the proofs of \cite[Theorem 8.3 and Theorem 8.7]{BMS1} show that $\Phi^1_{\X}\colon \wdh{\Omega}^1_\X\{-1\} \to \widetilde{\rm{R}^1\nu_*\wdh{\O}^+_X}$ factors as
\[
\begin{tikzcd}[column sep = 5 em, row sep = 4em]
    \wdh{\Omega}^1_\X\{-1\} \arrow{d}{\rm{BMS}^1_\X\{-1\}} \arrow{dr}{\Phi'^1_\X} \arrow{drr}{\Phi^1_\X}& &\\
    \cal{H}^1\left(L\eta_{\zeta_p-1}\left(\bf{R}\nu_*\wdh{\O}^+_{X}\right)\right) \arrow{r}{\cal{H}^1(\alpha)} & \rm{R}^1\nu_*\wdh{\O}_X^+ \arrow{r} & \frac{\rm{R}^n\nu_* \wdh{\O}_X^+}{\left(\rm{R}^n\nu_* \wdh{\O}_X^+\right)[\zeta_p-1]} =\widetilde{\rm{R}^1\nu_*\wdh{\O}_X^+}
\end{tikzcd}
\]
such that $\rm{BMS}^1_\X\{-1\}$ is an isomorphism. Then the result follows from the fact that the composition of the horizontal maps is an isomorphism onto $(\zeta_p-1)\left(\widetilde{\rm{R}^1\nu_*\wdh{\O}_X^+}\right)$. \smallskip

{\it Step 2. The map $\Phi^n_{\X}$ is an isomorphism onto $(\zeta_p-1)^n\left(\widetilde{\rm{R}^n\nu_*\wdh{\O}_X^+}\right)$ for any $n\geq 1$}: \cite[Proposition 6.7]{BMS1} guarantees that there is a natural cup-product structure on $L\eta_{\zeta_p-1}\left(\bf{R}\nu_* \wdh{\O}_X^+\right)$ (that, by construction, is compatible with the cup-product on $\bf{R}\nu_*\wdh{\O}_X^+$ via the map $\alpha$). Then \cite[Corollary 8.13(ii)]{BMS1} implies that the induced morphism\footnote{The cup-product descends to wedge-powers since the target $\cal{H}^n\left(L\eta_{\zeta_p-1}\left(\bf{R}\nu_*\wdh{\O}^+_{X}\right)\right)$ is torsion-free by Lemma~\ref{lemma:BMS-vector-bundle}.} 
\[
\cup^n\colon \bigwedge^n \cal{H}^1\left(L\eta_{\zeta_p-1}\left(\bf{R}\nu_*\wdh{\O}^+_{X}\right)\right) \to \cal{H}^n\left(L\eta_{\zeta_p-1}\left(\bf{R}\nu_*\wdh{\O}^+_{X}\right)\right)
\]
is an isomorphism for any $n\geq 1$. This allows us to define the {\it isomorphism} $\rm{BMS}_\X^n\{-n\}\colon \wdh{\Omega}^n_\X\{-n\} \to \cal{H}^n\left(L\eta_{\zeta_p-1}\left(\bf{R}\nu_*\wdh{\O}^+_{X}\right)\right)$ as the composition of $\bigwedge^n \rm{BMS}^1_\X\{-1\}$ with the above isomorphism $\cup^n$. By construction, we have the following commutative diagram:
\[
\begin{tikzcd}[column sep = 5em]
    \bigwedge^n \left( \wdh{\Omega}^1_\X\{-1\}\right) \arrow[bend left]{rr}{\bigwedge^n \Phi^1_\X} \arrow{d}{\wr}\arrow{r}{\bigwedge^n \rm{BMS}^1_\X\{-1\}} & \bigwedge^n \left(\cal{H}^1\left(L\eta_{\zeta_p-1}\left(\bf{R}\nu_*\wdh{\O}^+_{X}\right)\right)\right) \arrow{d}{\cup^n} \arrow{r}{\bigwedge^n \cal{H}^1(\alpha)} & \bigwedge^n \widetilde{\rm{R}^1\nu_*\wdh{\O}_X^+} \arrow{d}{\cup^n} \\
    \wdh{\Omega}^n_\X\{-n\} \arrow{r}{\rm{BMS}_\X^n\{-n\}} & \cal{H}^n\left(L\eta_{\zeta_p-1}\left(\bf{R}\nu_*\wdh{\O}^+_{X}\right)\right) \arrow{r}{\cal{H}^n(\alpha)} & \widetilde{\rm{R}^n\nu_*\wdh{\O}_X^+}.
\end{tikzcd}
\]
By going right and down in the diagram, we see that the composed morphism $\wdh{\Omega}^n_\X\{-n\} \to \widetilde{\rm{R}^n\nu_*\wdh{\O}_X^+}$ is equal to $\Phi^n_\X$. Now $\rm{BMS}_\X^n\{-n\}$ is an isomorphism, and $\cal{H}^n(\alpha)$ is an isomorphism onto $(\zeta_p-1)^n\left(\frac{\rm{R}^n\nu_* \wdh{\O}_X^+}{\left(\rm{R}^n\nu_* \wdh{\O}_X^+\right)[\zeta_p-1]}\right)=(\zeta_p-1)^n\left(\widetilde{\rm{R}^n\nu_*\wdh{\O}_X^+}\right)$ by its very construction (and Lemma~\ref{lemma:BMS-vector-bundle}). 
\end{proof}

We are almost ready to define the desired trace map $\Psi^d_\X \colon \rm{R}^d\nu_*\wdh{\O}^+_{\X_C} \to \omega_\X(-d)$ for a smooth admissible formal $\O_C$-scheme of pure dimension $d$. The last ingredient that we need to discuss is the relation between the Breuil-Kisin twist $\O_C\{1\}$ and the Tate twist $\O_C(1)$. \smallskip

We start by considering the $\rm{dlog}$-map $\rm{dlog}_{\Z_p}\colon \bf{Z}_p(1)=\rm{T}_p(\mu_{p^\infty}) \to \O_C\{1\}=\rm{T}_p(\Omega^1_{\O_C/\Z_p})$ as the morphism induced by the map 
\[
\mu_{p^\infty}(\ov{K}) = \mu_{p^\infty}(C) \xr{f\mapsto \frac{\rm{d}f}{f}} \Omega^1_{\O_C/\Z_p}.
\]
By abuse of notation, we denote the $\O_C$-linearization of $\rm{dlog}_{\Z_p}$ simply as 
\[
\rm{dlog}\colon \O_C(1) \to \O_C\{1\}.
\]

Now we recall the theorem of Fontaine that describes the image of $\rm{dlog}$. Unfortunately, we can not find this result explicitly stated in the form we need. For this reason, we include a proof of the following result:

\begin{thm}\label{thm:Fontaine}(Fontaine) The natural morphism $\rm{dlog}\colon \O_C(1) \to \O_C\{1\}$ is injective with the image equal to $(\zeta_p-1)\O_C\{1\}$.
\end{thm}
\begin{proof}
    We denote by $\bf{C}_p\coloneqq \wdh{\ov{\Q}}_p$ the completed algebraic closure of $\Q_p$, and by $\O_{\C_p}$ its ring of integers.  \smallskip
    
    {\it Step~$1$. The natural morphism $\O_{\C_p}\{1\} \otimes_{\O_{\C_p}} \O_C \to \O_C\{1\}$ is an isomorphism.} By definition, $\O_C\{1\}\simeq \wdh{L}_{\O_C/\Z_p}[-1]$. Therefore, (using that $\O_{\C_p}\{1\}$ is a free $\O_{\C_p}$-module of rank one), we see that it suffices to show that the natural morphism
    \[
    \wdh{L}_{\O_{\C_p}/\Z_p} \wdh{\otimes}_{\O_{\C_p}} \O_C \to \wdh{L}_{\O_C/\Z_p}
    \]
    is an isomorphism. This morphism fits into an exact triangle
    \[
    \wdh{L}_{\O_{\C_p}/\Z_p} \wdh{\otimes}_{\O_{\C_p}} \O_C \to \wdh{L}_{\O_C/\Z_p} \to \wdh{L}_{\O_{C}/\O_{\C_p}}.
    \]
    Therefore, the result follows from the fact that $\wdh{L}_{\O_C/\O_{\C_p}}=0$ (see the proof of \cite[Corollary 3.2.5]{bhatt-arizona}).\smallskip
    
    {\it Step~$2$. There is a natural isomorphism $\O_{\C_p}\{1\}\simeq \rm{T}_p(\Omega^1_{\ov{\Z}_p/\Z_p})$.} A proof similar to the one used in previous step shows that the natural morphism $\wdh{L}_{\ov{\Z}_p/\Z_p} \to \wdh{L}_{\O_C/\Z_p}$ is an isomorphism. Now \cite[Theorem \textsection{1.3}]{Bei12} and \cite[Corollary 3.3]{Sza} imply that $L_{\ov{\Z}_p/\Z_p} \simeq \Omega^1_{\ov{\Z}_p/\Z_p}[0]$ and $\Omega^1_{\ov{\Z}_p/\Z_p}$ is $p$-divisible. Therefore, we get 
    \[
    \O_{\C_p}\{1\}\coloneqq \wdh{L}_{\ov{\Z}_p/\Z_p}[-1] \simeq \rm{T}_p\left(\Omega^1_{\ov{\Z}_p/\Z_p}\right).
    \]
    
    {\it Step~$3$. Finish the argument.} Step~$1$ implies that it suffices to show that the claim for $C=\C_p$. Now \cite[Theorem \textsection{1.3}]{Bei12} implies that the sequence
    \[
    0 \to \frac{(1-\zeta_p)^{-1}\ov{\Z}_p}{\ov{\Z}_p}(1) \to \ov{\Z}_p\otimes_{\Z_p} \mu_{p^\infty} \xr{\ov{\Z}_p\otimes \rm{dlog}_{\Z_p}} \Omega^1_{\ov{\Z}_p/\Z_p} \to 0
    \]
    is exact\footnote{To get the left arrow, we note that $\ov{\Q}_p/\ov{\Z}_p \otimes_{\Z_p} \Z_p(1) \simeq \colim_m \left(\frac{1}{p^n}\ov{\Z}_p/\ov{\Z}_p \otimes_{\Z_p}\Z_p(1) \right) \simeq \colim_m \left(\ov{\Z}_p\otimes_{\Z_p} \Z_p(1)/p^n\Z_p(1)\right) \simeq \colim_m \left(\ov{\Z}_p\otimes_{\Z_p} \mu_{p^n}\right)\simeq \ov{\Z}_p \otimes_{\Z_p} \mu_{p^\infty}$.}. Now we apply the $\rm{RHom}_{\Z_p}(\Q_p/\Z_p, -)$-functor and use Step~$2$ to get an exact sequence
    \[
    0\to \O_{\C_p}(1) \xr{\rm{dlog}} \O_{\C_p}\{1\} \to Q\to 0,
    \]
    where $Q$ is annihilated by exactly $(1-\zeta_p)$. This finishes the proof. 
\end{proof}

Now let $\X$ be a smooth admissible formal $\O_C$-scheme and $n\geq 1$ an integer. Then  Theorem~\ref{thm:Fontaine} (after passing to duals and tensor powers) and local freeness of $\wdh{\Omega}^n_\X$ imply that we have a short exact sequence
\[
0 \to \wdh{\Omega}^n_\X\{-n\} \xr{\rm{id} \otimes (\rm{dlog}^{\vee})^{\otimes n}} \wdh{\Omega}^n_\X(-n) \to \cal{Q} \to 0
\]
such that the image of $\wdh{\Omega}^n_\X\{-n\}$ in $\wdh{\Omega}^n_\X(-n)$ is equal to $(\zeta_p-1)^n\left(\wdh{\Omega}^n_{\X}(-n)\right)$.

\begin{thm-def} Let $\X$ be a smooth admissible formal $\O_C$-scheme with generic fiber $X=\X_C$, and let $n\geq 1$ be an integer. Then there is a unique $\O_\X$-linear {\it isomorphism} $\Psi'^n_\X\colon \widetilde{\rm{R}^n\nu_*\wdh{\O}^+_X} \to \wdh{\Omega}^n_\X(-n)$ such that the diagram
\[
\begin{tikzcd}[row sep = 3em, column sep = 4em]
\widetilde{\rm{R}^n\nu_*\wdh{\O}^+_X}  \arrow[r, "\Psi'^{n}_{\X}"] & \wdh{\Omega}^n_{\X}(-n) \\
(\zeta_p-1)^n\left(\widetilde{\rm{R}^n\nu_*\wdh{\O}^+_X}\right) \arrow{u} \arrow{r}{(\Phi^n_{\X})^{-1}}  &\wdh{\Omega}^n_{\X} \{-n\} \arrow{u}{\rm{id} \otimes (\rm{dlog}^{\vee})^{\otimes n}}
\end{tikzcd}
\]
commutes\footnote{Existence of the inverse map $(\Phi^n_{\X})^{-1}$ follows from Lemma~\ref{lemma:BMS}.}. 
\end{thm-def}
\begin{proof}
    For brevity, we denote $\rm{id} \otimes (\rm{dlog}^{\vee})^{\otimes n}$ by $\iota^n$. Since all sheaves of interest are $(\zeta_p-1)$-torsionfree, it is straightforward to see that if $\Psi'^n_\X$ exists, it must be given by the formula 
    \[
    \Psi'^{n}_{\X}(x)= \frac{\iota^n\left((\Phi^n_{\X})^{-1}((\zeta_p-1)^nx)\right)}{(\zeta_p-1)^n}.
    \]
    One also easily checks that this formula is a well-defined $\O_\X$-linear {\it isomorphism} since $(\Phi^n_{\X})^{-1}$ is an isomorphism and both vertical arrows identify the source with $(\zeta_p-1)^n\times$ the target.
\end{proof}

\begin{defn}\label{defn:trace-on-smooth-locus} For an admissible separated smooth formal $\O_C$-scheme $\X$ with generic fiber $X$ of pure dimension $d$, we define the {\it trace morphism}
\[
    \Psi^d_{\X}\colon \rm{R}^d\nu_*\wdh{\O}^+_{X} \to \omega_{\X}(-d)
\]
as the composition
\[
\rm{R}^d\nu_*\wdh{\O}^+_{X} \to \widetilde{\rm{R}^d\nu_*\wdh{\O}^+_X} \xr{\Psi'^{d}_{\X}} \wdh{\Omega}^d_{\X}(-d) \xr{r_{\X}(-d)} \omega_{\X}(-d),
\]
where $r_{\X}$ is the map from Theorem~\ref{thm:dualizing-complex}.
\end{defn}

\subsection{Faltings' trace on the generic fiber}

The main goal of this section is to construct the trace morphism
\[
\left(\Psi^d_\X\right)_C \colon \left(\rm{R}^d\nu_*\wdh{\O}_X^+\right)_C \to \left(\omega_\X\right)_C(-d)
\]
for a separated admissible formal $\O_C$-scheme $\X$ with smooth generic fiber $X=\X_C$ and reduced special fiber. The key step is to relate the generic fiber of the morphism $ \Phi^n_\X \colon \wdh{\Omega}^n_{\X}\{-n\} \to \widetilde{\rm{R}^n\nu_*\wdh{\O}^+_X}$ from Definition~\ref{defn:phi-map} to another map (defined purely in terms of the generic fiber)
\[
\rm{Sch}^n_X\colon \Omega^n_{X}(-n) \to \rm{R}^n\mu_*\wdh{\O}_X,
\]
whose construction we now briefly recall. \smallskip

\subsubsection{Scholze's map}

First we recall that, for any rigid space $X$, we have morphisms of ringed sites $\lambda \colon (X_{\proet}, \wdh{\O}^+_X) \to (X_{\et}, \O^+_X)$ and $\pi \colon (X_{\et}, \O^+_X) \to (X_{\rm{an}}, \O^+_X)$. We denote the composition by $\mu \colon (X_{\proet}, \wdh{\O}^+_X) \to (X_{\rm{an}}, \O^+_X)$. \smallskip

If $X$ is smooth over $C$, P.\,Scholze constructs a canonical $\O_{X_\et}$-linear isomorphism
\[
\rm{Sch}'^1_X\colon \Omega^1_{X_{\et}} \xr{\sim} \rm{R}^1\lambda_*\wdh{\O}_X(1)
\] 
in \cite[Corollary 6.19, Remark 6.20]{Sch1} and \cite[Lemma 3.24]{Schsurvey}. In particular, we conclude that $\rm{R}^1\lambda_*\wdh{\O}_X$ is a coherent $\O_{X_{\et}}$-module. However, for our purposes, it will be convenient to work with the isomorphism
\[
\rm{Sch}^1_X \colon \Omega^1_X(-1) \xr{\sim} \rm{R}^1\mu_*\wdh{\O}_X
\]
defined as\footnote{We note that Scholze's isomorphisms $\O_{X_\et}(1)\simeq \lambda_*\wdh{\O}_X(1)$, $\Omega^1_{X_{\et}} \simeq \rm{R}^1\lambda_*\wdh{\O}_X(1)$, and the fact that coherent sheaves are $\pi_*$-acyclic imply that $\pi_*\left(\rm{R}^1\lambda_*\wdh{\O}_X(1)\right) \simeq \rm{R}^1\mu_*\wdh{\O}_X(1)$.} $\pi_*(\rm{Sch}'^1_X)(-1)$. 

\begin{rmk} \'Etale descent for coherent sheaves implies that $\pi^*(\rm{Sch}^1_X)(1)=\rm{Sch}'^1_X$.
\end{rmk}

\begin{defn} For a smooth rigid-analytic $C$-space $X$ and an integer $n\geq 1$, we define {\it Scholze's morphism}
\[
\rm{Sch}^n_X \colon \Omega^n_X(-n) \xr{\bigwedge^n(\rm{Sch}^1_X)} \bigwedge^n \left(\rm{R}^1\mu_*\wdh{\O}_X\right) \xr{\cup^n} \rm{R}^n\mu_*\wdh{\O}_X.
\]
\end{defn}

\begin{rmk} We note that the cup product factors through the wedge product since $2$ is invertible in $C$ (and so the cup-product is alternating as opposed to being just skew-symmetric).
\end{rmk}

\begin{rmk}\label{rmk:coherent-proetale-etale} \cite[Proposition 3.23]{Schsurvey} shows that the cup-product map 
\[
    \bigwedge^n \left(\rm{R}^1\mu_*\wdh{\O}_X\right) \xr{\cup^n} \rm{R}^n\mu_*\wdh{\O}_X
\]
is an isomorphism for any $n\geq 1$. Therefore, the map $\rm{Sch}^n_X$ is an isomorphism for any $n\geq 1$. In particular, the $\O_X$-module $\rm{R}^n\mu_*\wdh{\O}_X$ is coherent for any $n\geq 1$.  
\end{rmk} 

Now we wish to compute $\rm{Sch}'^1_X$ explicitly for an affinoid space $X$ admitting a finite \'etale morphism $X \to \bf{T}^d$ to the $d$-dimensional torus. We start with the following version of the K\"unneth formula for $\rm{Sch}'^1_X$:

\begin{lemma}\label{lemma:kunneth-scholze} Let $\pi_i\colon \bf{T}^d \to \bf{T}^1$ be the $i$-the projection map for $i=1, \dots, d$. Then the diagram
\begin{equation}\label{eqn:kunneth-scholze}
\begin{tikzcd}[column sep = 6em]
    \bigoplus_{i=1}^d\pi^*_i\,\Omega^1_{\bf{T}^1_{\et}} \arrow{d} \arrow{r}{\bigoplus_{i=1}^d \pi^*_i\left(\rm{Sch}'^1_{\bf{T}^1}\right)} & \bigoplus_{i=1}^d \pi^*_i\, \rm{R}^1\lambda_{\bf{T}^1, *}\wdh{\O}_{\bf{T}^1}(1) \arrow{d} \\
    \Omega^1_{\bf{T}^d_{\et}}\arrow{r}{\rm{Sch}'^1_{\bf{T}^d}} & \rm{R}^1\lambda_{\bf{T}^d, *}\wdh{\O}_{\bf{T}^d}(1)
\end{tikzcd}
\end{equation}
commutes for any $d\geq 1$. Furthermore, all arrows in this diagram are isomorphisms.
\end{lemma}
\begin{proof}
    Note that the torus $\bf{T}^d$ is defined over $\Q_p$, so we can use the construction of $\rm{Sch}'^1$ from \cite[Corollary 6.14 and 6.19]{Sch1}. Then the desired commutativity follows from functoriality of the $\bf{B}_{\rm{dR}}^+$-de Rham complex (see \cite[Definition 6.8 and the discussion after it]{Sch1}). Now we note that the left vertical arrow of Diagram~(\ref{eqn:kunneth-scholze}) is an isomorphism by construction, while horizontal arrows are isomorphisms because $\rm{Sch}'^1_X$ is an isomorphism for any smooth $X$. This formally implies that the right vertical arrow is also an isomorphism. 
\end{proof}

Later in the paper, we will need to sometime perform explicit computations in the ``framed'' case. All these computations follow a somewhat general pattern, so we fix the relevant notation here in order to not repeat it every time we need it. 

\begin{notation}\label{notation:perfectoid-covering} Let $R^+=\O_C\langle T_1^{\pm 1}, \dots, T_d^{\pm 1}\rangle$, $R=R^+[1/p]$, $\bf{T}^d = \Spa(R, R^+)$ a $d$-dimension torus, and $X=\Spa(S, S^+)\to \bf{T}^d$ an \'etale morphism that is a composition of rational embeddings and finite \'etale morphisms.

We consider the $R^+$-algebras $R_m^+=\O_C\langle T_1^{\pm 1/p^m}, \dots, T_d^{\pm 1/p^m}\rangle$, $R^+_\infty=\O_C\langle T_1^{\pm 1/p^\infty}, \dots, T_d^{\pm 1/p^\infty}\rangle$, $R_m=R_m^+[1/p]$, and $R_\infty=R_\infty^+[1/p]$. We also consider the corresponding pro-\'etale covering 
\[
\bf{T}^d_\infty \coloneqq \lim_m \bf{T}^d_m = \Spa(R_m, R_m^+) \to \bf{T}^d
\]
that is $\Gamma=\Z_p(1)^d$-torsor with the total space $\bf{T}^d_\infty$ being an affinoid perfectoid space with the associated adic space $\Spa(R_\infty, R_\infty^+)$. We denote the induced torsor over $X$ by
\[
X_\infty \coloneqq \bf{T}^d_\infty \times_{\bf{T}^d} X \to X
\]
that is also a $\Gamma$-torsor with $X_\infty$ being an affinoid perfectoid space (see \cite[Lemma 4.5]{Sch1}). Furthermore, the associated adic space $\wdh{X}_\infty$ is isomorphic to $\Spa(S_\infty, S_\infty^+)$ with $S_\infty = R_\infty \wdh{\otimes}_{R} S$ and $S_\infty^+$ equal to the integral closure of $\rm{Im}(R_\infty^+\wdh{\otimes}_{R^+} S^+ \to S_\infty)$ inside $S_\infty$. \smallskip

The Almost Purity Theorem implies that the natural morphism
\[ 
\rm{H}^i_{\cont}(\Gamma, S_{\infty}) \to \Gamma\left(X, \rm{R}^i\lambda_*\wdh{\O}_X\right) 
\]
is an isomorphism for any $i\geq 0$. Furthermore, the explicit computation with Koszul complexes (see \cite[Lemma 5.5]{Sch1}) ensures that the natural morphism 
\[
    \rm{H}^i_\cont(\Gamma, \Z_p)\otimes_{\Z_p} S \to \rm{H}^i_{\cont}(\Gamma, S_{\infty})
\]
is an isomorphism for $i\geq 0$. In particular, we get a canonical identification 
\[
\rm{Hom}_{\rm{cont}}\left(\Gamma, \Z_p(1)\right)\otimes_{\Z_p} S \simeq \rm{H}^1_{\rm{cont}}\left(\Gamma, S_\infty(1)\right) \simeq \Gamma\left(X, \rm{R}^i\lambda_*\wdh{\O}_X(1)\right).
\]
Since $\Omega^1_{X_{\et}}$ and $\rm{R}^1\lambda_*\wdh{\O}_X(1)$ are coherent, the map $\rm{Sch}'^1_X$ can be identified with an $S$-linear map
\[
\rm{Sch}'^1_X\colon \bigoplus_{i=1}^d S\cdot \frac{\rm{d}T_i}{T_i}=\Omega^1_{X_{\et}}(X) \to \rm{Hom}_{\rm{cont}}\left(\Z_p(1)^d, \Z_p(1)\right)\otimes_{\Z_p} S = \rm{H}^1_{\cont}(\Gamma, S_{\infty}(1)).
\]
\end{notation}

\begin{lemma}\label{lemma:nice-formula-generic-fiber} Let $X \to \bf{T}^d$ be an \'etale morphism that is a composition of rational embeddings and finite \'etale morphisms. Then, following Notation~\ref{notation:perfectoid-covering}, the morphism 
\[
\rm{Sch}'^1_X\colon \bigoplus_{i=1}^d S\cdot \frac{\rm{d}T_i}{T_i} \to \rm{Hom}_{\rm{cont}}\left(\Z_p(1)^d, \Z_p(1)\right)\otimes_{\Z_p} S
\]
is equal to the unique $S$-linear map that sends $\frac{\rm{d}T_i}{T_i}$ to the projection morphism
\[
\rm{pr}_i \in \rm{Hom}_{\rm{cont}}\left(\Z_p(1)^d, \Z_p(1)\right) \subset \rm{Hom}_{\rm{cont}}\left(\Z_p(1)^d, \Z_p(1)\right)\otimes_{\Z_p} S.
\]
\end{lemma}
\begin{proof}
    Since $\rm{Sch}'^1$ is \'etale local, it suffices to prove the claim for $X=\bf{T}^d$. Using Lemma~\ref{lemma:kunneth-scholze}, one reduces to the case $d=1$. This case we prove in Lemma~\ref{lemma:explicit-scholze} below.
\end{proof}

\begin{lemma}\label{lemma:explicit-scholze} In the notation as above, we have 
\[
\rm{Sch}'^1_{\bf{T}^1} \left(\frac{\rm{d}T}{T}\right) = \rm{id}\in \rm{Hom}_{\rm{cont}}\left(\bf{Z}_p(1), \bf{Z}_p(1)\right) \subset \rm{H}^1_{\rm{cont}}\left(\Gamma, \Z_p(1)\right)\otimes_{\Z_p} R.
\]
\end{lemma}
\begin{proof}
    We recall that $R=C\langle T^{\pm 1}\rangle$, $R_\infty=C\langle T^{\pm 1/p^\infty}\rangle$, $X=\bf{T}^1$, and $\Gamma=\Z_p(1)$. \smallskip
    
    {\it Step~$1$.} We now note that \cite[Lemma 3.24]{Schsurvey} guarantees that there is a commutative diagram
    \[
    \begin{tikzcd}\label{diagram:faltings-extension}
        R^\times \arrow{d}{\rm{dlog}}\arrow{r}{\delta} & \rm{H}^1_{\proet}\left(\Spa(R, R^+), \wdh{\bf{Z}}_p(1)\right) \arrow{d}{\alpha} \\
        \Omega^1_{\bf{T}^1}(\bf{T}^1) \arrow{r}{\rm{Sch}'^1_X} & \rm{H}^1_{\rm{cont}}\left(\Gamma, R_{\infty}(1)\right)=\rm{H}^1_{\proet}\left(X, \wdh{\O}_X(1)\right),
    \end{tikzcd}
    \]
    where the top horizontal map is the connecting map corresponding to the extension\footnote{\cite[Lemma 3.24]{Schsurvey} uses the ``uncompleted'' short exact sequence $0\to \wdh{\bf{Z}}_p(1) \to \lim_{\times p} \O_X^\times \to \O_X^\times \to 0$, but it is straightforward to see that it gives the same boundary map.} of sheaves 
    \begin{equation}\label{eqn:nedo-faltings}
        0 \to \wdh{\bf{Z}}_p(1) \to \lim_{\times p} \wdh{\O}_X^\times \to \wdh{\O}_X^\times \to 0
    \end{equation}
    on $X_\proet$. 
    Thus the question boils down to computing the element 
    \[
    \rm{Sch}'^1_X(\rm{dlog}\,T)=\alpha(\delta(T)).
    \]

    {\it Digression.} In order to compute $\alpha(\delta(X))$, we will need to consider the pro-etale $\Gamma$-covering
    \[
    X_\infty = \Spa (R_\infty, R^+_\infty) \to X=\Spa(R, R^+).
    \]
    Then the Cartan-Leray spectral sequence defines a canonical morphism
    \[
    \iota\colon \rm{H}^1_{\rm{cont}}(\Gamma, \Z_p(1))=\Check{\rm{H}}^1(X_\infty/X, \wdh{\Z}_p(1)) \to \rm{H}^1_{\proet}\left(\Spa(R, R^+), \wdh{\Z}_p(1)\right)
    \]
    such that $\alpha \circ \iota$ can be identified with the canonical morphism
    \[
    \rm{H}^1_{\rm{cont}}\left(\Gamma, \Z_p(1)\right) \to \rm{H}^1_{\rm{cont}}\left(\Gamma, R_\infty(1)\right)
    \]
    induced by $\Z_p(1) \to R_\infty(1)$. \smallskip
    
    {\it Step~$2$.} Now we pass from the short exact sequence (\ref{eqn:nedo-faltings}) to the associated left exact sequence of \v{C}ech complexes (associated to the pro-\'etale covering $X_\infty \to X$) to get the following left exact sequence of complexes
    \[
    0\to \Check{\rm{C}}(X_\infty/X, \wdh{\Z}_p(1)) \to \Check{\rm{C}}(X_\infty/X, \lim_{\times p} \wdh{\O}_X^\times) \to \Check{\rm{C}}(X_\infty/X, \wdh{\O}_X^\times).
    \]
    We note that the element $T\in R_\infty^\times$ lies in the image of 
    \[
    \check{\rm{C}}^0(X_\infty/X, \lim_{\times p}\wdh{\O}_X^\times)=\lim_{\times p}R_\infty^\times \to \Check{\rm{C}}^0(X_\infty/X, \wdh{\O}_X^\times)=R_\infty^\times
    \]
    with the explicit pre-image given by $\{T^{1/p^n}\}_n\in \lim_{\times p}R_\infty^\times$. \smallskip
    
    Therefore, \cite[Proposition F.2.1]{Zev} implies that we can compute $\delta(T)$ as follows:
    \[
    \delta(T) = \iota\left(\ov{\rm{d}\big(\{T^{1/p^n}\}_n\big)}\right) \in \rm{H}^1_{\proet}\left(\Spa(R, R^+), \wdh{\Z}_p(1)\right),
    \]
    where $\rm{d}\colon \Check{\rm{C}}^0(X_\infty/X, \lim_{\times p}\wdh{\O}_X^\times) \to \Check{\rm{C}}^1(X_\infty/X, \lim_{\times p}\wdh{\O}_X^\times)$ is the differential, and $\ov{\rm{d}\big(\{T^{1/p^n}\}_n\big)}$ is the corresponding class\footnote{Note that this class automatically lies in $\Check{\rm{C}}^1(X_\infty/X, \wdh{\Z}_p(1))\subset \Check{\rm{C}}^1(X_\infty/X, \lim_{\times p}\wdh{\O}_X^\times)$.} in $\Check{\rm{H}}^1(X_\infty/X, \wdh{\Z}_p(1))=\rm{H}^1_{\rm{cont}}(\Gamma, \Z_p(1))$.\smallskip
    
    {\it Step~$3$.} Now we choose a compatible sequence of primitive $p$-power roots of unity $\zeta_{p^n}$. This defines a topological generator $\gamma=(1, \zeta_p, \zeta_{p^2}, \dots)\in \Gamma$, and so defines an isomorphism $\Z_p(1)\cong \Z_p$. The action of $\gamma$ on $T^{1/p^m}$ is given by $\zeta_{p^m}$ for any $m$. Then one sees that 
    \[
    \rm{d}\big(\{T^{1/p^n}\}_n\big) \in \rm{Map}_{\rm{cont}}\left(\Gamma, \Z_p(1)\right) = \Check{\rm{C}}^1(X_\infty/X, \wdh{\Z}_p(1)) 
    \]
    is given by the unique continuous morphism such that
    \[
    \rm{d}\big(\{T^{1/p^n}\}_n\big) (\gamma^n) = \{1, \zeta_p^n, \zeta_{p^2}^n, \dots\}=\gamma^n
    \]
    for any $n\geq 1$. In other words, $\rm{d}\big(\{T^{1/p^n}\}_n\big)$ is equal to $\rm{id}\in \rm{Map}_{\rm{cont}}(\Z_p(1), \Z_p(1))$. In particular, its class in $\rm{H}^1_{\rm{cont}}\left(\Z_p(1),\Z_p(1)\right)=\rm{Hom}_{\rm{cont}}(\Z_p(1), \Z_p(1))$ is equal to $\rm{id}\in \rm{Hom}_{\rm{cont}}(\Z_p(1), \Z_p(1))$. Therefore, we finally conclude that
    \[
     \rm{Sch}'^1_X\left(\rm{dlog}\,T\right)=\alpha\left(\delta(X)\right) = \alpha\circ \iota(\rm{id})\in \rm{H}^1_{\rm{cont}}(\Gamma, R_{\infty})=\rm{H}^1_{\rm{cont}}(\Gamma, \Z_p(1))\otimes_{\Z_p} R.
    \]
    Using that the composition $\alpha\circ \iota$ is naturally identified with the canonical morphism $\rm{H}^1_{\rm{cont}}\left(\Gamma, \Z_p(1)\right) \to \rm{H}^1_{\rm{cont}}\left(\Gamma, R_\infty(1)\right)$ (see {\it Digression}), we conclude that
    \[
    \rm{Sch}'^1_X\left(\frac{\rm{d}T}{T}\right)= \rm{id}\in \rm{Hom}_{\rm{cont}}\left(\bf{Z}_p(1), \bf{Z}_p(1)\right) \subset \rm{H}^1_{\rm{cont}}\left(\Gamma, \Z_p(1)\right)\otimes_{\Z_p} R.\qedhere
    \]
\end{proof}

\begin{cor}\label{cor:compatible-traces-generic-fiber} Let $f\colon X' \to X$ be a finite \'etale morphism of qcqs smooth separated rigid-analytic $C$-varieties of pure dimension $d$. Then the following diagram
\begin{equation}\label{eqn:scholze-commutative}
\begin{tikzcd}[column sep = 4em]
f_* \Omega^d_{X'}(-d) \arrow{r}{\rm{Tr}_f(-d)} \arrow{d}{f_*\left(\rm{Sch}^d_{X'} \right)}& \Omega^d_X(-d) \arrow{d}{\rm{Sch}^d_X} \\
f_* \rm{R}^d\mu_{X', *}\wdh{\O}_{X'} \arrow{r}{\cal{H}^d(\rm{Tr}_{\rm{an}, f})} & \rm{R}^d\mu_{X, *} \wdh{\O}_X \\
\end{tikzcd}
\end{equation}
commutes, where $\rm{Tr}_f$ is the coherent trace morphism from Theorem/Definition~\ref{thm-def:rigid-analytic-trace} and $\rm{Tr}_{\rm{an}, f}$ is the pro-\'etale trace morphism from Definition~\ref{defn:analytic-trace}.
\end{cor}
\begin{proof}
    This question can be checked locally on $X$, so \cite[Lemma 5.2]{Sch1} (or \cite[Corollary D.5]{Z3} and the standard embedding $\bf{D}^n \subset \bf{T}^n$ as a rational subset $|X_i-1|\leq |p|$) implies that we can assume that $X=\Spa(A, A^+)$ is an affinoid and admits an \'etale morphism $g\colon X \to \bf{T}^d$ that is a composition of finite \'etale maps and rational embeddings. In this case, $X'=\Spa(B, B^+)$ is automatically an affinoid by \cite[p. 3.6.20]{H2}. \smallskip
    
    We note that $f_* \rm{R}^d\mu_{X', *}\wdh{\O}_{X'}$ and $\rm{R}^d\mu_{X, *} \wdh{\O}_X$ are coherent by Remark~\ref{rmk:coherent-proetale-etale}. So it suffices to check that Diagram~(\ref{eqn:scholze-commutative}) commutes after applying global sections. Then we use the map
    \[
    g\colon X \to \bf{T}^d=\Spa(C\langle T_{1}^{\pm 1}, \dots, T_d^{\pm 1}\rangle, \O_C\langle T_{1}^{\pm 1}, \dots, T_d^{\pm 1}\rangle)
    \]
    to define ``coordinates'' $z_i\coloneqq g^\sharp(T_i)$ on $X$, and 
    \[
        \rm{H}^0(X, \Omega^d_X)=A\left(\frac{\rm{d}z_1}{z_1}\wedge\frac{\rm{d}z_2}{z_2}\wedge \dots \wedge \frac{\rm{d}z_d}{z_d}\right)
        \]
    and, similarly,
    \[
        \rm{H}^0(X', \Omega^d_{X'})=B\left(\frac{\rm{d}z_1}{z_1}\wedge\frac{\rm{d}z_2}{z_2}\wedge \dots \wedge \frac{\rm{d}z_d}{z_d}\right).
    \]
    Now we choose a compatible system of $p$-power roots of unity $\{1, \zeta_p, \zeta_{p^2}, \dots\}$ to trivialize $\Z_p(1)$, so Lemma~\ref{lemma:trace-explicit-rigid}, Lemma~\ref{lemma:nice-formula-generic-fiber}, and Lemma~\ref{lemma:explicit-trace-etale} reduce the question to showing that the diagram
    \[
    \begin{tikzcd}[column sep = 10em, row sep = 4em]
        B\left(\frac{\rm{d}z_1}{z_1}\wedge\frac{\rm{d}z_2}{z_2}\wedge \dots \wedge \frac{\rm{d}z_d}{z_d}\right) \arrow{r}{\rm{Tr}_{B/A}\cdot \left(\frac{\rm{d}z_1}{z_1}\wedge\frac{\rm{d}z_2}{z_2}\wedge \dots \wedge \frac{\rm{d}z_d}{z_d}\right)} \arrow{d}{b\cdot \frac{\rm{d}z_1}{z_1}\wedge \dots \wedge \frac{\rm{d}z_d}{z_d} \mapsto b} & A\left(\frac{\rm{d}z_1}{z_1}\wedge\frac{\rm{d}z_2}{z_2}\wedge \dots \wedge \frac{\rm{d}z_d}{z_d}\right) \arrow{d}{a\cdot \frac{\rm{d}z_1}{z_1}\wedge \dots \wedge \frac{\rm{d}z_d}{z_d} \mapsto a} \\
        B \arrow{r}{\rm{Tr}_{B/A}} & A
    \end{tikzcd}
    \]
    is commutative. But this is clear by inspection.
\end{proof}

\subsubsection{Relationship between $\rm{Sch}_X$ and $\left(\Phi_\X\right)_C$}

\begin{rmk} Lemma~\ref{lemma:integral-rational-pullback} and Lemma~\ref{lemma:kill-torsion} imply that there is a canonical isomorphism 
\[
\left(\widetilde{\rm{R}^n\nu_*\wdh{\O}^+_X}\right)_C \simeq \rm{R}^n\mu_*\wdh{\O}_X
\]
for any $n\geq 0$. Therefore, we will everywhere implicitly identify the target of $\left(\Phi^n_\X\right)_C$ with $\rm{R}^n\mu_*\wdh{\O}_X$, and consider $\left(\Phi^n_\X\right)_C$ as a morphism
\[
\left(\Phi^n_\X\right)_C \colon \Omega^n_X\{-n\} \to \rm{R}^n\mu_*\wdh{\O}_X. 
\]
\end{rmk}

\begin{rmk} For any admissible formal $\O_C$-scheme $\X$, the map $\rm{dlog}\colon \O_C(1) \to \O_C\{1\}$ from Theorem~\ref{thm:Fontaine} induces a morphism
\[
\wdh{\Omega}^n_\X\{-n\} \xr{\rm{id} \otimes (\rm{dlog}^{\vee})^{\otimes n}} \wdh{\Omega}^n_{\X}(-n)
\]
such that its kernel and cokernel are annihilated by $(\zeta_p-1)^n$. This induces an isomorphism
\[
 \Omega^n_{X}\{-n\} \xr{\rm{id} \otimes (\rm{dlog}^{\vee})^{\otimes n}} \Omega^n_{X}(-n)
\]
on the generic fiber $X=\X_C$. 
\end{rmk}

\begin{thm}\label{thm:scholze-bms-coincide-on-generic-fibers} Let $\X$ be an admissible formal $\O_C$-scheme with reduced special fiber and smooth generic fiber $X=\X_C$. Then the diagram
\begin{equation}\label{282}
\begin{tikzcd}
\Omega^n_X(-n) \arrow{r}{\rm{Sch}_X^n} & \rm{R}^n\mu_*\,\wdh{\O}_X \\
\Omega^n_X\{-n\} \arrow{u}{j=\rm{id}\otimes \left(\rm{dlog}^{\vee}\right)^{\otimes n}} \arrow[ru, swap, "\left(\Phi^n_\X\right)_C"] &
\end{tikzcd}
\end{equation}
is commutative for any $n\geq 1$. In particular, the map $(\Phi^n_\X)_C$ is an isomorphism for $n\geq 1$.
\end{thm}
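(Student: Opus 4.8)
The plan is to reduce the statement to the case $n=1$ and then compare the two constructions directly via the cotangent complex. Both maps $\mathrm{Sch}^n_X$ and $(\Phi^n_\X)_C$ are, by their very definitions, built from their degree-$1$ analogues by wedging together $n$ copies of a degree-$1$ map and then applying the $n$-fold cup product (for $\mathrm{Sch}^n$ the cup product $\bigwedge^n(\rm R^1\mu_*\wdh\O_X(1))\to \rm R^n\mu_*\wdh\O_X$, and for $\Phi^n$ the cup product on the $(\zeta_p-1)^\infty$-torsion-free quotients of $\rm R^\bullet\nu_*\wdh\O^+_X$, suitably rationalized and reidentified along Lemma~\ref{lemma:integral-rational-pullback}). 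Since the identification $\rm{Id}_{\Omega^n_X}\otimes(\rm{dlog}_{\O_C}^\vee)^{\otimes n}$ is compatible with wedge powers of $\rm{Id}_{\Omega^1_X}\otimes\rm{dlog}_{\O_C}^\vee$, and since $(\zeta_p-1)^\infty$-torsion-free quotients of $\rm R^\bullet\nu_*\wdh\O^+_X$ become, after inverting $p$ (equivalently, after applying $(-)_C$), honestly the sheaves $\rm R^\bullet\mu_*\wdh\O_X$ with their usual cup product, commutativity of diagram~(\ref{282}) for general $n$ follows formally from commutativity for $n=1$ together with the compatibility of the two cup-product structures. So the first step is to carefully record these compatibilities — that $\rm{Sch}$ and $\Phi$ are both ``multiplicative'' in the relevant sense, and that the vertical identification intertwines these multiplicative structures — thereby reducing everything to the square
\[
\begin{tikzcd}
\Omega^1_X(-1) \arrow{r}{\rm{Sch}^1_X} & \rm R^1\mu_*\wdh\O_X \\
\Omega^1_X\{-1\} \arrow{u}{\rm{Id}\otimes \rm{dlog}_{\O_C}^\vee}\arrow[ru,swap,"(\Phi^1_\X)_C"] &
\end{tikzcd}
\]

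For the case $n=1$ the key observation is that both $\mathrm{Sch}^1_X$ and $\Phi^1_\X$ arise from the \emph{same} source: functoriality of the cotangent complex. Recall $\Phi^1_\X$ was defined via $\mathcal H^0$ of the map $\wdh L_{\O_\X/\Z_p}\to \bf R\nu_*\wdh L_{\wdh\O^+_X/\Z_p}$, using Lemma~\ref{lemma:ident-1} to identify the source with $\wdh\Omega^1_\X$ and Lemma~\ref{lemma:ident-2} to identify the target with $\rm R^1\nu_*\wdh\O^+_X\{1\}$, then twisting. On the other hand, Scholze's map $\mathrm{Sch}'^1_X\colon \Omega^1_{X_{\et}}\to \rm R^1\lambda_*\wdh\O_X(1)$ (from \cite[Corollary 6.19]{Sch1}) is likewise constructed from the cotangent complex of $\wdh\O_X$ relative to $C$, or equivalently from the $\mathrm{dlog}$-type boundary map in the Faltings extension. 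So the plan is to rationalize the integral construction: apply $(-)_C$ to the map $\wdh L_{\O_\X/\Z_p}\to \bf R\nu_*\wdh L_{\wdh\O^+_X/\Z_p}$, use that $\wdh L_{\O_C/\Z_p}\simeq \O_C\{1\}[1]$ (so that rationally the $\Z_p$-relative and $C$-relative cotangent complexes differ only by the shifted line $C\{1\}$, whose effect is exactly the Breuil–Kisin-vs-Tate twist bookkeeping), and thereby match $(\Phi^1_\X)_C$ — after dividing by $(\zeta_p-1)$ in passing from $\{1\}$ to $(1)$ — with the rationalized version of $\lambda$-applied cotangent-complex map, which is precisely $\mathrm{Sch}^1_X$ by Scholze's definition. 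Concretely one checks this on a basis of sufficiently small affinoids carrying étale coordinates, where both maps send $\frac{dz_i}{?}$ to the class of $\mathrm{dlog}$ of a compatible system of $p$-power roots of $z_i$, and the factor $(\zeta_p-1)$ is accounted for exactly by $\rm{dlog}_{\O_C}\colon \O_C(1)\hookrightarrow\O_C\{1\}$ having image $(\zeta_p-1)\O_C\{1\}$.

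The main obstacle I anticipate is the careful bookkeeping of twists and of the $(\zeta_p-1)$-powers: one must show that the naive map $(\Phi^1_\X)_C$ lands in the correct submodule and that ``dividing by $(\zeta_p-1)$'' is compatible on both sides with the isomorphism $\mathrm{dlog}_{\O_C}\colon\O_C(1)\to(\zeta_p-1)\O_C\{1\}$, and to verify this one really needs to unwind the BMS constructions (Lemma~\ref{lemma:BMS}) and Scholze's construction at the level of explicit cocycles / explicit presentations of the cotangent complex, rather than purely formally. A secondary, more technical point is justifying the identification $\left(\frac{\rm R^n\nu_*\wdh\O^+_X}{\rm R^n\nu_*\wdh\O^+_X[(\zeta_p-1)^\infty]}\right)_C\cong \rm R^n\mu_*\wdh\O_X$ in a way that is \emph{compatible with cup products} (not just an abstract isomorphism), which is where Lemma~\ref{lemma:integral-rational-pullback} and coherence of $\rm R^n\mu_*\wdh\O_X$ do the work; once that compatibility is in place, the reduction from general $n$ to $n=1$ is bookkeeping, and the $n=1$ case is the comparison of two avatars of the Faltings extension. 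Finally, the ``in particular'' clause — that $(\Phi^n_\X)_C$ is an isomorphism — is then immediate, since $\mathrm{Sch}^n_X$ is an isomorphism (by \cite[Proposition 3.23]{Schsurvey}) and the vertical map $\rm{Id}\otimes(\mathrm{dlog}_{\O_C}^\vee)^{\otimes n}$ is an isomorphism after rationalization.
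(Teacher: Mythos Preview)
Your reduction to $n=1$ is correct and matches the paper's first step. The gap is in how you handle $n=1$.

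You claim $\mathrm{Sch}^1_X$ is ``by definition'' the rationalized cotangent-complex map, so that matching it with $(\Phi^1_\X)_C$ becomes twist bookkeeping. But Scholze's actual definition (as the paper uses it, via \cite[Lemma~3.24]{Schsurvey}) is through the Kummer boundary for $0\to\wdh{\bf Z}_p(1)\to\lim_{\times p}\O_X^\times\to\O_X^\times\to 0$ composed with $\mathrm{dlog}$, not through functoriality of $L_{-/\bf Z_p}$. Proving that these two constructions agree is the content of the theorem, not an input to it.

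Your fallback --- ``check on a basis of sufficiently small affinoids carrying \'etale coordinates'' --- hits an obstacle the paper flags explicitly in a footnote: $(\Phi^1_\X)_C$ depends on the model $\X$ and is \emph{not known} to be \'etale-local on $X=\X_C$; only \'etale-locality on $\X$ is available (from BMS). So you cannot simply localize on $X$. The paper's route is: (i) use the Reduced Fiber Theorem to pass to $\X'\to\X$ with reduced special fiber, and show via a separate diagram chase (involving Lemma~\ref{generic-fiber-pushforward}) that $(\Phi^1_{\X'})_C=(\Phi^1_\X)_C$ --- a step you omit entirely; (ii) with reduced special fiber, $\X^{\sm}_C$ is nonempty and dense in each component of $X$, so by \cite[Lemma~2.1.4]{C} it suffices to check equality of two vector-bundle maps over $\X^{\sm}_C$; (iii) now use \'etale-locality on $\X^{\sm}$ to reduce to $\X=\wdh{\bf G}_m^d$; (iv) compute both sides explicitly on the torsor $\bf{T}^d_\infty\to\bf{T}^d$: one verifies $\mathrm{BMS}^1(\tfrac{dT}{T})=\mathrm{dlog}\otimes 1$ and, via the Kummer boundary, $\mathrm{Sch}'^1(\tfrac{dT}{T})=\mathrm{Id}\otimes 1$, which match under $\mathrm{dlog}_{\O_C}$. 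Step (iv) is where the identification you hoped was formal is actually established --- but only after the reductions (i)--(iii) that your plan does not supply.
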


\begin{proof} 

{\it Step 1. Reduction to the case $n=1$}: We recall that both morphisms $\Phi^n_\X$ and $\rm{Sch}^n$ are defined as the composition of the wedge power of $\Phi^1_\X$ and $\rm{Sch}^1$ with the cup product map. Since cup product commutes with base change, it is sufficient to show the claim for $n=1$. \\

{\it Step 2. Reduce to the case of a smooth model $\X$ with ``good coordinates''}: Firstly, we can assume that $X$ is connected by replacing it with its connected component and corresponding connected component of $\X$ (that is well-defined by Lemma~\ref{lemma:connected}). \smallskip

Now both $\Omega^1_X\{-1\}$ and $\rm{R}^1\mu_*\wdh{\O}_X$ are vector bundles, so the locus where the maps $\rm{Sch}^n_X \circ j$ and $(\Phi^n_\X)_C$ are equal on fibers is a Zariski-closed subset. Since $X$ is reduced, it suffices to show that this Zariski-closed subset is equal to $X$. \smallskip

We use \cite[Lemma 2.1.4]{C} to see that it is sufficient to check the equality on some non-empty open subset $U\subset X$. In particular, it is sufficient to check equality on the open $\X^\sm_C$ that is non-empty as $\X$ has reduced special fiber. Therefore, we can replace $\X$ with $\X^\sm$ to assume that $\X$ is smooth. Moreover, we can localize $\X$ even further to assume that it is affine and has a finite \'etale morphism to $\wdh{\bf{G}}^d_{m, \O_C}$. The latter is achieved by \cite[Lemma 4.9]{Bhatt-spec}. \smallskip

{\it Step 3. Reduce to $\X=\wdh{\bf{G}}_m$}: We assume that $\X$ admits a finite \'etale map $\wdh{\bf{G}}^d_m$. We first note that $\Phi^1_\X$ is \'etale local on $\X$ by Remark~\ref{rmk:etale-local-phi} (alternatively, see \cite[Corollary 8.13]{BMS1} and the discussion after the proof of \cite[Lemma 8.16]{BMS1}). The morphism $\rm{Sch}^1$ is \'etale local on $X$ by its construction as it comes from a morphism of coherent sheaves in the \'etale topology. Thus, it suffices to prove the claim for $\X=\wdh{\bf{G}}_m^d$. Now one uses (the twisted version of) Lemma~\ref{lemma:kunneth-scholze} and Remark~\ref{rmk:kunneth-phi} to reduce to $\X=\bf{G}_m$. \smallskip

{\it Step~$4$. Finish the proof in case $\X=\wdh{\bf{G}}_m$.} In this case, we note that all sheaves in Diagram~(\ref{282}) are coherent sheaves on an affinoid $\bf{T}^1$. Therefore, we can check equality of maps on global sections. In what follows, we follow Notation~\ref{notation:perfectoid-covering} (in particular, $R=C\langle T^{\pm 1}\rangle$ and $R^+=\O_C\langle T^{\pm 1}\rangle$). With that notation in mid, we conclude that Diagram~(\ref{282}) commutes if and only if the diagram
\[
\begin{tikzcd}
\Omega^1_{\bf{T}^1_C/C}(-1) \arrow{r}{\rm{Sch}_{\bf{T}^1}^1} &  \rm{H}^1_{\cont}\left(\Gamma, R_{\infty}\right) \\
\Omega^1_{\bf{T}^1_C/C}\{-1\} \arrow{u}{\rm{id}\otimes \rm{dlog}^{\vee}} \arrow[ru, swap, "\left(\Phi^1_{\wdh{\bf{G}}_m}\right)_C"] &
\end{tikzcd}
\]
commutes. After untwisting (and using the computation of $\rm{H}^1_{\rm{cont}}(\Gamma, R_\infty)$), we see that the above diagram commutes if and only if the diagram
\[
\begin{tikzcd}[column sep=13ex, row sep = 4em]
R\cdot \frac{\rm{d}T}{T}=\wdh{\Omega}^1_{R^+/\O_C}[1/p] \arrow{r}{} \arrow[rd, swap, "\rm{Sch}'^1_{\bf{T}^1}"] \arrow{r}{\left(\rm{BMS}^1_{\wdh{\bf{G}}_m}\right)_C}& \rm{H}^1_{\rm{cont}}\left(\Gamma, \O_C\{1\}\right) \otimes_{\O_C} R \\
& \arrow[u, swap, "\rm{H}^1_{\rm{cont}}\left(\rm{dlog}\right) \otimes \rm{id}"]\rm{H}^1_{\rm{cont}}\left(\Gamma,\Z_p(1)\right)\otimes_{\Z_p} R
\end{tikzcd}
\]
commutes (see the proof of Lemma~\ref{lemma:BMS} for the recollection on the $\rm{BMS}$-map). Now \cite[Proposition 8.17]{BMS1} shows that 
\[
    \rm{BMS}^1_{\wdh{\bf{G}}_m}\left(\frac{\rm{d}T}{T}\right)=\rm{dlog}\otimes 1 \in \rm{Hom}_{\rm{cont}}\left(\bf{Z}_p(1), \O_C\{1\}\right)\otimes_{\O_C} R
\]
under the identification $\rm{H}^1_{\rm{cont}}(\Gamma, \O_C\{1\})\simeq \rm{Hom}_{\rm{cont}}(\bf{Z}_p(1), \O_C\{1\})$. Therefore, it suffices to show that 
\[
\rm{Sch}'^1_{\bf{T}^1} \left(\frac{\rm{d}T}{T}\right) = \rm{id}\in \rm{Hom}_{\rm{cont}}(\bf{Z}_p(1), \bf{Z}_p(1)) \subset \rm{H}^1_{\rm{cont}}(\Gamma, \O_C(1))\otimes_{\O_C} R.
\]
This was already verified in Lemma~\ref{lemma:explicit-scholze}.
\end{proof}

\begin{thm-def} Let $\X$ be an admissible formal $\O_C$-scheme with smooth generic fiber $X=\X_C$ and reduced special fiber, and let $n\geq 1$ be an integer. Then there is a unique $\O_X$-linear {\it isomorphism} $(\Psi'^n_\X)_C\colon (\widetilde{\rm{R}^n\nu_*\wdh{\O}^+_X})_C \to \Omega^n_X(-n)$ such that the diagram
\begin{equation}\label{eqn:psi-generic-fiber}
\begin{tikzcd}[row sep = 3em, column sep = 4em]
\left(\widetilde{\rm{R}^n\nu_*\wdh{\O}^+_X}\right)_C  \arrow[r, "\left(\Psi'^n_\X\right)_C"] & \Omega^n_{X}(-n) \\
\left(\widetilde{\rm{R}^n\nu_*\wdh{\O}^+_X}\right)_C \arrow{u}{\times (1-\zeta_p)^n} \arrow{r}{\left(\Phi^n_{\X}\right)_C^{-1}}  &\Omega^n_{X} \{-n\} \arrow{u}{\rm{id} \otimes (\rm{dlog}^{\vee})^{\otimes n}},
\end{tikzcd}
\end{equation}
commutes\footnote{Existence of the inverse map $\left(\Phi^n_{\X}\right)^{-1}$ follows from Theorem~\ref{thm:scholze-bms-coincide-on-generic-fibers}.}. 
\end{thm-def}
\begin{proof}
    The statement clearly follows from the fact that all vertical arrows and $(\Phi_\X^n)_C$ are isomorphisms. 
\end{proof}

\begin{defn}\label{defn:phi-generic-fiber} For an admissible separated formal $\O_C$-scheme $\X$ with smooth generic fiber $X=\X_C$ of pure dimension $d$ and reduced special fiber, we define the {\it trace morphism} 
\[
\left(\Psi^d_{\X}\right)_C\colon \left(\rm{R}^d\nu_*\wdh{\O}^+_{X}\right)_C \to (\omega_{\X})_C(-d)
\]
as the composition
\[
\left(\rm{R}^d\nu_*\wdh{\O}^+_{X}\right)_C \xr{\sim} \left(\widetilde{\rm{R}^d\nu_*\wdh{\O}^+_X}\right)_C \xr{\left(\Psi'^{d}_{\X}\right)_C} \Omega^d_{X}(-d) \xr{s_{\X}(-d)} (\omega_{\X})_C(-d) \ ,
\]
where $s_{\X}$ is the isomorphism from Lemma~\ref{lemma:dualizing-forms}.
\end{defn}

For the next remark, we denote by $\delta_\X \colon \left(\rm{R}^n\nu_*\wdh{\O}^+_X\right)_C \xr{\sim} \rm{R}^n\mu_*\wdh{\O}_X$ the composition of the isomorphisms from Lemma~\ref{lemma:integral-rational-pullback}. 

\begin{rmk}\label{rmk:faltings-scholze-generic-fiber} Theorem~\ref{thm:scholze-bms-coincide-on-generic-fibers} implies that, under the canonical identifications $\delta_\X \colon \left(\rm{R}^n\nu_*\wdh{\O}^+_X\right)_C \xr{\sim} \rm{R}^n\mu_*\wdh{\O}_X$ and $s^{-1}_{\X} \colon (\omega_\X)_C \xr{\sim} \Omega^d_X$, the map $\left(\Psi^d_{\X}\right)_C$ becomes equal to $\left(\rm{Sch}^d_X\right)^{-1}$. In particular, it is independent of the choice of $\X$ (up to these identifications). 
\end{rmk}

\subsection{Construction of Faltings' trace}

In this section, we define Faltings' trace map on any admissible separated formal $\O_C$-scheme with smooth generic fiber of pure dimension $d$ and reduced special fiber. The essential idea of the construction is to glue Definition~\ref{defn:trace-on-smooth-locus} and Definition~\ref{defn:phi-generic-fiber} together. Even though, we are ultimately interested in the $\O^+/p$-version of the trace map, it is crucial to first define the $\wdh{\O}^+$-version of the trace map.

\begin{defn}\label{defn:nice-formal-scheme} A {\it nice admissible formal $\O_C$-scheme of dimension $d$} is a separated admissible formal $\O_C$-scheme $\X$ with smooth generic fiber of pure dimension $d$ and reduced special fiber. 
\end{defn}

\begin{thm}\label{thm:faltings-1} Let $\X$ be nice admissible formal $\O_C$-scheme of dimension $d$, and let $X=\X_C$ and $\ov{\X}$ be its generic and special fiber respectively. Then there is a unique morphism 
\[
\rm{Tr}^{d, +}_{F, \X}\colon \rm{R}^d\nu_*\wdh{\O}^+_X \to \omega_\X(-d)
\]
such that $\rm{Tr}^{d, +}_{F, \X}|_{\X^\sm}$ coincides with $\Psi^d_{\X^\sm}$ and $(\rm{Tr}^{d, +}_F)_C$ coincides with $\Psi^d_{X}$.
\end{thm}
\begin{proof}
We note that Theorem~\ref{thm:dualizing-reflexive-formal-schemes} guarantees that the natural map $\omega_\X \to j_{\X^\sm, *}(\omega_{\X^\sm}) \cap {\omega_\X}_C$ is an isomorphism. Therefore, it shows that $\rm{Tr}^{d, +}_{F, \X}$ is uniquely defined by its restriction onto $\X^\sm$ and its pullback on $X$. Furthermore, it implies that it suffices to separately define 
\[
\left(\rm{R}^d\nu_*\wdh{\O}^+_X\right)_C \to \left(\omega_\X\right)_C(-d)
\]
and
\[
\left(\rm{R}^d\nu_*\wdh{\O}^+_X\right)|_{\X^\sm}=\rm{R}^d\nu_*\wdh{\O}^+_{\X^\sm_C} \to  \omega_{\X^\sm}(-d)
\]
such that they coincide on $(\X^\sm)_C$. In other words, we need to check that $\Psi^d_X$ and $(\Psi^d_{\X^\sm})_C$ are the same. This follows from their constructions and Theorem~\ref{thm:scholze-bms-coincide-on-generic-fibers} (and Lemma~\ref{lemma:dualizing-forms} to guarantee that the identifications of differential forms and dualizing modules on $\X^\sm$ and $X$ agree on $(\X^\sm)_C$).
\end{proof}

\begin{defn}\label{defn:falting-trace-full} Let $\X$ be a nice admissible formal $\O_C$-scheme of dimension $d$, and let $X=\X_C$ be its generic fiber. We define {\it integral Faltings' trace} 
\[
\rm{Tr}^+_{F, \X}\colon \bf{R}\nu_*\wdh{\O}^{+, a}_X \to \omega^a_\X(-d)[-d]
\] 
as the composition 
\[
\bf{R}\nu_*\wdh{\O}^{+, a}_X \to \rm{R}^d\nu_*\wdh{\O}^{+, a}_X[-d] \xr{\rm{Tr}_{F, \X}^{+, d}[-d]} \omega_\X^a(-d)[-d],
\]
where the first map is the projection of a complex on its top cohomology sheaf (we recall that $\bf{R}\nu_*\wdh{\O}^{+, a}_X \in \bf{D}^{[0, d]}_{acoh}(\X)$ by \cite[Theorem 6.13.6]{Z3}).
\end{defn}

\begin{lemma}\label{lemma:gen-fiber-Scholze} Let $\X$ be a nice admissible formal $\O_C$-scheme of dimension $d$, and let $X=\X_C$ be its generic fiber. Then the following diagram 
\[
\begin{tikzcd}
\left(\bf{R}\nu_* \wdh{\O}_X^{+, a}\right)_C \arrow{d}{\left(\rm{Tr}^+_{F, \X}\right)_C} \arrow{r}{\delta_\X} & \bf{R}\mu_*\wdh{\O}_X \arrow{r}& \rm{R}^d\mu_*\wdh{\O}_X[-d] \arrow{d}{\left(\rm{Sch}^d_X\right)^{-1}[-d]} \\
\left(\omega_{\X}^a\right)_C(-d)[-d] \arrow{rr}{s_{\X}(-d)[-d]} & & \Omega^d_X(-d)[-d]
\end{tikzcd}
\]
is commutative, where $\delta_\X\colon \left(\bf{R}\nu_*\wdh{\O}^+_X\right)_C \to \bf{R}\mu_*\wdh{\O}_X$ is the composition of the isomorphisms from Lemma~\ref{lemma:integral-rational-pullback}.
\end{lemma}
\begin{proof} 
    This follows directly from the very definition of $\rm{Tr}^+_{F, \X}$ and Remark~\ref{rmk:faltings-scholze-generic-fiber}.
\end{proof}

\begin{defn}\label{defn:F-trace-final} Let $\X$ be a nice admissible formal $\O_C$-scheme of dimension $d$, and let $X=\X_C$ be its generic fiber. We define {\it Faltings' trace} 
\[
\rm{Tr}_{F, \X}\colon \bf{R}\nu_*\left(\O^{+,a}_X/p\right) \to \omega^{\bullet, a}_{\X_0}(-d)[-2d]
\] 
as the composition 
\[
\bf{R}\nu_*\left(\O^{+, a}_X/p\right) \xr{\rm{pr}^{-1}} \bf{R}\nu_*\,\wdh{\O}^{+, a}_X \otimes^{L}_{\O_{\X}}\O_{\X_0} \xr{\rm{Tr}_{F, \X}\otimes^{L}_{\O_\X} \O_{\X_0}}  \omega^a_\X(-d)[-d] \otimes_{\O_{\X}} \O_{\X_0} \xr{\rm{BC}^a_{\omega_\X}(-d)[-2d]}  \omega^{\bullet, a}_{\X_0}(-d)[-2d]\,
\]
where $\rm{pr}^{-1}$ is the inverse of the projection formula isomorphism $\rm{pr}\colon \bf{R}\nu_*\,\wdh{\O}^{+, a}_X \otimes^{L}_{\O_\X}\O_{\X_0} \xr{\sim} \bf{R}\nu_*\left(\O^{+, a}_X/p\right)$, and $\rm{BC}_{\omega_\X}$ is\footnote{This slightly abuses the notations, but we hope that it does not cause any confusion in what follows.} the composition of the base change map defined before Lemma~\ref{lemma:trace-coherent}, and the natural ``inclusion'' $\omega_{\X_0}(-d)[-d] \to \omega^\bullet_{\X_0}(-d)[-2d]$.
\end{defn}

\begin{rmk} The first version of this paper contained a version of Faltings' trace map for any admissible separated rig-smooth formal $\O_C$-scheme $\X$. Based on the suggestion of the referee, we decided to restrict the discussion only to nice admissible formal $\O_C$-schemes because it significantly simplifies the exposition and it is sufficient for the main purposes of this paper. 
\end{rmk}

\begin{rmk} Note that we could have also defined the integral Faltings' trace as a map 
\[
    \widetilde{\rm{Tr}}^+_{F, \X}\colon \bf{R}\nu_*\,\wdh{\O}^{+,a}_X \to \omega^{\bullet, a}_\X(-d)[-2d]
\]
by composing it with the natural morphism $\omega_\X(-d)[-d] \to \omega^\bullet_\X(-d)[-2d]$. However, we prefer not to do this because the dualizing complex $\omega^{\bullet, a}_\X(-d)[-2d]$ does not admit a good theory of Grothendieck duality.
\end{rmk}

\subsection{Functoriality of Faltings' trace} The main goal of this section is to relate Faltings' trace morphisms on $\X'$ and $\X$ for a ``nice'' rig-finite, rig-\'etale morphism $\mf\colon \X' \to \X$.

\begin{defn} A morphism $\mf\colon \X' \to \X$ of admissible formal $\O_C$-models with generic fibers of pure dimension $d$ is {\it pseudo-finite} if it is rig-finite and $\bf{R}\mf_{*}\bf{R}\nu_{\X', *}\,\wdh{\O}_{X'}^{+,a}$ lies in $\bf{D}^{[0, d]}(\X)^a$.
\end{defn}

\begin{examples} Let $\X'$ and $\X$ be admissible formal $\O_C$-schemes with generic fibers $X'=\X'_C$ and $X=\X_C$ of pure dimension $d$. Then 
\begin{enumerate}
    \item any finite morphism $\mf\colon \X' \to \X$ is pseudo-finite. Indeed, \cite[Theorem 6.13.6]{Z3} ensures that $\bf{R}\nu_{\X', *} \wdh{\O}_{X'}^{+, a}\in \bf{D}^{[0, d]}_{acoh}(\X')^a$, and so its cohomology sheaves are $\mf_*$-acyclic. Therefore, 
    \[
    \bf{R}\mf_{*}\bf{R}\nu_{\X', *}\,\wdh{\O}_{X'}^{+,a} \in \bf{D}^{[0, d]}(\X)^a.
    \]
    \item any rig-isomorphism $\mf\colon \X' \to \X$ is pseudo-finite. This follows from \cite[Theorem 6.13.6]{Z3} and the isomorphism
    \[
    \rm{R}\mf_{*}\rm{R}\nu_{\X', *}\,\wdh{\O}_{X'}^{+, a} \simeq \rm{R}\nu_{\X, *}\, \wdh{\O}_X^{+, a} \in \bf{D}^{[0, d]}_{acoh}(\X)^a.
    \]
\end{enumerate}
\end{examples}

\begin{construction}\label{construction:factoring} Let $\X'$ and $\X$ be nice admissible formal $\O_C$-schemes of dimension $d$, let $X'=\X'_C$ and $X=\X_C$ be the generic fibers, and let $\mf\colon \X'\to \X$ be a pseudo-finite morphism. Then, by dimension reasons, the morphism
\[
\bf{R}\mf_* \rm{R}\nu_{\X', *}\,\wdh{\O}_{X'}^+ \xr{\bf{R}\mf_*\left(\rm{Tr}^+_{F, \X'}\right)} \rm{R}\mf_*\, \omega_{\X'}(-d)[-d]
\]
uniquely factors as the composition
\[
\bf{R}\mf_* \rm{R}\nu_{\X', *}\,\wdh{\O}_{X'}^+ \xr{\alpha} \mf_*\, \omega_{\X'}(-d)[-d] \to \bf{R}\mf_*\, \omega_{\X'}(-d)[-d].
\]
We define the morphism $\mf_*\left(\rm{Tr}^+_{F, \X}\right) \colon \bf{R}\mf_* \rm{R}\nu_{\X', *}\,\wdh{\O}_{X'}^+ \to \mf_*\, \omega_{\X'}(-d)[-d]$ as the morphism $\alpha$ from above.
\end{construction}

\begin{lemma}\label{lemma:traces-compatible-finite-etale} Let $\X'$ and $\X$ be nice admissible formal $\O_C$-schemes of dimension $d$, let $X'=\X'_C$ and $X=\X_C$ be their generic fibers, and let $\mf\colon \X'\to \X$ be a rig-\'etale, pseudo-finite morphism. Then the following diagram
\[
\begin{tikzcd}[column sep=10ex]
\bf{R}\mf_*\bf{R}\nu_{\X', *} \wdh{\O}^{+, a}_{X'}  \arrow{d}{\rm{Tr}^+_{\rm{Zar}, \mathfrak f}} \arrow{r}{\mf_*(\rm{Tr}^{+, a}_{F, \X'})} & \mf_*\omega^a_{\X'}(-d)[-d] \arrow{d}{\rm{Tr}_{\mf}(-d)[-d]} \\
\bf{R}\nu_{\X, *} \wdh{\O}^{+, a}_{X}\arrow{r}{\rm{Tr}^+_{F, \X}} & \omega^a_{\X}(-d)[-d]
\end{tikzcd}
\]
commutes, where $\rm{Tr}^+_{\rm{Zar}, \mathfrak f}$ is from Definition~\ref{defn:integral-proetale-trace}. 
\end{lemma}
\begin{proof}
By assumption, $\bf{R}\mf_*\bf{R}\nu_{\X', *} \wdh{\O}^{+, a}_{X'}\in \bf{D}^{[0, d]}_{acoh}(\X)^a$. Therefore, it suffices to show that both compositions are equal after applying $\cal{H}^d$. Lemma~\ref{lemma:dualizing-module-flat} ensures that the module $\omega_\X(-d)$ is $\O_C$-flat, so it suffices to check that the diagram commutes after taking the generic fiber. Then Lemma~\ref{zar-an-trace} and Lemma~\ref{lemma:gen-fiber-Scholze} guarantee that it is enough to check that the diagram
\begin{equation}\label{tr-tr-tr-2}
\begin{tikzcd}[column sep = 6em]
f_{\rm{an}, *}\circ \rm{R}^d\mu_{X',*} \wdh{\O}_{X'} \arrow{r}{f_{\rm{an}, *}\left(\rm{Sch}^d_{X'}\right)^{-1}} \arrow{d}{\cal{H}^d\left(\rm{Tr}_{\rm{an}, f}\right)} & f_{\rm{an}, *}\left(\Omega^d_{X'}\right)(-d) \arrow{d}{\rm{Tr}_{f}(-d)} \\
\rm{R}^d\mu_{X, *}\wdh{\O}_X \arrow{r}{(\rm{Sch}^{d}_{X})^{-1}} & \Omega^d_{X}(-d)
\end{tikzcd}
\end{equation}
commutes. This follows from Corollary~\ref{cor:compatible-traces-generic-fiber}.
\end{proof}

\begin{cor}\label{cor:descend-rig-finite-etale} Let $\X'$, $\X$, and $\mf$ be as in Lemma~\ref{lemma:traces-compatible-finite-etale}. Then the diagram 
\begin{equation}\label{eqn:new-eqn}
\begin{tikzcd}[column sep = 3em]
\bf{R}\mf_{0, *} \left(\bf{R}\nu_{\X', *} \O^{+, a}_{X'}/p \right)\otimes^{L} \bf{R}\mf_{0, *} \left(\bf{R}\nu_{\X', *} \O^{+, a}_{X'}/p\right) \arrow{r}{\cup} \arrow[d, shift left=7ex, "\rm{Tr}_{\rm{Zar}, \mf}"] & \bf{R}\mf_{0, *} \left(\bf{R}\nu_{\X', *} \O_{X'}^{+, a}/p\right) \arrow{r}{\bf{R}\mf_{0, *}\rm{Tr}_{F, \X'}} \arrow{d}{\rm{Tr}_{\rm{Zar}, \mf}}& \bf{R}\mf_{0, *}\left(\omega^{\bullet, a}_{\X'_0}(-d)[-2d]\right)\arrow{d}{\rm{Tr}_{\mf_{0}}(-d)[-2d]}  \\
\bf{R}\nu_{\X, *} \left(\O^{+, a}_{X}/p\right) \otimes^{L}  \bf{R}\nu_{\X, *}\left(\O^{+, a}_{X}/p\right) \arrow[u, shift left=7ex, "\rm{Res}_{\mf}"] \arrow{r}{\cup} & \bf{R}\nu_{\X, *} \left(\O_{X}^{+, a}/p\right) \arrow{r}{\rm{Tr}_{F, \X}} & \omega^{\bullet, a}_{\X_0}(-d)[-2d] \\
\end{tikzcd}
\end{equation}
is commutative in $\bf{D}(\X)^a$ (see Section~\ref{section:notation} for the precise meaning of this commutativity). 
\end{cor}
\begin{proof}
We first discuss commutativity of the right square. Our assumption on $\mf$ formally implies that $\bf{R}\mf_{0, *} \left(\bf{R}\nu_{\X', *} \O_{X'}^{+, a}/p\right)\in \bf{D}^{[0, d]}(\X_0)^a$, while Lemma~\ref{lemma:dualizing-module-flat} ensures that $\bf{R}\mf_{0, *}\left(\omega^\bullet_{\X'_0}(-d)[-2d]\right) \in \bf{D}^{\geq d}(\X_0)$. So, similarly to Construction~\ref{construction:factoring}, the right square in Diagram~(\ref{eqn:new-eqn}) can be decomposed as follows:
\[
\begin{tikzcd}
\bf{R}\mf_{0, *} \left(\bf{R}\nu_{\X', *} \O_{X'}^{+, a}/p\right) \arrow{d}{\rm{Tr}_{\rm{Zar}, \mf}}\arrow{r} & \mf_{0, *}\left( \omega^{a}_{\X'_0}(-d)[-d]\right)\arrow{d} \arrow{r} & \bf{R}\mf_{0, *}\left(\omega^{\bullet, a}_{\X'_0}(-d)[-2d]\right) \arrow{d}{\rm{Tr}_{\mf_{0}}(-d)[-2d]} \\
\bf{R}\nu_{\X, *} \left(\O_{X}^{+, a}/p\right) \arrow{r}{\rm{Tr}_{F, \X}} \arrow{r} & \omega^a_{\X_0}(-d)[-d] \arrow{r} & \omega^{\bullet, a}_{\X_0}(-d)[-2d].
\end{tikzcd}
\]
Commutativity of the left square is a consequence of Lemma~\ref{lemma:traces-compatible-finite-etale} (since $\omega_{\X}/p\omega_\X$ is a subsheaf of $\omega_{\X_0}$), and commutativity of the right square is formal. \smallskip

Now we show that the left square of Diagram~(\ref{eqn:new-eqn}) is commutative. Using the definition of the pro-\'etale trace map (see Definition~\ref{defn:integral-proetale-trace}), this diagram can be idenfied with 
\[
\begin{tikzcd}[column sep = 5em]
\bf{R}\nu_{\X, *} f_{\proet, *} \left(\O^{+, a}_{X'}/p\right) \otimes^{L} \bf{R}\nu_{\X, *} f_{\proet, *}\left(\O^{+, a}_{X'}/p \right) \arrow{r}{\bf{R}\nu_{\X, *}(-\cup-)} \arrow[d, shift left=7ex, "\bf{R}\nu_{\X, *}(\rm{Tr}_{\proet, f})"] & \bf{R}\nu_{\X, *} f_{\proet, *}\left(\O^{+, a}_{X'}/p\right) \arrow{d}{\bf{R}\nu_{\X, *}(\rm{Tr}_{\proet, f})}  \\
\bf{R}\nu_{\X, *} \left(\O^{+, a}_{X}/p\right) \otimes^{L}  \bf{R}\nu_{\X, *} \left(\O^{+, a}_{X}/p\right) \arrow[u, shift left=7ex, "\bf{R}\nu_{\X, *}(\rm{Res}_{f})"] \arrow{r}{\bf{R}\nu_{\X, *}(-\cup-)} & \bf{R}\nu_{\X, *} \left(\O_{X}^{+, a}/p\right).
\end{tikzcd}
\]
Thus, it is sufficient to show that the diagram
\[
\begin{tikzcd}
 f_{\proet, *} \left(\O^+_{X'}/p\right)\otimes^{L} f_{\proet, *}\left(\O^+_{X'}/p\right)  \arrow{r}{-\cup-} \arrow[d, shift left=6ex, "\rm{Tr}_{\proet, f}"] &  f_{\proet, *}\left(\O^+_{X'}/p\right) \arrow{d}{\rm{Tr}_{\proet, f}}  \\
 \O^+_{X}/p \otimes^{L} \O^+_{X}/p \arrow[u, shift left=6ex, "\rm{Res}_{f}"] \arrow{r}{-\cup-} & \O_{X}^+/p
\end{tikzcd}
\]
commutes. This equality can now be checked pro-\'etale locally, so one can assume that $f$ is a split \'etale cover. In this case, the claim is obvious. 
\end{proof}

We also check that Faltings' trace map is \'etale local on $\X$. More precisely, we have the following result:

\begin{lemma}\label{lemma:faltings-trace-commutes-with-base-change} Let $\X$ and $\Y$ be nice admissible formal $\O_C$-schemes of dimension $d$, $\mf\colon \X \to \Y$ an \'etale morphism with generic fiber $f\colon X \to X$. Then the diagram
\begin{equation}\label{diagram:faltings-trace-etale-local}
\begin{tikzcd}[column sep = 4em]
\bf{L}\mf^*\bf{R}\nu_{Y, *} \wdh{\O}^{+, a}_Y \arrow{d} \arrow{r}{\bf{L}\mf^*\rm{Tr}_{F, \Y}} & \bf{L}\mf^*\omega_{\Y}^a(-d)[-d] \arrow{d}\\
\bf{R}\nu_{X, *} \wdh{\O}^{+, a}_X \arrow{r}{\rm{Tr}_{F, \X}} & \omega_{\X}^a(-d)[-d] \ ,
\end{tikzcd}
\end{equation}
where the left vertical arrow is the base change map and the right vertical arrow is the map from Lemma~\ref{lemma:dualizing-etale-base-change}, is commutative in $\bf{D}(\X)^a$ with vertical arrows being (almost) isomorphisms. 
\end{lemma}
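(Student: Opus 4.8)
The plan is to reduce \eqref{diagram:faltings-trace-etale-local} to a statement about the single sheaf $\rm{R}^d\nu_*\wdh{\O}^+$, then to reduce to admissible models with reduced special fibre, and finally to settle that case by restricting to the smooth locus, where the Faltings trace is the explicit map $\Psi^d_{\X^\sm}$ of Definition~\ref{trace-on-smooth-locus}, visibly étale-local by construction. \emph{Step 1.} Since $\mf$ is étale, $\bf{L}\mf^*=\mf^*$ is exact. By \cite[Theorem 6.1.1]{Z3} both $(\bf{R}\nu_{Y,*}\wdh{\O}^+_Y)^a$ and $(\bf{R}\nu_{X,*}\wdh{\O}^+_X)^a$ lie in $\bf{D}^{[0,d]}_{acoh}$, while $\omega^a_\Y(-d)[-d]$ and $\omega^a_\X(-d)[-d]$ are concentrated in degree $d$; hence any morphism between these factors through $\cal{H}^d$, and \eqref{diagram:faltings-trace-etale-local} is equivalent to commutativity of the square of $\O^a_\X$-modules with horizontal maps $\mf^*\rm{Tr}^{+,d}_{F,\Y}$ and $\rm{Tr}^{+,d}_{F,\X}$ (the maps of Definition~\ref{falting-trace-d}), left vertical map the base-change morphism $b\colon\mf^*\rm{R}^d\nu_{Y,*}\wdh{\O}^+_Y\to\rm{R}^d\nu_{X,*}\wdh{\O}^+_X$, and right vertical map $c\colon\mf^*\omega^a_\Y(-d)\to\omega^a_\X(-d)$ the $(-d)$-twist of the isomorphism of Lemma~\ref{dualizing-etale-base-change}. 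Here $c$ is an isomorphism, and $b$ is an almost isomorphism because the formation of $\bf{R}\nu_*\wdh{\O}^+$ commutes with étale base change (one checks this after $-\otimes^{\bf{L}}_{\O_\X}\O_{\X_0}$, using $\bf{R}\nu_{X,*}(\O^+_X/p)=\bf{R}t_{X,*}\bf{R}\lambda_{X,*}(\O^+_X/p)$ and étale base change for $\bf{R}\lambda_*$ and $\bf{R}t_*$); this already yields the ``almost isomorphism'' assertion, and it remains to prove the displayed square commutes.

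\emph{Step 2.} We reduce to reduced special fibres. Let $\pi_\Y\colon\Y'\to\Y$ be the finite rig-isomorphism of \cite[Theorem 2.1]{BLR4} with $\ov{\Y'}$ reduced; by \cite[Proposition 3.4.1]{lutke-jacobian} it is the normalisation of $\Y$ in $Y$. Put $\X':=\X\times_\Y\Y'$, with projections $\pi_\X\colon\X'\to\X$ and $\mf'\colon\X'\to\Y'$. Then $\pi_\X$ is finite and a rig-isomorphism, $\mf'$ is étale, $\X'$ is admissible, and $\ov{\X'}=\ov{\X}\times_{\ov{\Y}}\ov{\Y'}$ is reduced (being étale over the reduced $\ov{\Y'}$), so $\X'$ is the normalisation of $\X$ in $X$. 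Expanding $\rm{Tr}^{+,d}_{F,\X}$ as the composite $\rm{Tr}_{\pi_\X}(-d)\circ\pi_{\X,*}(\rm{Tr}^{+,d}_{F,\X'})\circ e$ of Definition~\ref{falting-trace-d}, and likewise for $\Y$, the square for $(\mf,\X,\Y)$ follows from the square for $(\mf',\X',\Y')$ once one knows that the edge map $e$ is functorial in the formal model (hence compatible with $\mf^*$), and that the coherent trace $\rm{Tr}_\pi$ is compatible with étale base change; the latter follows, via the description $\rm{Tr}_\pi=\lim_n\cal{H}^{-d}(\rm{Tr}_{\pi_n})$ of Lemma~\ref{trace-coherent}, from compatibility of the coherent trace with flat base change of schemes (\cite[\href{https://stacks.math.columbia.edu/tag/0B6J}{Tag 0B6J}]{stacks-project}) together with Lemma~\ref{dualizing-etale-base-change}. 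We may therefore assume $\ov{\X}$ and $\ov{\Y}$ are reduced.

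\emph{Step 3.} Now $\X^\sm=\mf^{-1}(\Y^\sm)$ and $\mf^\sm\colon\X^\sm\to\Y^\sm$ is étale. By Theorem~\ref{thm:dualizing-reflexive-formal-schemes} the canonical map $\omega_\X\to j_{\X^\sm,*}(\omega_\X|_{\X^\sm})$ is injective (a section of the line bundle $\omega_{\X,C}$ on the reduced space $\X_C$ vanishing on the dense open $\X^\sm_C$ vanishes), hence so is $\omega^a_\X(-d)\to(j_{\X^\sm,*}\omega_{\X^\sm}(-d))^a$; consequently two morphisms into $\omega^a_\X(-d)$ agree as soon as they agree after restriction to $\X^\sm$, so it is enough to check the square of Step 1 after restricting to $\X^\sm$. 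There $\rm{Tr}^{+,d}_{F,\X}|_{\X^\sm}=\Psi^d_{\X^\sm}$ by Theorem~\ref{faltings-1} (and similarly for $\Y$), and by Definition~\ref{trace-on-smooth-locus} $\Psi^d_{\X^\sm}$ is the composite of the quotient map onto $\rm{R}^d\nu_*\wdh{\O}^+/\rm{R}^d\nu_*\wdh{\O}^+[(\zeta_p-1)^\infty]$, of $\Psi'^{d}_{\X^\sm}$ — the inverse of the map obtained from the BMS map $\Phi^d_{\X^\sm}$ by dividing by $(\zeta_p-1)^d$ — and of $r_{\X^\sm}(-d)$. The quotient map is tautologically compatible with étale pullback; $\Phi^d_{\X^\sm}$, assembled from $\Phi^1_{\X^\sm}$ by wedge powers and cup products, is étale-local on $\X$ by \cite[Corollary 8.13]{BMS1} (and the discussion after \cite[Lemma 8.16]{BMS1}); division by the scalar $(\zeta_p-1)^d$ is compatible too; and $r_{\X^\sm}$ is compatible with étale pullback by Lemma~\ref{dualizing-etale-base-change} and the construction of $r_{\X^\sm}$ in Theorem~\ref{thm:dualizing-complex}. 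Composing, $\Psi^d_{\X^\sm}=(\mf^\sm)^*\Psi^d_{\Y^\sm}$ under the base-change identifications, which is precisely the restriction of the square to $\X^\sm$. Together with Step 1, this proves the lemma.

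The main obstacle is the verification in Step 2: identifying $\X\times_\Y\Y'$ with the canonical reduced-fibre model of $X$ (so that Definition~\ref{falting-trace-d} applies to $\X$ and $\X'$ compatibly) and transporting the flat-base-change compatibility of the coherent trace through the inverse-limit construction of $\rm{Tr}_\pi$. Everything else is bookkeeping with the building blocks $\Phi^d$, $r_{\X^\sm}$ and the quotient/division operations, each of which is étale-local by construction.
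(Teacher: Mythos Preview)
Your proof is correct, but it takes a longer route than the paper's. You share Step~1 (reduction to the single map $\mf^*\rm{R}^d\nu_{Y,*}\wdh{\O}^+_Y\to\omega^a_\X(-d)$) with the paper, but then diverge.

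The paper bypasses your Steps~2 and~3 entirely. Instead of reducing to reduced special fibre and then to the smooth locus, it uses that $\omega_\X(-d)$ is $\O_C$-flat (Lemma~\ref{lemma:dualizing-module-flat}), so two almost maps into $\omega^a_\X(-d)$ agree as soon as they agree on the adic generic fibre. On the generic fibre, Corollary~\ref{gen-fiber-Scholze} identifies $\rm{Tr}^+_{F,\X}$ with $(\rm{Sch}^d_X)^{-1}$ (up to $r_{\X_C}$ and $\delta_\X$), and the latter is \'etale-local on the \emph{rigid space} $X$ directly from its definition as the restriction to $X_{\rm{an}}$ of a map of sheaves on $X_\et$. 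This is a one-line verification, whereas your \'etale-locality of $\Psi^d_{\X^\sm}$ on the \emph{formal} side requires invoking the BMS machinery. Moreover, the reduction to reduced special fibre that you carry out in Step~2 is already packaged into Corollary~\ref{gen-fiber-Scholze}, so the paper does not need to redo it.

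Your approach has the virtue of staying on the formal/integral side throughout, and your Step~2 verification that $\X\times_\Y\Y'$ is the canonical reduced-fibre model of $X$ (via \cite[Proposition 3.4.1]{lutke-jacobian}) together with the flat base-change compatibility of $\rm{Tr}_\pi$ is correct and worth knowing. But the paper's shortcut through the generic fibre is substantially shorter.
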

\begin{proof}
We note that $\bf{L}\mf^*\omega_{\Y}(-d)[-d] \simeq \mf^* \omega_{\Y}(-d)[-d]$ as $\mf^*$ is flat. So Lemma~\ref{lemma:dualizing-etale-base-change} guarantees that the right vertical map is an isomorphism. Moreover, \cite[Theorem 6.13.5]{Z3} guarantees that the left vertical map is an isomorphism. So we only need to show that the diagram commutes.\smallskip

Now we use that $\bf{L}\mf^*\bf{R}\nu_{Y, *} \wdh{\O}^{+, a}_Y$ is concentrated in degrees $[0, d]$ by \cite[Theorem 6.13.6]{Z3}. So it suffices to show that both morphisms 
\[
\mf^*(\rm{R}^d\nu_{Y, *} \wdh{\O}^{+}_Y)^a \to \omega_{\X}^a(-d)
\]
coincide. \smallskip

Now we recall that $\omega_{\X}^a(-d)$ is $\O_C$-flat by Lemma~\ref{lemma:dualizing-module-flat}. Therefore, equality of these maps can be checked on the generic fiber. However, on the generic fiber, Lemma~\ref{zar-an-trace} and Lemma~\ref{lemma:gen-fiber-Scholze} imply that Diagram~(\ref{diagram:faltings-trace-etale-local}) can be identified with 
\[
\begin{tikzcd}[column sep = 4em]
f^*(\rm{R}^d\mu_{Y, *}\wdh{\O}_Y) \arrow{r}{f^*(\rm{Sch}^d_Y)^{-1}} \arrow{d} & f^* \Omega^d_Y(-d) \arrow{d} \\
\rm{R}^d\mu_{X, *}\wdh{\O}_X \arrow{r}{(\rm{Sch}^d_X)^{-1}} &  \Omega^d_X(-d).
\end{tikzcd}
\]
This diagram commutes because $\rm{Sch}^d_X$ comes from a morphism of sheaves on the \'etale site.  
\end{proof}

\section{Local Duality}\label{section:local-duality}
\subsection{Overview}
Throughout this section, we fix an algebraically closed complete rank-$1$ valued field $C$ with ring of integers $\O_C$, maximal ideal $\m\subset \O_C$, and residue field $k$. We also assume that $\O_C$ is of mixed characteristic $(0, p)$. \smallskip

Throughout this section, we will freely use the notion of (derived) almost Hom sheaves $\bf{R}\ud{al\cal{H}om}$. We refer to \cite[Definition 3.5.6]{Z3} for the precise definition of this object. \smallskip

The main goal of Section~\ref{section:local-duality} is to show that, for any nice admissible formal $\O_C$-scheme $\X$ (see Definition~\ref{defn:nice-formal-scheme}) with generic fiber $X=\X_C$, Faltings' trace $\rm{Tr}_{F, \X}\colon \bf{R}\nu_*\left(\O_X^{+, a}/p\right) \to \omega^{\bullet, a}_{\X_0}(-d)[-2d]$ induces an {\it almost perfect pairing} (see Section~\ref{section:notation})
\begin{equation}\label{eqn:almost-perfect-pairing}
\bf{R}\nu_*\left(\O_X^{+, a}/p\right) \otimes^L_{\O_{\X_0}} \bf{R}\nu_* \left(\O_X^{+, a}/p\right) \xr{\cup} \bf{R}\nu_*\left(\O_X^{+, a}/p\right) \xr{\rm{Tr}_{F, \X}} \omega^{\bullet, a}_{\X_0}(-d)[-2d].
\end{equation}

\begin{defn}\label{defn:almost-perfect-pairing} Let $\X$ be a nice admissible formal $\O_C$-scheme with generic fiber $X=\X_C$. Then the paring~(\ref{eqn:almost-perfect-pairing}) is called {\it Faltings' pairing} with the corresponding duality morphism 
\[
D_\X\colon \bf{R}\nu_*\left(\O_X^{+, a}/p\right) \to \bf{R}\ud{al\cal{H}om}_{\X_0}\left(\bf{R}\nu_* \left(\O_X^{+, a}/p\right), \omega^{\bullet,a}_{\X_0}(-d)[-2d]\right).
\]
\end{defn}

\begin{thm}\label{thm:main-thm-?} (Theorem~\ref{thm:local-duality}) Let $\X$ be an admissible nice formal $\O_C$-scheme. Then Faltings' pairing is almost perfect, i.e. the duality morphism $D_\X$ is an isomorphism in $\bf{D}(\X_0)^a$. 
\end{thm}

We now explain the main steps involved in the proof in more detail:
\begin{enumerate}
    \item We show the claim for polystable formal $\O_C$-schemes (see Definition~\ref{defn:polystable}) with smooth generic fiber. The proof in this case is strongly motivated by the calculations in \cite[\textsection 3]{Ces}. Namely, we choose an explicit affinoid perfectoid cover of the model polystable formal scheme. This covering turns out to be a $\Gamma\simeq \bf{Z}_p(1)^d$-torsor that allows us to reduce the claim to the almost duality in the continuous cohomology groups of $\Gamma$. This duality claim is relatively easy in the case of a smooth model, but becomes quite subtle for a general polystable model.  \smallskip 
    
    \item Then we use \cite[Theorem 1.4]{Z1} to relate any admissible formal $\O_C$-scheme with smooth generic fiber of dimension $d$ to polystable formal $\O_C$-schemes. Roughly, \cite[Theorem 1.4]{Z1} says that, locally on $\X$, any rig-smooth formal model is isomorphic to a polystable formal scheme up to rig-isomorphisms and quotients by a finite group with free action on the generic fiber. So, we only need to show that the property that $D_\X$ is an isomorphism descends through rig-isomorphisms and ``good quotients'' by finite groups. 
    
    \item We show that almost perfectness of Faltings' pairing descends through rig-isomorphisms. The key input here is the almost version of Grothendieck Duality and Corollary~\ref{cor:descend-rig-finite-etale}. 
    
    \item We show that almost perfectness of Faltings' pairing descends through ``good'' quotients by an action of a finite group $G$. The key input here is the almost version of Grothendieck Duality, duality between homotopy invariants and coinvariants for an action of a finite group $G$, and Corollary~\ref{cor:descend-rig-finite-etale}.
\end{enumerate}

\subsection{Preliminaries on polystable formal schemes}\label{section:preliminary-polystable}

The main goal of this section is to perform certain computations on (model) polystable formal $\O_C$-schemes that will be later used to establish almost perfectness of Faltings' trace map. In particular, we discuss standard perfectoid coverings of (model) polystable formal $\O_C$-schemes, and we construct an isomorphism (see Theorem~\ref{thm:structure-nearby-framed})
\[
\frac{\rm{R}^d\nu_* \wdh{\O}^+_X}{\left(\rm{R}^d\nu_* \wdh{\O}^+_X\right)[1-\zeta_p]} \simeq \omega_\X(-d)
\]
generalizing (via similar methods) the isomorphism from \cite[Theorem 8.3]{BMS1} to the case of polystable formal schemes.\smallskip

There are slightly different non-equivalent definition of polystable formal schemes in the literature. For this reason, we explicitly recall the definition that we are going to use in this paper:

\begin{defn} A {\it model polystable formal $\O_C$-scheme} $\X$ is an affine formal $\O_C$-scheme of the form 
\[
    \Spf \frac{\O_C\langle t_{1,0},\dots, t_{1, n_1}, \dots, t_{l,0}, \dots, t_{l, n_l}\rangle}{(t_{1,0}\cdots t_{1, n_1}-\varpi_1, \dots, t_{l,0}\cdots t_{l, n_l}-\varpi_l)}
\]
for some $\varpi_i\in \O_C \setminus \{0\}$. 
\end{defn}

\begin{defn}\label{defn:polystable} An admissible formal $\O_C$-scheme $\X$ is called {\it polystable} if \'etale locally it admits an \'etale morphism to a model polystable formal $\O_C$-scheme. 
\end{defn}

\begin{rmk} Our definition of a polystable formal $\O_C$-scheme might differ from some sources. For example, our definition implies that a polystable formal $\O_C$-scheme $\X$ is rig-smooth, while some other definitions do not impose this assumption (i.e. allow $\varpi_i=0$).  
\end{rmk}

\begin{lemma}\label{lemma:integrally-closed} Let $\Spf A$ be an affine polystable formal $\O_C$-scheme. Then $A$ is equal to $A_K^{\circ}$. In particular, $A$ is integrally closed in $A_K$.
\end{lemma}
\begin{proof}
    We note that $A=A_K^{\circ}$ due to Lemma~\ref{lemma:special-fiber-reduced} and the fact that polystable formal schemes have reduced special fiber. In particular, $A$ is integrally closed in $A_K$ since it is always true for the ring of power-bounded elements.
\end{proof}

\subsubsection{Dualizing complex on polystable formal schemes}

This section is devoted to the study of dualizing complexes on polystable formal $\O_C$-schemes. 

\begin{lemma}\label{lemma:alg-polyst-dual} Let $\X$ be a separated polystable formal $\O_C$-scheme with special fiber $\ov{\X}$. Then $\omega_{\X_n}^{\bullet}$ is isomorphic to a rank-$1$ locally free $\O_{\X_n}$-module in degree $-\dim \X_n=-\dim \ov{\X}$. Furthermore, if $\X$ is a model polystable formal $\O_C$-scheme, we have a canonical\footnote{This isomorphism is only canonical after fixing the coordinates $t_{i, j_i}$. But these coordinates are indeed fixed in our definition of a model polystable formal $\O_C$-scheme.} isomorphism $\omega_{\X_n}^\bullet \simeq \O_{\X_n}[\dim \ov{\X}]$.
\end{lemma}
\begin{proof}
We prove the claim for $n=0$, the proof is absolutely the same for $n\geq 1$. Recall that the relative dualizing complex commutes with \'etale base change by Corollary~\ref{cor:etale-morphism-!}. So it suffices to treat the model example 
\[
\X_0=\Spf \frac{\O_C/p[t_{1,0}, \dots, t_{1, n_1}, \dots, t_{l, 0}, \dots t_{l, n_l}]}{\left(t_{1,0} \dots t_{1, n_1} - \varpi_1, \dots, t_{l,0} \dots t_{l, n_l} - \varpi_l \right)} \ .
\]
This can be realized as a closed subscheme 
\[
i\colon \X_0 \to Y_0\coloneqq \bf{A}_{ \O_C/p}^{\dim \X_0+l}
\]
defined by the ideal $I\coloneqq (t_{1,0} \dots t_{1, n_1} - \varpi_1, \dots, t_{l,0} \dots t_{l, n_l} - \varpi_l)$. The sequence $\{t_{1,0} \dots t_{1, n_1} - \varpi_1, \dots, t_{l,0} \dots t_{l, n_l} - \varpi_l\}$ is regular, thus the ideal $I$ is Koszul-regular by \cite[\href{https://stacks.math.columbia.edu/tag/063E}{Tag 063E}]{stacks-project}. \smallskip

Now we note that $\omega_{Y_0}^\bullet \simeq \O_{Y_0}[\dim Y_0]$ since $Y_0$ is an affine space (this isomorphism is canonical after fixing coordinates on $\bf{A}^{\dim \X_0+l}$). Now we recall that $\omega^\bullet_{\X_0}\cong i^!\,\omega_{Y_0}^\bullet$, so \cite[\href{https://stacks.math.columbia.edu/tag/0BR0}{Tag 0BR0}]{stacks-project} provides an identification
\[
\omega_{\X_0}^\bullet \cong \left(\bigwedge^{l} \left(\cal{I}/\cal{I}^2\right)^{\vee}\right)[\dim \X_0],
\]
where $\cal{I}=\widetilde{I}$. The last observation is that the sheaf $\cal{I}/\cal{I}^2$ is free of rank $l$ by the proof of \cite[\href{https://stacks.math.columbia.edu/tag/063H}{Tag 063H}]{stacks-project}. 
\end{proof}

\begin{rmk}\label{rmk:dualizing-complex-on-semistable-models} The same proof shows that the (relative) dualizing complex $\omega^\bullet_X \cong \O_X[d]$ for $X=\Spec k[x_0, \dots, x_n]/(x_0\dots x_n)$ for any field $k$.
\end{rmk}

\begin{lemma}\label{lemma:line-bundle-dualizing} Let $\X$ be a separated polystable formal $\O_C$-scheme of pure dimension $d$. Then $\omega_{\X}^{\bullet}$ is isomorphic to a rank-$1$ locally free $\O_{\X}$-module in degree $-d$. Furthermore, if $\X$ is a model polystable formal $\O_C$-scheme, then $\omega_{\X}^\bullet \simeq \O_\X[d]$. 
\end{lemma}
\begin{proof}
Since dualizing complexes (and modules) commute with \'etale base change (see Lemma~\ref{lemma:dualizing-etale-base-change}), it suffices to prove the claim for a model polystable formal $\O_C$-scheme. In this case, the result follows trivially from Lemma~\ref{lemma:alg-polyst-dual} since 
\[
\omega^\bullet_\X \simeq \bf{R}\lim_n \omega^\bullet_{\X_n}[d] \simeq \bf{R}\lim_n \O_{\X_n}[d] \simeq \O_\X[d]. \qedhere
\]
\end{proof}

\subsubsection{Pro-\'etale cover of a model polystable formal scheme}\label{section:poly-perfectoid-covering}

In this subsection we construct an explicit ``cover'' of the model polystable formal scheme such that on the generic fiber it becomes a pro-finite \'etale cover by an affinoid perfectoid space. The case $\X=\bf{G}_m^d$ was done in \cite{Sch1} and the case of a semi-stable model\footnote{With some extra mild assumptions.} was done in \cite[\textsection 3]{Ces}. The main difference between our approach and the approach in \cite[\textsection 3]{Ces} is that we write some formulas in a more canonical way. It will both later simply the proofs and the notation.  \smallskip

For the rest of the subsection, we fix a ring 
\begin{equation}\label{eqn:model-polystable}
R^+\coloneqq \frac{\O_C\left\langle t_{1,0}, \dots, t_{1, n_1}, \dots, t_{l, 0}, \dots, t_{l, n_l}\right\rangle}{\left(t_{1,0} \dots t_{1, n_1} - \varpi_1, \dots, t_{l,0} \dots t_{l, n_l} - \varpi_l \right)}
\end{equation}
and define rings
\begin{equation}\label{eqn:model-polystable-etale}
R_m^+\coloneqq \frac{\O_C\left\langle  t_{1,0}^{1/p^m}, \dots, t_{1, n_1}^{1/p^m}, \dots, t_{l, 0}^{1/p^m}, \dots, t_{l, n_l}^{1/p^m}\right\rangle}{\left(t_{1,0}^{1/p^m} \dots t_{1, n_1}^{1/p^m} - \varpi_1^{1/p^m}, \dots, t_{l,0}^{1/p^m} \dots t_{l, n_l}^{1/p^m} - \varpi_l^{1/p^m} \right)},
\end{equation}
where we implicitly choose a compatible sequence of roots $\varpi_i^{1/p^m}\in \O_C$ for all $i=1, \dots, l$. We define the rational version of those rings as $R_m\coloneqq R_m^+[1/p]$. \smallskip

Throughout this section, we put $\X=\Spf R^+$, $X=\X_C$, and $d=\dim X = n_1+n_2+\dots +n_l$. \smallskip

\begin{lemma}\label{lemma:finite-cover-integrally-closed} 
\begin{enumerate}
    \item We have an equality $R_m^+=R_m^{\circ}$, where $(-)^{\circ}$ stands for the set of power-bounded elements. In particular, $R_m^+$ is integrally closed in $R_m$;
    \item\label{lemma:finite-cover-integrally-closed-2} $R_m^+$ is finitely presented as an $R^+$-module.
\end{enumerate}  
\end{lemma}
\begin{proof}
(1): This follows formally from Lemma~\ref{lemma:integrally-closed} due to the observation that $\Spf R^+_m$ is a model polystable formal $\O_C$-scheme. \smallskip

(2): Clearly, $R^+_m$ is a finite $R^+$-module that is also $\O_C$-flat. Therefore, it is a finitely presented $R^+$-module due to \cite[Theorem 7.3/4]{B}. \end{proof}

We clearly have maps $R^+ \to R_m^+$ that just send $t_{i,j} \in R^+$ to $t_{i,j}$ considered as elements of $R_m^+$. It is easy to see that the associated map $\Spf R_m^+ \to \Spf R^+$ is finite and rig-\'etale, i.e. its generic fiber is \'etale as a map of rigid spaces. Furthermore, the group $\Gamma_m:=\prod_{i=1}^l \mu_{p^{m}}^{n_i}(C)$ admits a continuous $R^+$-linear action on $R^+_m$. Namely, we can realize $\Gamma_m$ as a subgroup
\begin{equation}\label{eqn:delta-m}
\Gamma_m=\left\{(\e_{1, i_1})_{i_1=0}^{n_1}\times \dots \times (\e_{l, i_l})_{i_l=0}^{n_l}  \in \prod_{i=1}^l \mu_{p^{m}}^{n_i+1}(C) \ | \  \prod_{j=0}^{n_i} \e_{i, j} =1\ \forall
 i=1, \dots l\right\} 
\end{equation}

In this presentation an element $\e\in \Gamma_m$  acts on $R^+_m$ as the multiplication of each coordinate $t_{i,j}^{1/p^m}$ by the corresponding root of unity $\e_{i,j}$. This action is clearly seen to be continuous and $R^+$-linear. Moreover, one can see that the generic fiber 
\[
\Spa(R_m, R_m^+) \to \Spa(R, R^+)
\]
is a finite \'etale $\Gamma_m$-torsor. Note that $\Gamma_m$ is isomorphic $\mu_{p^m}^{d}(C)$ for any $m\geq 1$. \smallskip

We define the ring $R^+_{\infty}\coloneqq (\rm{colim}_m R_{m}^+)^{\wedge}$, where $(-)^{\wedge}$ stands for the $p$-adic completion, and the rational version $R_{\infty}\coloneqq R^+_{\infty}[1/p]$. Our first goal is to see that $(R_{\infty}, R^+_{\infty})$ is a perfectoid pair. \smallskip

For this, we will need the following decomposition of $R^+_\infty$ as an $\O_C$-module:
\begin{equation}\label{model-decomp-OC}
R^+_{\infty} \cong \wdh{\bigoplus}_J \O_C t_{1,0}^{d_{1,0}} \dots  t_{l,n_l}^{d_{l, n_l}},
\end{equation}
\[
\text{where } J=\left\{\left(d_{i,j}\right)_{i=1, j=0}^{l, n_i} \in \Z\left[\frac{1}{p}\right]_{\geq 0}\, | \, \forall i=1, \dots, l \, \exists
 j\in [0, n_i] \text{ such that } d_{i,j}=0\right\}
\]
We give a more conceptual description of this decomposition later in this section.

\begin{defn}\label{defn:integral-perfectoid} We say that a $p$-torsionfree $\O_C$-algebra $A$ is an {\it integral perfectoid ring} if the Frobenius homomorphism $A/p^{1/p} A \xr{x\mapsto x^p} A/p A$ is an isomorphism. 
\end{defn}

\begin{rmk}\label{rmk:infty-integral-perfectoid} This definition coincides with \cite[Definition 3.5]{BMS1} for $p$-torsionfree $\O_C$-algebras by \cite[Lemma 3.10]{BMS1}.
\end{rmk}

\begin{lemma}\label{lemma:pro-etale-torsor} The ring $R^+_{\infty}$ is a $p$-torsionfree integral perfectoid ring, and the pair $(R_{\infty}, R^+_{\infty})$ is a perfectoid pair. Furthermore, in the pro-\'etale site $\Spa(R, R^+)_{\proet}$, the map $\lim_m \Spa(R_m, R_m^+) \to \Spa(R, R^+)$ is a pro-\'etale $\Gamma$-torsor for the pro-finite group $\Gamma\coloneqq \lim_m \Gamma_m$.
\end{lemma}
\begin{proof}
We note that \cite[Lemma 3.20]{BMS1} ensures that it suffices to show that $R^+_\infty$ is a $p$-torsionfree integral perfectoid and that $(R_\infty, R_\infty^+)$ is a Tate-Huber pair. The latter boils down to showing that $R_\infty^+$ is integrally closed in $R_\infty$. \smallskip

In other words, we have to show that $R^+_{\infty}$ is $p$-torsionfree, integrally closed in $R_{\infty}$, and the map $\varphi\colon R^+_{\infty}/p^{1/p} \to R^+_{\infty}/p$ induced by $x\mapsto x^p$ is an isomorphism. \smallskip

The fact $R^+_{\infty}$ is $p$-torsionfree can be either seen directly from decomposition~(\ref{model-decomp-OC}). Lemma~\ref{lemma:finite-cover-integrally-closed} implies that $\colim R_{m}^+$ is integrally closed in $\colim R_m$. Therefore, \cite[Lemma 5.1.2]{Bhatt-notes} guarantees that $R^+_{\infty}$ is integrally closed in $R_{\infty}$. Finally, we can see that the map $\varphi$ is an isomorphism directly from Decomposition~(\ref{model-decomp-OC}). \smallskip

Since each $\Spa(R_m, R_m^+) \to \Spa(R, R^+)$ is a $\Gamma_m$-torsor, we conclude that $\lim_n \Spa(R_m, R_m^+) \to \Spa(R, R^+)$ is a $\Gamma$-torsor.
\end{proof}

\begin{rmk} We pass to the limit in (\ref{eqn:delta-m}) to get a natural inclusion $\Gamma \subset \Z_p(1)^{d+l}$. Similarly, we conclude that $\Gamma\simeq \Z_p(1)^{d}$. Moreover, after choosing a compatible sequence of $p$-power roots of unity $(\zeta_p, \zeta_{p^2}, \dots)$ it becomes isomorphic to $\Z_p^{d}$. 
\end{rmk}

We consider the $\Gamma$-torsor 
\[
f\colon \lim_m \Spa(R_m, R_m^+) \to \Spa(R, R^+).
\]
Using almost vanishing of higher cohomology of affinoid perfectoid objects (see \cite[Lemma 4.12]{Sch1}), Lemma~\ref{lemma:pro-etale-torsor}, and the Cartan-Leray spectral sequence associated to the pro-\'etale $\Gamma$-torsor $f$, we conclude that the natural morphism
\[
\bf{R}\Gamma_{\rm{cont}}(\Gamma, R_\infty^+) \to \bf{R}\Gamma(X, \wdh{\O}^+_X) \simeq \bf{R}\Gamma(\X, \bf{R}\nu_*\, \wdh{\O}^+_X)
\]
is an almost isomorphism (see the proof \cite[Lemma 5.6]{Sch1} for more details). \smallskip

For the next lemma, we recall that there is a functor $(-)^{L\Updelta} \colon \bf{D}_{comp}(S^+, pS^+) \to \bf{D}(\Spf S^+)$ that sends a derived $p$-adically complete $S^+$-module $M$ to its associated ``quasi-coherent'' (in the derived sense) complex of sheaves $M^{L\Updelta}$. We refer to \cite[\textsection 4.8 and Definition 4.8.7]{Z3} for more details.

\begin{lemma}\label{lemma:almost-compute-nu} The map $\bf{R}\Gamma_{\cont}(\Gamma, R^+_\infty)^{L\Updelta} \to \bf{R}\nu_*\,\wdh{\O}^+_X$ is an almost isomorphism. Similarly, the natural map $\widetilde{\bf{R}\Gamma_{\cont}(\Gamma, R^+_\infty/p)} \to \bf{R}\nu_*(\O^+_X/p)$ is an almost isomorphism.
\end{lemma}
\begin{proof}
The discussion before the lemma implies that there is a canonical almost isomorphism
\[
\bf{R}\Gamma_{\rm{cont}}(\Gamma, R_\infty^+) \simeq \bf{R}\Gamma(\X, \bf{R}\nu_*\, \wdh{\O}^+_X).
\]
Since $\bf{R}\nu_*\wdh{\O}^+_X$ is almost (quasi-)coherent by \cite[Theorem 6.13.6]{Z3}, \cite[Corollary 4.8.14]{Z3} implies that the natural morphism
\[
\bf{R}\Gamma_{\cont}(\Gamma, R^+_\infty)^{L\Updelta} \simeq \bf{R}\Gamma(\X, \bf{R}\nu_*\, \wdh{\O}^+_X)^{L\Updelta} \to \bf{R}\nu_*\, \wdh{\O}^+_X
\]
is an almost isomorphism. The proof for $\O_X^+/p$ is the same. 
\end{proof}

The next goal is to get a more conceptual description of Decomposition~(\ref{model-decomp-OC}). We start by considering the split $\O_C$-torus $T$ defined by its functor of points
\[
T(S) = \left\{ \left(x_{1,0}, \dots, x_{1, n_1}, \dots, x_{l,0}, \dots, x_{l, n_l}\right) \in \bf{G}_m^{d+l}(S) \ | \ \prod_{j=0}^{n_i} x_{i, j} =1\ \forall
 i=1, \dots l\right\} \subset \bf{G}_m^{d+l}
\] 
One sees that $T$ is abstractly isomorphic to $\bf{G}_m^{d}$ (but there is no preferred isomorphism). We denote by $\bf{X}(T)\coloneqq \rm{Hom}_{\O_C\rm{-gp}}(T, \bf{G}_m)$ the character group of $T$.

\begin{rmk}\label{rmk:conceptual-delta} For the later use, it will be convenient to use a slightly different description of $\Gamma$. We define 
\[
\Gamma'\coloneqq \rm{T}_p(T) \simeq \rm{Hom}\left(\bf{X}\left(T\right)\left[1/p\right]/\bf{X}\left(T\right), \mu_{p^{\infty}}\left(C\right)\right).
\]
Using the standard inclusion $T \subset \bf{G}_m^{d+l}$, we get an inclusion
\[
\Gamma' \subset \rm{T}_p(\bf{G}_m^{d +l}) \simeq \bf{Z}_p(1)^{d+l}
\]
that coincides with $\Gamma \subset \Z_p(1)^{d+l}$ defined as a limit. 
\end{rmk}

Now we note $T$ admits an $\O_C$-action on $\Spf R^+$ defined on regular functions as 
\[
(x).t_{i,j}=x_{i,j}t_{i,j} \text{ for } x\in T.
\] 
Then we have a canonical decomposition of $R^+$ by weights: 
\begin{equation}\label{decom-char}
R^+ \simeq \wdh{\bigoplus}_{\chi \in \bf{X}(T)} V_{\chi}  
\end{equation}
with $V_{\chi}$ being a one dimensional free $\O_C$-module that corresponds to the character $\chi$. And the multiplication in $R^+$ is induced by canonical isomorphisms\footnote{We use additive notation for elements of $\bf{X}(T)$ in the formula below.}
\[
V_{\chi} \otimes V_{\chi'} \xr{\sim} V_{\chi + \chi'} \ . 
\]
More explicitly, we recall that the inclusion $T \subset \bf{G}_m^{d+l}$ induces the surjection 
\[
    \Z^{d+l}\simeq \bf{X}(\bf{G}_m^{d+l}) \to \bf{X}(T).
\]
So a direct summand $V_{\chi}$ explicitly corresponds to a rank-$1$ free $\O_C$-module
\[
\O_C \prod_{i=1}^n t_{i,0}^{a_{i,0}-a_{i,j_i}}\dots t_{i,n_i}^{a_{i,n_i}-a_{i,j_i}} \subset R^+,
\]
where $(a_{i,j}) \in \Z^{d+l}$ is any lift of $\chi$ and, for each $i$, $j_i$ is the integer such that $a_{i, j_i}$ is the smallest element among all $\{a_{i, j}\}_{j=0}^{n_i}$. Now we extend, the definition of $V_{\chi}$ for all $\chi \in \bf{X}(T)[1/p]$. Namely, for $\chi\in \bf{X}(T)[1/p]$, we choose an integer $m$ such that $p^m\chi\in \bf{X}(T)$ and so
\[
V_{p^m\chi}=\O_C t_{1,0}^{a_{1,0}}\dots t_{l,n_l}^{a_{l,n_l}} \text{ with the condition that at least one } a_{i,j}=0 \ \forall i
\]
for some non-negative integers $a_{i, j}$. Then we define
\[
V_{\chi}\coloneqq \O_C t_{1,0}^{a_{1,0}/p^m}\dots t_{l,n_l}^{a_{l,n_l}/p^m} \subset R_m^+.
\]
Moreover, one easily gets the following lemma:

\begin{lemma}\label{lemma:decomposition-R} In the notation as above, there is a canonical decomposition
\begin{equation}\label{eqn:decomposition:finite-level-OC}
R^+_m \simeq \wdh{\bigoplus}_{\chi \in \frac{1}{p^m}\bf{X}(T)} V_{\chi}
\end{equation}
for each $m\geq 0$. Furthermore, $V_{\chi}$ is a free $\O_C$-module of rank one for each $\chi\in \frac{1}{p^m}\bf{X}(T)$, and the multiplication map on $R^+_m$ induces isomorphisms 
\[
V_{\chi} \otimes V_{\chi'} \xr{\sim} V_{\chi + \chi'}
\]
for each $\chi, \chi' \in \frac{1}{p^m}\bf{X}(T)$. 
\end{lemma}

\begin{rmk} By passing to the limit, Lemma~\ref{lemma:decomposition-R} gives a decomposition
\[
R^+_\infty \simeq \wdh{\bigoplus}_{\chi \in \bf{X}(T)[1/p]} V_{\chi}
\]
that recovers Decomposition~(\ref{model-decomp-OC}).
\end{rmk}

One disadvantage of Lemma~\ref{lemma:decomposition-R} is that Decomposition~(\ref{eqn:decomposition:finite-level-OC}) is only $\O_C$-linear and not $R^+$-linear. We fix it by introducing the following definition: 

\begin{defn}\label{defn:+-chi} For each integer $m$ and an element $\chi\in \frac{1}{p^m}\bf{X}(T) \subset \bf{X}(T)[1/p]$, we define an $R^+$-module $R^+_{\ov{\chi}} \subset R^+_m\subset R_\infty^+$ as follows
\[
R^+_{\ov{\chi}} = \wdh{\bigoplus}_{\chi'\in \chi+\bf{X}(T)} V_{\chi'}.
\]
\end{defn}

\begin{rmk} Definition~\ref{defn:+-chi} is easily seen to depend only on the class $\ov{\chi} \in \frac{\frac{1}{p^m}\bf{X}(T)}{\bf{X}(T)} \subset \frac{\bf{X}(T)[1/p]}{\bf{X}(T)}$. In particular, this definition does not depend on the choice of $m$ such that $p^m\chi \in \bf{X}(T)$ or $\chi\in \ov{\chi}$. 
\end{rmk}

\begin{lemma}\label{lemma:decomposition-R-infty-2} In the notation as above, there is a canonical decomposition
\begin{equation}\label{eqn:O-module-decomposition}
R^+_m \simeq \bigoplus_{\ov{\chi} \in \frac{\frac{1}{p^m}\bf{X}(T)}{\bf{X}(T)}} R^+_{\ov{\chi}}.
\end{equation} 
\end{lemma}
\begin{proof}
    This follows directly from Lemma~\ref{lemma:decomposition-R}. We only note that the direct sum in (\ref{eqn:O-module-decomposition}) is already complete since $\frac{\frac{1}{p^m}\bf{X}(T)}{\bf{X}(T)}$ is a finite group.
\end{proof}

\begin{rmk}\label{rmk:action-explicit} By passing to the limit, Lemma~\ref{lemma:decomposition-R-infty-2} gives a decomposition
\begin{equation}\label{eqn:decomposition-R-infty}
    R^+_\infty \simeq \wdh{\bigoplus}_{\ov{\chi} \in \frac{\bf{X}(T)[1/p]}{\bf{X}(T)}} R^+_{\ov{\chi}}.
\end{equation}
Furthermore, using the presentation of $\Gamma=\rm{Hom}\left(\bf{X}\left(T\right)\left[1/p\right]/\bf{X}\left(T\right), \mu_{p^{\infty}}\left(C\right)\right)$ from Remark~\ref{rmk:conceptual-delta}, one sees that the action of $\Gamma$ on $R^+_{\ov{\chi}}$ is given by the rule
\[
\gamma.x= \gamma(\ov{\chi})x \text{ for any } \gamma \in \Gamma = \rm{Hom}\left(\bf{X}\left(T\right)\left[1/p\right]/\bf{X}\left(T\right), \mu_{p^{\infty}}\left(C\right)\right), \ x\in R_{\ov{\chi}}^+.
\]
\end{rmk}

\begin{lemma}\label{lemma:smooth-case} In the notation as above, suppose $l=1$ and $\varpi_1$ lies in $\O_C^\times$ in (\ref{eqn:model-polystable}) (i.e., $R^+=\frac{\O_C\left\langle t_{0}, \dots, t_{n}\right\rangle}{\left(t_{0} \dots t_{n} - \varpi\right)}$ for $\varpi \in \O_C^\times$). Then the natural morphism 
\[
V_{\chi} \otimes_{\O_C} R^+ \to R^+_{\ov{\chi}}
\]
is an isomorphism for any $\chi\in \bf{X}(T)[1/p]$. In particular, each $R^+_{\ov{\chi}}$ is a free $R^+$-module of rank one. 
\end{lemma}
\begin{proof}
    This can be seen by an explicit computation. We leave details to the interested reader.
\end{proof}

\begin{warning} In general, the map $V_{\chi} \otimes_{\O_C} R^+ \to R^+_{\ov{\chi}}$ is not an isomorphism. Furthermore, the $R^+$-modules $R_{\ov{\chi}}^+$ are not free $R^+$-modules. They are not even $R^+$-flat. 
\end{warning}

\subsubsection{Group cohomology}

Throughout this section, we keep the notation introduced in Section~\ref{section:poly-perfectoid-covering}. \smallskip

The main goal of this section is to get a good understanding of the cohomology groups $\bf{R}\Gamma_{\cont}(\Gamma, R_\infty^+)$. For this, it will be convenient to choose a trivialization $T\cong \bf{G}_m^{d}$ that provides us with a basis $e_1, \dots, e_d$ in $\bf{X}(T)$. In particular, any rational character $\chi\in \bf{X}(T)[1/p]$ admits a unique decomposition

\[
\chi = \frac{a_1}{p^{m_1}}e_1 + \dots + \frac{a_d}{p^{m_d}}e_d
\]
such that $a_i\in \Z$, $m_i\in \Z_{\geq 0}$, and $a_i$ is coprime with $p$ if $m_i>0$. Furthermore, we choose a compatible system of primitive $p$-power roots of $(1, \zeta_p, \zeta_{p^2}, \dots )$ which defines a trivialization $\Gamma \cong \bf{Z}_p^{d}$. Now we use the presentation $\Gamma=\rm{Hom}\big(\bf{X}\left(T\right)\left[1/p\right]/\bf{X}\left(T\right), \mu_{p^{\infty}}\left(C\right)\big)$ from Remark~\ref{rmk:conceptual-delta} to note that the above trivializations provide $\Gamma$ with a basis $\gamma_1, \dots, \gamma_d$ such that, for each $\frac{1}{p^{m}}e_j$, we have
\[
\gamma_i\left(\ov{\frac{1}{p^{m}}e_j}\right) =  \begin{cases} \zeta_{p^{m}} & i=j \\ 1 & i\neq j \end{cases},
\]
where $\ov{\frac{1}{p^{m}}e_j}$ is the class of $\frac{1}{p^{m}}e_j$ in $\bf{X}(T)[1/p]/\bf{X}(T)$.

\begin{lemma}\label{lemma:computation-chi} Let $\chi\in \bf{X}(T)[1/p]$ be a rational character with a decomposition $\chi= \frac{a_1}{p^{m_1}}e_1 + \dots + \frac{a_d}{p^{m_d}}e_d$ as above, and let $m=m_k$ be the maximum of $m_1,\dots, m_d$. Then 
\begin{enumerate}
    \item\label{lemma:computation-chi-0} $\rm{H}^i_{\cont}(\Gamma, R^+_{\ov{\chi}})=0$ for $i>d$. 
    \item\label{lemma:computation-chi-1} $\rm{H}^i_{\cont}(\Gamma, R^+_{\ov{\chi}})$ is a finitely presented $R^+$-module for any $i\geq 0$;
    \item\label{lemma:computation-chi-2} $\rm{H}^i_{\cont}(\Gamma, R^+_{\ov{\chi}})$ is annihilated by $\zeta_{p^{m_k}}-1$ for any $i\geq 0$;
    \item\label{lemma:computation-chi-3} there is a finitely presented $\O_C$-module $M_i$ such that $\rm{H}^i_{\cont}(\Gamma, R^+_{\ov{\chi}})\simeq M_i\otimes_{\O_C} R^+_{\ov{\chi}}$. Furthermore, $\rm{H}^i_{\cont}(\Gamma, R^+_{\ov{\chi}})$ has no non-zero $\m$-torsion elements for any $i\geq 0$.
\end{enumerate}
\end{lemma}
\begin{proof}
    First, we note that \cite[Lemma 7.3]{BMS1} ensures that there is a canonical isomorphism
    \[
    K\left(R^+_{\ov{\chi}}; \gamma_1-1,\dots, \gamma_d-1\right) \simeq \bf{R}\Gamma_{\cont}\left(\Gamma, R^+_{\ov{\chi}}\right),
    \]
    where $K\left(R^+_{\ov{\chi}}; \gamma_1-1,\dots, \gamma_d-1\right)$ is the Koszul complex from \cite[Definition 7.1]{BMS1}. Remark~\ref{rmk:action-explicit} and the discussion before the lemma imply that $\gamma_i$ acts on $R^+_{\ov{\chi}}$ via the multiplication by the element $\gamma_j(\ov{\chi})=\zeta_{p^{m_j}}^{a_j}$. Therefore, we have an isomorphism of complexes
    \[
    K\left(R^+_{\ov{\chi}}; \gamma_1-1,\dots, \gamma_d-1\right) \simeq K\left(\O_C; \zeta_{p^{m_1}}^{a_1}-1, \dots, \zeta_{p^{m_d}}^{a_d}-1\right) \otimes_{\O_C} R^+_{\ov{\chi}}.
    \]
    Since $R^+_{\ov{\chi}}$ is an $\O_C$-flat module, we conclude that 
    \begin{equation}\label{eqn:reduce-to-oc}
        \rm{H}^i_{\cont}(\Gamma, R^+_{\ov{\chi}}) \simeq \rm{H}^i\left( K\left(\O_C; \zeta_{p^{m_1}}^{a_1}-1, \dots, \zeta_{p^{m_d}}^{a_d}-1\right) \right) \otimes_{\O_C} R^+_{\ov{\chi}}.
    \end{equation}
    For brevity, we denote $K\left(\O_C; \zeta_{p^{m_1}}^{a_1}-1, \dots, \zeta_{p^{m_d}}^{a_d}-1\right)$ simply by $K^\bullet$.
    
    (\ref{lemma:computation-chi-0}): This is clear since the Koszul complex $K^\bullet$ is concentrated in degrees $[0, d]$. \smallskip
    
    (\ref{lemma:computation-chi-1}): Note that Lemma~\ref{lemma:finite-cover-integrally-closed}(\ref{lemma:finite-cover-integrally-closed-2}) implies $R^+_{\ov{\chi}}$ is finitely presented over $R^+$ since it is a direct summand of $R^+_{m_k}$ (by our choice of $m_k$). Therefore, Equation~(\ref{eqn:reduce-to-oc}) implies that it suffices to show that $\rm{H}^i\left(K^\bullet\right)$ is a finitely presented $\O_C$-module for any $i\geq 0$. This formally follows from the fact that the terms $K^\bullet$ are free $\O_C$-modules of finite rank, its differentials are $\O_C$-linear, and $\O_C$ is a coherent ring.\smallskip 
    
    (\ref{lemma:computation-chi-2}): Similarly, it suffices to show that $\rm{H}^i\left(K^\bullet\right)$ is annihilated by $\zeta_{p^{m_k}}-1$. First, we note that $\zeta_{p^{m_k}}-1=0$ if $m_k=0$, so we can assume that $m_k>0$. Now \cite[\href{https://stacks.math.columbia.edu/tag/0663}{Tag 0663}]{stacks-project} implies that these groups are annihilated by $\zeta^{a_k}_{p^{m_k}}-1$. Since $a_k$ is coprime with $p$, we conclude that $\zeta_{p^{m_k}}^{a_k}-1$ and $\zeta_{p^{m_k}}-1$ have the same $p$-adic valuation, so $\zeta_{p^{m_k}}-1$ must also annihilate these groups as well. \smallskip
    
    (\ref{lemma:computation-chi-3}): The first part of is clear from the discussion before. Since $\rm{H}^i\left(K^\bullet\right)$ is a finitely presented $\O_C$-module, \cite[Proposition 2.8(i) and 2.10(i)]{Sch1} ensures that it is isomorphic to a finite direct sum $\O_C^{r} \oplus \bigoplus_{j} \O_C/b_j \O_C$ for some $b_j\in \m$. Therefore, it suffices to show that $R^+_{\ov{\chi}}/b$ has no $\m$-torsion for any $b\in \m$. This can be easily seen from the fact that $R^+_{\ov{\chi}}$ is a topologically free $\O_C$-module (this follows directly from Definition~\ref{defn:+-chi}). 
\end{proof}

\begin{rmk}\label{rmk:zetap-1-torsion} We note that Lemma~\ref{lemma:computation-chi}(\ref{lemma:computation-chi-2}) implies that $\rm{H}^i_{\cont}(\Gamma, R^+_{\ov{\chi}})$ is annihilated by $\zeta_p-1$ for any non-trivial $\ov{\chi} \in \bf{X}(T)[1/p]/\bf{X}(T)$.
\end{rmk}

\begin{cor}\label{cor:computation-chi-mod-p} We have
\begin{enumerate}
    \item\label{cor:computation-chi-mod-p-0} $\rm{H}^i_{\cont}(\Gamma, R^+_{\ov{\chi}}/p)=0$ for $i>d$ and $\ov{\chi} \in \bf{X}(T)[1/p]/\bf{X}(T)$;
    \item\label{cor:computation-chi-mod-p-1} $\rm{H}^i_{\cont}(\Gamma, R^+_{\ov{\chi}}/p)$ is a finitely presented $R^+/p$-module for $i\geq 0$ and $\ov{\chi} \in \bf{X}(T)[1/p]/\bf{X}(T)$;
    \item\label{cor:computation-chi-mod-p-2} for each integers $i\geq 0$ and $n>0$, the group $\rm{H}^i_{\cont}(\Gamma, R^+_{\ov{\chi}}/p)$ is annihilated by $p^{1/n}$ for all but finitely many elements $\ov{\chi} \in \bf{X}(T)[1/p]/\bf{X}(T)$.
\end{enumerate}
\end{cor}
\begin{proof}
    First, we note that \cite[Lemma 7.3]{BMS1} implies that 
    \begin{equation}\label{eqn:cohomology-base-change}
        \bf{R}\Gamma_{\cont}(\Gamma, R^+_{\ov{\chi}}) \otimes_{\Z_p}^L \Z/p\Z \simeq \bf{R}\Gamma_{\cont}(\Gamma, R^+_{\ov{\chi}}/p).
    \end{equation}
    Therefore, (\ref{cor:computation-chi-mod-p-0}) and (\ref{cor:computation-chi-mod-p-1}) follow directly from Lemma~\ref{lemma:computation-chi}(\ref{lemma:computation-chi-0}, \ref{lemma:computation-chi-1}). Now (\ref{cor:computation-chi-mod-p-2}) follows directly from Equation~(\ref{eqn:cohomology-base-change}), Lemma~\ref{lemma:computation-chi}(\ref{lemma:computation-chi-2}), and the observation that $v_p(\zeta_{p^m}-1)=\frac{1}{p^m-p^{m-1}}$ approaches $0$ as $m$ grows.
\end{proof}

\begin{cor}\label{cor:group-coh-compute-model-case} Let $i$ be a non-negative integer. Then
\begin{enumerate}
    \item\label{cor:group-coh-compute-model-case-0} $\rm{H}^i_{\cont}(\Gamma, R^+_\infty)=0$ for $i>d$. 
    \item\label{cor:group-coh-compute-model-case-1} there is a canonical isomorphism $\rm{H}^i_{\cont}(\Gamma, R^+) \simeq \bigwedge^i_{R^+} (R^+)^{d}$;
    \item\label{cor:group-coh-compute-model-case-2} there is a decomposition $\rm{H}^i_{\cont}(\Gamma, R^+_\infty)=\rm{H}^i_{\cont}(\Gamma, R^+) \oplus N_i$ such that $N_i$ is an $R^+$-module annihilated by $1-\zeta_p$;
    \item\label{cor:group-coh-compute-model-case-2.5} $\rm{H}^i_{\cont}(\Gamma, R^+_\infty)$ has no $\m$-torsion elements;
    \item\label{cor:group-coh-compute-model-case-3} $\rm{H}^i_{\cont}(\Gamma, R^+_\infty)$ is almost finitely presented over $R^+$.
\end{enumerate}
\end{cor}
\begin{proof}
    (\ref{cor:group-coh-compute-model-case-0}): This can be seen similarly to Lemma~\ref{lemma:computation-chi}(\ref{lemma:computation-chi-0}) using the Koszul complex. \smallskip 

    (\ref{cor:group-coh-compute-model-case-1}): The proof of Lemma~\ref{lemma:computation-chi} shows that $\bf{R}\Gamma_{\cont}(\Gamma, R^+)$ is canonically isomorphic to the Koszul complex $K(R^+;0, 0, \dots, 0)$ with trivial differentials. Therefore, we get a canonical identification $\rm{H}^i_{\cont}(\Gamma, R^+) \simeq \bigwedge^i_{R^+} (R^+)^{d}$. \smallskip
    
    (\ref{cor:group-coh-compute-model-case-2}): First, we note that Equation~(\ref{eqn:decomposition-R-infty}) implies that 
    \[
    \rm{H}^i_{\cont}(\Gamma, R^+_\infty) = \rm{H}^i_{\cont}(\Gamma, R^+) \bigoplus \rm{H}^i_{\cont}\Big(\Gamma, \wdh{\bigoplus\limits_{0\neq \ov{\chi} \in \bf{X}(T)[1/p]/\bf{X}(T)}} R^+_{\ov{\chi}}\Big). 
    \]
    Therefore, it suffices to show that $N_i\coloneqq \rm{H}^i_{\cont}\Big(\Gamma, \wdh{\bigoplus\limits_{0\neq \ov{\chi} \in \bf{X}(T)[1/p]/\bf{X}(T)}} R^+_{\ov{\chi}}\Big)$ is annihilated by $\zeta_p-1$. For this, we note that Remark~\ref{rmk:zetap-1-torsion} ensures that $\rm{H}^i_{\cont}(\Gamma, R^+_{\ov{\chi}})$ is annihilated by $\zeta_p-1$ for each $0\neq \ov{\chi} \in \bf{X}(T)[1/p]/\bf{X}(T)$. Therefore, \cite[Lemma 3.6]{Ces} concludes that 
    \[
    \rm{H}^i_{\cont}\Big(\Gamma, \wdh{\bigoplus\limits_{0\neq \ov{\chi} \in \bf{X}(T)[1/p]/\bf{X}(T)}} R^+_{\ov{\chi}}\Big) \subset  \prod_{0\neq \ov{\chi} \in \bf{X}(T)[1/p]/\bf{X}(T)} \rm{H}^i_{\cont}(\Gamma, R^+_{\ov{\chi}})
    \]
    is annihilated by $\zeta_p-1$ as well. \smallskip
    
    (\ref{cor:group-coh-compute-model-case-2.5}): The proof of (\ref{cor:group-coh-compute-model-case-2}) implies that
    \[
    \rm{H}^i_{\cont}(\Gamma, R^+_\infty) \subset \prod_{\ov{\chi} \in \bf{X}(T)[1/p]/\bf{X}(T)} \rm{H}^i_{\cont}(\Gamma, R^+_{\ov{\chi}}).
    \]
    Therefore, it suffices to show that $\rm{H}^i_{\cont}(\Gamma, R^+_{\ov{\chi}})$ does not have $\m$-torsion elements for each $\chi\in \bf{X}(T)[1/p]/\bf{X}(T)$. This follows from Lemma~\ref{lemma:computation-chi}(\ref{lemma:computation-chi-3}). \smallskip 

    (\ref{cor:group-coh-compute-model-case-3}): We recall that \cite[Lemma 7.3]{BMS1} implies that $\bf{R}\Gamma_{\cont}(\Gamma, R^+_\infty)$ is $p$-adically derived complete and  $\bf{R}\Gamma_{\cont}(\Gamma, R^+_\infty) \otimes_{\Z_p}^L \Z/p\Z \simeq \bf{R}\Gamma_{\cont}(\Gamma, R^+_\infty/p)$. Therefore, \cite[Theorem 2.13.2]{Z3} ensures that it suffices to show that 
    \[
    \bf{R}\Gamma_{\cont}(\Gamma, R^+_\infty/p) \simeq \bigoplus_{\ov{\chi} \in \bf{X}(T)[1/p]/\bf{X}(T)} \bf{R}\Gamma_{\cont}(\Gamma, R^+_{\ov{\chi}}/p)
    \]
    lies in $\bf{D}_{acoh}^b(R^+/p)$. This follows directly from Corollary~\ref{cor:computation-chi-mod-p}. 
\end{proof}

\begin{lemma}\label{lemma:framed-case-decalage} Let $\X=\Spf R^+$ be a model polystable formal $\O_C$-scheme, and let $X=\X_C$ be its generic fiber. Then the natural map
\[
L\eta_{1-\zeta_p}\left(\bf{R}\Gamma_{\cont}\left(\Gamma, R^+_{\infty}\right)\right) \to L\eta_{1-\zeta_p}\left(\bf{R}\Gamma(X, \wdh{\O}^+_X)\right)
\]
is an (honest) isomorphism (see \cite[Corollary 6.5]{BMS1} for the definition of $L\eta_{1-\zeta_p}$). 
\end{lemma}
\begin{proof}
We know that the map $\bf{R}\Gamma_{\cont}\left(\Gamma, R^+_{\infty}\right) \to \bf{R}\Gamma(X, \wdh{\O}^+_X)$ is an almost isomorphism by Lemma~\ref{lemma:almost-compute-nu} (or really the discussion before it). Now \cite[Lemma 8.11(ii)]{BMS1} says that, in order to show that the map \[
L\eta_{1-\zeta_p}\left(\bf{R}\Gamma_{\cont}\left(\Gamma, R^+_{\infty}\right)\right) \to L\eta_{1-\zeta_p}\left(\bf{R}\Gamma(X, \wdh{\O}^+_X)\right)
\]
is an isomorphism, it suffices to show $\rm{H}^i_{\cont}(\Gamma, R^+_{\infty})$ and $\frac{\rm{H}^i_{\cont}(\Gamma, R^+_{\infty})}{\left(1-\zeta_p\right)\rm{H}^i_{\cont}(\Gamma, R^+_{\infty})}$ have no non-zero $\m$-torsion elements for all $i\geq 0$. This follows from Corollary~\ref{cor:group-coh-compute-model-case}(\ref{cor:group-coh-compute-model-case-1}, \ref{cor:group-coh-compute-model-case-2}, \ref{cor:group-coh-compute-model-case-2.5}). 
\end{proof}

\begin{cor}\label{cor:framed-case-decalage-sheaf} Let $\X$ and $X$ be as in Lemma~\ref{lemma:framed-case-decalage} . Then the natural map
\[
\left(L\eta_{1-\zeta_p}\,\bf{R}\Gamma_{\cont}\left(\Gamma, R^+_{\infty}\right)\right)^{L\Updelta} \to L\eta_{1-\zeta_p}\left(\bf{R}\nu_*\wdh{\O}^+_X\right)
\]
is an (honest) isomorphism. 
\end{cor}
\begin{proof}
Using that $\bf{R}\nu_*\wdh{\O}^+_X$ is quasi-coherent and almost coherent (see \cite[Theorem 6.13.6]{Z3}), we conclude that \cite[Lemma 4.8.13,Theorem 4.8.15]{Z3} and \cite[Lemma 6.4]{BMS1} imply that
\[
L\eta_{1-\zeta_p}\left(\bf{R}\nu_*\wdh{\O}^+_X\right)\simeq \left(L\eta_{1-\zeta_p}\bf{R}\Gamma(X, \wdh{\O}^+_X)\right)^{L\Updelta}.
\]
Therefore, the result follows directly from Lemma~\ref{lemma:framed-case-decalage}. 
\end{proof}

\begin{cor}\label{cor:line-bundle-push} Let $\X$ be a polystable formal $\O_C$-scheme with $X=\X_C$ of pure dimension $d$. Then $\frac{\rm{R}^i\nu_*\wdh{\O}^+_X}{\rm{R}^i\nu_*\wdh{\O}^+_X[1-\zeta_p]}$ is a locally free $\O_\X$-module of rank $\binom{d}{i}$ for any $i\geq 0$. In particular, the natural morphism 
\[
\frac{\rm{R}^i\nu_*\wdh{\O}^+_X}{\rm{R}^i\nu_*\wdh{\O}^+_X[1-\zeta_p]} \to \widetilde{\rm{R}^i\nu_*\wdh{\O}^+_X}
\]
is an isomorphism. 
\end{cor}
\begin{proof}
    Since the formation of $\rm{R}^i\nu_*\wdh{\O}^+_X$ commutes with \'etale base change (see \cite[Theorem 6.13.6]{Z3}), it suffices to treat the case of a model polystable formal $\O_C$-scheme $\X=\Spf R^+$. Now \cite[Lemma 6.4]{BMS1} implies that it suffices to show that $\cal{H}^i\left(L\eta_{1-\zeta_p}\left(\bf{R}\nu_*\wdh{\O}^+_X\right)\right)$ is a free $\O_\X$-module of rank $\binom{d}{i}$. This follows from Corollary~\ref{cor:framed-case-decalage-sheaf} and Corollary~\ref{cor:group-coh-compute-model-case}(\ref{cor:group-coh-compute-model-case-1}, \ref{cor:group-coh-compute-model-case-2}) (and \cite[Lemma 4.8.13]{Z3}). 
\end{proof}

\begin{thm}\label{thm:structure-nearby-framed} Let $\X$ be a separated polystable formal $\O_C$-scheme with $X=\X_C$ of pure dimension $d$. Then Faltings' map
\[
\rm{Tr}^{+, d}_{F, \X}\colon \rm{R}^d\nu_*\wdh{\O}^+_X \to \omega_{\X}(-d)
\]
induces the (honest) isomorphism
\[
t^+_\X\colon \frac{\rm{R}^d\nu_*\wdh{\O}^+_X}{(\rm{R}^d\nu_*\wdh{\O}^+_X)[1-\zeta_p]} \xr{\sim} \omega_{\X}(-d) \simeq \omega^\bullet_{\X}(-d)[-d].
\]
\end{thm} 
\begin{proof}
First of all, Lemma~\ref{lemma:line-bundle-dualizing} guarantees that $\omega_{\X}(-d) \simeq \omega^\bullet_{\X}(-d)[-d]$ is a locally free $\O_\X$-module of rank one, while Corollary~\ref{cor:line-bundle-push} ensures that $\frac{\rm{R}^d\nu_*\wdh{\O}^+_X}{(\rm{R}^d\nu_*\wdh{\O}^+_X)[1-\zeta_p]}$ is also a locally free $\O_\X$-module of rank one. In particular, both sheaves are reflexive. Now note that $\X$ has (geometrically) reduced special fiber and smooth generic fiber. Thus, Lemma~\ref{intersection-reducedfiber} says that it is sufficient to check that the map $t^+_\X$ is an isomorphism on the smooth locus $\X^\sm$ and the generic fiber $\X_C$. \smallskip

After unravelling the definition (see Theorem~\ref{thm:faltings-1}), we see that the former case follows from \cite[Theorem 8.3]{BMS1} and the latter case follows from \cite[Proposition 3.23]{Schsurvey}. 
\end{proof}

\subsection{Local duality on formal polystable schemes}

The main goal of this section is to show that Faltings' trace map induces an almost perfect pairing on any separated polystable formal $\O_C$-scheme $\X$. We start the section by establishing duality in continous group cohomology of the profinite group $\Gamma=\Z_p(1)^d$, and then use the standard perfectoid covering from Section~\ref{section:poly-perfectoid-covering} to relate it to perfectness of Faltings' trace on {\it model} polystable formal $\O_C$-schemes. The general case then follow by a version of \'etale descent.

\subsubsection{Digression: duality on semistable varieties}\label{section:digression}

In this section, we do some preliminary computations on semi-stable varieties over a finite field $\bf{F}_p$ (all computations stay the same over an arbitrary field). These computations will be used in the next subsection to show that the complex $\bf{R}\Gamma_\cont(\Gamma, R^+_\infty)$ is almost self-dual. \smallskip

Throughout this section, let us fix a ring $A=\bf{F}_p[t_0, \dots, t_n]/(t_0 \cdots  t_n)$, and $X=\Spec A$. We also put $A_m=\bf{F}_p[t_0^{1/p^m}, \dots, t_n^{1/p^m}]/(t_0^{1/p^m} \cdots t_n^{1/p^m})$ and $X_m=\Spec A_m$ for every integer $m\geq 0$. \smallskip

\begin{defn}\label{defn:trace-am} The {\it trace morphism} $\rm{tr}_m\colon A_m \to A$ is the unique $\bf{F}_p$-linear morphism such that, for every $a_0, \dots, a_n \in \frac{1}{p^m}\Z$, 
\[
\rm{tr}_m\left(t_0^{a_0} \dots  t_n^{a_n}\right) = \begin{cases}
  t_0^{a_0} \dots  t_n^{a_n},   & a_i\in \Z \text{ for all } i \\
  0, & \text{ otherwise.}
\end{cases} 
\]
The trace morphism is easily seen to be $A$-linear. 
\end{defn}

\begin{thm}\label{thm:algebraic-duality-semistable} The trace morphism induces a (derived) perfect pairing
\[
A_m\otimes^L_A A_m \xr{\cdot} A_m \xr{\rm{tr}_m} A
\]
for every integer $m\geq 0$. In other words, the induced morphism
\[
\alpha_m \colon A_m \to \bf{R}\Hom_A(A_m, A)
\]
is an isomorphism for all $m\geq 0$.
\end{thm}
\begin{proof}
    {\it Step~$1$. $\bf{R}\Hom_A(A_m, A)$ is concentrated in degree $0$.} A standard argument (for example, see Remark~\ref{rmk:dualizing-complex-on-semistable-models}) shows that the dualizing complexes (relative to $\bf{F}_p$) on $X=\Spec A$ and $X_m=\Spec A_m$ are isomorphic to $\O_X[n]$ and $\O_{X_m}[n]$ respectively. Since $X_m \to X$ is a finite morphism of noetherian schemes, \cite[\href{https://stacks.math.columbia.edu/tag/0AX2}{Tag 0AX2}]{stacks-project} implies that $A_m[n] \simeq \bf{R}\Hom_A(A_m, A[n])$. In particular, $\bf{R}\Hom_A(A_m, A)$ is concentrated in degree $0$. At this moment, we already know that $A_m$ is abstractly isomorphic to $\bf{R}\Hom_A(A_m, A)$, but we do not know that this isomorphism is defined via the trace morphism. 
    
    {\it Step~$2$. We show that $\rm{tr}_m$ induces an isomorphism $\alpha_m \colon A_m \to \Hom_A(A_m, A)$.} One easily checks that $\alpha_m$ is injective. So it suffices to show that it is surjective. Pick any $\varphi \in \Hom_A(A_m, A)$. Then, for any
    \[
    I\subset [n], \underline{a}=(a_i)_{i\in I} \in \left(\frac{1}{p^m}\Z \cap (0, 1)\right)^{\#I},
    \]
    we have
    \[
    \left(\prod_{j\in [n]\setminus I} t_j\right)\cdot\varphi\left(\prod_{i\in I} t_i^{a_i}\right) = \varphi\left(\prod_{i\in I} t_i^{a_i} \cdot \prod_{j\in [n]\setminus I} t_j\right)=0.
    \]
    Therefore, for any such $I$ and $\underline{a}$, there is an element $b_{I, \underline{a}}\in A$ such that 
    \[
    \varphi\left(\prod_{i\in I} t_i^{a_i}\right) = b_{I, \underline{a}} \prod_{i\in I} t_i. 
    \]
    Then one sees that 
    \[
    \alpha_m(x_\varphi)=\varphi,
    \]
    where 
    \[
    x_{\varphi} = \sum_{I\subset [n], \underline{a}=(a_i)_{i\in I} \in \left(\frac{1}{p^m}\Z \cap (0, 1)\right)^{\#I}} b_{I, \underline{a}} \prod_{i\in I} t_i^{1-a_i}. \qedhere
    \]
\end{proof}

\subsubsection{Duality on polystable formal schemes}

Throughout this section, we keep the notation of Section~\ref{section:poly-perfectoid-covering}. In particular, $R^+$ is a model polystable algebra as in (\ref{eqn:model-polystable}) and $R^+_m$ is its rig-\'etale cover as in (\ref{eqn:model-polystable-etale}). \smallskip

The main goal of this section is to show an analogue of Theorem~\ref{thm:algebraic-duality-semistable} for $R_m^+$ and $R^+$. \smallskip

\begin{defn} The {\it trace morphism} $\rm{tr}^+_m \colon R^+_m \to R^+$ is defined to be the projection 
\[
R_m^+ = \bigoplus_{\ov{\chi} \in \frac{\frac{1}{p^m}\bf{X}(T)}{\bf{X}(T)}} R^+_{\ov{\chi}} \to R^+
\]
onto the factor corresponding to the trivial character (see Lemma~\ref{lemma:decomposition-R-infty-2}). 
\end{defn}

\begin{lemma}\label{lemma:dual-complete} Let $A$ be a topologically finitely presented $\O_C$-algebra, and $M \in \bf{D}^-_{acoh}(A)$. Then $\bf{R}\Hom_A(M, A)\in \bf{D}^+_{acoh}(A)$, and it is derived $p$-adically complete. 
\end{lemma}
\begin{proof}
    The first claim follows from \cite[Proposition 2.6.19 and 2.4.8(i)]{Z3}. The second claim follows from \cite[\href{https://stacks.math.columbia.edu/tag/0A6E}{Tag 0A6E}]{stacks-project} and the observation that $A$ is $p$-adically complete (thus, it is derived $p$-adically complete by \cite[\href{https://stacks.math.columbia.edu/tag/091T}{Tag 091T}]{stacks-project}).
\end{proof}

\begin{lemma}\label{lemma:kunneth-polystable-model} Let $A$ and $B$ be flat, topologically finite type $\O_C$-algebras, let $M$ be a finitely presented $A$-module, and $N$ a finitely presented $B$-module. Suppose that both $M$ and $N$ are $\O_C$-flat, then the natural morphism 
    \[
    \bf{R}\Hom_{A}(M, A) \wdh{\otimes}^L_{\O_C}\bf{R}\Hom_{B}(N, B) \to \bf{R}\Hom_{A\wdh{\otimes}_{\O_C} B}(M\wdh{\otimes}_{\O_C} N, A\wdh{\otimes}_{\O_C} B)
    \]
    is an isomorphism\footnote{To construct the map, we note that $\bf{R}\Hom_{R\wdh{\otimes}_{\O_C} S}(M\wdh{\otimes}_{\O_C} N, R\wdh{\otimes}_{\O_C} S)$ is derived $p$-adically complete by Lemma~\ref{lemma:dual-complete} applied to $R\wdh{\otimes}_{\O_C} S$ and $M\wdh{\otimes}_{\O_C} N$.}, where $\wdh{\otimes}$ stands for the usual completed tensor product and $\wdh{\otimes}^L_{\O_C}$ for the derived completed derived tensor product. 
\end{lemma}
\begin{proof}
    For brevity, let us denote $A\wdh{\otimes}_{\O_C} B$ by $S$. The derived Nakayama Lemma (see \cite[\href{https://stacks.math.columbia.edu/tag/0G1U}{Tag 0G1U}]{stacks-project}) and \cite[\href{https://stacks.math.columbia.edu/tag/0A6A}{Tag 0A6A}]{stacks-project} imply that it suffices to show that the natural morphism
    \[
    \bf{R}\Hom_{A/p}(M/p, A/p) \otimes^L_{\O_C/p}\bf{R}\Hom_{B/p}(N/p, B/p) \to \bf{R}\Hom_{S/p}(M/p\otimes_{\O_C/p} N/p, S/p)
    \]
    is an isomorphism. Since $M/p$ and $N/p$ are pseudo-coherent, we can choose resolutions $P^\bullet \to M/p$ and $Q^\bullet \to N/p$ by bounded above complexes of finite free modules. Since the terms of $P^\bullet$ and $Q^\bullet$ are $\O_C/p$-flat, we conclude that $P^\bullet \otimes^\bullet_{\O_C/p} Q^\bullet \to M/p\otimes_{\O_C/p} N/p$ is a resolution by finite free $S/p$-modules. Then the claim boils down to showing that the natural morphism of complexes
    \[
    \Hom_{A/p}(P^\bullet, A/p) \otimes^\bullet_{\O_C/p} \Hom_{B/p}(Q^\bullet, B/p) \to \Hom_{S/p}(P^\bullet \otimes^\bullet_{\O_C/p} Q^\bullet, S/p).
    \]
    is a quasi-isomorphism. This morphism can be easily seen to be even an isomorphism of complexes. 
\end{proof}

\begin{prop}\label{prop:derived-duality-Rm} In the notation as above, the pairing
\[
R_m^+ \otimes^L_{R^+} R_m^+ \xr{ \cdot } R_m^+ \xr{\rm{tr}_m^+} R^+
\]
is perfect, i.e. the natural duality morphism
\[
R_m^+ \to \bf{R}\Hom_{R^+}(R_m^+, R^+)
\]
is an isomorphism.
\end{prop}
\begin{proof}
    First, Lemma~\ref{lemma:kunneth-polystable-model} implies that it suffices to treat the case when $l=1$ in Equation~(\ref{eqn:model-polystable}). So we assume that $R^+=\frac{\O_C\left\langle t_{0}, \dots, t_{n}\right\rangle}{\left(t_{0} \dots t_{n} - \varpi\right)}$ for some $\varpi\in \O_C \setminus \{0\}$. \smallskip
    
    {\it Case~$1$. $\varpi\in \O_C^\times$. } In this case, Lemma~\ref{lemma:decomposition-R-infty-2} and Lemma~\ref{lemma:smooth-case} guarantee that we have a natural decomposition
    \[
    R_m^+ \simeq \bigoplus_{\ov{\chi}} R_{\ov{\chi}}^+ \simeq \bigoplus_{\ov{\chi}} V_{\chi} \otimes_{\O_C} R^+,
    \]
    where the sum is taken over $\ov{\chi}\in \frac{\frac{1}{p^m}\bf{X}(T)}{\bf{X}(T)}$. Since $R^+_{\ov{\chi}}$ pairs non-trivially only with elements of $R^+_{-\ov{\chi}}$, we conclude that it suffices to show that the natural morphism
    \[
    \bigoplus_{\ov{\chi}} V_{\chi} \otimes_{\O_C} R^+ \to \bigoplus_{\ov{\chi}} \bf{R}\Hom_{R^+}(V_{-\chi} \otimes_{\O_C} R^+, R^+)
    \]
    is an isomorphism. This can be checked for each $\ov{\chi}$ separately. Now \cite[\href{https://stacks.math.columbia.edu/tag/0A6A}{Tag 0A6A}]{stacks-project} implies that it suffices to show that the natural morphism
    \[
    V_{\chi} \to \bf{R}\Hom_{\O_C}(V_{-\chi}, \O_C)
    \]
    is an isomorphism. This is obvious since both $V_{\chi}$ and $V_{-\chi}$ are free $\O_C$-modules, and $V_{\chi} \otimes V_{-\chi} \simeq \O_C$ via the multiplication map. \smallskip
    
    {\it Case~$2$. $\varpi\in \m_C\setminus \{0\}$.} Choose $\pi\in \m_C$ such that $1>|\pi|>|\varpi^{1/p^n}|$ and $1>|\pi|>|p|$. We note that Lemma~\ref{lemma:finite-cover-integrally-closed}(\ref{lemma:finite-cover-integrally-closed-2}) and Lemma~\ref{lemma:dual-complete} imply that both $R_m^+$ and $\bf{R}\Hom_{R^+}(R_m^+, R^+)$ are derived $\pi$-adically complete. Therefore, \cite[\href{https://stacks.math.columbia.edu/tag/0A6A}{Tag 0A6A}]{stacks-project} and the derived Nakayama Lemma (see \cite[\href{https://stacks.math.columbia.edu/tag/0G1U}{Tag 0G1U}]{stacks-project}) imply that it suffices to show that the natural morphisms
    \[
    R_m^+/\pi \to \bf{R}\Hom_{R^+/\pi}(R_m^+/\pi, R^+/\pi)
    \]
    are isomorphisms. Now we note that 
    \[
    R^+/\pi \simeq \frac{(\O_C/\pi)\left[t_0, \dots, t_n\right]}{\left(t_0 \dots  t_n\right)}, \, \text{ and }\, R^+_m/\pi \simeq \frac{(\O_C/\pi)\left[t_0^{1/p^m}, \dots, t_n^{1/p^m}\right]}{\left(t_0^{1/p^m} \dots  t_n^{1/p^m}\right)}.
    \]
    Since $p\in \pi\O_C$ by our choice of $\pi$, we see that 
    \[
    R^+/\pi \simeq \frac{\bf{F}_p\left[t_0, \dots, t_n\right]}{(t_0 \dots t_n)} \otimes_{\bf{F}_p} \O_C/\pi \simeq A\otimes_{\bf{F}_p} \O_C/\pi,
    \]
    \[
    R^+_m/\pi \simeq \frac{\bf{F}_p\left[t_0^{1/p^m}, \dots, t_n^{1/p^m}\right]}{\left(t_0^{1/p^m} \dots  t_n^{1/p^m}\right)} \otimes_{\bf{F}_p} \O_C/\pi \simeq A_m\otimes_{\bf{F}_p} \O_C/\pi,
    \]
    where $A$ and $A_m$ are as in Section~\ref{section:digression}. Furthermore, one sees that $\rm{tr}_m^+/\pi \colon R^+_m/\pi \to R^+/\pi$ can be identified with 
    \[
    \rm{tr}_m\otimes_{\bf{F}_p} \O_C/\pi \colon A_m\otimes_{\bf{F}_p} \O_C/\pi \to A\otimes_{\bf{F}_p} \O_C/\pi,
    \]
    where $\rm{tr}_m\colon A_m \to A$ is from Definition~\ref{defn:trace-am}. Therefore, an easy application of \cite[\href{https://stacks.math.columbia.edu/tag/0A6A}{Tag 0A6A}]{stacks-project} ensures that it suffices to show that the natural morphism (induced by $\rm{tr}_m$)
    \[
    A_m \to \bf{R}\Hom_A(A_m, A)
    \]
    is an isomorphism. This was already checked in Theorem~\ref{thm:algebraic-duality-semistable}. 
\end{proof}

\begin{cor}\label{cor:derived-duality} In the notation of Definition~\ref{defn:+-chi}, the pairing
\[
R_{\ov{\chi}}^+ \otimes^L_{R^+} R_{-\ov{\chi}}^+ \xr{ \cdot } R^+
\]
is a perfect (derived) pairing for any $\ov{\chi} \in \frac{\frac{1}{p^m}\bf{X}(T)}{\bf{X}(T)}$. More precisely, the natural duality morphism
\[
R_{\ov{\chi}}^+ \to \bf{R}\Hom_{R^+}(R_{-\ov{\chi}}^+, R^+)
\]
is an isomorphism, and the same with $\ov{\chi}$ and $-\ov{\chi}$ interchanged.
\end{cor}
\begin{proof}
    This follows directly from Proposition~\ref{prop:derived-duality-Rm}, Lemma~\ref{lemma:decomposition-R-infty-2}, and the observation that $R^+_{\ov{\chi}}$ pairs non-trivially only with the elements of $R^+_{-\ov{\chi}}$. 
\end{proof}

\begin{rmk}\label{rmk:duality-mod-p-chi} Corollary~\ref{cor:derived-duality} easily implies that the pairing
\[
R^+_{\ov{\chi}}/p\otimes^L_{R^+/p}  R^+_{-\ov{\chi}}/p \xr{\cdot} R^+/p
\]
is perfect. 
\end{rmk}

\subsubsection{Almost duality in group cohomology}

Throughout this section, we keep the notation of Section~\ref{section:poly-perfectoid-covering}. In particular, $R^+$ is a model polystable $\O_C$-algebra, $R^+_\infty$ is its standard perfectoid covering such that $\Spf R_\infty^+ \to \Spf R^+$ is a pro-\'etale $\Gamma\simeq \Z_p(1)^d$-torsor on generic fibers. \smallskip

The main goal of this section is to show that $\bf{R}\Gamma_\cont(\Gamma, R_\infty^+)$ is an almost self-dual element of $\bf{D}(R^+)$. We start by studying some general properties of continuous cohomology of the group $\Gamma$. 

\begin{defn}\label{defn:trace-map-group-cohomology} The {\it trace morphism} $\rm{tr}_\Gamma \colon \bf{R}\Gamma_{\cont}(\Gamma, R^+) \to R^+[-d]$ is the composition
\[
\bf{R}\Gamma_{\cont}(\Gamma, R^+) \to \rm{H}^d_{\cont}(\Gamma, R^+)[-d] \xr{\sim} R^+[-d],
\]
where the first map is a ``projection'' onto the top cohomology group and the second map is the canonical isomorphism from Lemma~\ref{cor:group-coh-compute-model-case}(\ref{cor:group-coh-compute-model-case-1}). Similarly, one defines
\[
\rm{tr}_\Gamma/p \colon \bf{R}\Gamma_{\cont}(\Gamma, R^+/p) \to R^+/p[-d].
\]
\end{defn}

\begin{lemma}\label{lemma:general-duality-group-coh} Let $M$ and $N$ be $R^+/p$-modules with a continuous $R^+/p$-linear $\Gamma$-action, and let $M\otimes^L_{R^+/p} N \xr{\varphi} R^+/p$ be a $\Gamma$-invariant perfect pairing (i.e. $M \to \bf{R}\Hom_{R^+/p}(N, R^+/p)$ is an isomorphism, and the same with $N$ and $M$ interchanged). Then the pairing
\[
\bf{R}\Gamma_{\cont}(\Gamma, M) \otimes^L_{R^+/p} \bf{R}\Gamma_{\cont}(\Gamma, N) \xr{\cup} \bf{R}\Gamma_{\cont}(\Gamma, M\otimes^L_{R^+/p} N) \xr{\bf{R}\Gamma_{\cont}(\Gamma, \varphi)} \bf{R}\Gamma_{\cont}(\Gamma, R^+/p) \xr{\rm{tr}_\Gamma/p} R^+/p[-d]
\]
is perfect, i.e., the natural morphism
\[
\bf{R}\Gamma_{\cont}(\Gamma, M) \to \bf{R}\Hom_{R^+/p}\left(\bf{R}\Gamma_{\cont}(\Gamma, N), R^+/p[-d]\right)
\]
is an isomorphism, and the same with $M$ and $N$ interchanged.
\end{lemma}
\begin{proof}
    First, we choose some trivialization $\Gamma \cong \Z_p^d$. Then we note \cite[Lemma 7.3]{BMS1} ensures that the natural morphism
    \[
    \bf{R}\Gamma(\Z^d, R^+/p) \to \bf{R}\Gamma_{\cont}(\Gamma, R^+/p)
    \]
    is an isomorphism, and the same applies to cohomology with coefficients with $M$ and $N$. Therefore, it suffices to show analogous statement where we replace all continuous cohomology of $\Gamma$ with cohomology of the (discrete) group $\Z^d$. \smallskip
    
    Now we recall that the classifying space $B(\Z^d)$ is isomorphic to the $d$-dimensional torus $(\bf{S}^1)^d$, and so $M$ (resp. $N$) (functorially) correspond to a locally constant sheaf of $R^+/p$-modules $\ud{M}$ (resp. $\ud{N}$) on $(\bf{S}^1)^d$. Furthermore, the induced pairing 
    \[
    \ud{M}\otimes^L_{\ud{R}^+/p} \ud{N} \to \ud{R}^+/p
    \]
    is perfect, and $\bf{R}\Gamma(\Z^d, M)\simeq \bf{R}\Gamma((\bf{S}^1)^d, \ud{M})$ (and the same for $N$ and $R^+/p$). Therefore, it suffices to show that the pairing
    \[
    \bf{R}\Gamma\left(\left(\bf{S}^1\right)^d, \ud{M}\right) \otimes^L_{\ud{R}^+/p} \bf{R}\Gamma\left(\left(\bf{S}^1\right)^d, \ud{N}\right) \xr{\cup} \bf{R}\Gamma\left(\left(\bf{S}^1\right)^d, \ud{M}\otimes^L_{\ud{R}^+/p} \ud{N}\right) \xr{\bf{R}\Gamma\left(\left(\bf{S}^1\right)^d, \ud{\varphi}\right)} \bf{R}\Gamma\left(\left(\bf{S}^1\right)^d, \ud{R}^+/p\right) \to R^+/p[-d]
    \]
    is perfect, where the last map is the trace map in topological Poincar\'e Duality. This follows\footnote{This implication explicitly uses that the trace map $\bf{R}\Gamma((\bf{S}^1)^d, \ud{R}^+/p) \xr{\rm{tr}_{(\bf{S}^1)^d}} R^+/p[-d]$ in topological Poincar\'e duality matches the explicitly defined trace map $\rm{tr}_{\Gamma}/p$ up to an invertible unit $u\in (R^+/p)^\times$. To see this, one should note that both trace maps induce an isomorphism of the top cohomology group with the free rank-$1$ module $R^+/p$.} from the usual topological Poincar\'e Duality on the $d$-dimensial torus $(\bf{S}^1)^d$.
\end{proof}

\begin{lemma}\label{lemma:duality-chi} Let $\ov{\chi} \in \bf{X}(T)[1/p]/\bf{X}(T)$. Then, in the notation of Definition~\ref{defn:+-chi}, the pairing
\[
\bf{R}\Gamma_{\cont}(\Gamma, R^+_{\ov{\chi}}) \otimes^L_{R^+} \bf{R}\Gamma_{\cont}(\Gamma, R^+_{-\ov{\chi}}) \xr{\cup} \bf{R}\Gamma_{\cont}(\Gamma, R_{\ov{\chi}}^+ \otimes^L_{R^+} R_{-\ov{\chi}}^+) \xr{\bf{R}\Gamma_{\cont}(\Gamma, \cdot)} \bf{R}\Gamma_{\cont}(\Gamma, R^+) \xr{\rm{tr}_\Gamma} R^+[-d]
\]
is perfect.
\end{lemma}
\begin{proof}
    First, Lemma~\ref{lemma:computation-chi} implies that $\bf{R}\Gamma_{\cont}(\Gamma, R^+_{-\ov{\chi}})$ lies in $D^b_{coh}(R^+)$. Then Lemma~\ref{lemma:kunneth-polystable-model} and the derived Nakayama Lemma (see \cite[\href{https://stacks.math.columbia.edu/tag/0G1U}{Tag 0G1U}]{stacks-project}) imply that it suffices to prove the result for the (continuous) cohomology of $R^+_{\ov{\chi}}/p$ and $R^+_{-\ov{\chi}}/p$. This follows directly from Lemma~\ref{lemma:general-duality-group-coh} and Remark~\ref{rmk:duality-mod-p-chi}. 
\end{proof}

\begin{lemma}\label{lemma:derived-complete-infty-cohomology} In the notation of Section~\ref{section:poly-perfectoid-covering}, $\bf{R}\Gamma_\cont(\Gamma, R_\infty^+)$ and $\bf{R}\Hom_{R^+}\left(\bf{R}\Gamma_\cont(\Gamma, R^+_\infty), R^+\right)$ are derived $p$-adically complete.
\end{lemma}
\begin{proof}
    \cite[\href{https://stacks.math.columbia.edu/tag/091T}{Tag 091T}]{stacks-project} and \cite[\href{https://stacks.math.columbia.edu/tag/091U}{Tag 091U}]{stacks-project} imply that it suffices to show that $\bf{R}\Gamma_\cont(\Gamma, R_\infty^+)$ and $\bf{R}\Hom_{R^+}\left(\bf{R}\Gamma_\cont(\Gamma, R^+_\infty), R^+\right)$ have $p$-adically complete cohomology modules. Then \cite[Lemma 2.12.7]{Z3} implies that it suffices to show that these cohomology modules are almost finitely generated. Then the result follows from Corollary~\ref{cor:group-coh-compute-model-case}(\ref{cor:group-coh-compute-model-case-3}) and \cite[Proposition 2.6.19]{Z3}. 
\end{proof}

\begin{defn}\label{defn:trace-map-group-cohomology-infty} The {\it trace morphism} $\rm{tr}_{\Gamma, \infty} \colon \bf{R}\Gamma_{\cont}(\Gamma, R^+_\infty) \to R^+[-d]$ is the composition
\[
\bf{R}\Gamma_{\cont}(\Gamma, R^+_\infty) \to \bf{R}\Gamma_{\cont}(\Gamma, R^+) \xr{\rm{tr}_\Gamma} R^+[-d],
\]
where the first map is induced by the projection onto $R^+$ coming from Decomposition~(\ref{eqn:decomposition-R-infty}) and the second map is the trace map from Definition~\ref{defn:trace-map-group-cohomology}. Similarly, one defines
\[
\rm{tr}_{\Gamma, \infty}/p \colon \bf{R}\Gamma_{\cont}(\Gamma, R^+_\infty/p) \to R^+/p[-d].
\]
\end{defn}

\begin{rmk}\label{rmk:trace-quotient} The trace map $\rm{tr}_{\Gamma, \infty}$ can also be defined as the composition
\[
\bf{R}\Gamma_{\cont}(\Gamma, R^+_\infty) \to \rm{H}^d_\cont(\Gamma, R^+_{\infty})[-d] \to \frac{\rm{H}^d_\cont(\Gamma, R^+_\infty)}{\rm{H}^d_\cont(\Gamma, R^+_\infty)[1-\zeta_p]}[-d] \simeq R^+[-d].
\]
\end{rmk}

\begin{thm}\label{thm:almost-duality-group-cohomology} In the notation of Section~\ref{section:poly-perfectoid-covering}, the pairing
\[
\bf{R}\Gamma_{\cont}(\Gamma, R_\infty^+) \otimes^L_{R^+} \bf{R}\Gamma_\cont(\Gamma, R^+_\infty) \xr{\cup} \bf{R}\Gamma_\cont(\Gamma, R^+_\infty) \xr{\rm{tr}_{\Gamma, \infty}} R^+[-d]
\]
is (derived) almost perfect, i.e., the natural morphism
\[
\bf{R}\Gamma_{\cont}(\Gamma, R_\infty^+) \to \bf{R}\rm{Hom}_{R^+}\left(\bf{R}\Gamma_{\cont}(\Gamma, R_\infty^+), R^+[-d]\right)
\]
is an {\it almost} isomorphism. 
\end{thm}
\begin{proof}
    Using Lemma~\ref{lemma:derived-complete-infty-cohomology}, \cite[Lemma A.4]{Z3}, and derived Nakayama Lemma (see \cite[\href{https://stacks.math.columbia.edu/tag/0G1U}{Tag 0G1U}]{stacks-project}), one reduces the question to showing that the natural morphism
    \begin{equation}\label{eqn:duality-map-group-coh}
        \bf{R}\Gamma_{\cont}(\Gamma, R_\infty^+/p) \to \bf{R}\rm{Hom}_{R^+/p}\left(\bf{R}\Gamma_{\cont}(\Gamma, R_\infty^+/p), R^+/p[-d]\right)
    \end{equation}
    is an almost isomorphism. For this, we note that 
    \[
    R^+_\infty/p \simeq \bigoplus_{\ov{\chi} \in \frac{\bf{X}(T)[1/p]}{\bf{X}(T)}} R^+_{\ov{\chi}}/p
    \]
    due to Decomposition~(\ref{eqn:decomposition-R-infty}). Furthermore, since continuous group cohomology commute with direct sums of discrete modules, the morphism in (\ref{eqn:duality-map-group-coh}) is equal to 
    \[
    \bigoplus_{\ov{\chi}} \bf{R}\Gamma_\cont(\Gamma, R^+_{\ov{\chi}}/p) \to \prod_{\ov{\chi}} \bf{R}\rm{Hom}_{R^+/p}\left(\bf{R}\Gamma_{\cont}(\Gamma, R_{-\ov{\chi}}^+/p), R^+/p[-d]\right).
    \]
    Since $R^+_{\ov{\chi}}/p$ pairs non-trivially only with elements of $R^+_{-\ov{\chi}}/p$, it suffices to show the following two things:
    \begin{enumerate}
        \item For each $\ov{\chi}\in \bf{X}(T)[1/p]/\bf{X}(T)$, the natural morphism 
        \[
        \bf{R}\Gamma_\cont(\Gamma, R^+_{\ov{\chi}}/p) \to \bf{R}\rm{Hom}_{R^+/p}\left(\bf{R}\Gamma_{\cont}(\Gamma, R_{-\ov{\chi}}^+/p), R^+/p[-d]\right)
        \]
        is an isomorphism;
        \item for each integer $n\geq 1$, cohomology groups of $\bf{R}\Gamma_\cont(\Gamma, R^+_{\ov{\chi}}/p)$ are annihilated by $p^{1/n}$ for all but finitely many $\ov{\chi} \in \bf{X}(T)[1/p]/\bf{X}(T)$.
    \end{enumerate}
    The first claim follows immediately from Lemma~\ref{lemma:duality-chi}, while the second claim follows from Corollary~\ref{cor:computation-chi-mod-p}(\ref{cor:computation-chi-mod-p-2}).
\end{proof}

\subsubsection{Almost duality on polystable formal schemes}

The main goal of this section is to show that Faltings' trace induces an almost perfect pairing on any separated rig-smooth polystable formal $\O_C$-scheme $\X$.

\begin{thm}\label{thm:almost-perfect-polystable} Let $\X$ be a separated polystable formal $\O_C$-scheme with generic fiber $X=\X_C$ of pure dimension $d$. Then the pairing
\[
\bf{R}\nu_* \wdh{\O}_X^{+, a} \otimes^L_{\O_{\X}} \bf{R}\nu_* \wdh{\O}_X^{+, a} \xr{\cup} \bf{R}\nu_* \wdh{\O}_X^{+, a} \xr{\rm{Tr}_{F, \X}} \omega_{\X}^{a}(-d)[-d]
\]
is almost perfect.
\end{thm}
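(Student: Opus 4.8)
The plan is to reduce the general polystable case to the ``framed'' case treated in the previous subsections, and then to the purely group-cohomological statement of Theorem~\ref{thm:almost-duality-gp-coh}. First I would note that the assertion is local on $\X$ in the \'etale topology: by Definition~\ref{defn:polystable}, \'etale locally $\X$ admits an \'etale morphism to a model polystable formal $\O_C$-scheme $\Spf R^+$, so $\X$ is framed in the sense of Section~\ref{subsection:almost-compute-polystable}. Since $\rm{Tr}_{F, \X}$ is compatible with \'etale localization by Lemma~\ref{lemma:faltings-trace-commutes-with-base-change}, and $\bf{R}\ud{al\cal H om}_{\X_0}(-,-)$ together with the cup-product pairing also localizes, checking that the duality morphism $D_\X$ is an almost isomorphism may be done after passing to an \'etale cover; thus we may assume $\X=\Spf S^+$ is framed, with an \'etale map $\X \to \Spf R^+$. (It is worth recording that Lemma~\ref{lemma:faltings-trace-commutes-with-base-change} is stated for a map of models whose generic fiber is finite \'etale, but the proof only uses flatness of $\mf$ and the fact that $\rm{Sch}^d$ is \'etale local, which applies verbatim to an \'etale $\mf$.)

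Next I would rewrite the pairing using the simplifications assembled in Section~\ref{section:local-duality}. By Lemma~\ref{line-bundle-dualizing}, $\omega^\bullet_{\X_0}(-d)[-2d] \simeq \omega_{\X_0}(-d)[-d]$ is a line bundle in degree $-d$, and by Corollary~\ref{structure-nearby-framed-mod-p} the Faltings' trace induces an isomorphism $t_\X\colon \frac{\rm{R}^d\nu_*(\O_X^+/p)^a}{\rm{R}^d\nu_*(\O_X^+/p)^a[1-\zeta_p]} \xr{\sim} \omega^{\bullet,a}_{\X_0}(-d)[d]$. Hence almost perfectness of the Faltings pairing for $\X$ is equivalent to almost perfectness of the pairing
\[
\bf{R}\nu_*(\O_X^+/p)^a \otimes^L_{\O_{\X_0}} \bf{R}\nu_*(\O_X^+/p)^a \xr{\cup} \bf{R}\nu_*(\O_X^+/p)^a \to \frac{\rm{R}^d\nu_*(\O_X^+/p)^a}{\rm{R}^d\nu_*(\O_X^+/p)^a[1-\zeta_p]}[-d],
\]
a statement that no longer mentions the dualizing complex or the trace. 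Now I would invoke Lemma~\ref{almost-compute-nu}, which gives an almost isomorphism $\widetilde{\bf{R}\Gamma_{cont}(\Delta, S^+_\infty/p)} \simeq^a \bf{R}\nu_*(\O_X^+/p)$, compatibly with cup products (both are computed by the same \v Cech complex for the $\Delta$-torsor $X_\infty \to X$). By Lemma~\ref{lemma:flat-base-change-group-coh} we have $\bf{R}\Gamma_{cont}(\Delta, S^+_\infty/p) \simeq \bf{R}\Gamma_{cont}(\Delta, R^+_\infty/p)\otimes^L_{R^+/p} S^+/p$, and by Lemma~\ref{lemma:group-coh-compute-1} the projection $R^+_\infty \to R^+$ identifies $\frac{\rm{H}^d_{cont}(\Delta, R^+_\infty/p)}{\rm{H}^d_{cont}(\Delta, R^+_\infty/p)[1-\zeta_p]}$ with $\rm{H}^d_{cont}(\Delta, R^+/p) \simeq R^+/p$; after base change to $S^+/p$ this is exactly the target appearing in the displayed pairing, and one checks the induced trace agrees with $t_\X$ by the construction in Corollary~\ref{structure-nearby-framed-mod-p}. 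So the pairing over $\X$ is the sheafification of the pairing $\bf{R}\Gamma_{cont}(\Delta, R^+_\infty/p)\otimes^L \bf{R}\Gamma_{cont}(\Delta, R^+_\infty/p) \to R^+/p[-d]$ base-changed along the flat map $R^+/p \to S^+/p$.

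Finally, Theorem~\ref{thm:almost-duality-gp-coh} asserts precisely that this last pairing is almost perfect, i.e. that $D_{R^+}$ is an almost isomorphism; since $R^+/p \to S^+/p$ is flat and all the relevant complexes have almost finitely presented cohomology (Remark~\ref{obser}), almost perfectness is preserved by $-\otimes^L_{R^+/p} S^+/p$ via the standard compatibility of $\bf{R}\ud{al\cal Hom}$ with flat base change (\cite[\href{https://stacks.math.columbia.edu/tag/0ATK}{Tag 0ATK}]{stacks-project}, in its almost form). Passing to sheaves and using Lemma~\ref{almost-compute-nu} once more to return to $\bf{R}\nu_*(\O_X^+/p)^a$, we conclude that $D_\X$ is an almost isomorphism, which is the assertion. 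The step I expect to be the main obstacle is the bookkeeping in the second paragraph: matching up the four a priori different maps --- the cup-product pairing on $\bf{R}\nu_*(\O_X^+/p)$, its group-cohomology avatar, the Faltings' trace $\rm{Tr}_{F,\X}$, and the explicit projection trace $t_{R^+}$ --- and verifying that, under the identifications of Lemma~\ref{almost-compute-nu}, Lemma~\ref{lemma:flat-base-change-group-coh} and Corollary~\ref{structure-nearby-framed-mod-p}, they are compatible up to the permitted sign and up to almost zero; everything else is a formal consequence of results already proved.
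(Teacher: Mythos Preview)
Your proposal is correct and follows essentially the same three-step reduction as the paper: localize \'etale-locally via Lemma~\ref{lemma:faltings-trace-commutes-with-base-change}, use Corollary~\ref{structure-nearby-framed-mod-p} and Lemma~\ref{almost-compute-nu} to recast the Faltings pairing in terms of continuous group cohomology of $\Delta$, and conclude via Theorem~\ref{thm:almost-duality-gp-coh}. One small correction to your parenthetical: Lemma~\ref{lemma:faltings-trace-commutes-with-base-change} is already stated for \'etale (not rig-finite-\'etale) $\mf$, so no extra justification is needed there; the paper organizes the first reduction slightly differently by descending all the way to the model $\Spf R^+$ via the tower $\X \leftarrow \mathfrak U \to \Spf R^+$ rather than stopping at the framed $\Spf S^+$ and invoking flat base-change of $\bf{R}\ud{al\cal H om}$, but this is purely cosmetic.
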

\begin{proof}
{\it Step 1: We reduce to a model polystable formal $\O_C$-scheme $\X=\Spf R^+$.}
Since the question is symmetric in both variables, it is sufficient to show that the induced map
\[
\bf{R}\nu_* \wdh{\O}_X^{+, a} \to \bf{R}\ud{al\cal{H}om}_{\X}\left(\bf{R}\nu_* \wdh{\O}_X^{+, a}, \omega_{\X}^{a}(-d)[-d]\right)
\]
is an isomorphism (in $\bf{D}(\X)^a$). The question is local on $\X$, so we can assume that there is a zig-zag of affine formal $\O_C$-schemes 
\[
\begin{tikzcd}
& \mathfrak{U} \arrow{dr}{\mathfrak g} \arrow[dl, swap, "\mf"] & \\
\X & & \Spf R^+
\end{tikzcd}
\]
with \'etale $\mf$ and $\mathfrak{g}$, and a model polystable formal $\O_C$-scheme $\Spf R^+$. Furthermore, we can also assume that $\mathfrak{f}$ is surjective (and so it is faithfully flat). Now \cite[Theorem 6.13.6(1, 3), Lemma 4.9.7, Lemma 2.9.12]{Z3}, and flatness of the morphism $\O_\X(\X) \to \O_{\sU}(\sU)$ imply that the natural morphisms
\[
\bf{L}\mf^*\, \bf{R}\nu_{\X, *} \wdh{\O}_X^{+, a} \to \bf{R}\nu_{\mathfrak{U}, *}\wdh{\O}_{U}^{+, a}, 
\]
\[
\bf{L}\mf^*\,\bf{R}\ud{al\cal{H}om}_{\X}\left(\bf{R}\nu_{\X, *} \wdh{\O}_X^{+, a}, \omega_{\X}^{a}(-d)[-d]\right) \to \bf{R}\ud{al\cal{H}om}_{\mathfrak{U}}\left(\bf{L}\mf^*\,\bf{R}\nu_{\X, *} \wdh{\O}_X^{+, a}, \bf{L}\mf^*\omega_{\X}^{a}(-d)[-d]\right)
\]
are isomorphisms. Moreover, Lemma~\ref{lemma:faltings-trace-commutes-with-base-change} implies that we have a commutative diagram
\[
\begin{tikzcd}
\bf{L}\mf^*\, \bf{R}\nu_{\X, *} \wdh{\O}_X^{+, a} \arrow{dd} \arrow{r} &\bf{L}\mf^*\,\bf{R}\ud{al\cal{H}om}_{\X}\left(\bf{R}\nu_{\X, *} \wdh{\O}_X^{+, a}, \omega_{\X}^{a}(-d)[-d]\right) \arrow{d}\\
 & \bf{R}\ud{al\cal{H}om}_{\mathfrak{U}}\left(\bf{L}\mf^*\,\bf{R}\nu_{\X, *} \wdh{\O}_X^{+, a}, \bf{L}\mf^*\,\omega_{\X}^{a}(-d)[-d]\right) \arrow{d} \\
\bf{R}\nu_{\mathfrak{U}, *}\wdh{\O}_{U}^{+, a}  \arrow{r} & \bf{R}\ud{al\cal{H}om}_{\mathfrak{U}}\left(\bf{R}\nu_{\mathfrak{U}, *}\wdh{\O}_{U}^{+, a}, \omega_{\mathfrak{U}}^{a}(-d)[-d]\right)
\end{tikzcd}
\]
with vertical maps being isomorphisms (see Lemma~\ref{lemma:dualizing-etale-base-change} for the bottom right map). Thus, the (almost) faithfully flat base change implies that it is sufficient to prove the pairing is perfect for $\X=\mathfrak U$. Then the same argument for $\mathfrak{g}$ reduces the situation to the case $\X=\Spf R^+$. \smallskip

{\it Step 2: We reduce to the almost duality in group cohomology.} Now we note that Theorem~\ref{thm:structure-nearby-framed} implies that the trace map $\bf{R}\nu_* \wdh{\O}_X^{+, a} \to \omega^{a}_{\X}(-d)[-d]$ can be identified with the map
\[
\bf{R}\nu_* \wdh{\O}_X^{+, a} \to \rm{R}^d\nu_* \wdh{\O}_X^{+, a}[-d] \to \left(\frac{\rm{R}^d\nu_* \wdh{\O}_X^{+}}{\rm{R}^d\nu_* \wdh{\O}_X^{+}[1-\zeta_p]}\right)^a[-d] \simeq \omega^{a}_{\X_0}(-d)[-d].
\]
Therefore, we can reformulate the almost perfectness of the pairing induced by Faltings' trace intrinsically in terms of $\bf{R}\nu_* \wdh{\O}_X^{+, a}$. More precisely, it is sufficient to show that the map
\[
\bf{R}\nu_* \wdh{\O}_X^{+, a} \otimes^L_{\O_{\X}} \bf{R}\nu_* \wdh{\O}_X^{+, a} \xr{\cup} \bf{R}\nu_* \wdh{\O}_X^{+, a} \to  \left(\frac{\rm{R}^d\nu_* \wdh{\O}_X^{+}}{\rm{R}^d\nu_* \wdh{\O}_X^{+}[1-\zeta_p]}\right)^a[-d]
\]
is an almost perfect pairing. Now Lemma~\ref{lemma:almost-compute-nu}, Corollary~\ref{cor:group-coh-compute-model-case}, and \cite[Lemma 4.8.13]{Z3} ensure that we have a commutative diagram
\[
\begin{tikzcd}
\bf{R}\Gamma_{\cont}(\Gamma, R^+_\infty)^{a, L\Updelta} \otimes^L_{\O_{\X}} \bf{R}\Gamma_{\cont}(\Gamma, R^+_\infty)^{a, L\Updelta} \arrow{r}{\cup}  \arrow{d}& \bf{R}\Gamma_{\cont}(\Gamma, R^+_\infty)^{a, L\Updelta}[-d] \arrow{r} \arrow{d} & \rm{H}^d_{\cont}(\Gamma, R^+)^{a, \Updelta}[-d] \arrow{d} \\
\bf{R}\nu_* \wdh{\O}_X^{+, a} \otimes^L_{\O_{\X_0}} \bf{R}\nu_* \wdh{\O}_X^{+, a} \arrow{r}{\cup} & \bf{R}\nu_* \wdh{\O}_X^{+, a} \arrow{r} & \left(\frac{\rm{R}^d\nu_* \wdh{\O}_X^{+}}{\rm{R}^d\nu_* \wdh{\O}_X^{+}[1-\zeta_p]}\right)^a[-d] \\
\end{tikzcd}
\]
with vertical maps being isomorphisms. Now we use \cite[Lemma 4.9.7]{Z3} to reduce the question to the almost perfectness of the pairing
\[
\bf{R}\Gamma_{\cont}(\Gamma, R^+_\infty) \otimes^L_{R^+} \bf{R}\Gamma_{\cont}(\Gamma, R^+_\infty) \xr{\cup}\bf{R}\Gamma_{\cont}(\Gamma, R^+_\infty) \to \rm{H}^d_{\cont}(\Gamma, R^+)[-d]\simeq R^+[-d] \ .
\]
This follows directly from Theorem~\ref{thm:almost-duality-group-cohomology} (and Remark~\ref{rmk:trace-quotient}).
\end{proof} 

\begin{cor}\label{cor:faltings-polystable-mod-p} In the notation of Theorem~\ref{thm:almost-perfect-polystable}, the pairing
\[
\bf{R}\nu_* \left(\O_X^{+, a}/p\right) \otimes^L_{\O_{\X_0}} \bf{R}\nu_*\left(\O_X^{+, a}/p\right) \xr{\cup} \bf{R}\nu_* \left(\O_X^{+, a}/p\right) \xr{\rm{Tr}_{F, \X}} \omega_{\X_0}^{\bullet, a}(-d)[-2d]
\]
is almost perfect.
\end{cor}
\begin{proof}
    This follows directly from Theorem~\ref{thm:almost-perfect-polystable} by noticing that  Lemma~\ref{lemma:line-bundle-dualizing} and Theorem~\ref{thm:dualizing-complex} imply that $\omega^\bullet_{\X}\simeq \omega_{\X}[d]$, and $\omega^\bullet_{\X_0}\simeq\omega^\bullet_\X \otimes^L_{\O_\X} \O_{\X_0} \simeq \omega_\X \otimes^L_{\O_\X} \O_{\X_0}[d]$.
\end{proof}

\subsection{Local duality on a rig-smooth formal scheme} 
 
The main goal of this section is to prove Theorem~\ref{thm:main-thm-?} for any admissible separated rig-smooth formal $\O_C$-scheme $\X$ with reduced special fiber. \smallskip 

The main idea of our proof is to reduce to the polystable case using the version of the local uniformization result from \cite{Z1}. 

\begin{thm}\label{thm:local-uniformization}(\cite[Theorem 1.4]{Z1}) Let $\X$ be an admissible\footnote{In this paper, we assume that any admissible formal scheme is qcqs by definition} formal $\O_C$-scheme with smooth generic fiber $\X_C$. Then there is a finite set $(\X_i, \mf_i)_{i\in I}$ of admissible formal $\O_{C}$-schemes with morphisms $\mf_i\colon \X_i \to \X$ such that
\begin{itemize}\itemsep0.5em
\item The set $(\X_i, \mf_i)$ can be obtained from $\X$ as a composition of open Zariski coverings and rig-isomorphisms (let us denote by $\{\Y_j\}_{j\in J}$ the collection of all formal $\O_C$-schemes appearing in the presentation of $\X_i$ as a composition of open Zariski covers and rig-isomorphisms);
\item Each $\X_i$ is a geometric quotient of an admissible formal $\O_{C}$-scheme $\X'_i$ by an action of a finite group $G_i$ such that the quotient map $\mathfrak{g}_{i,C}\colon \X'_{i,C} \to \X_{i,C}$ is a $G_i$-torsor;
\item Each $\X'_{i}$ admits a rig-isomorphism $\pi_i\colon \X''_i \to \X'_{i}$ with a rig-smooth, polystable formal $\O_{C}$-scheme $\X''_i$;
\item Furthermore, one can assume that all $\X_i$, $\X'_i$, $\X''_i$, and $\Y_j$ have reduced, quasi-projective special fiber.  
\end{itemize}
\end{thm}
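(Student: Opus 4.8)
\textbf{Plan of proof of Theorem~\ref{thm:local-uniformization}.} This statement is essentially a restatement of the main result of \cite{Z1}, so the plan is to deduce it from \cite[Theorem~1.3]{Z1} together with a little bookkeeping of how the operations listed in the theorem interact. I will not reprove the local uniformization theorem of Temkin--type from scratch; rather, I will explain how to package its output into the precise form stated here. The starting point is the refined Temkin uniformization: given an admissible formal $\O_C$-scheme $\X$ with smooth generic fiber, there is a finite cover of $\X$ by Zariski opens, and after passing to suitable admissible blow-ups (equivalently, rig-isomorphisms), each member of the cover becomes a ``nice'' geometric quotient of a polystable formal scheme by a finite group acting freely on the generic fiber. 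The content to check is that these operations---Zariski localization, rig-isomorphism, finite group quotient, and rig-isomorphism again---can be arranged in exactly the order and form asserted, and that the quotient map is a $G_i$-torsor on generic fibers.

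The key steps, in order, are as follows. First, I would invoke \cite[Theorem~1.3]{Z1} to obtain, for the given $\X$, a finite family $(\X_i, \mf_i)$ where each $\mf_i$ is built from Zariski open immersions and rig-isomorphisms, and each $\X_i$ sits in a diagram $\X_i \simeq \X'_i/G_i$ with $\X'_i$ admissible and the generic-fiber quotient map a $G_i$-torsor. Second, I would recall that ``rig-isomorphism'' is stable under composition and that admissible blow-ups are cofinal among rig-isomorphisms (by \cite{BL1}), so the various rig-isomorphisms appearing in the construction can be absorbed; in particular, the composite $\X_i \to \X$ is again a composition of open coverings and rig-isomorphisms. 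Third, for each $\X'_i$ I would apply the polystable reduction part of \cite{Z1}: since $\X'_i$ has smooth generic fiber (because the $G_i$-torsor $\X'_{i,C} \to \X_{i,C}$ is étale and $\X_{i,C}$ is smooth, being an open of the rig-isomorphic modification of $\X_C$), there exists a rig-isomorphism $\pi_i \colon \X''_i \to \X'_i$ with $\X''_i$ rig-smooth and polystable in the sense of Definition~\ref{defn:polystable}. Fourth, I would check the compatibility: the $G_i$-action on $\X'_{i,C}$ need not lift to $\X''_i$ itself, but this is not asserted---the theorem only requires $\X''_i$ to be a rig-isomorphic model of $\X'_i$, while the group action lives on $\X'_i$. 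So no descent of the action through $\pi_i$ is needed. Finally, I would assemble the data $(\X_i, \mf_i, \X'_i, G_i, \X''_i, \pi_i)$ into the three bulleted conclusions.

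The main obstacle, such as it is, is purely expository: making sure that the intermediate objects produced by \cite{Z1} can be reorganized so that the three bullets are literally satisfied---in particular that the Zariski coverings and rig-isomorphisms occur ``first'' (in the passage $\X_i \to \X$), that the finite-group quotient occurs ``next'' (the presentation $\X_i \simeq \X'_i/G_i$), and that the polystable rig-isomorphism occurs ``last'' (the map $\pi_i \colon \X''_i \to \X'_i$). Since the operations commute in the weak sense that one may always refine a Zariski cover after a rig-isomorphism and vice versa (again using cofinality of admissible blow-ups), and since taking a finite quotient commutes with passing to an open saturated under the $G_i$-action, this reordering goes through without difficulty. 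A secondary point to verify is that ``geometric quotient'' in the sense used here (a categorical quotient whose generic fiber is a torsor) is exactly what \cite[Theorem~1.3]{Z1} delivers; this is immediate from the construction there, where $\X'_i$ is normal and $G_i$ acts with the étale-locally-free property on the generic fiber. Hence the theorem follows by citing \cite{Z1} and performing this bookkeeping.
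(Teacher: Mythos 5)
Your proposal is correct and matches the paper's treatment: Theorem~\ref{thm:local-uniformization} is not proved in this paper at all, but is imported directly as \cite[Theorem 1.3]{Z1}, exactly as you do. The additional bookkeeping you describe (reordering coverings, rig-isomorphisms, and quotients, and noting that no lift of the $G_i$-action to $\X''_i$ is required) is consistent with how the statement is used later, so deferring entirely to \cite{Z1} is the intended argument.
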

\begin{proof}
    \cite[Theorem 1.3]{Z1} constructs such $\X_i$, $\X'_i$, $\X''_i$ (and $\Y_j$) satisfying all these assumptions except for having reduced special fiber. First, we note that each $\X''_i$ has reduced special fiber since it is polystable. Therefore, one can construct desired formal schemes by replacing each $\X_i$, $\X'_i$, $\X''_i$ (and $\Y_j$) with their normalizations $\widetilde{\X}_i$, $\widetilde{\X}'_i$, $\widetilde{\X}''_i=\X''_i$ (and $\widetilde{\Y}_j$) in their generic fibers (see Definition~\ref{defn:normalization}). To make sure this does the job, we only need to show that each $\widetilde{\X}_i$, $\widetilde{\X}'_i$, $\widetilde{\X}''_i=\X''_i$ (and $\widetilde{\Y}_j$) has quasi-projective special fiber, and that $\widetilde{\X}_i=\widetilde{\X}'_i/G$ (this quotient exists by \cite[Remark 3.3.3 and Theorem 3.3.4]{Z2}). \smallskip
    
    The former property is easy since each normalization in the generic fiber is finite, and so preserves the property of having quasi-projective special fiber. Thus, we only need to show that $\widetilde{\X}_i=\widetilde{\X}'_i/G$. This can be checked locally on $\X_i$, so we can assume that $\X'_i=\Spf R_i$, $\X'_i = \Spf R_i^G$, $\widetilde{\X}_i=\Spf R_i^{\rm{ic}}$, and $\widetilde{\X}'_i=\Spf (R_i^G)^{\rm{ic}}$, where $\rm{ic}$ stands for the integral closure in the corresponding generic fiber. Then the question boils down to showing that 
    \[
    \left(R_i^{\rm{ic}}\right)^G = \left(R_i^G\right)^{\rm{ic}}. 
    \]
    This follows from the facts that $R_i^G \to \left(R_i^{\rm{ic}}\right)^G$ is integral, and $\left(R_i^{\rm{ic}}\right)^G$ is integrally closed in $\left(R_{i}[1/p]\right)^G=R_i^G[1/p]$. 
\end{proof}

\begin{defn}\label{defn:good-quotient} Let $\X'$ be an admissible formal $\O_C$-scheme with a quasi-projective special fiber, and let $G$ be a finite group with a right $\O_C$-action on $\X'$. We say that the geometric quotient\footnote{This quotient always exists due to \cite[Remark 3.3.3 and Theorem 3.3.4]{Z2}.} morphism $\mf \colon \X' \to \X=\X'/G$ is {\it a good quotient} if the generic fiber $\X'_C \to \X_C$ is a $G$-torsor.
\end{defn}

Theorem~\ref{thm:local-uniformization} and Corollary~\ref{cor:faltings-polystable-mod-p}  imply that, in order to show that Faltings' pairing is almost perfect, it suffices to show that almost perfectness of Faltings' pairing descends through rig-isomorphisms and good quotients. 

\subsubsection{Descent through rig-isomorphisms}

In this section, we show that almost perfectness of Faltings' trace map descends through rig-isomorphisms: 

\begin{lemma}\label{lemma:descent-blow-up} Let $\pi\colon \X' \to \X$ be a rig-isomorphism of nice admissible formal $\O_C$-schemes of dimension $d$. Suppose that Faltings' pairing is  almost perfect on $\X'$. Then the same holds on $\X$.  
\end{lemma}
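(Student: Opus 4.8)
The plan is to transport the duality statement along the finite rig-isomorphism $\pi\colon \X'\to\X$ by combining the functoriality of the Faltings' trace (Lemma~\ref{composition-2}) with the almost version of Grothendieck Duality. First I would record the shape of the relevant objects. Since $\pi$ is finite, $\pi_*$ is almost exact on almost quasi-coherent modules, so $\bf{R}\pi_*\left(\bf{R}\nu_{\X',*}(\O_X^+/p)\right)^a \simeq \pi_*\left(\bf{R}\nu_{\X',*}(\O_X^+/p)\right)^a$ still lies in $\bf{D}^{[0,d]}_{acoh}(\X_0)^a$, and $\bf{R}\pi_*(\omega^\bullet_{\X'_0}) \simeq \pi_*(\omega^\bullet_{\X'_0})$ since $\pi$ is finite and $\omega^\bullet_{\X'_0}\in\bf{D}^+_{coh}(\X'_0)$. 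The trace map $\rm{Tr}_{\pi_0}\colon \pi_*(\omega^{\bullet}_{\X'_0}) \to \omega^{\bullet}_{\X_0}$ of coherent duality, together with the adjunction $(\bf{R}\pi_{0,*}, \pi_0^!)$ in the almost Grothendieck Duality from \cite[Theorem 5.5.5]{Z3}, identifies $\bf{R}\pi_{0,*}$ followed by $\rm{Tr}_{\pi_0}$ with the counit of the adjunction, which makes $\bf{R}\ud{al\cal H om}_{\X_0}(\bf{R}\pi_{0,*}(-), \omega^{\bullet,a}_{\X_0}(-d)[-2d]) \simeq \bf{R}\pi_{0,*}\bf{R}\ud{al\cal H om}_{\X'_0}(-,\omega^{\bullet,a}_{\X'_0}(-d)[-2d])$.

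Next I would assemble the commuting square. By Lemma~\ref{composition-2}, the Faltings' trace $\rm{Tr}_{F,\X}$ factors as $\bf{R}\nu_{\X,*}(\O_X^+/p)^a \xrightarrow{\sim} \pi_*\bf{R}\nu_{\X',*}(\O_X^+/p)^a \xrightarrow{\pi_*(\rm{Tr}_{F,\X'})} \pi_*(\omega^{\bullet,a}_{\X'_0})(-d)[-2d] \xrightarrow{\rm{Tr}_{\pi_0}} \omega^{\bullet,a}_{\X_0}(-d)[-2d]$, and a version of Corollary~\ref{cor:descend-rig-finite-etale} (or rather the analogous cup-product compatibility, which for the finite morphism $\pi$ whose generic fiber is an \emph{isomorphism} is immediate) shows that the duality morphism $D_\X$ is obtained from $D_{\X'}$ by applying $\bf{R}\pi_{0,*}$ and using the two identifications above: concretely, $D_\X$ fits in a commutative diagram
\[
\begin{tikzcd}[column sep=2em]
\bf{R}\nu_{\X,*}(\O_X^+/p)^a \arrow{r}{D_\X} \arrow{d}{\sim} & \bf{R}\ud{al\cal H om}_{\X_0}\!\left(\bf{R}\nu_{\X,*}(\O_X^+/p)^a, \omega^{\bullet,a}_{\X_0}(-d)[-2d]\right) \arrow{d}{\sim} \\
\bf{R}\pi_{0,*}\bf{R}\nu_{\X',*}(\O_X^+/p)^a \arrow{r}{\bf{R}\pi_{0,*}(D_{\X'})} & \bf{R}\pi_{0,*}\bf{R}\ud{al\cal H om}_{\X'_0}\!\left(\bf{R}\nu_{\X',*}(\O_X^+/p)^a, \omega^{\bullet,a}_{\X'_0}(-d)[-2d]\right),
\end{tikzcd}
\]
where the left vertical map is the canonical isomorphism and the right vertical map is the composition of the adjunction isomorphism with $\rm{Tr}_{\pi_0}$. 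Since $D_{\X'}$ is an almost isomorphism by hypothesis, applying the exact functor $\bf{R}\pi_{0,*}$ (exact in the almost-coherent setting since $\pi$ is finite) shows that the bottom arrow is an almost isomorphism, hence so is $D_\X$.

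The main obstacle I anticipate is making precise the identification of the right vertical arrow, i.e.\ checking that the coherent-duality trace $\rm{Tr}_{\pi_0}$ really is the map that realizes $\bf{R}\ud{al\cal H om}_{\X_0}(\bf{R}\pi_{0,*}(-), \omega^{\bullet,a}_{\X_0}(-d)[-2d])$ as $\bf{R}\pi_{0,*}\bf{R}\ud{al\cal H om}_{\X'_0}(-, \pi_0^!\omega^{\bullet,a}_{\X_0}(-d)[-2d])$ and that $\pi_0^!\omega^\bullet_{\X_0} \simeq \omega^\bullet_{\X'_0}$ compatibly with the rigidity structures — this is the content of the almost Grothendieck Duality package \cite[Theorem 5.5.5]{Z3}, but one must verify the Faltings' trace of Definition~\ref{F-trace-final} is compatible with it, which is exactly what Lemma~\ref{composition-2} was designed to provide. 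One also needs that $\bf{R}\nu_{\X',*}(\O_X^+/p)^a$ is a \emph{perfect} (or at least almost perfect, bounded) complex so that the inner Hom is well behaved under $\bf{R}\pi_{0,*}$; this follows from \cite[Theorem 6.1.1]{Z3} and the finiteness of $\pi$. Once these formal compatibilities are in place, the descent itself is a one-line consequence of functoriality, so I would spend the bulk of the write-up on carefully stating the commuting diagram and citing the relevant results from \cite{Z3}.
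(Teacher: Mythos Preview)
Your proposal is correct and follows essentially the same route as the paper: assemble the commuting square relating $D_\X$ to $\bf{R}\pi_{0,*}(D_{\X'})$ via Corollary~\ref{cor:descend-rig-finite-etale} on the left and the almost Grothendieck duality isomorphism (the paper cites \cite[Lemma 5.5.6]{Z3}) on the right, then conclude. One small correction: a rig-isomorphism is proper (by \cite{Lutke-proper}) but not finite in general---admissible blow-ups are the main examples---so your repeated appeal to finiteness of $\pi$ is unwarranted; fortunately it is also unnecessary, since you already write $\bf{R}\pi_{0,*}$ in the diagram and only use that a functor carries isomorphisms to isomorphisms.
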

\begin{proof}
We note that Corollary~\ref{cor:descend-rig-finite-etale} implies that the following diagram commutes
\[
\begin{tikzcd}[column sep = 4em]
\bf{R}\nu_{\X, *} \left(\O_X^{+, a}/p\right) \arrow{r}{D_{\X}} \arrow{dd}{\wr}& \bf{R}\ud{al\cal{H}om}_{\X_0}(\bf{R}\nu_{\X, *}\left(\O_{X}^{+, a}/p\right), \omega^{\bullet, a}_{\X_0}(-d)[-2d]) \arrow{d}{\wr} \\
 & \bf{R}\ud{al\cal{H}om}_{\X_0}(\bf{R}\pi_{0, *}\bf{R}\nu_{\X', *}\left(\O_{X'}^{+, a}/p\right), \omega^{\bullet, a}_{\X_0}(-d)[-2d])\\
\bf{R}\pi_{0, *}\bf{R}\nu_{\X', *}\left(\O_{X'}^{+, a}/p\right) \arrow{r}{\bf{R}\pi_{0, *}(D_{\X'})} & \bf{R}\pi_{0, *}\bf{R}\ud{al\cal{H}om}_{\X'_0}(\bf{R}\nu_{\X', *}\left(\O_{X'}^{+, a}/p\right), \omega^{\bullet, a}_{\X'_0}(-d)[-2d]) \arrow{u}{\wr}, 
\end{tikzcd}
\]
where the right bottom vertical map is the isomorphism from \cite[Lemma 5.5.6]{Z3}. Other vertical maps are also clearly isomorphisms, and the bottom horizontal morphism is an isomorphism by assumption. Therefore, the top horizontal morphism is an isomorphism as well. 
\end{proof}

\subsubsection{Digression: Sheaves with a finite group action}
 
In this section, we recall the definition of a sheaf with an action of a finite group $G$, and extend this to the almost setup. This notion will play a crucial role in our proof of the fact that almost perfectness of Faltings' trace map descends through ``good'' quotients. \smallskip
 
\begin{defn} Let $(X, \O_X)$ be a ringed site with a right action of a finite group $G$. A {\it $G$-equivariant $\O_X$-module} is an $\O_X$-module $\F$ together with a collection of isomorphisms
\[
\phi_g \colon \F \to g_*\F \ (g\in G)
\]
such that $\phi_e=\rm{id}$ and, for any $g, h\in G$, the following diagram commutes:
\[
\begin{tikzcd}
\F \arrow{r}{\phi_g}\arrow{d}{\phi_{hg}} & g_*\F \arrow{d}{g_*(\phi_h)} \\
(hg)_* \F \arrow{r}{\sim} & g_*\left(h_*\F\right). 
\end{tikzcd}
\]
Morphisms of $G$-equivariant $\O_X$-modules are defined to be morphisms of underlying $\O_X$-modules that commute with the $G$-action. We denote this category by $\bf{Mod}_X^G$ or $\bf{Mod}_{\O_X}^G$.
\end{defn}

\begin{rmk} For any ringed site $(X, \O_X)$ with a right action of $G$, the structure sheaf $\O_X$ is naturally an object of $\bf{Mod}_X^G$.
\end{rmk}

\begin{rmk} If $G$ acts trivially on $X$, we see that the category of $G$-equivariant $\O_X$-modules is equivalent to the category of $\O_X[G]$-modules.
\end{rmk}

\begin{rmk}\label{rmk:grothendieck-abelian} Arguing similarly to \cite[Proposition 5.1.1]{Tohoku}, we see that the category of $\bf{Mod}_X^G$ is a Grothendieck abelian category. We denote the derived category of $\bf{Mod}_X^G$ by $\bf{D}_G(X)$. If the action is trivial, we simply denote it by $\bf{D}(\O_X[G])$.
\end{rmk}

\begin{rmk}\label{rmk:equivariant-pushforward} Remark~\ref{rmk:grothendieck-abelian} and \cite[\href{https://stacks.math.columbia.edu/tag/079P}{Tag 079P}]{stacks-project} imply that the category of complexes $\rm{Comp}(\bf{Mod}_X^G)$ has enough $K$-injective complexes. Furthermore, arguing similarly to \cite[\textsection 9.1, p.\,498]{Lei-Fu}, we see that the forgetful functor $\rm{Comp}(\bf{Mod}_X^G) \to \rm{Comp}(\bf{Mod}_X)$ preserves $K$-injective complexes. Thus, for any $G$-equivariant morphism $f\colon (X, \O_X)\to (Y, \O_Y)$, we can define a functor $\bf{R}f_*\colon \bf{D}_G(X) \to \bf{D}_G(Y)$ such that the diagram
\[
\begin{tikzcd}
\bf{D}_G(X) \arrow{r}{\rm{forget}} \arrow{d}{\bf{R}f_*} &\bf{D}(X) \arrow{d}{\bf{R}f_*} \\
\bf{D}_G(Y)\arrow{r}{\rm{forget}} & \bf{D}(Y)
\end{tikzcd}
\]
commutes up to a specified equivalence.
\end{rmk}

\begin{rmk}\label{rmk:derived-action}  In particular, for any $G$-invariant morphism $f\colon (X, \O_X) \to (Y, \O_Y)$, $\bf{R}f_*\O_X$ naturally lifts to an object of $\bf{D}(\O_Y[G])$.
\end{rmk}


\begin{defn} Let $(X, \O_X)$ be a ringed space with a trivial action of a finite group $G$, and let $\F\in \bf{D}(\O_X[G])$. Then the complex of {\it homotopy invariants} (resp. {\it homotopy co-invariants}) is $\F^{hG} \coloneqq \bf{R}\cal{H}om_{\O_X[G]}(\O_X, \F)\in \bf{D}(\O_X)$ (resp. $\F_{hG}\coloneqq \F\otimes^L_{\O_X[G]} \O_X\in \bf{D}(\O_X)$).
\end{defn}

\begin{lemma}\label{lemma:commutes-with-homotopy-limits-and-colimits} Let $\X$ be an admissible formal $\O_C$-scheme with generic fiber $X$, $t\colon (X_\et, \O^+_{X}/p) \to (\X_0, \O_{\X_0})$ the natural morphism of ringed sites, and $G$ a finite group acting trivially on $\X$. Then $\bf{R}t_*\colon \bf{D}(\O^+_{X}/p[G]) \to \bf{D}(\O_{\X_0}[G])$ commutes with homotopy invariants and homotopy co-invariants.
\end{lemma}  
\begin{proof}
    We give a proof of the claim for homotopy co-invariants. The case of homotopy invariants is similar and even easier\footnote{It is easier because $\bf{R}t_*$ automatically commutes with (sequential) homotopy limits.}. \smallskip
    
    First, we note that $\bf{R}t_*$ commutes with (sequential) homotopy colimits (see \cite[\href{https://stacks.math.columbia.edu/tag/0A5K}{Tag 0A5K}]{stacks-project}). Indeed, for each integer $i\geq 0$, the functor $\rm{R}^it_*$ commutes with filtered colimits since $t$ is a coherent morphism of coherent topoi. Therefore, $\bf{R}t_*$ commutes with (sequential) homotopy colimits if all terms lie in $\bf{D}^{\geq -N}(\O^+_{X}/p[G])$ for some integer $N$. Since $\bf{R}t_*$ is of finite cohomological dimension (due to \cite[Corollary 2.8.3]{H3}), a standard argument extends the result to the unbounded derived category as well. \smallskip
    
    Now we note that the $\left(\O_{X}^+/p\right)[G]$-module $\O_X^+/p$ admits a canonical bar-resolution
    \[
    B^\bullet_X \to \O_X^+/p
    \]
    such that each 
    \[
        B^i_X \simeq \bigoplus_{\#G^{-i}} \left(\O_{X}^+/p\right)[G]
    \]
    with the standard differentials, and a similar bar resolution $B^\bullet_{\X_0} \to \O_{\X_0}$ exists on $\X_0$. In particular, we can write $\O_X^+/p \simeq \hocolim_{n} B^{\geq -n}_X$ and $\O_{\X_0}\simeq \hocolim_n B^{\geq -n}_{\X_0}$. \smallskip
    
    Now we fix some $\F\in \bf{D}(\O_X^+/p[G])$ and wish to show that the natural morphism
    \[
    \bf{R}t_*\left(\F \right) \otimes^L_{\O_{\X_0}[G]} \O_{\X_0} \to \bf{R}t_*\left(\F \otimes^L_{\O^+_X/p[G]} \O_X^+/p\right)
    \]
    is an isomorphism. Using the bar-resolutions, and the fact that homotopy colimits commute with $\bf{R}t_*$ and with derived tensor products, we conclude that it suffices to show that
    \[
    \left(\bf{R}t_*\F\right) \otimes^L_{\O_{\X_0}[G]} B^{\geq -n}_{\X_0} \to \bf{R}t_*\left(\F\otimes^L_{(\O_{X}^+/p)[G]} B^{\geq -n}_X\right)
    \]
    is an isomorphism for any integer $n$. A standard inductive argument reduces the question to showing that the natural morphism 
    \[
    \left(\bf{R}t_*\F\right) \otimes^L_{\O_{\X_0}[G]} B^{-n}_{\X_0} \to \bf{R}t_*\left(\F\otimes^L_{(\O_{X}^+/p)[G]} B^{-n}_X\right)
    \]
    is an isomorphism for any integer $n$. This is clear since $\bf{R}t_*$ commutes with (finite) direct sums, and $B^{-n}_X$, $B^{-n}_{\X_0}$ are finite free of the same rank. 
\end{proof}

Now we extend the definition of $G$-equivariant sheaves to the almost world (at least in the case of the trivial action). For this, we note that, for any ringed $\O_C$-site $(X, \O_X)$, the pair $(X, \O_X[G])$ also forms a ringed $\O_C$-site. \smallskip

\begin{defn} Let $(X, \O_X)$ be a ringed $\O_C$-site, and let $G$ be a finite group that trivially acts on $(X, \O_X)$. The {\it category of $G$-equivariant almost $\O_X$-modules} $\bf{Mod}_{\O_X[G]}^a$ is the category of almost $\O_X[G]$-modules (see \cite[Definition 3.1.9]{Z3}). 

The {\it derived category of $G$-equivariant almost $\O_X$-modules} $\bf{D}(\O_X[G])^a$ is the derived category of $\bf{Mod}_{\O_X[G]}^a$.
\end{defn}

\begin{rmk} The results of \cite[\textsection 3.1, 3.4, 3.5]{Z3} implicitly assume that the ringed $\O_C$-site $(X, \O_X)$ has a {\it commutative} sheaf of rings $\O_X$. However, almost all results hold under the weaker assumption that $\O_C$ is {\it central} in $\O_X$. In particular, we can apply the results of these sections to the ringed $\O_C$-site $(X, \O_X[G])$. 
\end{rmk}

\begin{defn} Let $(X, \O_X)$ be a ringed space with a trivial action of a finite group $G$, and let $\F^a\in \bf{D}(\O_X[G])^a$. Then the complex of {\it homotopy invariants} (resp. {\it homotopy co-invariants}) is $\F^{a, hG} \coloneqq \bf{R}\ud{al\cal{H}om}_{\O_X[G]^a}(\O_X^a, \F^a)\in \bf{D}(\O_X)^a$ (resp. $\F^a_{hG}\coloneqq \F^a\otimes^L_{\O_X[G]^a} \O_X^a\in \bf{D}(\O_X)^a$).
\end{defn}

\begin{lemma}\label{lemma:duality-invariants-coinvarints} Let $(X, \O_X)$ be a ringed $\O_C$-site, and $G$ a finite group. Let $\F\in \bf{D}(\O_X[G])$  and $\G\in \bf{D}(\O_X)$. Then the natural map $\bf{R}\ud{al\cal{H}om}_{X}(\F^a, \G^a)^{hG} \to \bf{R}\ud{al\cal{H}om}_{X}(\F^a_{hG}, \G^a)$ is an isomorphism in $\bf{D}(X)^a$.
\end{lemma}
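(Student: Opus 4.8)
The plan is to reduce the claim to the standard fact that induction and restriction between $\O_X[G]$-modules and $\O_X$-modules are adjoint, packaged in a way that interacts well with internal Hom. First I would recall that $\F_{hG} = \F \otimes^L_{\O_X[G]} \O_X$, so by the usual tensor-hom adjunction (in the monoidal category $\bf{D}(\O_X)$, with all sheaf-Homs being internal Homs of $\O_X$-modules) we have a chain of natural isomorphisms
\[
\bf{R}\ud{\cal{H}om}_X\bigl(\F_{hG}, \G\bigr) \simeq \bf{R}\ud{\cal{H}om}_X\bigl(\F \otimes^L_{\O_X[G]} \O_X, \G\bigr) \simeq \bf{R}\ud{\cal{H}om}_{\O_X[G]}\bigl(\F, \bf{R}\ud{\cal{H}om}_X(\O_X, \G)\bigr),
\]
where on the right $\G$ is viewed as an $\O_X[G]$-module via the augmentation $\O_X[G] \to \O_X$ (equivalently, with trivial $G$-action). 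Now $\bf{R}\ud{\cal{H}om}_X(\O_X, \G) \simeq \G$, still with trivial $G$-action, so the right-hand side is $\bf{R}\ud{\cal{H}om}_{\O_X[G]}(\F, \G)$. It remains to identify $\bf{R}\ud{\cal{H}om}_{\O_X[G]}(\F, \G)$ with $\bf{R}\ud{\cal{H}om}_X(\F, \G)^{hG}$; this is precisely the internal-Hom incarnation of the fact that $\O_X$-linear homomorphisms with their conjugation $G$-action, derived-invariants, recover $\O_X[G]$-linear homomorphisms — i.e. $\bf{R}\ud{\cal{H}om}_{\O_X[G]}(\F,\G) \simeq \bigl(\bf{R}\ud{\cal{H}om}_X(\F,\G)\bigr)^{hG}$, which follows by writing $(-)^{hG} = \O_X \otimes^L_{\O_X[G]} (-)$ is wrong — one uses $(-)^{hG} = \bf{R}\ud{\cal{H}om}_{\O_X[G]}(\O_X, -)$ together with the composition of derived internal Homs, or more simply observes that $\O_X[G]$ is a Frobenius algebra over $\O_X$ (finite free, self-injective) so that restriction along $\O_X \to \O_X[G]$ is computed as in the tower above.

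In practice I would present the proof in two steps. \textbf{Step 1.} Resolve $\O_X$ as an $\O_X[G]$-module: since $G$ is finite, $\O_X[G]$ is a finite free $\O_X$-module, so we may pick a projective (in fact, termwise finite free $\O_X[G]$-) resolution $P_\bullet \to \O_X$ and use it to compute both $\F_{hG} = \F \otimes_{\O_X[G]} P_\bullet$ and, on the other side, $(-)^{hG} = \bf{R}\ud{\cal{H}om}_{\O_X[G]}(P_\bullet, -)$. \textbf{Step 2.} For a term $P_i = \O_X[G]^{\oplus n_i}$, which is free of finite rank over $\O_X[G]$ and also finite free over $\O_X$, the two functors $\ud{\cal{H}om}_X(\F \otimes_{\O_X[G]} P_i, \G)$ and $\ud{\cal{H}om}_X(\F,\G) \otimes_{\O_X[G]}^{?} \cdots$ — here I would instead just note that for $P_i$ finite free over $\O_X[G]$ the natural map $\ud{\cal{H}om}_X(\F \otimes_{\O_X[G]} P_i, \G) \to \ud{\cal{H}om}_{\O_X[G]}(P_i, \ud{\cal{H}om}_X(\F, \G))$ is an isomorphism (it is so for $P_i = \O_X[G]$ by unwinding definitions, and both sides commute with finite direct sums). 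Passing to the totalization of the resolution and using that $P_\bullet$ is termwise $\O_X[G]$-projective gives exactly the desired isomorphism $\bf{R}\ud{\cal{H}om}_X(\F,\G)^{hG} \xrightarrow{\sim} \bf{R}\ud{\cal{H}om}_X(\F_{hG}, \G)$.

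The construction of the natural map itself is the content of the statement, and I would make sure it is the one obtained from the counit/adjunction rather than some ad hoc map; concretely it is adjoint to the evaluation $\F_{hG} \otimes \bigl(\bf{R}\ud{\cal{H}om}_X(\F,\G)^{hG}\bigr) \to \G$ induced by the $G$-equivariant evaluation $\F \otimes \bf{R}\ud{\cal{H}om}_X(\F,\G) \to \G$. I expect the main (minor) obstacle to be purely bookkeeping: keeping track of which $G$-actions are trivial, which come from conjugation, and checking that the comparison map constructed via resolutions agrees with the canonically defined natural transformation — but there is no serious mathematical difficulty, since everything reduces to the case $G$ acting trivially and $\O_X[G]$ being a finite free self-injective $\O_X$-algebra, where the identity $\bf{R}\ud{\cal{H}om}_{\O_X[G]}(-, -) \simeq \bf{R}\ud{\cal{H}om}_X(-,-)^{hG}$ is standard homological algebra. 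One should also remark (or it will be used in the sequel) that when $\F$ is a perfect complex of $\O_X[G]$-modules the analogous statement with the roles of invariants and coinvariants swapped also holds, but that is not needed here.
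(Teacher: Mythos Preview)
Your argument is correct but takes an unnecessary detour. The paper's proof is a single line: unravel the definitions to see that the map in question is
\[
\bf{R}\ud{\cal{H}om}_{\O_X[G]}\bigl(\O_X,\, \bf{R}\ud{\cal{H}om}_{\O_X}(\F,\G)\bigr)\;\longrightarrow\;\bf{R}\ud{\cal{H}om}_{\O_X}\bigl(\F\otimes^L_{\O_X[G]}\O_X,\,\G\bigr),
\]
which is the derived tensor--hom adjunction (for $\F$ regarded as an $\O_X$--$\O_X[G]$--bimodule). That is the entire proof.

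The difference with your approach is which factor of $\F\otimes^L_{\O_X[G]}\O_X$ you pull out. You apply tensor--hom to move $\F$ into the outer $\bf{R}\ud{\cal{H}om}_{\O_X[G]}$, landing at $\bf{R}\ud{\cal{H}om}_{\O_X[G]}(\F,\G)$; the paper instead moves $\O_X$ out, which \emph{by definition} is $(-)^{hG}$ applied to the inner Hom. Your route then leaves you with the residual identification $\bf{R}\ud{\cal{H}om}_{\O_X[G]}(\F,\G)\simeq \bf{R}\ud{\cal{H}om}_{\O_X}(\F,\G)^{hG}$, and the quickest way to prove that is precisely the adjunction the paper invokes --- so you are essentially circling back. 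Your Steps~1--2 (resolve $\O_X$ by free $\O_X[G]$-modules and check term by term) are a valid hands-on verification of this same adjunction, but they are not needed once you quote derived tensor--hom; nor is the finiteness of $G$, the Frobenius property of $\O_X[G]$, or any self-injectivity. The self-correction mid-sentence and the trailing remark about perfect complexes should be dropped.
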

\begin{proof}
Unravelling the definitions, we see that one has to show that the natural map
\[
\bf{R}\ud{al\cal{H}om}_{\O_X[G]^a}(\O_X^a, \bf{R}\ud{al\cal{H}om}_{\O_X^a}(\F^a, \G^a)) \to \bf{R}\ud{al\cal{H}om}_{\O_X^a}(\F^a\otimes^L_{\O_X[G]^a} \O_X^a, \G^a)
\]
is an isomorphism. Using \cite[Proposition 3.5.8 and Proposition 3.5.14 (or Proposition 3.5.20)]{Z3}, we see that it suffices to show that the natural morphism
\[
\bf{R}\ud{\cal{H}om}_{\O_X[G]}(\O_X, \bf{R}\ud{\cal{H}om}_{\O_X}(\F, \G)) \to \bf{R}\ud{\cal{H}om}_{\O_X}(\F\otimes^L_{\O_X[G]} \O_X, \G)
\]
is an isomorphism in $\bf{D}(X)$. This is the usual version of the (derived) tensor-Hom adjunction.
\end{proof}

\subsubsection{Descent through good quotients}

The main goal of this section is to prove that almost perfectness of Faltings' trace descends through good quotients (see Definition~\ref{defn:good-quotient}). The main idea is to adapt the proof of Lemma~\ref{lemma:descent-blow-up} to the $G$-equivariant setting. The main new technical difficulty is to promote the commutative diagram from Corollary~\ref{cor:descend-rig-finite-etale} to a commutative diagram in $\bf{D}(\O_{\X_0}[G])^a$. We do not quite do this in this section since it seems difficult to rigorously define a $G$-equivariant structure on $\bf{R}\mf_{0, *}\, \omega_{\X_0}^{\bullet, a}$, but it turns out that a slightly weaker form of the $G$-equivariant version of Corollary~\ref{cor:descend-rig-finite-etale} is sufficient for our purposes. \smallskip

For the next constructions, let $\mf \colon \X' \to \X$ be a good quotient (see Definition~\ref{defn:good-quotient}) with generic fiber $f\colon X'\to X$. 

\begin{construction} Remark~\ref{rmk:equivariant-pushforward} (or Remark~\ref{rmk:derived-action}) implies that $\bf{R}\mf_{0, *}\,\bf{R}\nu_{\X', *}\left(\O_{X'}^+/p\right)$ can be naturally promoted to an element of $\bf{D}(\O_{\X_0}[G])$. By applying the almostification functor, we see that $\bf{R}\mf_{0, *}\,\bf{R}\nu_{\X', *}\left(\O_{X'}^{+, a}/p\right)$ is naturally an object of $\bf{D}(\O_{\X_0}[G])^a$.
\end{construction}

\begin{construction}\label{rmk:pro-etale-trace-equivariant} The pro-\'etale trace (see Definition~\ref{defn:mod-p-proetale-trace})
\[
\rm{Tr}_{\rm{Zar}, \mf} \colon \bf{R}\mf_{0, *}\bf{R}\nu_{\X', *}\left(\O_{X'}^+/p\right) \to \bf{R}\nu_{\X, *}\left(\O_{X}^+/p\right)
\]
can be canonically promoted to a morphism in $\bf{D}(\O_{\X_0}[G])$ if we put the trivial $G$-structure on $\bf{R}\nu_{\X, *}\left(\O_{X}^+/p\right)$. Indeed, Remark~\ref{rmk:etale-pushforward-trace} ensures $\rm{Tr}_{\rm{Zar}, \mf}$ is identified with 
\[
\bf{R}t_{\X, *}\left(\rm{Tr}_{\et, f}\right) \colon \bf{R}t_{\X, *} \bf{R}f_{\et, *} \left(\O_{X'}^+/p^n\right)
 \to \bf{R}\nu_{\X, *}\left(\O_{X}^+/p\right).
\]
Using Remark~\ref{rmk:equivariant-pushforward}, we reduce the question to showing that the \'etale trace map
\[
\rm{Tr}_{\et, f, \O_X^+/p} \colon f_{\et, *} \O_{X'}^+/p \to \O_{X}^+/p
\]
is $G$-equivariant. This can be checked locally, so we can assume that $X' \to X$ is a split torsor. In this case, it is obvious. 
\end{construction}

\begin{construction} A similar argument shows that the restriction morphism
\[
\rm{Res}_{\mf} \colon \bf{R}\nu_{\X, *} \left(\O^{+}_{X}/p\right) \to \bf{R}\mf_{0, *} \left(\bf{R}\nu_{\X', *} \O^{+}_{X'}/p \right) 
\]
can be canonically promoted to a morphism in $\bf{D}(\X_0[G])$. 
\end{construction}

\begin{lemma}\label{lemma:invariant-coinvariant-torsor} Let $\mf \colon \X' \to \X$ be a good quotient (see Definition~\ref{defn:good-quotient}) with generic fiber $f\colon X'\to X$. Then the pro-\'etale trace map 
 \[
 \rm{Tr}_{\rm{Zar}, \mf}\colon \bf{R}\mf_{0, *}\,\bf{R}\nu_{\X', *}\left(\O_{X'}^+/p\right) \to \bf{R}\nu_{\X, *}\left(\O_X^+/p\right)
 \]
 induces an isomorphism 
 \[
    \bigg(\bf{R}\mf_{0, *}\,\bf{R}\nu_{\X', *}\left(\O_{X'}^+/p\right)\bigg)_{hG} \to \bf{R}\nu_{\X, *}\left(\O_X^+/p\right).
 \] 
 Similarly, the restriction map $\rm{Res}_{\mf}\colon \bf{R}\nu_{\X, *}\left(\O_X^+/p\right) \to \bf{R}\mf_*\bf{R}\nu_{\X', *}\left(\O_{X'}^+/p\right)$ induces an isomorphism
\[
\bf{R}\nu_{\X, *}\left(\O_X^+/p\right) \to \bigg(\bf{R}\mf_{0, *}\bf{R}\nu_{\X', *}\left(\O_{X'}^+/p\right)\bigg)^{hG}.
\]
\end{lemma}
\begin{proof}
We note that \cite[Corollary 3.17(i)]{Sch1} implies that 
\[
    \bf{R}\mf_{0, *}\bf{R}\nu_{\X', *}\left(\O_{X'}^+/p\right)\simeq \bf{R}t_{\X, *}f_{\et, *}\left(\O_{X'}^+/p\right),
\]
\[
    \bf{R}\nu_{\X, *}\left(\O_X^+/p\right) \simeq \bf{R}t_{\X, *}\left(\O_X^+/p\right)
\]
where $t\colon (X_\et, \O_X^+/p) \to (\X_{0, \rm{Zar}}, \O_{\X_0})$ is the natural morphism of ringed sites. Now we note that 
\[
    \rm{Tr}_{\rm{Zar}, \mf}\simeq \bf{R}t_{\X, *}(\rm{Tr}_{\et, f}),
\]
\[
    \rm{Res}_{\mf}\simeq \bf{R}t_{\X, *}(\rm{Res}_{f}).
\]
Lemma~\ref{lemma:commutes-with-homotopy-limits-and-colimits} implies that $\bf{R}t_{\X, *}$ commutes with both homotopy invariants and co-invariants, so it suffices to show that the maps
\[
\left(f_{\et, *}\,\O_{X'}^+/p\right)_{hG} \to \O_X^+/p,
\]
\[
\O_X^+/p \to \left(f_{\et, *}\,\O_{X'}^+/p\right)^{hG}
\]
are isomorphisms. \smallskip

The claim is \'etale local on $X$, so we can assume that $f$ is a split $G$-torsor. In this case, $f_{\et, *}\,\O_{X'}^+/p \simeq \O_X^+/p[G]$, and so it suffices to show that the natural morphisms
\[
\left(\O_X^+/p[G]\right)_{hG} \to \O_X^+/p,
\]
\[
\O_X^+/p \to \left(\O_X^+/p[G]\right)^{hG}
\]
are isomorphisms. The first morphism is an isomorphism for tautological reasons. The second map can be easily seen to be an isomorphism on $\cal{H}^0$, so it suffices to show that $\left(\O_X^+/p[G]\right)^{hG}$ has no higher cohomology sheaves. This follows from the observation that induced modules have no higher group cohomology. 
\end{proof}

\begin{rmk}\label{rmk:almost-makes-sense} Lemma~\ref{lemma:invariant-coinvariant-torsor} stays true in the almost category since derived (inner) Hom, derived tensor product, and derived pushforward in the almost category can be computed as the ``almostification'' of the usual analogues of derived (inner) Hom, derived tensor product, and derived pushforward respectively (see \cite[Propositions 3.5.8, 3.5.14, and 3.5.23]{Z3}).
\end{rmk}

Now we prove a $G$-equivariant version of Corollary~\ref{cor:descend-rig-finite-etale}. It will be more convenient to split it into $2$ independent parts. 

\begin{lemma}\label{lemma:descend-rig-finite-etale-equivariant} Let $\mf\colon \X' \to \X$ be a good quotient of admissible formal $\O_C$-schemes, and let $f\colon X' \to X$ be the induced morphism on generic fibers. Then the diagram 
\[
\begin{tikzcd}[column sep = 4em]
\bf{R}\mf_{0, *} \left(\bf{R}\nu_{\X', *} \O^{+, a}_{X'}/p \right)\otimes^{L} \bf{R}\mf_{0, *} \left(\bf{R}\nu_{\X', *} \O^{+, a}_{X'}/p\right) \arrow{r}{\cup} \arrow[d, shift left=7ex, "\rm{Tr}_{\rm{Zar}, \mf}"] & \bf{R}\mf_{0, *} \left(\bf{R}\nu_{\X', *} \O_{X'}^{+, a}/p\right) \arrow{d}{\rm{Tr}_{\rm{Zar}, \mf}}  \\
\bf{R}\nu_{\X, *} \left(\O^{+, a}_{X}/p\right) \otimes^{L}  \bf{R}\nu_{\X, *}\left(\O^{+, a}_{X}/p\right) \arrow[u, shift left=7ex, "\rm{Res}_{\mf}"] \arrow{r}{\cup} & \bf{R}\nu_{\X, *} \left(\O_{X}^{+, a}/p\right)
\end{tikzcd}
\]
commutes in $\bf{D}(\O_{\X_0}[G])^a$. 
\end{lemma}
\begin{proof}
    Arguing as in the proof of Corollary~\ref{cor:descend-rig-finite-etale}, we reduce the question to showing that the diagram
    \[
    \begin{tikzcd}
        f_{\proet, *} \left(\O^+_{X'}/p\right)\otimes^{L} f_{\proet, *}\left(\O^+_{X'}/p\right)  \arrow{r}{\cup} \arrow[d, shift left=6ex, "\rm{Tr}_{\proet, f}"] &  f_{\proet, *}\left(\O^+_{X'}/p\right) \arrow{d}{\rm{Tr}_{\proet, f}}  \\
        \O^+_{X}/p \otimes^{L} \O^+_{X}/p \arrow[u, shift left=6ex, "\rm{Res}_{f}"] \arrow{r}{\cup} & \O_{X}^+/p
    \end{tikzcd}
    \]
commutes in $\bf{Mod}_{\O^+_X/p}^G$. This equality can now be checked pro-\'etale locally, so one can assume that $f$ is a split $G$-torsor. In this case, the claim is obvious. 
\end{proof}

\begin{lemma}\label{lemma:descend-rig-finite-etale-equivariant-2} Let $\mf\colon \X' \to \X$ be a good quotient of nice admissible formal $\O_C$-schemes of dimension $d$ (see Definition~\ref{defn:nice-formal-scheme}), and let $f\colon X' \to X$ be the induced morphism on generic fibers. Then 
    \begin{enumerate}
        \item the morphism $\rm{Tr}_{\mf_0}(-d)[-2d]\circ \bf{R}\mf_{0, *}(\rm{Tr}_{F, \X'})\colon \bf{R}\mf_{0, *} \left(\bf{R}\nu_{\X', *} \O_{X'}^{+, a}/p\right) \to \omega^{\bullet, a}_{\X_0}(-d)[-2d]$ can be uniquely promoted to a morphism in $\bf{D}(\O_{\X_0}[G])^a$ (with the trivial $G$-equivariant structure on $\omega^{\bullet, a}_{\X_0}$);
        \item the diagram
    \begin{equation}\label{eqn:commutes-G-equi}
    \begin{tikzcd}[column sep = 4em, row sep = 4em]
        \bf{R}\mf_{0, *} \left(\bf{R}\nu_{\X', *} \O_{X'}^{+, a}/p\right) \arrow{d}{\rm{Tr}_{\rm{Zar}, \mf}} \arrow{rd}{\rm{Tr}_{\mf_0}(-d)[-2d]\circ \bf{R}\mf_{0, *}(\rm{Tr}_{F, \X'})} &  \\
        \bf{R}\nu_{\X, *} \left(\O_{X}^{+, a}/p\right) \arrow{r}{\rm{Tr}_{F, \X}} & \omega^{\bullet, a}_{\X_0}(-d)[-2d]
    \end{tikzcd}
    \end{equation}
commutes\footnote{We note that this diagram commutes in $\bf{D}(\X_0)^a$ due to Corollary~\ref{cor:descend-rig-finite-etale}.} in $\bf{D}(\O_{\X_0}[G])^a$.
    \end{enumerate}
\end{lemma}
\begin{proof}
    We first show (1). For this, we note that 
    \[
    \bf{R}\mf_{0, *} \left(\bf{R}\nu_{\X', *} \O_{X'}^{+, a}/p\right) \in \bf{D}^{[0, d]}_{acoh}(\X_0)^a
    \]
    due to \cite[Theorem 6.13.5]{Z3} (and finiteness of $\mf$), and $\omega_{\X_0}(-d)[-2d]\in \bf{D}^{[d, 2d]}_{coh}(\X_0)$ due to Theorem~\ref{thm:dualizing-complex}. In particular, the morphism $\rm{Tr}_{\mf_0}(-d)[-2d]\circ \bf{R}\mf_{0, *}(\rm{Tr}_{F, \X'})$ factors through the morphism
    \[
    \cal{H}^d\left(\rm{Tr}_{\mf_0}(-d)[-2d]\circ \bf{R}\mf_{0, *}(\rm{Tr}_{F, \X'}) \right) \colon \mf_{0, *} \left(\rm{R}^d\nu_{\X', *} \O_{X'}^{+, a}/p\right) \to \omega^a_{\X_0}(-d). 
    \]
    Therefore, the morphism $\rm{Tr}_{\mf_0}(-d)[-2d]\circ \bf{R}\mf_{0, *}(\rm{Tr}_{F, \X'})$ can be (uniquely) promoted to a morphism in $\bf{D}(\O_{\X_0}[G])^a$ if and only if $\cal{H}^d\left(\rm{Tr}_{\mf_0}(-d)[-2d]\circ \bf{R}\mf_{0, *}(\rm{Tr}_{F, \X'}) \right)$ is $G$-equivariant. In particular, we see that this promotion is unique if exists. \smallskip
    
    Corollary~\ref{cor:descend-rig-finite-etale} ensures that $\cal{H}^d\left(\rm{Tr}_{\mf_0}(-d)[-2d]\circ \bf{R}\mf_{0, *}(\rm{Tr}_{F, \X'}) \right)$ is equal to the composition
    \[
    \cal{H}^d\left( \rm{Tr}_{F, \X}\right) \circ \cal{H}^d\left(\rm{Tr}_{\rm{Zar}, \mf}\right)  \colon \mf_{0, *} \left(\rm{R}^d\nu_{\X', *} \O_{X'}^{+, a}/p\right) \to \omega^a_{\X_0}(-d).
    \]
    Now we note that $\cal{H}^d\left( \rm{Tr}_{F, \X}\right)$ is $G$-equivariant because it is a morphism between sheaves with trivial $G$-action, and $\cal{H}^d\left(\rm{Tr}_{\rm{Zar}, \mf}\right)$ is $G$-equivariant by Construction~\ref{rmk:pro-etale-trace-equivariant}. Therefore, the composition is also $G$-equivariant finishing the proof. \smallskip
    
    (2): By degree reasons, we can check that the diagram commutes after applying $\cal{H}^d$. Then it is clear from the discussion/construction above. 
\end{proof}

Finally we are able to prove the main result of this section:

\begin{lemma}\label{lemma:descent-good-quotients} Let $\mf\colon \X' \to \X$ be a good quotient of nice admissible formal $\O_C$-schemes of dimension $d$ (see Definition~\ref{defn:nice-formal-scheme}), and let $f\colon X' \to X$ be the induced morphism on generic fibers. If Faltings' pairing is almost perfect on $\X'$, then the same holds on $\X$.
\end{lemma}
\begin{proof}
We note that Corollary~\ref{cor:descend-rig-finite-etale} implies that the following diagram commutes
\[
\begin{tikzcd}[column sep = 4em]
\bf{R}\nu_{\X, *} \left(\O_X^{+, a}/p\right) \arrow{r}{D_{\X}} \arrow{dd}{\rm{Res}_{\mf}}& \bf{R}\ud{al\cal{H}om}_{\X_0}\left(\bf{R}\nu_{\X, *}\left(\O_{X}^{+, a}/p\right), \omega^{\bullet, a}_{\X_0}(-d)[-2d]\right) \arrow{d}{} \\
 & \bf{R}\ud{al\cal{H}om}_{\X_0}\left(\bf{R}\mf_{0, *}\bf{R}\nu_{\X', *}\left(\O_{X'}^{+, a}/p\right), \omega^{\bullet, a}_{\X_0}(-d)[-2d]\right)\\
\bf{R}\mf_{0, *}\bf{R}\nu_{\X', *}\left(\O_{X'}^{+, a}/p\right) \arrow{r}{\bf{R}\mf_{0, *}D_{\X'}} & \bf{R}\mf_{0, *}\bf{R}\ud{al\cal{H}om}_{\X'_0}\left(\bf{R}\nu_{\X', *}\left(\O_{X'}^{+, a}/p\right), \omega^{\bullet, a}_{\X'_0}(-d)[-2d]\right) \arrow{u},
\end{tikzcd}
\]
where the right bottom vertical map is the isomorphism from \cite[Lemma 5.5.6]{Z3}, and the right top vertical map is induced by 
\[
\rm{Tr}_{\rm{Zar}, \mf}\colon \bf{R}\mf_{0, *}\bf{R}\nu_{\X', *}\left(\O_{X'}^+/p\right) \to \bf{R}\nu_{\X, *}\left(\O_{X}^+/p\right).
\]
Since the bottom vertical arrow is an isomorphism, we slightly abuse the notation and denote the composition of the bottom horizontal map and the bottom vertical map simply by
\[
\bf{R}\mf_{0, *}D_{\X'}\colon \bf{R}\mf_{0, *}\bf{R}\nu_{\X', *}\left(\O_{X'}^{+, a}/p\right) \to \bf{R}\ud{al\cal{H}om}_{\X_0}\left(\bf{R}\mf_{0, *}\bf{R}\nu_{\X', *}\left(\O_{X'}^{+, a}/p\right), \omega^{\bullet, a}_{\X_0}(-d)[-2d]\right).
\]

Therefore, we get a commutative diagram
\begin{equation}\label{diagram:good-quotients-step-2}
\begin{tikzcd}[column sep = 4em]
\bf{R}\nu_{\X, *}\left( \O_X^{+, a}/p\right) \arrow{r}{D_{\X}} \arrow{d}{\rm{Res}_{\mf}}& \bf{R}\ud{al\cal{H}om}_{\X_0}\left(\bf{R}\nu_{\X, *}\left(\O_{X}^{+, a}/p\right), \omega^{\bullet, a}_{\X_0}(-d)[-2d]\right) \arrow{d}{} \\
\bf{R}\mf_{0, *}\bf{R}\nu_{\X', *}\left(\O_{X'}^{+, a}/p\right) \arrow{r}{\bf{R}\mf_{0, *}D_{\X'}} & \bf{R}\ud{al\cal{H}om}_{\X_0}\left(\bf{R}\mf_{0, *}\bf{R}\nu_{\X', *}\left(\O_{X'}^{+, a}/p\right), \omega^{\bullet, a}_{\X_0}(-d)[-2d]\right), 
\end{tikzcd}
\end{equation}
that (canonically) lifts to a commutative diagram in $\bf{D}(\O_{\X_0}[G])^a$ due to Lemma~\ref{lemma:descend-rig-finite-etale-equivariant} and Lemma~\ref{lemma:descend-rig-finite-etale-equivariant-2}. Since the $G$-actions on $\bf{R}\nu_{\X, *}\left( \O_X^{+, a}/p\right)$ and $\bf{R}\ud{al\cal{H}om}_{\X_0}\left(\bf{R}\nu_{\X, *}\left(\O_{X}^{+, a}/p\right), \omega^{\bullet, a}_{\X_0}(-d)[-2d]\right)$ are trivial (by construction), Diagram~(\ref{diagram:good-quotients-step-2}) induces the following commutative diagram:
\[
\begin{tikzcd}[column sep = 5em]
\bf{R}\nu_{\X, *}\left( \O_X^{+, a}/p\right) \arrow{r}{D_{\X}} \arrow{d}{}& \bf{R}\ud{al\cal{H}om}_{\X_0}\left(\bf{R}\nu_{\X, *}\left(\O_{X}^{+, a}/p\right), \omega^{\bullet, a}_{\X_0}(-d)[-2d]\right) \arrow{d}{} \\ 
\left(\bf{R}\mf_{0, *}\bf{R}\nu_{\X', *}\left(\O_{X'}^{+, a}/p\right)\right)^{hG} \arrow{d} \arrow{r}{\left(\bf{R}\mf_{0, *}D_{\X'}\right)^{hG}} & \left(\bf{R}\ud{al\cal{H}om}_{\X_0}\left(\bf{R}\mf_{0, *}\bf{R}\nu_{\X', *}\left(\O_{X'}^{+, a}/p\right), \omega^{\bullet, a}_{\X_0}(-d)[-2d]\right)\right)^{hG} \arrow{d}\\
\bf{R}\mf_{0, *}\bf{R}\nu_{\X', *}\left(\O_{X'}^{+, a}/p\right) \arrow{r}{\bf{R}\mf_{0, *}D_{\X'}} & \bf{R}\ud{al\cal{H}om}_{\X_0}\left(\bf{R}\mf_{0, *}\bf{R}\nu_{\X', *}\left(\O_{X'}^{+, a}/p\right)^a, \omega^{\bullet, a}_{\X_0}(-d)[-2d]\right), 
\end{tikzcd}
\]
Now we note that $\left(\bf{R}\mf_{0, *}D_{\X'}\right)^{hG}$ is an isomorphism since $D_{\X'}$ is an isomorphism by our assumption. Moreover, the top left vertical arrow is an isomorphism by Lemma~\ref{lemma:invariant-coinvariant-torsor} and Remark~\ref{rmk:almost-makes-sense}. Thus, it suffices to show that the top right vertical arrow is an isomorphism. Lemma~\ref{lemma:duality-invariants-coinvarints} ensures that it suffices to show that the composition
\begin{align*}
\bf{R}\ud{al\cal{H}om}_{\X_0}\left(\bf{R}\nu_{\X, *}(\O_{X}^{+, a}/p), \omega^{\bullet, a}_{\X_0}(-d)[-2d]\right) &\to \left(\bf{R}\ud{al\cal{H}om}_{\X_0}\left(\bf{R}\mf_{0, *}\bf{R}\nu_{\X', *}(\O_{X'}^{+, a}/p), \omega^{\bullet, a}_{\X_0}(-d)[-2d]\right)\right)^{hG}  \\
& \xrightarrow{\sim} \bf{R}\ud{al\cal{H}om}_{\X_0}\left(\left(\bf{R}\mf_{0, *}\bf{R}\nu_{\X', *}(\O_{X'}^{+, a}/p)\right)_{hG}, \omega^{\bullet, a}_{\X_0}(-d)[-2d]\right)
\end{align*}
is an isomorphism. This, in turn, follows from Lemma~\ref{lemma:invariant-coinvariant-torsor} as the morphism 
\[
\left(\bf{R}\mf_{0, *}\, \bf{R}\nu_{\X', *}\left(\O_{X'}^{+}/p\right)\right)_{hG} \to \bf{R}\nu_{\X, *}\left(\O_{X}^{+}/p\right)
\]
is already an isomorphism. 
\end{proof}

\begin{thm}\label{thm:local-duality} Let $\X$ be a separated admissible formal $\O_C$-scheme with smooth generic fiber $X=\X_C$ of pure dimension $d$. Then Faltings' pairing is almost perfect on $\X$.
\end{thm}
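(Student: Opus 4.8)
The strategy is exactly the one outlined in the overview of this subsection: reduce the statement for a general $\X$ to the polystable case (already handled in Theorem~\ref{thm:almost-perfect-polystable}) by descending almost perfectness of Faltings' pairing through the two operations appearing in the local uniformization Theorem~\ref{thm:local-uniformization}, namely rig-isomorphisms and ``good quotients'' by finite groups. Both descent statements are the content of Lemma~\ref{lemma:descent-blow-up} and Lemma~\ref{lemma:descent-good-quotients}, so the proof of the theorem is essentially a formal bookkeeping argument combining these inputs.

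\begin{proof}
By Theorem~\ref{thm:local-uniformization} there is a finite set of maps $\mf_i\colon \X_i \to \X$ obtained from $\X$ as a composition of open Zariski coverings and rig-isomorphisms, such that each $\X_i$ is a geometric quotient $\mathfrak g_i\colon \X'_i \to \X_i$ of an admissible formal $\O_C$-scheme $\X'_i$ by a finite group $G_i$ acting freely on the generic fiber, and each $\X'_i$ admits a rig-isomorphism $\pi_i\colon \X''_i \to \X'_i$ with a rig-smooth polystable formal $\O_C$-scheme $\X''_i$. Almost perfectness of the Faltings' pairing is a local statement on $\X$ in the Zariski topology: indeed, the duality morphism $D_\X$ is a map in $\bf{D}(\X_0)^a$ and its restriction to an open $\sU_0 \subset \X_0$ is identified with $D_\sU$ because $\bf{R}\nu_*(\O_X^+/p)^a$, $\omega^{\bullet,a}_{\X_0}(-d)[-2d]$, and the formation of $\rm{Tr}_{F, \X}$ all commute with restriction to opens (the latter by Lemma~\ref{lemma:faltings-trace-commutes-with-base-change}, applied to an open immersion, and its mod-$p$ consequence). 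Hence it suffices to prove the theorem for each $\X_i$, and after shrinking further we may assume $\X = \X_i$ is a geometric quotient $\mathfrak g\colon \X' \to \X$ of $\X'$ by a finite group $G$ acting freely on generic fibers, with $\X'$ rig-isomorphic to a rig-smooth polystable model $\X''$ via $\pi\colon \X'' \to \X'$.

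Now Theorem~\ref{thm:almost-perfect-polystable} shows that the Faltings' pairing is almost perfect on $\X''$. Since $\pi\colon \X'' \to \X'$ is a rig-isomorphism of admissible separated formal $\O_C$-schemes with smooth generic fibers of pure dimension $d$, Lemma~\ref{lemma:descent-blow-up} yields almost perfectness of the Faltings' pairing on $\X'$. Finally, $\mathfrak g\colon \X' \to \X$ is a $G$-invariant $\O_C$-morphism whose generic fiber $g\colon X' \to X$ is a $G$-torsor, so Lemma~\ref{lemma:descent-good-quotients} yields almost perfectness of the Faltings' pairing on $\X$. This completes the proof.
\end{proof}

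\textbf{Remark on the difficulty.} In this presentation the theorem itself is short, as essentially all the real work has already been distributed into the preceding lemmas; the one point requiring care is the reduction of almost perfectness to a Zariski-local statement, which uses that $D_\X$ is compatible with restriction to Zariski opens — this in turn rests on the compatibility of $\rm{Tr}_{F,\X}$ with \'etale (hence open) base change from Lemma~\ref{lemma:faltings-trace-commutes-with-base-change}, together with the corresponding statements for $\bf{R}\nu_*$ and $\omega^\bullet$ recorded in \cite[Theorem 6.1.1]{Z3} and Lemma~\ref{dualizing-etale-base-change}. The genuinely hard inputs lie upstream: the polystable base case in Theorem~\ref{thm:almost-perfect-polystable} (which bottoms out in the Koszul-complex duality of Theorem~\ref{thm:almost-duality-gp-coh}), the Grothendieck-duality-based descent along rig-isomorphisms, and the invariants/co-invariants duality for finite group actions in Lemma~\ref{lemma:descent-good-quotients}.
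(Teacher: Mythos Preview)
Your approach is essentially the same as the paper's, and all the key ingredients are correctly identified. However, there is a small gap in the reduction ``it suffices to prove the theorem for each $\X_i$, and after shrinking further we may assume $\X = \X_i$'': Theorem~\ref{thm:local-uniformization} says that the maps $\mf_i\colon \X_i \to \X$ are obtained as compositions of open Zariski coverings \emph{and} rig-isomorphisms, not just Zariski open immersions. So the reduction from $\X$ to the $\X_i$ is not purely Zariski-local; you also need to know that almost perfectness descends through rig-isomorphisms at this stage. The paper handles this explicitly in its Step~4 by invoking Lemma~\ref{lemma:descent-blow-up} a second time for the rig-isomorphisms appearing in the composite $\mf_i$. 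Since you already cite Lemma~\ref{lemma:descent-blow-up} for the passage $\X''_i \to \X'_i$, the fix is immediate: just insert the same lemma into the first reduction step.
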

\begin{proof}
Let $\X_i, \X'_i, \X''_i$, and $\Y_j$ be the formal $\O_C$-schemes obtained by applying Theorem~\ref{thm:local-uniformization} to $\X$.

{\it Step 1. Prove the claim for each $\X''_i$}: This was already done in Theorem~\ref{thm:almost-perfect-polystable} because $\X''_i$ are polystable (and separated).  We only note that each $X''_{i}=\X''_{i, C}$ is indeed pure of dimension $d$ as it is \'etale over $X$.  \smallskip

{\it Step 2. Prove the claim for each $\X'_i$}: We recall that the morphism $\pi_i\colon \X''_i \to \X'_i$ is a rig-isomorphism between nice admissible formal $\O_C$-schemes of dimension $d$. Then Lemma~\ref{lemma:descent-blow-up} ensures that Faltings' trace is almost perfect on $\X'_i$ if it is almost perfect on $\X''_i$. \smallskip

{\it Step 3. Prove the claim for each $\X_i$}: This follows from Lemma~\ref{lemma:descent-good-quotients} since $\X'_i \to \X'$ is a good quotient of nice admissible formal $\O_C$-schemes of dimension $d$. \smallskip

{\it Step 4. Prove the claim for $\X$}: We note that the set $(\X_i, \mf_i)$ can be obtained from $\X$ as the composition of rig-isomorphisms and open Zariski coverings. Furthermore, we know that each $\Y_j$ that appears in the presentation of $\X_i$ as a composition of open Zariski covers and rig-isomorphisms has reduced special fiber. Thus, in order to show that Faltings' pairing is almost perfect on $\X$, it suffice to show that this property descends through Zariski open coverings and rig-isomorphisms of nice formal $\O_C$-schemes. The first claim is trivial, and the second one was proven in Lemma~\ref{lemma:descent-blow-up}.
\end{proof}

\section{Global Duality}\label{section:duality}

\subsection{Overview}
Throughout this section, we fix a complete rank-$1$ valued field $K$ with ring of integers $\O_K$, maximal ideal $\m\subset \O_K$, and residue field $k$. We also assume that $\O_K$ is of mixed characteristic $(0, p)$. We denote the completed algebraic closure of $K$ by $C$. \smallskip

Throughout this section, all cohomology groups are considered with respect to the pro-\'etale topology unless it is specified otherwise. We note that \cite[Corollary 3.17(i)]{Sch1} ensures that, for every integer $i$, the cohomology groups of $\bf{F}_p(i)$ or $\O_X^+/p(i)$ are the same in the pro-\'etale or \'etale topology on any rigid-analytic space $X$. \smallskip 

Section~(\ref{section:duality}) has two main goals. The first one is to construct the {\it global analogue} of Faltings' trace map and then to show that it induces an almost perfect pairing. The second goal is to deduce the usual Poincar\'e Duality for mod-$p$ \'etale cohomology of smooth proper rigid-analytic $K$-spaces.

\begin{thm}\label{thm:almost-global-duality-overview} Let $X$ be a smooth proper rigid $C$-space of pure dimension $d$. Then the there is a global trace map $\rm{Tr}_X\colon \bf{R}\Gamma(X, \O_X^{+, a}/p) \to \O_C^a/p(-d)[-2d]$ such that the induced pairing
\[
\bf{R}\Gamma(X, \O_X^{+, a}/p) \otimes^L_{\O_C/p} \bf{R}\Gamma(X, \O_X^{+, a}/p) \xr{-\cup -} \bf{R}\Gamma(X, \O_X^{+, a}/p) \xr{\rm{Tr}_X} \O_C^a/p(-d)[-2d]
\]
is almost perfect. 
\end{thm}

Theorem~\ref{thm:almost-global-duality-overview} is essentially a formal consequence of Theorem~\ref{thm:local-duality} and the almost version of Grothendieck Duality. One subtle point is that the construction of $\rm{Tr}_X$ will, a priori, depend on a choice of a nice formal model $\X$, but we show that it does not depend on it. \smallskip

To deduce the \'etale version of Poincar\'e Duality, we will need to use the \'etale trace map \[
t_X\colon \rm{H}^{2d}_\et(X_C, \bf{F}_p(d)) \to \bf{F}_p
\]
constructed by Berkovich in \cite[\textsection 7.2]{Ber} in the language of Berkovich spaces, and translated into the language of adic spaces in Section~\ref{section:berkovich-trace}.

\begin{thm}\label{thm:etale-PD-overview} Let $X$ be a smooth proper rigid $K$-space of pure dimension $d$. Then the pairing \[
\rm{H}^i_\et(X_C, \bf{F}_p) \otimes_{\bf{F}_p} \rm{H}^{2d-i}_\et(X_C, \bf{F}_p(d))\xr{-\cup -} \rm{H}^{2d}_\et(X_C, \bf{F}_p(d)) \xr{t_X} \bf{F}_p
\]
is a perfect pairing of continuous Galois representations for any $i\geq 0$. 
\end{thm}

The essential idea of the proof of Theorem~\ref{thm:etale-PD-overview} is to deduce it from Theorem~\ref{thm:almost-global-duality-overview} via the Primitive Comparison Theorem.

\subsection{Global almost duality}\label{section:global-almost-duality}

The main goal of this section is to prove Theorem~\ref{thm:almost-global-duality-overview}. Throughout this section, we fix a smooth proper rigid-analytic $C$-space $X$ of pure dimension $d$. \smallskip

We start with the construction of the global trace map by choosing a nice admissible formal $\O_C$-model $\mf\colon \X \to \Spf \O_C$ of the rigid-analytic space $f\colon X\to \Spa(C, \O_C)$ (its existence is guaranteed by the Reduced Fiber Theorem, see Theorem~\ref{thm:cofinal-reduced-family}). This model is automatically proper due to Lemma~\ref{lemma:proper-adic-formal}. 

\begin{defn} The {\it global trace} $\rm{Tr}_X^\X\colon \bf{R}\Gamma(X, \O_X^{+, a}/p) \to \O_C^a/p(-d)[2d]$ is the composition
\[
\bf{R}\Gamma\left(X, \O_X^{+, a}/p\right)\simeq \bf{R}\Gamma\left(\X_0, \bf{R}\nu_*\left(\O_X^{+, a}/p\right)\right) \xr{\bf{R}\Gamma(\X_0, \rm{Tr}_{F, \X})} \bf{R}\Gamma\left(\X_0, \omega^{\bullet, a}_{\X_0}\right)(-d)[-2d] \xr{\rm{Tr}_{\mf_0}(-d)[-2d]} \O^a_C/p(-d)[-2d],
\]
where $\rm{Tr}_{F, \X}$ is Faltings' trace from Definition~\ref{defn:F-trace-final}, and $\rm{Tr}_{\mf_0}$ is the trace map coming from Theorem~\ref{thm:existence-!}. 
\end{defn}

This construction, a priori, depends on the choice of an admissible nice model $\X$. We show that actually this construction is canonically independent of this choice.

\begin{lemma}\label{lemma:global-trace-independent} Let $X$ be as above. Then the trace maps $\rm{Tr}_X^\X$ and $\rm{Tr}_X^{\X'}$ are canonically identified for any two choices of admissible formal $\O_C$-models $\X$ and $\X'$.
\end{lemma}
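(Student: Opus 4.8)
The plan is to exploit the cofiltered structure of the category of admissible formal $\O_C$-models of $X$. By Raynaud's theory of admissible blow-ups \cite[Claims (c) and (d) on p.~307]{BL1}, any two admissible formal $\O_C$-models of $X$ are dominated by a common third model over $X$; hence it suffices to prove that for a morphism $\mf\colon \X' \to \X$ of admissible formal $\O_C$-models of $X$ the traces $\rm{Tr}_X^{\X'}$ and $\rm{Tr}_X^{\X}$ agree under the canonical identifications coming from $\mf_0 \circ \nu_{\X'} = \nu_{\X}$ and $\bf{R}\Gamma(\X_0, \bf{R}\mf_{0,*}(-)) \simeq \bf{R}\Gamma(\X'_0, -)$. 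Since $X$ is proper, both $\X$ and $\X'$ are proper over $\Spf \O_C$ by \cite[Lemma 2.6]{Lutke-proper}, so $\mf$ is proper and $\mf'\coloneqq \mg\circ\mf\colon \X' \to \Spf\O_C$ (where $\mg\colon \X\to\Spf\O_C$ is the structure map, so $\mf'_0 = \mg_0\circ\mf_0$). Transitivity of the trace in the almost version of Grothendieck duality \cite[\textsection 5.5]{Z3} (the formal analogue of Lemma~\ref{almost-commutative-coherent}) gives $\rm{Tr}_{\mf'_0} = \rm{Tr}_{\mg_0}\circ \bf{R}\mg_{0,*}(\rm{Tr}_{\mf_0})$, where $\rm{Tr}_{\mf_0}\colon \bf{R}\mf_{0,*}\omega^\bullet_{\X'_0} \to \omega^\bullet_{\X_0}$ is the coherent trace. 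Chasing the definitions of $\rm{Tr}_X^{\X}$ and $\rm{Tr}_X^{\X'}$ through this identity and through $\bf{R}\Gamma(-)$, the lemma reduces to the following purely local statement on $\X_0$:
\[
\begin{tikzcd}[column sep = 11ex]
\bf{R}\mf_{0,*}\bf{R}\nu_{\X',*}(\O_X^+/p)^a \arrow{r}{\bf{R}\mf_{0,*}(\rm{Tr}_{F,\X'})} \arrow{d}{\sim} & \bf{R}\mf_{0,*}\omega^{\bullet, a}_{\X'_0}(-d)[-2d] \arrow{d}{\rm{Tr}_{\mf_0}} \\
\bf{R}\nu_{\X,*}(\O_X^+/p)^a \arrow{r}{\rm{Tr}_{F,\X}} & \omega^{\bullet, a}_{\X_0}(-d)[-2d]
\end{tikzcd}
\]
commutes in $\bf{D}(\X_0)^a$.

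To prove this, recall that $\bf{R}\nu_{\X,*}(\O_X^+/p)^a \simeq \bf{R}\mf_{0,*}\bf{R}\nu_{\X',*}(\O_X^+/p)^a$ is almost concentrated in degrees $[0,d]$ while $\omega^{\bullet,a}_{\X_0}(-d)[-2d]$ is concentrated in degrees $[d,2d]$, so both composites factor canonically through $\rm{R}^d\nu_{\X,*}(\O_X^+/p)^a[-d]$; moreover, by Definition~\ref{F-trace-final} the map $\rm{Tr}_{F,\X}$ is obtained by base change along $\O_\X \to \O_{\X_0}$ from the integral Faltings' map $\rm{Tr}^{+,d}_{F,\X}\colon \rm{R}^d\nu_{\X,*}\wdh{\O}^+_X \to \omega_\X(-d)$, and (by compatibility of the coherent trace with reduction mod $p$, as in Lemma~\ref{trace-coherent}) $\rm{Tr}_{\mf_0}$ on the $(-d)$-th cohomology sheaves is the mod-$p$ reduction of the coherent trace $\rm{Tr}_{\mf}\colon \mf_*\omega_{\X'} \to \omega_\X$ of Lemma~\ref{trace-coherent}. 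Hence the displayed diagram reduces to the assertion that $\rm{Tr}^{+,d}_{F,\X}$ equals the composite
\[
\rm{R}^d\nu_{\X,*}\wdh{\O}^+_X \xrightarrow{\ e\ } \mf_{0,*}\rm{R}^d\nu_{\X',*}\wdh{\O}^+_X \xrightarrow{\mf_{0,*}(\rm{Tr}^{+,d}_{F,\X'})} \mf_*(\omega_{\X'})(-d) \xrightarrow{\rm{Tr}_{\mf}(-d)} \omega_\X(-d),
\]
$e$ the canonical edge map (for the special $\mf$ with $\X'$ reduced this is Definition~\ref{falting-trace-d}; in general it is new). This is now an equality of honest maps of coherent $\O_\X$-modules whose target $\omega_\X(-d)$ is $\O_C$-flat by Lemma~\ref{lemma:dualizing-module-flat}; since an $\O_C$-flat coherent sheaf injects into its adic generic fibre, it suffices to check the equality after applying $(-)_C$.

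On the generic fibre, Corollary~\ref{gen-fiber-Scholze} together with Remark~\ref{rmk:faltings-scholze-generic-fiber} identifies $(\rm{Tr}^{+,d}_{F,\X})_C$ with $(\rm{Sch}^d_X)^{-1}$ under the isomorphisms $\delta_\X$ and $r_{\X_C}$, and likewise identifies $(\rm{Tr}^{+,d}_{F,\X'})_C$ with $(\rm{Sch}^d_X)^{-1}$; crucially $\rm{Sch}^d_X$, $\rm{R}^d\mu_*\wdh{\O}_X$ and the edge map $e_C$ are intrinsic to $X$ and involve no formal model, so these two identifications are compatible. Finally, $(\rm{Tr}_{\mf})_C\colon (\omega_{\X'})_C \to (\omega_\X)_C$ is, under $r_{\X'_C}$ and $r_{\X_C}$, the identity of $\Omega^d_X$ --- this is exactly the commutativity of the triangle in Lemma~\ref{trace-first} (and it is an isomorphism by Lemma~\ref{trace-iso}). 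Therefore both sides of the displayed identity become $(\rm{Sch}^d_X)^{-1}$ after $(-)_C$, which proves the local statement and hence the lemma. The only step requiring genuine work is the reduction of the model-independence of $\rm{Tr}_{F,\X}$ to the generic fibre; this succeeds precisely because one routes everything through the \emph{integral} Faltings' map, whose target is $\O_C$-flat, rather than comparing the mod-$p$ traces directly, and because on the generic fibre the Faltings' map is nothing but Scholze's model-independent map $\rm{Sch}^d_X$.
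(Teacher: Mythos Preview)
Your proof is correct and follows essentially the same route as the paper: reduce to a rig-isomorphism $\mf\colon \X'\to\X$ via a common refinement, then show the local compatibility $\rm{Tr}_{F,\X}\simeq \rm{Tr}_{\mf_0}\circ\bf{R}\mf_{0,*}(\rm{Tr}_{F,\X'})$ by reducing to the integral Faltings' map, using $\O_C$-flatness of $\omega_\X(-d)$ to pass to the generic fibre, and there invoking that both sides become $(\rm{Sch}^d_X)^{-1}$ while $(\rm{Tr}_\mf)_C$ is the identity under the $r$-identifications (Lemma~\ref{trace-first}). The paper packages this local compatibility as a special case of the already-proved Lemma~\ref{lemma:traces-compatible-finite-etale} (a rig-isomorphism has identity generic fibre, hence is trivially rig-finite-\'etale), whose proof is exactly your flatness-plus-generic-fibre argument; you have effectively reproved that lemma inline for the case at hand. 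One small slip: in your displayed composite the functor should be $\mf_*$ rather than $\mf_{0,*}$ when you are working with $\wdh{\O}^+_X$ and $\omega_{\X'}$, and the compatibility of $\delta_\X$ with $\delta_{\X'}$ you implicitly use is the content of Lemma~\ref{add-later}.
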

\begin{proof}
Suppose we have two nice admissible formal models $\X$ and $\X'$ of the rigid-analytic space $X$. Using Theorem~\ref{thm:cofinal-reduced-family}, we can choose another nice admissible formal $\O_C$-model $\X''$ of $X$ that dominates both $\X$ and $\X'$. Therefore, for the purpose of proving that $\rm{Tr}_X^\X$ can be identified with $\rm{Tr}_X^{\X'}$, it is enough to assume that there is a rig-isomorphism $\pi \colon \X' \to \X$. \smallskip

Now Corollary~\ref{cor:descend-rig-finite-etale} implies that the following diagram commutes
\[
\begin{tikzcd}[column sep = 7 em]
\bf{R}\Gamma(X, \O_X^{+, a}/p) \arrow{r}{\sim} & \bf{R}\Gamma\left(\X_0, \bf{R}\nu_{\X, *}\left(\O_X^{+, a}/p\right)\right) \arrow{r}{\sim} \arrow{d}{\bf{R}\Gamma\left(\X_0, \rm{Tr}_{F, \X}\right)} & \bf{R}\Gamma\left(\X_0, \bf{R}\pi_*\bf{R}\nu_{\X', *}\left(\O_{X'}^{+, a}/p\right)\right) \arrow{d}{\bf{R}\Gamma(\X_0, \bf{R}\pi_*(\rm{Tr}_{F, \X'}))} \\
\O^a_C/p(-d)[-2d] & \arrow{l}{\rm{Tr}_{\mf_0}(-d)[-2d]} \bf{R}\Gamma\left(\X_0, \omega^{\bullet, a}_{\X_0}(-d)[-2d]\right) & \arrow{l}{\bf{R}\Gamma(\X_0, \rm{Tr}_{\pi_0}(-d)[-2d])} \bf{R}\Gamma\left(\X_0, \bf{R}\pi_*\omega^{\bullet, a}_{\X'_0}(-d)[-2d]\right).
\end{tikzcd}
\]
The left horseshoe defines the morphism $\rm{Tr}_X^\X \colon \bf{R}\Gamma(X, \O_X^{+, a}/p) \to \O_C^a/p(-d)[-2d]$, and the outer horseshoe defines the morphism $\rm{Tr}_X^{\X'}$ because $\rm{Tr}_{\mf_0}\circ \bf{R}\Gamma(\X_0, \rm{Tr}_{\pi_0})= \rm{Tr}_{\mf'_0}$. Therefore, $\rm{Tr}_{X}^\X = \rm{Tr}_X^{\X'}$.
\end{proof}

\begin{defn}\label{defn:global-trace} We define the {\it global trace map} 
\[
\rm{Tr}_X \colon \bf{R}\Gamma\left(X, \O_X^{+, a}/p\right) \to \O^a_C/p(-d)[-2d]
\]
to be $\rm{Tr}_X^\X$ for any choice of nice admissible formal model $\X$. Lemma~\ref{lemma:global-trace-independent} guarantees that it is well-defined. 
\end{defn}

The global trace map defines the pairing
\begin{equation}\label{eqn:trace-almost-mod-p}
\bf{R}\Gamma(X, \O_X^{+, a}/p) \otimes_{\O_C/p}^L \bf{R}\Gamma(X, \O_X^{+, a}/p) \xr{-\cup-} \bf{R}\Gamma(X, \O_X^{+, a}/p)  \xr{\rm{Tr}_X} \O_C^a/p(-d)[-2d].
\end{equation}
We show that this pairing is almost perfect, i.e. the {\it duality map}
\[
D_X\colon \bf{R}\Gamma(X, \O_X^{+, a}/p) \to \bf{R}\rm{alHom}_{\O_C/p}\left(\bf{R}\Gamma(X, \O_X^{+, a}/p), \O_C^a/p(-d)[-2d]\right)
\]
is an almost isomorphism\footnote{This pairing is symmetric, so this definition coincides with the definition from Section~\ref{section:notation}}. 

\begin{thm}\label{thm:global-duality-mod-p} Let $X$ be a smooth proper rigid $C$-space of pure dimensiond $d$. Then the pairing (\ref{eqn:trace-almost-mod-p}) is almost perfect, i.e., the morphism $D_X$ is an almost isomorphism. 
\end{thm}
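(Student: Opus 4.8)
The strategy is to reduce the global almost duality statement to the local one (Theorem~\ref{thm:local-duality}) via the almost version of Grothendieck Duality on the proper formal model $\X$. First I choose an admissible formal $\O_C$-model $\X$ of $X$, which exists by \cite[Lemma 2.6]{Lutke-proper} and is automatically proper. By construction of $\rm{Tr}_X$ (Definition~\ref{defn:global-trace}), the trace $\rm{Tr}_X$ is obtained by applying $\bf{R}\Gamma(\X_0, -)$ to the Faltings' trace $\rm{Tr}_{F, \X}\colon \bf{R}\nu_*(\O_X^+/p)^a \to \omega^{\bullet, a}_{\X_0}(-d)[-2d]$ and then composing with the Grothendieck trace $\rm{Tr}_{\mf_0}\colon \bf{R}\Gamma(\X_0, \omega^{\bullet, a}_{\X_0}) \to \O_C^a/p$ from the adjunction $(\bf{R}\mf_{0,*}, \mf_0^!)$ in the almost setting \cite[Theorem 5.5.5]{Z3}. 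The cup product pairing on $\bf{R}\Gamma(X, \O_X^+/p)^a$ is, under the identification $\bf{R}\Gamma(X, \O_X^+/p)^a \simeq \bf{R}\Gamma(\X_0, \bf{R}\nu_*(\O_X^+/p)^a)$, the one induced by the cup product pairing on $\bf{R}\nu_*(\O_X^+/p)^a$ together with the multiplication $\bf{R}\Gamma \otimes^L \bf{R}\Gamma \to \bf{R}\Gamma$.

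\textbf{Key steps.} The heart of the argument is the following chain of identifications of the duality morphism. Write $F \coloneqq \bf{R}\nu_*(\O_X^+/p)^a \in \bf{D}^{[0,d]}_{acoh}(\X_0)^a$ and $\omega \coloneqq \omega^{\bullet, a}_{\X_0}(-d)[-2d]$. By Theorem~\ref{thm:local-duality}, the Faltings' pairing is almost perfect, i.e. the local duality morphism
\[
D_\X\colon F \to \bf{R}\ud{al\cal{H}om}_{\X_0}(F, \omega)
\]
is an isomorphism in $\bf{D}(\X_0)^a$. Applying $\bf{R}\Gamma(\X_0, -)$ and using that $\bf{R}\Gamma$ is a triangulated functor, I obtain an isomorphism $\bf{R}\Gamma(\X_0, F) \xrightarrow{\sim} \bf{R}\Gamma(\X_0, \bf{R}\ud{al\cal{H}om}_{\X_0}(F, \omega))$. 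The next step is to identify the target with $\bf{R}\rm{alHom}_{\O_C/p}(\bf{R}\Gamma(\X_0, F), \O_C^a/p(-d)[-2d])$. This is exactly where the Grothendieck Duality adjunction \cite[Theorem 5.5.5]{Z3} enters: for $\mf_0\colon \X_0 \to \Spec \O_C/p$ proper and $\omega^{\bullet}_{\X_0} = \mf_0^!(\O_{\Spec \O_C/p})$ the dualizing complex, one has a natural isomorphism
\[
\bf{R}\Gamma(\X_0, \bf{R}\ud{al\cal{H}om}_{\X_0}(F, \omega^{\bullet, a}_{\X_0})) \simeq \bf{R}\rm{alHom}_{\O_C/p}(\bf{R}\mf_{0,*}F, \O_C^a/p),
\]
which is just the combination of the projection/internal-Hom formula $\bf{R}\mf_{0,*}\bf{R}\ud{al\cal{H}om}_{\X_0}(F, \mf_0^!\G) \simeq \bf{R}\ud{al\cal{H}om}_{\O_C/p}(\bf{R}\mf_{0,*}F, \G)$ with $\G = \O_C^a/p$ and the fact that $\bf{R}\mf_{0,*} = \bf{R}\Gamma(\X_0, -)$. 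Twisting by $(-d)[-2d]$ and chasing through the definitions, this identifies the map $\bf{R}\Gamma(\X_0, D_\X)$ with the duality morphism attached to the pairing~(\ref{eqn:trace-almost-mod-p}); the compatibility of the cup products under $\bf{R}\mf_{0,*}$ and of $\rm{Tr}_{F, \X}$ with $\rm{Tr}_X$ via $\rm{Tr}_{\mf_0}$ is exactly the content of the construction in Definition~\ref{defn:global-trace}. Since $D_\X$ is an almost isomorphism, so is $\bf{R}\Gamma(\X_0, D_\X)$, and hence pairing~(\ref{eqn:trace-almost-mod-p}) is almost perfect.

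\textbf{Main obstacle.} The routine part is invoking Grothendieck Duality; the genuine bookkeeping is checking that the abstract Grothendieck trace $\rm{Tr}_{\mf_0}$ and the internal-Hom adjunction isomorphism are compatible with the concrete cup-product structures, so that $\bf{R}\Gamma(\X_0, D_\X)$ really is \emph{the} duality morphism of~(\ref{eqn:trace-almost-mod-p}) and not merely an abstract isomorphism between the same objects. This amounts to verifying commutativity of a diagram relating (i) the cup product $F \otimes^L F \to F$ on $\X_0$, (ii) the multiplication $\bf{R}\Gamma(\X_0, F) \otimes^L \bf{R}\Gamma(\X_0, F) \to \bf{R}\Gamma(\X_0, F \otimes^L F)$, and (iii) the counit $\bf{R}\mf_{0,*}\mf_0^!(\O_C^a/p) \to \O_C^a/p$; all three are standard, and the compatibility is the usual statement that the Grothendieck trace is a map of modules over $\bf{R}\Gamma(\X_0, \O_{\X_0})$, together with associativity of cup products. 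I would phrase this cleanly by noting that $D_\X$ is, by construction, the morphism adjoint to the pairing $F \otimes^L_{\O_{\X_0}} F \xrightarrow{\cup} F \xrightarrow{\rm{Tr}_{F,\X}} \omega$, applying $\bf{R}\mf_{0,*}$ which takes adjoints to adjoints (by the projection formula), and identifying $\bf{R}\mf_{0,*}$ of this pairing with pairing~(\ref{eqn:trace-almost-mod-p}) — the last identification being immediate from Definition~\ref{defn:global-trace} and the fact that $\bf{R}\mf_{0,*}$ is lax monoidal.
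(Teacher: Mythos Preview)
Your proposal is correct and follows essentially the same approach as the paper: apply $\bf{R}\Gamma(\X_0,-)$ to the local duality isomorphism $D_\X$ of Theorem~\ref{thm:local-duality}, then identify the target with $\bf{R}\rm{alHom}_{\O_C/p}(\bf{R}\Gamma(X,\O_X^+/p)^a,\O_C^a/p(-d)[-2d])$ via the almost Grothendieck adjunction \cite[Theorem 5.5.5]{Z3}. The compatibility of cup products that you flag as the ``main obstacle'' is exactly what the paper handles by citing \cite[\href{https://stacks.math.columbia.edu/tag/0FP6}{Tag 0FP6}]{stacks-project}, which gives the needed commutativity between the external cup product on global sections and $\bf{R}\Gamma$ of the internal one.
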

\begin{proof} 
We note that \cite[\href{https://stacks.math.columbia.edu/tag/0FP6}{Tag 0FP6}]{stacks-project} gives that the diagram
\[
\begin{tikzcd}
\bf{R}\Gamma(X, \O_X^+/p) \otimes^L_{\O_C/p} \bf{R}\Gamma(X, \O_X^+/p) \arrow{r}{\sim} \arrow{d}{\cup}& \bf{R}\Gamma(\X_0, \bf{R}\nu_* \O_X^+/p) \otimes^L_{\O_C/p} \bf{R}\Gamma(\X_0, \bf{R}\nu_* \O_X^+/p) \arrow{d}{\cup} \\
\bf{R}\Gamma(X, \O_X^+/p) & \arrow{l}{\bf{R}\Gamma(\X_0,- \cup-)}  \bf{R}\Gamma(\X_0, \bf{R}\nu_* \O_X^+/p \otimes^L_{\O_{\X_0}} \bf{R}\nu_* \O_X^+/p)
\end{tikzcd}
\]
is commutative. As the trace map in Grothendieck duality is defined via cup products, this formally implies that the diagram
\[\label{eqn:diagram-global-duality}
\begin{tikzcd}[column sep = 5em]
\bf{R}\Gamma\left(\X_0, \bf{R}\nu_*\left(\O_X^{+, a}/p\right)\right)\arrow{d}{\wr} \arrow{r}{\bf{R}\Gamma(\X_0, D_{\X})} & \bf{R}\Gamma\left(\X_0, \bf{R}\ud{al\cal{H}om}_{\O_{\X_0}}\left(\bf{R}\nu_*\left(\O_X^{+, a}/p\right), \omega^{\bullet, a}_{\X_0}(-d)[-2d]\right)\right)  \arrow{d}{\wr} \\
\bf{R}\Gamma(X, \O_X^{+, a}/p) \arrow{r}{D_X} & \bf{R}\rm{alHom}_{\O_C/p}\left(\bf{R}\Gamma\left(X, \O_X^{+, a}/p\right), \O^a_C/p(-d)[-2d]\right)
\end{tikzcd}
\]
is commutative, where the right vertical arrow is the almost isomorphism coming from the almost version of Grothendieck Duality (use \cite[Lemma 5.5.6]{Z3} for $f=\mf_0\colon \X_0 \to \Spec \O_C/p$). Now we note that the top horizontal arrow is an almost isomorphism by Theorem~\ref{thm:local-duality}, and the left vertical map is an almost isomorphism for tautological reasons. This  implies that $D_X$ is an almost isormorphism. 
\end{proof}

Now we want to show a non-derived analogue of Theorem~\ref{thm:global-duality-mod-p}. We recall that $\bf{R}\Gamma(X, \O_X^+/p)$ is almost concentrated in degrees $[0,2d]$ by \cite[Theorem 6.3.3 and Lemma C.6.10]{Z3}. Therefore, $\rm{Tr}_X$ induces a morphism (in the almost category) 
\[
\rm{Tr}_X^{2d}\colon \rm{H}^{2d}\left(X, \O_X^{+, a}/p\right) \to \O_C^a/p(-d)
\]
This, in turn, induces the pairing
\begin{equation}\label{eqn:pairing-abelian}
    \rm{H}^i(X, \O_X^{+, a}/p) \otimes_{\O_C/p} \rm{H}^{2d-i}(X, \O_X^{+, a}/p) \xr{-\cup-} \rm{H}^{2d}(X, \O_X^{+, a}/p) \xr{\rm{Tr}_X^{2d}} \O_C^a/p(-d).
\end{equation}

\begin{thm}\label{thm:almost-perfect-underived} Let $X$ be a smooth proper rigid $C$-space of pure dimension $d$. Then the pairing (\ref{eqn:pairing-abelian}) is almost perfect for every $i\geq 0$.
\end{thm}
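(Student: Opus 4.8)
The plan is to deduce the non-derived statement from the derived one (Theorem~\ref{thm:global-duality-mod-p}) by combining an almost-coherence/finiteness input with an explicit truncation argument. First I would record that, by \cite[Corollary 6.1.3]{Z3}, the complex $\bf{R}\Gamma(X, \O_X^+/p)^a$ lies in $\bf{D}^{[0,2d]}_{acoh}(\O_C/p)^a$; in particular each cohomology module $\rm{H}^i(X, \O_X^+/p)^a$ is an almost finitely presented $\O_C/p$-module, and the whole complex is almost perfect over $\O_C/p$. Since $\O_C/p$ is not noetherian, the right tool here is the almost version of the theory of perfect complexes over a (bounded $p^\infty$-torsion) ring, but the key structural fact we need is simply that an almost perfect complex over $\O_C/p$ that is almost self-dual in the derived sense forces a perfect duality pairing on each cohomology group after a degree shift, provided one tracks the contributions of the $\Tor$-terms.

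The main mechanism is the universal coefficients / duality spectral sequence. From Theorem~\ref{thm:global-duality-mod-p} we have an almost isomorphism
\[
\bf{R}\Gamma(X, \O_X^+/p)^a \xrightarrow{\ \sim\ } \bf{R}\rm{alHom}_{\O_C/p}\left(\bf{R}\Gamma(X, \O_X^+/p)^a, \O_C^a/p(-d)[-2d]\right).
\]
Applying $\rm{H}^i(-)$ to the right-hand side produces a convergent spectral sequence whose $E_2$-page involves $\rm{alExt}^s_{\O_C/p}\big(\rm{H}^t(X,\O_X^+/p)^a, \O_C^a/p\big)$. The cleanest way to run this is to first pass to the flat cover $\O_C$: the ring $\O_C$ has global dimension bounded by $2$ in the almost sense (indeed $\m$ is flat and $\O_C$ is a valuation ring, so almost-finitely-presented modules admit length-$2$ almost-projective resolutions), hence $\O_C/p$ has almost Tor/Ext amplitude concentrated in $[0,1]$ over $\O_C$, and more relevantly, almost-finitely-presented $\O_C/p$-modules $M$ satisfy $\rm{alExt}^s_{\O_C/p}(M,\O_C/p)=0$ for $s\ge 2$. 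Thus the spectral sequence degenerates enough to give, for each $i$, a two-step almost filtration of $\rm{H}^i(X,\O_X^+/p)^a$ with graded pieces $\rm{alHom}_{\O_C/p}(\rm{H}^{2d-i}(X,\O_X^+/p)^a,\O_C/p(-d))$ and $\rm{alExt}^1_{\O_C/p}(\rm{H}^{2d-i+1}(X,\O_X^+/p)^a,\O_C/p(-d))$.

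To turn this into honest perfectness of the pairing~(\ref{eqn:pairing-abelian}) I would use the following input: each $\rm{H}^i(X,\O_X^+/p)^a$ is an \emph{almost flat} $\O_C/p$-module, equivalently its $\rm{alExt}^1_{\O_C/p}(-,\O_C/p)$ vanishes almost. The almost flatness of $\rm{H}^i(X, \wdh{\O}_X^+)^a$ (hence of its reduction mod $p$) is standard from the primitive comparison theorem \cite[Theorem 5.1]{Sch1}, which gives an almost isomorphism $\rm{H}^i(X_C,\bf{F}_p)\otimes_{\bf{F}_p}\O_C^a/p \simeq \rm{H}^i(X,\O_X^+/p)^a$ with $\rm{H}^i(X_C,\bf{F}_p)$ a finite $\bf{F}_p$-vector space; a finite direct sum of copies of $\O_C/p$ is certainly almost flat and has vanishing $\rm{alExt}^1$ into $\O_C/p$. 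With the $\rm{alExt}^1$ terms killed, the filtration above collapses to a single almost isomorphism $\rm{H}^i(X,\O_X^+/p)^a \xrightarrow{\sim} \rm{alHom}_{\O_C/p}(\rm{H}^{2d-i}(X,\O_X^+/p)^a, \O_C/p(-d))$, and one checks by the usual compatibility (the derived duality map refines the cup-product pairing, by construction of $\rm{Tr}_X$ via Grothendieck duality and \cite[\href{https://stacks.math.columbia.edu/tag/0FP6}{Tag 0FP6}]{stacks-project} as in the proof of Theorem~\ref{thm:global-duality-mod-p}) that this almost isomorphism is precisely the duality map attached to~(\ref{eqn:pairing-abelian}). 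The symmetric statement follows by symmetry of the pairing, so~(\ref{eqn:pairing-abelian}) is almost perfect for all $i\ge 0$.

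\textbf{Main obstacle.} The delicate point is the vanishing of the $\rm{alExt}^1_{\O_C/p}$ contributions, i.e.\ the almost-flatness claim: one must be careful that ``almost finitely presented + almost flat over $\O_C/p$'' behaves as expected when $\O_C/p$ is non-noetherian, and that $\rm{alExt}^1_{\O_C/p}(M,\O_C/p)$ genuinely vanishes for such $M$ — this is where I would lean on the primitive comparison theorem to reduce $M$ to a finite free $\O_C/p$-module in the almost category, rather than trying to argue abstractly. A secondary bookkeeping issue is the convergence and degeneration of the duality spectral sequence in the almost setting, but given the boundedness of $\bf{R}\Gamma(X,\O_X^+/p)^a$ in $[0,2d]$ and the bounded Ext-amplitude of $\O_C/p$, this is routine.
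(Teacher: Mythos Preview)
Your proposal is correct and follows essentially the same route as the paper: both arguments deduce the non-derived statement from Theorem~\ref{thm:global-duality-mod-p} by invoking the primitive comparison theorem to see that each $\rm{H}^i(X,\O_X^+/p)^a$ is almost free (the paper says ``almost projective'' via \cite[Lemma 2.2.6]{Z3}), which forces the higher $\rm{alExt}$ terms to vanish and collapses the derived $\bf{R}\rm{alHom}$ to termwise $\rm{alHom}$. The only cosmetic difference is that the paper states this collapse directly rather than packaging it as a spectral-sequence degeneration, and your aside equating ``almost flat'' with vanishing of $\rm{alExt}^1$ is imprecise (flatness controls $\Tor$, not $\rm{Ext}$), but you immediately recover the correct statement by reducing to almost-free modules via primitive comparison, which is exactly what is needed.
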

\begin{proof}
Theorem~\ref{thm:global-duality-mod-p} says that the morphism
\[
D_X\colon \bf{R}\Gamma\left(X, \O_X^{+, a}/p\right) \to \bf{R}\rm{alHom}_{\O_C/p}\left(\bf{R}\Gamma\left(X, \O_X^{+, a}/p\right), \O^a_C/p(-d)[-2d]\right)
\]
is an almost isomorphism. Now we recall the Primitive Comparison Theorem \cite[Theorem 5.1]{Sch1} that says that the natural map
\[
\rm{H}^i_\et(X, \bf{F}_p) \otimes_{\bf{F}_p} \O_C/p \to \rm{H}^i(X, \O_X^+/p)
\]
is an almost isomorphism for any $i\geq 0$. This implies that $\rm{H}^i(X, \O_X^{+, a}/p)$ are almost projective over $\O_C^a/p$ by \cite[Lemma 2.2.6]{Z3} (in fact, these groups are even almost finite free, but we do not need this). Therefore, we get isomorphisms 
\[
\rm{H}^i\left(\bf{R}\rm{alHom}_{\O_C/p}\left(\bf{R}\Gamma(X, \O_X^{+, a}/p), \O^a_C/p(-d)[-2d]\right)\right) \simeq \rm{alHom}_{\O_C/p}\left(\rm{H}^{2d-i}\left(X, \O_X^{+, a}/p\right), \O^a_C/p(-d)\right).
\]
Thus, Theorem~\ref{thm:global-duality-mod-p} implies that the natural map
\[
\rm{H}^i(X, \O_X^{+, a}/p) \to \rm{alHom}_{\O_C/p}(\rm{H}^{2d-i}(X, \O_X^{+, a}/p), \O_C^a/p(-d))
\]
is an almost isomorphism, i.e. the pairing~(\ref{eqn:pairing-abelian}) is almost perfect. 
\end{proof}

\subsection{Berkovich trace}\label{section:berkovich-trace}

We recall the trace map in \'etale cohomology constructed in \cite{Ber}. Berkovich defined the trace map for a smooth morphism of Berkovich spaces, we transfer his construction to the language of adic spaces. \smallskip

For the rest of the section, we fix a ring $\Lambda = \Z/n\Z$ for some $n>0$. For a morphism $f\colon X \to Y$ of rigid-analytic spaces, we will freely identify $\left(\bf{R}f_!\, \F\right)(m)$ with $\bf{R}f_! \left( \F(m)\right)$ for any $\F\in \bf{D}^b(X_\et; \Lambda)$. This is possible due to (the easy version) of the projection formula established in \cite[Theorem 5.5.9]{H3}. \smallskip

\begin{defn}\label{defn:dimension}(\cite[Def.\,1.8.1]{H3}) The {\it dimension} of a locally spectral space $X$ is the supremum of the length $d$ of the chains of specializations $x_0\succ x_1 \succ \dots \succ x_d$ of points of $X$.

A locally spectral space $X$ is of {\it pure dimension $d$} if every non-empty open subset $U\subset X$ has dimension $d$.

The {\it (relative) dimension} $\dim f$ of a morphism of analytic adic spaces $f\colon X \to Y$ is the supremum of the dimensions of the fibers of $f$,
\[
\dim f \coloneqq \sup_{y\in Y} \dim f^{-1}(y)\in \bf{Z}_{\geq 0} \cup \{\infty\}.
\]
A morphism $f\colon X \to Y$ is of {\it relative pure dimension $d$} if all non-empty fibers $f^{-1}(y)$ are of pure dimension $d$.
\end{defn}

\begin{lemma}\label{lemma:cohomological-dimension} Let $f\colon X \to Y$ be a partially proper morphism of rigid $K$-spaces of pure dimension $d$. Then $\rm{R}^if_!\,\F=0$ for any $\F\in \rm{Shv}(X_\et, \Lambda)$ and $i>2d$.
\end{lemma}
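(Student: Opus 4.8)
The statement is a cohomological dimension bound for the pushforward with compact support $\mathrm{R}f_!$ along a partially proper morphism of pure dimension $d$. The natural approach is to reduce to the case where $Y=\Spa(K,\O_K)$ is a point and then to the absolute cohomological dimension bound for $\mathbf{F}_\ell$-étale cohomology of rigid varieties, which is a classical result of Huber \cite{H3} (and Berkovich \cite{Ber} in the Berkovich-space language). First I would note that the vanishing of $\mathrm{R}^if_!(\F)$ can be checked on stalks at geometric points, and that the stalk of $\mathrm{R}^if_!(\F)$ at a geometric point $\bar y \to Y$ computes $\rm{H}^i_c(X_{\bar y}, \F|_{X_{\bar y}})$, where $X_{\bar y}$ is the fiber — a rigid variety over the (algebraically closed, complete) residue field $K(\bar y)^{\wedge\mathrm{alg}}$ of dimension $\leq d$ (here I use the hypothesis that $f$ is pure of dimension $d$, so each nonempty fiber has dimension exactly $d$, and partial properness is preserved under base change so the fiber is still partially proper, hence compactly-supported cohomology makes sense). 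This base-change statement for $\mathrm{R}f_!$ is part of Huber's formalism and I would cite it directly.

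Having reduced to the absolute statement, the plan is: for a partially proper rigid variety $Z$ of dimension $\leq d$ over an algebraically closed complete non-archimedean field, $\rm{H}^i_c(Z,\F) = 0$ for $i > 2d$ and any $\F\in \rm{Shv}(Z_\et,\Lambda)$. The standard proof proceeds by dévissage: one writes $\F$ as a colimit of constructible sheaves (compactly-supported cohomology commutes with filtered colimits for partially proper $Z$ by Huber), and reduces further, via the localization sequence and a stratification, to the case $\F = j_!\ud{\Lambda}$ for $j\colon U \hookrightarrow Z$ an open immersion with $U$ smooth of dimension $\leq d$; in fact to $\F=\ud{\Lambda}$ on a smooth partially proper $Z$ of dimension $\leq d$. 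At that point the bound $\rm{H}^i_c(Z,\Lambda)=0$ for $i>2d$ is exactly the content of \cite[Corollary 5.3.11]{H3} (or the Künneth/fibration arguments in \cite[§7]{Ber}): one covers $Z$ by a countable increasing union of quasi-compact opens, uses that each has compactly-supported cohomological dimension $\leq 2d$ (which follows by induction on $d$, fibering over a curve or projecting to affinoid subdomains and invoking the one-dimensional case plus proper base change), and passes to the colimit.

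The one genuinely load-bearing input, and the step I expect to be the main obstacle to make clean, is the compactly-supported cohomological dimension bound $2d$ for quasi-compact rigid varieties — i.e.\ exactly the non-archimedean analogue of Artin's bound for schemes. I would not reprove this; instead I would cite Huber's \cite[Corollary 5.3.11]{H3} (he proves $\mathrm{cd}_c \leq 2\dim$ for rigid varieties over a field, using that $\mathrm{R}f_!$ exists and satisfies base change in his setting). If one wants a self-contained argument in the $\ell = p$ case it still goes through because nothing here uses invertibility of the coefficient characteristic — the bound $2d$ for compactly-supported cohomology is purely topological/dimension-theoretic and holds for arbitrary torsion coefficients. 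So the proof is: (i) reduce to stalks via proper base change for $\mathrm{R}f_!$; (ii) reduce to the constant sheaf on a smooth partially proper variety by dévissage and passage to filtered colimits; (iii) invoke Huber's bound $\mathrm{cd}_c(Z) \leq 2\dim Z$. I would write it in essentially that order, keeping (i) and (ii) brief and pointing to \cite{H3} for (iii).
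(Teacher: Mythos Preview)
Your approach is correct but takes an unnecessary detour. The paper's proof is a two-line direct citation: Huber's \cite[Proposition~5.3.11]{H3} is already a \emph{relative} statement for arbitrary $\F$ --- it gives $\mathrm{R}^if_!(\F)=0$ for $i$ exceeding (twice) the invariant $\mathrm{dim.tr}(f)=\sup_{y\in Y}\mathrm{dim.tr}(f^{-1}(y))$, with no hypothesis on $\F$ beyond being an abelian sheaf. The only remaining work is to compute this invariant, which the paper does via \cite[Lemmas~1.8.5 and~1.8.6]{H3}: the supremum is attained on maximal (rank-$1$) points $y\in Y_{\max}$, and over such points $\mathrm{dim.tr}(f^{-1}(y))=\dim X_y=d$.

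Your steps (i) and (ii) --- passing to geometric fibers via base change for $\mathrm{R}f_!$, then d\'evissage from arbitrary $\F$ down to constant sheaves on smooth strata --- are valid but superfluous, since Huber's 5.3.11 already packages all of this. In effect you are unpacking part of Huber's proof and then re-citing the same result for the endgame. What your route buys is a clearer picture of \emph{why} the bound is $2d$ (it really is about fibers over rank-$1$ points being $d$-dimensional rigid varieties), at the cost of extra bookkeeping; the paper's route is shorter but more opaque, trading the d\'evissage for a dimension-invariant computation internal to Huber's formalism.
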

\begin{proof}
This follows directly from \cite[Proposition 5.3.11 and Corollary 1.8.7]{H3}. 
\end{proof}

Let $ f\colon X \to Y$ and $g\colon Y\to Z$ be smooth partially proper morphisms of taut rigid $K$-spaces of pure dimension $d$ and $e$, respectively. Suppose we are given three homomorphisms $\alpha \colon \rm{R}^{2d}f_!\,\ud{\Lambda}_X(d) \to \ud{\Lambda}_Y$, $\beta\colon R^{2e}g_!\,\ud{\Lambda}_Y(e) \to \ud{\Lambda}_Z$, and $\gamma\colon \rm{R}^{2(d+e)}(g\circ f)_!\, \ud{\Lambda}_X(d+e) \to \ud{\Lambda}_Z$. Using the Leray spectral sequence and Lemma~\ref{lemma:cohomological-dimension}, we get an isomorphism
\[
\rm{R}^{2(d+e)}(g\circ f)_!\,\ud{\Lambda}_X(d+e)\simeq \rm{R}^{2e}g_!\,\left(\rm{R}^{2d}f_!\,\ud{\Lambda}_X(d)(e)\right)
\]
This defines a morphism
\[
\rm{R}^{2(d+e)}(g\circ f)_!\, \ud{\Lambda}_X(d+e)\simeq \rm{R}^{2e}g_!\,(\rm{R}^{2d}f_!\ud{\Lambda}_X(d)(e)) \xr{\rm{R}^{2e}g_!\,(\alpha)(e)} \rm{R}^{2e}g_!\, \ud{\Lambda}_Y(e) \xr{\rm{\beta}}\ud{\Lambda}_Z
\]
that is denoted by $\beta \Box\alpha$. We say that $\alpha$, $\beta$, and $\gamma$ are {\it compatible with the composition} if $\gamma=\beta \Box \alpha$. 

\begin{thm}\label{thm:construction-trace-map} Let $X$ and $Y$ be rigid-analytic $K$-spaces, and let $f\colon X \to Y$ be a partially proper smooth morphism of pure dimension $d$. Then one can define the trace map $t_{f}\colon \rm{R}^{2d}f_!\,\ud{\Lambda}_X(d) \to \ud{\Lambda}_Y$ satisfying the following properties:
\begin{enumerate}
    \item\label{property:trace-1} $t_f$ is compatible with taking geometric fibers over maximal points,
    \item\label{property:trace-2} $t_f$ is compatible with composition, i.e. if $g\colon Y \to Z$ is another smooth, partially proper morphism of pure dimension $e$, then $t_g \Box t_f=t_{g \circ f}$,
    \item\label{property:trace-3} if $d=0$, then $t_f\colon f_!\,\ud{\Lambda}_X \to \ud{\Lambda}_Y$ is the map coming from the adjunction $(f_!, f^*)$.
    \item\label{property:trace-4}  $t_f$ is surjective if all fibers of $f$ are non-empty. 
\end{enumerate}
\end{thm}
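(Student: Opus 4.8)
The strategy is to reduce the existence of $t_f$ for a general smooth partially proper morphism of pure dimension $d$ to the case of an open (or partially proper) immersion and the case of the relative analytic affine/projective line — more precisely, to the structure morphisms of $\mathbf{A}^1$ and $\mathbf{P}^1$ — and then to build the general trace by factoring $f$ locally on the source through compositions of such standard morphisms, using property~(\ref{property:trace-2}) to glue. First I would recall Berkovich's construction in \cite[\S7.2]{Ber} and translate each step into the adic language via Huber's comparison between Berkovich spaces and (taut, partially proper) adic spaces, so that $\rm{R}f_!$, the cycle class maps, and the Leray spectral sequence identifications all carry over; this uses \cite{H3} for the formalism of $\rm{R}f_!$ for partially proper morphisms and Lemma~\ref{lemma:cohomological-dimension} to know that $\rm{R}^{2d}f_!\Lambda_X(d)$ is the top nonvanishing sheaf, so that the Leray spectral sequence degenerates enough to give the isomorphism $\rm{R}^{2(d+e)}(g\circ f)_!\Lambda_X(d+e)\simeq \rm{R}^{2e}g_!\big(\rm{R}^{2d}f_!\Lambda_X(d)\big)(e)$ used to define $\beta\Box\alpha$.

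The core construction proceeds as follows. For $d=0$ one \emph{defines} $t_f$ to be the counit $f_!f^*\Lambda_Y\to\Lambda_Y$, which makes property~(\ref{property:trace-3}) hold by fiat and property~(\ref{property:trace-1}) obvious. For the relative curve case one defines $t_f$ for $f\colon \mathbf{P}^1_Y\to Y$ (resp. $\mathbf{A}^1_Y\to Y$) using the fundamental class / first Chern class of $\O(1)$, exactly as in the classical \'etale story and as in \cite{Ber}: the key input is that $\rm{R}^2f_!\Lambda(1)\cong\Lambda_Y$ canonically, which one checks on geometric fibers over maximal points (here one uses that $\rm{H}^2_c$ of the analytic affine line with $\Lambda(1)$-coefficients is canonically $\Lambda$, a computation that can be imported from \cite{Ber} or \cite{H3}). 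One then verifies that these standard traces are compatible with composition and with base change. Now for general $f\colon X\to Y$ smooth partially proper of pure dimension $d$: working locally on $X$ in the \'etale topology (which suffices, since $\rm{R}^{2d}f_!\Lambda_X(d)$ is a sheaf on $Y_\et$ and a morphism of sheaves can be checked \'etale-locally, provided the local constructions agree on overlaps), one factors $f$ as a composition of smooth morphisms of relative dimension $\le 1$ — a composite of relative curves and \'etale maps — and sets $t_f$ to be the corresponding iterated $\Box$ of the standard traces. Independence of the factorization is the heart of the matter and is handled by the compatibility axioms: any two factorizations can be dominated by a common refinement, and one checks that $\Box$ is associative (which is a diagram chase with the Leray spectral sequence identifications) and that the curve-traces satisfy the requisite coherence, just as in \cite[\S7.2]{Ber}. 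Property~(\ref{property:trace-1}) then follows because each standard trace is compatible with geometric fibers over maximal points and $\rm{R}f_!$ commutes with such base change \cite[\S5.3]{H3}; property~(\ref{property:trace-2}) is then automatic; property~(\ref{property:trace-4}) (surjectivity when all fibers are nonempty) is checked on geometric fibers over maximal points using property~(\ref{property:trace-1}), reducing to the known statement that the trace $\rm{H}^{2d}_c(X_{\bar y},\Lambda(d))\to\Lambda$ is surjective (indeed an isomorphism onto each connected component's contribution) for a nonempty smooth rigid variety of dimension $d$ over an algebraically closed field, which is again in \cite{Ber}.

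The main obstacle I expect is \emph{well-definedness}: showing that the trace built from a local factorization into relative curves and \'etale maps is independent of all the choices (the \'etale charts on $X$, the way each chart is written as a tower of curve fibrations, the order of the fibrations) and glues to a global morphism of sheaves on $Y_\et$. This is exactly the bookkeeping that occupies \cite[\S7.2]{Ber}, and the cleanest route is to prove a uniqueness statement first — that $t_f$ is the \emph{unique} family of maps satisfying (\ref{property:trace-1})--(\ref{property:trace-3}), since property~(\ref{property:trace-1}) pins $t_f$ down on geometric fibers over maximal points and $\rm{R}^{2d}f_!\Lambda_X(d)$ has no sections supported away from such points (a consequence of Lemma~\ref{lemma:cohomological-dimension} together with the constructibility/overconvergence properties of $\rm{R}f_!$ in \cite{H3}) — and then use uniqueness to see that the locally-defined traces patch. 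A secondary technical point is the precise translation of Berkovich's results into the adic framework for possibly non-quasi-compact (taut, partially proper) spaces; here I would lean on Huber's dictionary and on the fact that $\Lambda$-cohomology with compact supports agrees under the comparison, citing \cite[\S5.3, \S5.4]{H3}. I will not reproduce the diagram chases for associativity of $\Box$ or the curve computations, as these are routine given the classical analogues and \cite{Ber}.
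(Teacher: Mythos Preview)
Your approach is correct in spirit but takes a much longer route than the paper. You propose to \emph{reproduce} Berkovich's construction step by step in the adic language: factor $f$ \'etale-locally into towers of relative curves and \'etale maps, define the trace for each elementary piece, and then verify well-definedness and gluing. The paper instead \emph{imports} Berkovich's trace wholesale via the comparison functor. Concretely: since $\rm{R}^{2d}f_!\Lambda_X(d)$ is overconvergent on $Y$ \cite[Cor.~8.2.4]{H3}, a morphism out of it is determined by its behaviour on stalks over maximal points, so one may work locally on $Y$ and assume $Y$ affinoid, hence taut, hence $X$ taut. Then $u(f)\colon u(X)\to u(Y)$ is a smooth morphism of Berkovich spaces of pure dimension $d$ (Lemma~\ref{lemma:properties-of-berkovization}), so Berkovich's trace $t_f^B\colon \rm{R}^{2d}u(f)_!\Lambda_{u(X)}(d)\to\Lambda_{u(Y)}$ already exists with all the required properties. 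The key point is the comparison isomorphism $\theta_Y^*\bf{R}u(f)_!K\simeq\bf{R}f_!\theta_X^*K$ for partially proper $f$ (Theorem~\ref{thm:berkozation-compact-cohomology}), which lets one simply set $t_f\coloneqq\theta_Y^*(t_f^B)$. Properties (\ref{property:trace-1})--(\ref{property:trace-4}) then transfer directly from $t_f^B$, with Lemma~\ref{lemma:trace-berkovich-adic} handling the $d=0$ case.

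What the paper's approach buys is that \emph{all} of the well-definedness bookkeeping you correctly identify as the main obstacle --- independence of factorization, associativity of $\Box$, gluing on overlaps --- is outsourced to \cite{Ber} and never has to be redone. What your approach would buy, if carried out, is a self-contained construction entirely within the adic framework, independent of the Berkovich comparison; but since the paper has already set up that comparison in Appendix~\ref{appendix:berkovich}, there is no need to pay that price.
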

\begin{proof}
We note that $\rm{R}^if_!\,\ud{\Lambda}_X(d)$ is an overconvergent sheaf on $Y$ by \cite[Corollary 8.2.4]{H3}. Therefore, (\ref{property:trace-1}) ensures that it suffices to construct $t_f$ locally on $Y$ because a map of overconvergent sheaves is uniquely defined by a map on stalks over maximal points. This uniqueness allows to glue $t_f$ from local pieces. So we may and do assume that $Y$ is affinoid. In particular, Remark~\ref{rmk:app-berkovich-taut-examples} gives that $Y$ is taut and $X$ is also taut as it is partially proper over $Y$.

We consider the associated morphism of Berkovich spaces $u(f)\colon u(X)\to u(Y)$. Lemma~\ref{lemma:properties-of-berkovization} implies that it is a smooth morphism of pure dimension $d$. Therefore, \cite[Theorem 7.2.1]{Ber} constructs the morphism 
\[
t_f^B\colon \rm{R}^{2d}u(f)_!\ud{\Lambda}_{u(X)} \to \ud{\Lambda}_{u(Y)}
\]
with all desired properties. Now we use Theorem~\ref{thm:berkozation-compact-cohomology} to define 
\[
t_f\colon \rm{R}^{2d}f_!\ud{\Lambda}_{X} \to \ud{\Lambda}_{Y}
\]
as $\theta_Y^*(t_f^B)$. \smallskip

It is easy to see that $t_f^B$ satisfies (\ref{property:trace-1}), (\ref{property:trace-2}). Now (\ref{property:trace-3}) follows from Lemma~\ref{lemma:trace-berkovich-adic}. So we only need to show (\ref{property:trace-4}). Since $\theta_Y^*$ is exact, \cite[Theorem 7.2.1]{Ber} ensures that it suffices to show that $u(f)$ is surjective if $f$ is surjective. This is clear because the underlying continuous morphism $|u(f)|\colon |u(X)| \to |u(Y)|$ can be identified with the morphism $f_{\max}\colon X_{\max} \to Y_{\max}$. So its surjectivity follows from surjectivity of $f$ and \cite[Lemma 1.1.10]{H3}.
\end{proof}

\subsection{Poincar\'e Duality for $\bf{F}_p$-coefficients}\label{section:poincare-duality-mod-p}

The main goal of this section is to show Poincar\'e Duality for $\bf{F}_p$-coefficients on a smooth proper rigid-analytic $K$-space $X$. We denote by $f\colon X \to \Spa(K, \O_K)$ the structure morphism of $X$. \smallskip

We recall that sheaves of $\bf{F}_p$-modules on the \'etale site of $\Spa(K, \O_K)_\et$ can be identified with continuous $\bf{F}_p$-representations of the Galois group $G_K$ (for example, this follows from \cite[Proposition 2.3.10]{H3} or it can be seen directly). Therefore, the trace morphism $t_f\colon \rm{R}^{2d}f_*\, \ud{\bf{F}}_p(d) \to \ud{\bf{F}}_p$ can be identified with the Galois-equivariant morphism
\[
t_X \colon \rm{H}^{2d}_{\et}(X_C; \bf{F}_p(d)) \to \bf{F}_p.
\]

\begin{rmk}\label{rmk:pass-to-alg-closure} Theorem~\ref{thm:construction-trace-map}(\ref{property:trace-1}) implies that $t_X=t_{X_C} \colon \rm{H}^{2d}_\et(X_C; \bf{F}_p(d)) \to \bf{F}_p$ as a morphism of {\it abelian groups}.
\end{rmk}

\begin{thm}\label{thm:trace-iso-connected} Let $X$ be a smooth proper geometrically connected rigid $K$-space of pure dimension $d$. Then the trace map $t_X\colon \rm{H}^{2d}_\et(X_C, \bf{F}_p(d)) \to \bf{F}_p$ is an isomorphism of Galois representations.
\end{thm}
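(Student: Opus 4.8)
The plan is to reduce the claim that $t_X$ is an isomorphism to the statement that $\rm{H}^{2d}(X_C, \bf{F}_p(d))$ is one-dimensional over $\bf{F}_p$, since Theorem~\ref{thm:construction-trace-map}\eqref{property:trace-4} already gives that $t_X$ is surjective (the unique fiber of $X \to \Spa(K, \O_K)$ is $X_C$ itself, which is nonempty). So it suffices to show $\dim_{\bf{F}_p} \rm{H}^{2d}(X_C, \bf{F}_p(d)) = 1$. First I would invoke the Primitive Comparison Theorem \cite[Theorem 5.1]{Sch1}, which says the natural morphism
\[
\rm{H}^{2d}(X_C, \bf{F}_p(d)) \otimes_{\bf{F}_p} \O_C/p \to \rm{H}^{2d}(X_C, \O_{X_C}^+/p(d))
\]
is an almost isomorphism. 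Hence $\rm{H}^{2d}(X_C, \bf{F}_p(d))$ is one-dimensional if and only if $\rm{H}^{2d}(X_C, \O_{X_C}^+/p(d))^a$ is almost isomorphic to $\O_C^a/p$ (as an $\O_C/p$-module, an almost finitely generated almost projective module of the right "almost rank").

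The second step is to identify $\rm{H}^{2d}(X_C, \O_{X_C}^+/p)^a$ using the almost duality established in Section~\ref{section:global-almost-duality}. By Theorem~\ref{thm:global-duality-mod-p} the pairing
\[
\bf{R}\Gamma(X_C, \O_{X_C}^+/p)^a \otimes^L_{\O_C/p} \bf{R}\Gamma(X_C, \O_{X_C}^+/p)^a \xr{-\cup-} \bf{R}\Gamma(X_C, \O_{X_C}^+/p)^a \xr{\rm{Tr}_{X_C}} \O_C^a/p(-d)[-2d]
\]
is almost perfect. Since $\bf{R}\Gamma(X_C, \O_{X_C}^+/p)^a$ is concentrated in degrees $[0, 2d]$ by \cite[Corollary 6.1.3]{Z3}, taking $\cal{H}^0$ of the almost isomorphism $D_{X_C}\colon \bf{R}\Gamma(X_C, \O_{X_C}^+/p)^a \to \bf{R}\rm{alHom}_{\O_C/p}(\bf{R}\Gamma(X_C, \O_{X_C}^+/p)^a, \O_C^a/p(-d)[-2d])$ gives an almost isomorphism
\[
\rm{H}^0(X_C, \O_{X_C}^+/p)^a \xrightarrow{\sim} \rm{alHom}_{\O_C/p}\big(\rm{H}^{2d}(X_C, \O_{X_C}^+/p)^a, \O_C^a/p(-d)\big),
\]
using that the higher $\rm{alExt}$ terms vanish because each $\rm{H}^i(X_C, \O_{X_C}^+/p)^a$ is almost projective over $\O_C/p$ (again by Primitive Comparison and \cite[Lemma 2.2.6]{Z3}). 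Now $\rm{H}^0(X_C, \O_{X_C}^+/p)^a \simeq (\O_C/p)^a$: indeed $X_C$ is geometrically connected (equivalently $X$ is), so $\rm{H}^0(X_C, \O_{X_C}) = C$, hence $\rm{H}^0(X_C, \O_{X_C}^+)$ is a bounded open subring of $C$ and its reduction mod $p$ is almost $\O_C/p$. Dualizing the displayed almost isomorphism back, using that $\rm{H}^{2d}(X_C, \O_{X_C}^+/p)^a$ is almost finitely generated almost projective, we obtain $\rm{H}^{2d}(X_C, \O_{X_C}^+/p)^a \simeq \O_C^a/p(-d) \simeq \O_C^a/p$.

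Combining the two steps, $\rm{H}^{2d}(X_C, \bf{F}_p(d)) \otimes_{\bf{F}_p} \O_C/p$ is almost isomorphic to $\O_C/p$, which forces the finite-dimensional $\bf{F}_p$-vector space $\rm{H}^{2d}(X_C, \bf{F}_p(d))$ to be one-dimensional (a nonzero finite $\bf{F}_p$-module $V$ with $V\otimes_{\bf{F}_p}\O_C/p \simeq^a \O_C/p$ must have $\dim_{\bf{F}_p} V = 1$, since $V \otimes_{\bf{F}_p} \O_C/p \cong (\O_C/p)^{\dim V}$ and $(\O_C/p)^{\oplus n}$ is almost isomorphic to $\O_C/p$ only for $n = 1$). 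Together with surjectivity of $t_X$ from Theorem~\ref{thm:construction-trace-map}, this shows $t_X\colon \rm{H}^{2d}(X_C, \bf{F}_p(d)) \to \bf{F}_p$ is an isomorphism. I expect the main subtlety to be bookkeeping in almost mathematics — precisely tracking that the relevant $\rm{H}^i$'s are almost projective so that the derived $\rm{alHom}$ computes term-by-term, and that "almost isomorphic to $\O_C/p$" pins down the $\bf{F}_p$-dimension — rather than any deep new input, since the substantive content is already in Theorem~\ref{thm:global-duality-mod-p} and the Primitive Comparison Theorem.
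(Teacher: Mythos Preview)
Your proposal is correct and follows essentially the same route as the paper: reduce to one-dimensionality of $\rm{H}^{2d}(X_C,\bf{F}_p(d))$ via surjectivity of $t_X$, pass to $\O_X^+/p$-coefficients through the primitive comparison theorem, and then use the almost-perfect global pairing (Theorem~\ref{thm:global-duality-mod-p}) together with almost projectivity of the cohomology groups to identify $\rm{H}^{2d}(X_C,\O_{X_C}^+/p)^a$ with the almost dual of $\rm{H}^0(X_C,\O_{X_C}^+/p)^a\simeq\O_C^a/p$. The only cosmetic difference is that the paper computes $\rm{H}^0(X_C,\O_{X_C}^+/p)^a$ directly via primitive comparison in degree $0$ (using $\rm{H}^0(X_C,\bf{F}_p)\simeq\bf{F}_p$), which is slightly cleaner than your detour through $\rm{H}^0(X_C,\O_{X_C})=C$, and the paper takes $\cal{H}^{2d}$ of $D_{X_C}$ rather than $\cal{H}^0$ followed by re-dualizing; neither difference is substantive.
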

\begin{proof}
First we note that $t_X$ is Galois-equivariant by its construction, so Remark~\ref{rmk:pass-to-alg-closure} allows us to assume that $K=C$ is algebraically closed. Then $t_X$ is surjective due to Theorem~\ref{thm:construction-trace-map}(\ref{property:trace-4}). Therefore, it suffices to show that $\rm{H}^{2d}_\et(X, \bf{F}_p(d))$ is a one dimensional vector space over $\bf{F}_p$. Using the Primitive Comparison Theorem, we conclude that 
\[
\rm{H}^{i}_\et(X, \bf{F}_p(d))\otimes_{\bf{F}_p} \O_C^a/p \simeq \rm{H}^{i}(X, (\O_X^{+,a}/p)(d))
\]
is an almost isomorphism for all $i$. 

Now we note that the classification of finitely presented torsion $\O_C$-modules \cite[Proposition 2.10]{Sch1} guarantees that it suffices to show  that $\rm{H}^{2d}(X, (\O_X^{+, a}/p)(d)) \simeq \O_C^a/p$. \smallskip

We know that $\rm{H}^{2d}(X, (\O_X^+/p)(d))$ is almost dual to $\rm{H}^0(X, \O_X^+/p)$ due to Theorem~\ref{thm:global-duality-mod-p}, so it suffices to show that $\rm{H}^0(X, \O_X^+/p)^a\simeq \O_C^a/p$. We use the Primitive Comparison Theorem again to conclude that
\[
\rm{H}^0(X, \O_X^+/p)^a \simeq \rm{H}^0_\et(X, \bf{F}_p)\otimes_{\bf{F}_p}\O_C^a/p \simeq \O_C^a/p
\]
because $X$ is (geometrically) connected. 
\end{proof}

\begin{lemma}\label{lemma:iso-after-tensor-product} Let $V$ and $W$ be finite dimensional $\bf{F}_p$-vector spaces, and let $f\colon V \to W$ be an $\bf{F}_p$-linear homomorphism. Then $f$ is an isomorphism if and only if the morphism 
\[
f\otimes_{\bf{F}_p} \O_C/p \colon V\otimes_{\bf{F}_p} \O_C/p \to W\otimes_{\bf{F}_p} \O_C/p
\]
is an almost isomorphism.
\end{lemma}
\begin{proof}
    Let $F$ be the kernel of $f$, and let $Q$ be the cokernel of $f$. Then the kernel and cokernel of $f\otimes_{\bf{F}_p} \O_C/p$ are given by $F'\coloneqq F\otimes_{\bf{F}_p} \O_C/p \cong (\O_C/p)^{\dim F}$ and $Q'\coloneqq Q\otimes_{\bf{F}_p} \O_C/p \cong (\O_C/p)^{\dim Q}$ respectively. Now \cite[Proposition 2.10]{Sch1} implies that $(\O_C/p)^{\dim F} \simeq^a 0$ (resp. $(\O_C/p)^{\dim Q} \simeq^a 0$) if and only if $\dim F=0$ (resp. $\dim Q=0$). Therefore, $f$ is an isomorphism if and only if $f\otimes_{\bf{F}_p}\O_C/p$ is an almost isomorphism. 
\end{proof}

\begin{thm}\label{thm:poincare-duality-mod-p} Let $X$ be a smooth and proper rigid-analytic $K$-space of pure dimension $d$. Then the pairing 
\[
\rm{H}^i_\et(X_{C}, \bf{F}_p) \otimes_{\bf{F}_p} \rm{H}^{2d-i}_\et(X_{C}, \bf{F}_p(d)) \to \rm{H}^{2d}_\et(X_{C}, \bf{F}_p(d)) \xr{t_X} \bf{F}_p
\]
is a perfect Galois-equivariant pairing.
\end{thm}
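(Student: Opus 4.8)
The plan is to reduce the Galois-equivariant perfectness of the $\bf{F}_p$-pairing on $X_{\wdh{\ov{K}}}$ to the almost perfectness of the $\O_X^+/p$-pairing established in Theorem~\ref{thm:global-duality-mod-p}, using the primitive comparison theorem as a bridge. Set $C = \wdh{\ov{K}}$. First I would reduce to the case that $X$ is geometrically connected: both sides of the pairing decompose as direct sums over the geometrically connected components of $X$, and the cup product and trace are compatible with this decomposition (the trace map $t_X$ is a sum of the traces on the connected components by Theorem~\ref{thm:construction-trace-map}(\ref{property:trace-1}) and (\ref{property:trace-2})). So we may assume $X$ is geometrically connected, hence $\rm{H}^{2d}(X_C, \bf{F}_p(d)) \simeq \bf{F}_p$ with $t_X$ an isomorphism by Theorem~\ref{thm:trace-iso-connected}.

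Next, as explained in Step~(4) of the introduction, once $t_X$ is known to be an isomorphism, the perfectness of the pairing composed with $t_X$ is equivalent to the perfectness of the bare cup product pairing
\[
\rm{H}^i(X_C, \bf{F}_p) \otimes_{\bf{F}_p} \rm{H}^{2d-i}(X_C, \bf{F}_p(d)) \xr{-\cup-} \rm{H}^{2d}(X_C, \bf{F}_p(d)),
\]
which does not reference the trace at all, and is automatically $G_K$-equivariant since cup products are. Thus it suffices to prove that this cup product pairing is perfect. By the primitive comparison theorem \cite[Theorem 5.1]{Sch1}, the natural maps $\rm{H}^i(X_C, \bf{F}_p)\otimes_{\bf{F}_p}\O_C/p \to \rm{H}^i(X_C, \O_X^+/p)$ are almost isomorphisms for all $i$, and they are compatible with cup products. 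Since $\rm{H}^i(X_C, \bf{F}_p)$ is a finite-dimensional $\bf{F}_p$-vector space, tensoring the cup product pairing with $\O_C/p$ and passing to the almost category identifies it with the $\O_X^+/p$-pairing
\[
\rm{H}^i(X_C, \O_X^+/p)^a \otimes_{\O_C/p} \rm{H}^{2d-i}(X_C, (\O_X^+/p)(d))^a \xr{-\cup-} \rm{H}^{2d}(X_C, (\O_X^+/p)(d))^a,
\]
which is almost perfect by Theorem~\ref{thm:global-duality-mod-p} (or rather its non-derived abelian-category version proven just after it). Since $\O_C/p$ is faithfully flat over $\bf{F}_p$ in the almost sense — more precisely, an $\bf{F}_p$-linear map $V \to W$ of finite-dimensional vector spaces is an isomorphism iff $V \otimes_{\bf{F}_p} \O_C/p \to W \otimes_{\bf{F}_p}\O_C/p$ is an almost isomorphism, because $\O_C/p$ is a nonzero $\bf{F}_p$-algebra and almost-zero finitely presented $\O_C/p$-modules obtained by extension of scalars from $\bf{F}_p$ must vanish — we conclude that the duality map $\rm{H}^{2d-i}(X_C, \bf{F}_p(d)) \to \rm{Hom}_{\bf{F}_p}(\rm{H}^i(X_C, \bf{F}_p), \rm{H}^{2d}(X_C, \bf{F}_p(d)))$ is an isomorphism, giving perfectness; the $G_K$-equivariance is automatic.

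The main obstacle is the faithful-flatness/descent step: one needs to carefully justify that the almost isomorphism after $\otimes_{\bf{F}_p}\O_C/p$ of a map between (extensions of scalars of) finite-dimensional $\bf{F}_p$-vector spaces descends to an honest isomorphism over $\bf{F}_p$. This is elementary but requires being precise about the almost structure: the key point is that for a finite-dimensional $\bf{F}_p$-vector space $V$, the $\O_C/p$-module $V\otimes_{\bf{F}_p}\O_C/p$ is almost zero iff it is zero iff $V=0$, since $\O_C/p$ contains elements of arbitrarily small positive valuation but is not almost zero itself; applying this to the kernel and cokernel of the descent of the duality map does the job. A secondary routine point is checking that the identification of the cup-product pairing with its $\O_X^+/p$-counterpart is genuinely compatible — this follows from functoriality of cup products with respect to the map of coefficient sheaves $\bf{F}_p \to \O_X^+/p$ — and that $\rm{H}^{2d}(X_C,(\O_X^+/p)(d))^a \simeq \O_C^a/p$, which was already shown inside the proof of Theorem~\ref{thm:trace-iso-connected}.
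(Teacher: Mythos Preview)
Your proposal is correct and follows essentially the same route as the paper: reduce to $K=C$ and geometrically connected $X$, use Theorem~\ref{thm:trace-iso-connected} to replace $t_X$ by an isomorphism, and then deduce perfectness of the bare cup-product pairing from the almost $\O_X^+/p$-duality via primitive comparison. The only difference is that the paper splits the last step into proving $h^i=h^{2d-i}$ and injectivity of the duality map separately, whereas your single descent observation (kernel and cokernel of a map of finite $\bf{F}_p$-spaces vanish iff they vanish after $\otimes_{\bf{F}_p}\O_C/p$ in the almost category) handles both at once; also, the reduction to connected components uses property~(\ref{property:trace-3}) rather than~(\ref{property:trace-1}).
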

\begin{proof}
Similar to Theorem~\ref{thm:trace-iso-connected}, we can assume that $K=C$ is algebraically closed. Then $X$ is a disjoint union of its (geometrically) connected components $X=\sqcup_{i=1}^n X_i$. Theorem~\ref{thm:construction-trace-map}(\ref{property:trace-3}) ensures that $t_X$ is equal to 
\[
\sum_{i=1}^n t_{X_i} \colon \bigoplus_{i=1}^n \rm{H}^{2d}_\et(X_i, \bf{F}_p(d))=\rm{H}^{2d}_\et(X, \bf{F}_p(d)) \to \bf{F}_p.
\]
Therefore, we may and do assume that $X$ is connected and $K=C$ is algebraically closed. \smallskip

Then Theorem~\ref{thm:trace-iso-connected} gives that $t_X$ is an isomorphism. Thus, it suffices to show that the natural morphism
\[
\rm{H}^i_\et(X, \bf{F}_p) \to \rm{Hom}_{\bf{F}_p}(\rm{H}^{2d-i}_\et(X, \bf{F}_p), \rm{H}^{2d}_\et(X, \bf{F}_p))
\]
induced by the cup product is an isomorphism. The main advantage of this reformulation is that it is independent of the construction of the trace map. \smallskip

We consider the following commutative diagram
\[
\begin{tikzcd}[column sep = 5em]
\rm{H}^i_\et(X, \bf{F}_p) \otimes_{\bf{F}_p} \O_C/p \arrow{dd}{\gamma} \arrow{r}{d_1} & \rm{Hom}_{\bf{F}_p}\left(\rm{H}^{2d-i}_\et(X, \bf{F}_p), \rm{H}^{2d}_\et(X, \bf{F}_p)\right) \otimes_{\bf{F}_p} \O_C/p \arrow{d}{\alpha} \\
& \rm{Hom}_{\O_C/p}\left(\rm{H}^{2d-i}_\et(X, \bf{F}_p) \otimes_{\bf{F}_p} \O_C/p, \rm{H}^{2d}_\et(X, \bf{F}_p)\otimes_{\bf{F}_p}\O_C/p\right) \arrow{d}{\beta} \\
\rm{H}^i(X, \O_X^+/p) \arrow{r}{d_2} & \rm{Hom}_{\O_C/p}\left(\rm{H}^{2d-i}(X, \O_X^+/p), \rm{H}^{2d}(X, \O_X^+/p)\right) \\
\end{tikzcd}
\]
Lemma~\ref{lemma:iso-after-tensor-product} implies that it suffices to show that $d_1$ is an almost isomorphism. For this, we note that $\alpha$ is easily seen to be an isomorphism as $\rm{H}^{2d-i}(X, \bf{F}_p)$ is of finite dimension, and $\beta$ and $\gamma$ are almost isomorphisms by the Primitive Comparison Theorem. The map $d_2$ is an almost isomorphism by Theorem~\ref{thm:almost-perfect-underived} (and \cite[Proposition 2.2.1]{Z3}). This implies that $d_1$ is an almost isomorphism and finishes the proof. 
\end{proof}

We also show a version of Poincar\'e Duality for \'etale $\bf{F}_p$-local systems on $X$. For this, we recall that there is the natural ``evaluation'' morphism
\[
\rm{ev}\colon \bf{L} \otimes \bf{L}^\vee \to \bf{F}_p
\]
for any \'etale $\bf{F}_p$-local system $\bf{L}$ on $X$. Then we can combine it with the cup-product to get a morphism
\[
\rm{H}^i_\et(X_C, \bf{L}) \otimes \rm{H}^{2d-i}_\et(X_C, \bf{L}^\vee(d)) \xr{\cup} \rm{H}^{2d}_\et(X_C, \bf{L}\otimes \bf{L}^\vee(d)) \xr{\rm{H}^{2d}_{\et}(X; \rm{ev})} \rm{H}^{2d}_\et(X_C, \bf{F}_p(d)),
\]
functorial in $\bf{L}$.

\begin{thm}\label{thm:local-systems} Let $X$ be a smooth and proper rigid $K$-variety of pure dimension $d$, and let $\bf{L}$ be an \'etale $\bf{F}_p$-local system on $X$. Then the pairing 
\[
\rm{H}^i_\et(X_{C}, \bf{L}) \otimes_{\bf{F}_p} \rm{H}^{2d-i}_\et(X_{C}, \bf{L}^{\vee}(d)) \to \rm{H}^{2d}_\et(X_{C}, \bf{F}_p(d)) \xr{t_X} \bf{F}_p
\]
is a perfect Galois-equivariant pairing.
\end{thm}
\begin{proof}
    If $\bf{L}=f_*\bf{L'}$ for some finite \'etale morphism $f\colon X'\to X$, then Theorem~\ref{thm:construction-trace-map}(\ref{property:trace-2}) implies that Poincar\'e Duality for the local system $\bf{L}$ is equivalent to Poincar\'e Duality for the local system $\bf{L}'$. Using method de la trace (argue as in \cite[\href{https://stacks.math.columbia.edu/tag/03SH}{Tag 03SH}]{stacks-project}), we can find a finite \'etale morphism $f\colon X' \to X$ of degree $r$ prime to $p$ such that 
    \[
    \bf{L}|_{X'}
    \]
    is a successive extension of constant $\bf{F}_p$-local systems on $X'$. Using the trace morphism for finite \'etale morphisms, we see that the composition 
    \[
    \bf{L} \to f_*\left(\bf{L}|_{X'}\right)\to \bf{L}
    \]
    is equal to the multiplication by $r$ map, in particular it is invertible. Thus $\bf{L}$ is a direct summand of $f_*\left(\bf{L}|_{X'}\right)$, and so it suffices to show the claim for $f_*\left(\bf{L}|_{X'}\right)$. The argument above implies that, furthermore, it suffices to show the claim for $\bf{L}|_{X'}$ and $X'$. So we can assume that $\bf{L}$ is a successive extension of constant local systems. In this case, the claim follows from the Five Lemma and Theorem~\ref{thm:poincare-duality-mod-p}.
\end{proof}

\subsection{Poincar\'e Duality for $p$-adic coefficients}

The main goal of this section is to generalize Theorem~\ref{thm:poincare-duality-mod-p} to $\Z/p^n\Z$, $\bf{Z}_p$, and $\Q_p$-coefficients. In this section, we will freely identify $\bf{R}\Gamma_{\proet}(X, \Z/p^n\Z)$ and $\bf{R}\Gamma_\et(X, \Z/p^n\Z)$. \smallskip

We recall that, by definition, $\bf{R}\Gamma(X, \Z_p) = \bf{R}\Gamma(X_\proet, \wdh{\Z}_p)$ for the pro-\'etale sheaf\footnote{See \cite[Proposition 8.2]{Sch1} for the second isomorphism below.}
\[
\wdh{\Z}_p=\lim_n \ud{\Z/p^n\Z} \simeq \bf{R}\lim_n \ud{\Z/p^n\Z}.
\]
Similarly, $ \bf{R}\Gamma(X, \Q_p) \simeq \bf{R}\Gamma(X_\proet, \wdh{\Q}_p)$ for the pro-\'etale sheaf $\wdh{\Q}_p=\wdh{\Z}_p[1/p]$.

\begin{lemma}\label{lemma:cohomology-perfect} Let $X$ be a proper rigid $C$-space of pure dimension $d$, and $n$ a positive integer. Then $\bf{R}\Gamma_\et\left(X, \Z/p^n\Z\right)\in \bf{D}_{perf}^{[0, 2d]}\left(\Z/p^n\Z\right)$ and $\bf{R}\Gamma(X, \Z_p)\in \bf{D}_{perf}^{[0, 2d]}\left(\Z_p\right)$. Furthermore, the natural morphism
\[
\rm{H}^i(X, \Z_p) \to \lim_n\rm{H}^i_\et(X, \Z/p^n\Z)
\]
is an isomorphism for every $i\geq 0$. 
\end{lemma}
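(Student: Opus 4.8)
The statement has two parts: perfectness of the complexes $\bf{R}\Gamma(X, \Z/p^n\Z)$ and $\bf{R}\Gamma(X, \Z_p)$, and the comparison of $\rm{H}^i(X, \Z_p)$ with $\lim_n \rm{H}^i(X, \Z/p^n\Z)$. First I would reduce the perfectness of $\bf{R}\Gamma(X, \Z/p^n\Z)$ to the case $n=1$, which is what the mod-$p$ Poincar\'e duality machinery directly gives access to. By the primitive comparison theorem \cite[Theorem 5.1]{Sch1}, $\rm{H}^i(X, \bf{F}_p) \otimes_{\bf{F}_p} \O_C^a/p \simeq \rm{H}^i(X, \O_X^+/p)^a$, and together with \cite[Corollary 6.1.3]{Z3} (almost concentration of $\bf{R}\Gamma(X, \O_X^+/p)$ in degrees $[0, 2d]$) and finiteness of $\rm{H}^i(X, \bf{F}_p)$ one concludes $\bf{R}\Gamma(X, \bf{F}_p) \in \bf{D}^{[0, 2d]}_{perf}(\bf{F}_p)$ — a complex of $\bf{F}_p$-vector spaces concentrated in a finite range with finite-dimensional cohomology is automatically perfect. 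For general $n$, I would argue by d\'evissage along the exact triangles $\Z/p\Z \to \Z/p^n\Z \to \Z/p^{n-1}\Z$ (or directly: $\bf{R}\Gamma(X, \Z/p^n\Z)$ has finitely generated cohomology over $\Z/p^n\Z$ because its reduction mod $p$ does, using the universal coefficient / Nakayama-type argument, hence it is perfect over $\Z/p^n\Z$ as any bounded complex with finitely generated cohomology over an Artinian ring is perfect).

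\textbf{The integral statement.} For $\bf{R}\Gamma(X, \Z_p) = \bf{R}\lim_n \bf{R}\Gamma(X, \Z/p^n\Z)$ (this is the definition of pro-\'etale $\wdh{\Z}_p$-cohomology recalled in the conventions), I would use that the transition maps $\bf{R}\Gamma(X, \Z/p^{n+1}\Z) \to \bf{R}\Gamma(X, \Z/p^n\Z)$ are compatible and that each term is perfect over $\Z/p^n\Z$. The key point is that the system $\{\bf{R}\Gamma(X, \Z/p^n\Z)\}_n$ is, up to quasi-isomorphism, a "tower of perfect complexes" — more precisely, I would show it is pro-isomorphic to a tower $\{P^\bullet \otimes^L_{\Z} \Z/p^n\Z\}$ for a single perfect complex $P^\bullet$ of $\Z_p$-modules. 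The cleanest route: first establish that $\bf{R}\Gamma(X, \bf{F}_p)$ has cohomology bounded uniformly (it does, degrees $[0,2d]$), then invoke a standard lemma (e.g. a derived Nakayama / Mittag-Leffler argument, as in \cite[Lemma 3.18]{Sch1} or \cite{stacks-project}) that a derived limit of a tower of perfect $\Z/p^n\Z$-complexes with surjective-enough transition maps on cohomology, whose mod-$p$ reduction is perfect, is itself perfect over $\Z_p$ and commutes with $-\otimes^L_{\Z_p} \Z/p^n\Z$. Since each $\rm{H}^i(X, \Z/p^n\Z)$ is finite, the inverse system $\{\rm{H}^i(X, \Z/p^n\Z)\}_n$ satisfies the Mittag-Leffler condition, so $\rm{R}^1\lim_n \rm{H}^{i-1}(X, \Z/p^n\Z) = 0$; then the Milnor exact sequence
\[
0 \to \rm{R}^1\lim_n \rm{H}^{i-1}(X, \Z/p^n\Z) \to \rm{H}^i(X, \Z_p) \to \lim_n \rm{H}^i(X, \Z/p^n\Z) \to 0
\]
collapses to give the desired isomorphism $\rm{H}^i(X, \Z_p) \xrightarrow{\sim} \lim_n \rm{H}^i(X, \Z/p^n\Z)$, and simultaneously shows $\rm{H}^i(X, \Z_p)$ is a finitely generated $\Z_p$-module concentrated in $[0, 2d]$. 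Boundedness of $\bf{R}\Gamma(X, \Z_p)$ in $[0,2d]$ follows, and finite generation of all cohomology modules plus boundedness over $\Z_p$ (a regular ring of dimension $1$) gives perfectness.

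\textbf{Main obstacle.} The genuinely delicate point is passing from "each $\bf{R}\Gamma(X, \Z/p^n\Z)$ perfect" to "$\bf{R}\lim$ is perfect over $\Z_p$": a derived limit of perfect complexes need not be perfect in general, so one must exploit the specific structure here — uniform cohomological bound (from \cite[Corollary 6.1.3]{Z3} via primitive comparison), finiteness of each cohomology group (from the $\bf{F}_p$-case via d\'evissage), and compatibility of the tower with $-\otimes^L \Z/p^n\Z$. The last compatibility is really the crux: one needs $\bf{R}\Gamma(X, \Z_p) \otimes^L_{\Z_p} \Z/p^n\Z \simeq \bf{R}\Gamma(X, \Z/p^n\Z)$, which amounts to the vanishing of the relevant $\lim^1$ terms and derived-completeness of the system, and this is where I expect to spend most of the care; it can be packaged using the fact that a tower of finite abelian $p$-groups is automatically Mittag-Leffler, reducing everything to the classical Milnor sequence argument and a derived Nakayama lemma over $\Z_p$. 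Once these formal homological inputs are assembled, the proof is short.
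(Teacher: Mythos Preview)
Your proposal is correct and follows essentially the same route as the paper: finiteness for $\bf{F}_p$-coefficients in degrees $[0,2d]$, a Nakayama/d\'evissage argument to get perfectness over $\Z/p^n\Z$, then finiteness of each $\rm{H}^i(X,\Z/p^n\Z)$ plus the Milnor sequence and Mittag--Leffler for the $\Z_p$-statement, with perfectness over $\Z_p$ coming from boundedness, finitely generated cohomology, and regularity of $\Z_p$. The only notable difference is that the step you flag as the ``main obstacle''---passing from perfectness at each finite level to coherence of the derived limit---is dispatched in the paper by a direct citation of \cite[\href{https://stacks.math.columbia.edu/tag/0CQF}{Tag 0CQF}]{stacks-project}, so no extra care is actually needed there beyond what you already outline.
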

\begin{proof}
We note that 
\[
\bf{R}\Gamma_\et\left(X, \Z/p^n\Z\right) \otimes^L_{\Z/p^n\Z} \bf{F}_p \simeq \bf{R}\Gamma_\et\left(X, \bf{F}_p\right) \in \bf{D}^{[0,2d]}_{coh}\left(\bf{F}_p\right) = \bf{D}^{[0,2d]}_{perf}\left(\bf{F}_p\right). 
\]
by \cite[Theorem 3.17]{Schsurvey} (or \cite[Theorem 1.1.2]{Z3}). Therefore, \cite[\href{https://stacks.math.columbia.edu/tag/07LU}{Tag 07LU}]{stacks-project} and \cite[Lemma A.3]{Z3} ensure that $\bf{R}\Gamma_\et(X, \Z/p^n\Z)\in \bf{D}^{[0, 2d]}_{perf}(\Z/p^n\Z)$. \smallskip

Now we note that \cite[\href{https://stacks.math.columbia.edu/tag/0A07}{Tag 0A07}]{stacks-project} ensures that $\bf{R}\Gamma(X, \Z_p) \simeq \bf{R}\lim_n \bf{R}\Gamma_\et(X, \Z/p^n\Z)$. Therefore, \cite[\href{https://stacks.math.columbia.edu/tag/09AV}{Tag 09AV}]{stacks-project} and \cite[Lemma A.3]{Z3} directly imply that $\bf{R}\Gamma(X, \Z_p) \in \bf{D}_{coh}^{[0, 2d]}(\Z_p) = \bf{D}_{perf}^{[0, 2d]}(\Z_p)$. Finally, we consider the Milnor exact sequence 
\[
0 \to \rm{R}^1\lim_n \rm{H}^{i-1}_\et(X, \Z/p^n\Z) \to \rm{H}^i(X, \Z_p) \to \lim_n \rm{H}^i_\et(X, \Z/p^n\Z) \to 0.
\]
Thus, in order to show that $\rm{H}^i(X, \Z_p) \to \lim_n \rm{H}^i_\et(X, \Z/p^n\Z)$ is an isomorphism, it suffices to show that $(\rm{H}^{i-1}_\et(X, \Z/p^n\Z))_{n\in \N}$ satisfies the Mittag-Leffler condition. This follows from finiteness of $\rm{H}^{i-1}_\et(X, \Z/p^n\Z)$ for all $i$ and $n$. 
\end{proof}

\begin{thm}\label{thm:Poincare-duality-mod-pn} Let $X$ be a smooth proper rigid-analytic $K$-space of pure dimension $d$. Then the pairing 
\[
\bf{R}\Gamma_\et(X_C, \Z/p^n\Z) \otimes^L_{\Z/p^n\Z} \bf{R}\Gamma_\et(X_C, \Z/p^n\Z(d)) \xr{- \cup -} \bf{R}\Gamma_\et(X_C, \Z/p^n\Z(d)) \xr{t_{X, \Z/p^n\Z}} \Z/p^n\Z
\]
is a perfect Galois-equivariant pairing for any integer $n\geq 1$.
\end{thm}
\begin{proof}
Similarly to the proof of Theorem~\ref{thm:poincare-duality-mod-p}, we can assume that $K=C$ is algebraically closed. \smallskip

It suffices to show that the duality morphism
\[
D_{X, \Z/p^n\Z} \colon \bf{R}\Gamma_\et(X, \Z/p^n\Z) \to \bf{R}\rm{Hom}_{\Z/p^n\Z}\left(\bf{R}\Gamma_\et(X, \Z/p^n\Z(d)), \Z/p^n\Z\right)
\]
is an isomorphism. Lemma~\ref{lemma:cohomology-perfect} implies that $D_{X, \Z/p^n\Z}$ is a morphism of perfect $\Z/p^n\Z$ complexes. Therefore, it suffices to show that $D_{X, \Z/p^n\Z} \otimes^L_{\Z_p} \bf{F}_p$ is an isomorphism. \smallskip

Now we note that the diagram 
\[
\begin{tikzcd}[column sep = 8em] 
\rm{H}^{2d}_\et(X, \Z/p^n\Z(d)) \otimes_{\Z/p^n\Z} \bf{F}_p \arrow{r}{t_{X, \Z/p^n\Z}\otimes_{\Z/p^n\Z} \bf{F}_p} \arrow{d}{\wr}& \bf{F}_p\\
\rm{H}^{2d}_\et(X,\bf{F}_p(d)) \arrow[ru, swap, "t_{X, \bf{F}_p}"]
\end{tikzcd}
\]
commutes by the construction of the trace map (see Theorem~\ref{thm:construction-trace-map}). Therefore, the square 
\[
\begin{tikzcd}[column sep = 8em]
\bf{R}\Gamma_\et\left(X, \Z/p^n\Z\right) \otimes^L_{\Z/p^n\Z} \bf{F}_p \arrow{r}{D_{X, \Z/p^n\Z}\otimes^L_{\Z/p^n\Z}\bf{F}_p}\arrow{dd}{\gamma} & \bf{R}\rm{Hom}_{\Z/p^n\Z}\bigg(\bf{R}\Gamma_\et\left(X, \Z/p^n\Z(d)\right), \Z/p^n\Z\bigg) \otimes^L_{\Z/p^n\Z} \bf{F}_p \arrow{d}{\alpha} \\
& \bf{R}\rm{Hom}_{\bf{F}_p}\left(\bf{R}\Gamma_\et\left(X, \Z/p^n\Z(d)\right)\otimes^L_{\Z/p^n\Z}\bf{F}_p, \bf{F}_p\right) \arrow{d}{\beta} \\
\bf{R}\Gamma_\et\left(X, \bf{F}_p\right) \arrow{r}{D_{X, \bf{F}_p}} & \bf{R}\rm{Hom}_{\bf{F}_p}\left(\bf{R}\Gamma_\et\big(X, \bf{F}_p(d)\right), \bf{F}_p\big)
\end{tikzcd}
\]
is commutative. We note that $\alpha$ is an isomorphism by \cite[\href{https://stacks.math.columbia.edu/tag/0A6A}{Tag 0A6A}]{stacks-project} since $\bf{R}\Gamma_\et(X, \Z/p^n\Z(d))$ is perfect due to Lemma~\ref{lemma:cohomology-perfect}. Clearly, $\beta$, $\gamma$ are isomorphisms, and $D_{X, \bf{F}_p}$ is an isomorphism by Theorem~\ref{thm:poincare-duality-mod-p}. This implies that $D_{X, \Z/p^n\Z}\otimes^L_{\Z/p^n\Z}\bf{F}_p$ is an isomorphism, and therefore so is $D_{X, \Z/p^n\Z}$.  
\end{proof}

\begin{defn} The {\it trace map} 
\[
t_{X, \Z_p}\colon \rm{H}^{2d}(X_C, \bf{Z}_p(d)) \to \Z_p
\]
for a smooth proper rigid $K$-space $X$ of pure dimension $d$, is defined as follows:
\[
t_{X, \Z_p}\coloneqq \lim_{n}t_{X, \Z/p^n\Z} \colon \rm{H}^{2d}(X_C, \bf{Z}_p(d))=\lim_n \rm{H}^{2d}_\et(X_C, \Z/p^n\Z(d)) \to \Z_p.
\]
\end{defn}

\begin{thm}\label{thm:poincare-duality-integral} Let $X$ be a smooth proper rigid $K$-variety of pure dimension $d$. Then the pairing 
\[
\bf{R}\Gamma(X_C, \Z_p) \otimes^L_{\Z_p} \bf{R}\Gamma(X_C, \Z_p(d)) \xr{- \cup -} \bf{R}\Gamma(X_C, \Z_p(d)) \xr{t_{X, \Z_p}} \Z_p
\]
is a perfect Galois-equivariant pairing.
\end{thm}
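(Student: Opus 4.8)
The plan is to deduce Theorem~\ref{thm:poincare-duality-integral} from Theorem~\ref{thm:Poincare-duality-mod-pn} by a standard passage to the limit, using the perfectness and boundedness statements of Lemma~\ref{lemma:cohomology-perfect}. As in the proofs of Theorems~\ref{thm:poincare-duality-mod-p} and~\ref{thm:Poincare-duality-mod-pn}, we may assume $K=C$ is algebraically closed; Galois-equivariance then comes for free from the functoriality of the constructions in $K$. So fix $C$ algebraically closed and consider the duality morphism
\[
D_{X, \Z_p}\colon \bf{R}\Gamma(X_C, \Z_p) \to \bf{R}\rm{Hom}_{\Z_p}\left(\bf{R}\Gamma(X_C, \Z_p(d)), \Z_p\right)
\]
induced by the cup product and $t_{X, \Z_p}$. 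By Lemma~\ref{lemma:cohomology-perfect}, $\bf{R}\Gamma(X_C, \Z_p)$ and $\bf{R}\Gamma(X_C, \Z_p(d))$ lie in $\bf{D}^{[0,2d]}_{perf}(\Z_p)$; since $\Z_p$ is a regular local ring, $\bf{R}\rm{Hom}_{\Z_p}(\bf{R}\Gamma(X_C, \Z_p(d)), \Z_p)$ is also a perfect complex. Hence $D_{X, \Z_p}$ is a morphism of perfect $\Z_p$-complexes, and to check it is an isomorphism it suffices to check this after $-\otimes^L_{\Z_p} \bf{F}_p$ (equivalently, by Nakayama for perfect complexes, modulo $p$).

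First I would set up the compatibility between $t_{X, \Z_p}$ and $t_{X, \Z/p^n\Z}$: by definition $t_{X, \Z_p} = \lim_n t_{X, \Z/p^n\Z}$ under the isomorphism $\rm{H}^{2d}(X_C, \Z_p(d)) \xrightarrow{\sim} \lim_n \rm{H}^{2d}(X_C, \Z/p^n\Z(d))$ of Lemma~\ref{lemma:cohomology-perfect}, and the cup product is compatible with reduction mod $p^n$. Therefore there is a commutative square relating $D_{X,\Z_p}$ to $D_{X, \Z/p^n\Z}$ via the reduction maps $\bf{R}\Gamma(X_C, \Z_p) \otimes^L_{\Z_p} \Z/p^n\Z \simeq \bf{R}\Gamma(X_C, \Z/p^n\Z)$ (this last identity is exactly the statement that $\bf{R}\Gamma(X_C, \Z_p) \simeq \bf{R}\lim_n \bf{R}\Gamma(X_C, \Z/p^n\Z)$ together with perfectness). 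Concretely I would write the diagram
\[
\begin{tikzcd}[column sep = 6em]
\bf{R}\Gamma(X_C, \Z_p) \otimes^L_{\Z_p} \Z/p^n\Z \arrow{r}{D_{X, \Z_p}\otimes^L \Z/p^n\Z}\arrow{d}{\sim} & \bf{R}\rm{Hom}_{\Z_p}\left(\bf{R}\Gamma(X_C, \Z_p(d)), \Z_p\right) \otimes^L_{\Z_p} \Z/p^n\Z \arrow{d}{\sim} \\
\bf{R}\Gamma(X_C, \Z/p^n\Z) \arrow{r}{D_{X, \Z/p^n\Z}} & \bf{R}\rm{Hom}_{\Z/p^n\Z}\left(\bf{R}\Gamma(X_C, \Z/p^n\Z(d)), \Z/p^n\Z\right)
\end{tikzcd}
\]
where the right vertical arrow is the base-change isomorphism for $\bf{R}\rm{Hom}$ out of a perfect complex (\cite[\href{https://stacks.math.columbia.edu/tag/0A6A}{Tag 0A6A}]{stacks-project}), valid because $\bf{R}\Gamma(X_C, \Z_p(d))$ is perfect by Lemma~\ref{lemma:cohomology-perfect}, and the left vertical arrow is the reduction isomorphism. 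The bottom map $D_{X, \Z/p^n\Z}$ is an isomorphism by Theorem~\ref{thm:Poincare-duality-mod-pn}. Taking $n=1$ gives that $D_{X, \Z_p} \otimes^L_{\Z_p} \bf{F}_p$ is an isomorphism, hence $D_{X, \Z_p}$ is an isomorphism of perfect complexes. This proves perfectness of the pairing, and the resulting isomorphism is Galois-equivariant because every arrow in sight is.

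I do not expect a genuine obstacle here — the argument is formal once the perfectness package of Lemma~\ref{lemma:cohomology-perfect} is available. The only point requiring a little care is the commutativity and compatibility of the trace maps and cup products under $-\otimes^L_{\Z_p}\Z/p^n\Z$, i.e. verifying that the square above really commutes; this follows from the definition $t_{X,\Z_p} = \lim_n t_{X,\Z/p^n\Z}$ and the compatibility of cup products with the transition maps in the inverse system $\{\bf{R}\Gamma(X_C, \Z/p^n\Z)\}_n$, together with the fact that the trace in Grothendieck/Poincaré duality is built from cup products. One should also note that the Künneth-type identification $\bf{R}\Gamma(X_C, \Z_p)\otimes^L_{\Z_p}\Z/p^n\Z \simeq \bf{R}\Gamma(X_C, \Z/p^n\Z)$ uses precisely that $\bf{R}\Gamma(X_C,\Z_p)$ is the derived limit of the $\bf{R}\Gamma(X_C,\Z/p^n\Z)$ and is perfect, both supplied by Lemma~\ref{lemma:cohomology-perfect}. (The analogous statement for $\Q_p$-coefficients, Theorem~\ref{thm:poincare-duality-rational}, then follows by inverting $p$, using Lemma~\ref{lemma:cohomology-rational}.)
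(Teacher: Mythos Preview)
Your proposal is correct and follows essentially the same approach as the paper: the paper's proof simply says that the argument of Theorem~\ref{thm:Poincare-duality-mod-pn} works almost verbatim, which amounts to exactly the Nakayama-for-perfect-complexes reduction you wrote out, using Lemma~\ref{lemma:cohomology-perfect} for perfectness and \cite[\href{https://stacks.math.columbia.edu/tag/0A6A}{Tag 0A6A}]{stacks-project} for the base-change of $\bf{R}\rm{Hom}$.
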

\begin{proof}
The proof of Theorem~\ref{thm:Poincare-duality-mod-pn} works almost verbatim.
\end{proof}

In the rest of this section, we will freely use the fact that $\bf{R}\Gamma(X_C, \Q_p)=\bf{R}\Gamma(X_C, \Z_p)[1/p]$ for a {\it qcqs} rigid-analytic $K$-space $X$. This follows from the fact that $X_\proet$ is a coherent topos for a qcqs $X$ (see \cite[Proposition 3.12(vii)]{Sch1}), and the fact that $\rm{R}^i\Gamma(\cal{C}, -)$ commutes with filtered colimits for a coherent topos $\cal{C}$. 

\begin{defn} The {\it trace map} $t_{X, \Q_p}\colon \rm{H}^{2d}(X_C, \bf{Q}_p(d)) \to \Q_p$ for a smooth proper rigid $K$-space $X$ of pure dimension $d$, is defined as follows:
\[
t_{X, \Q_p}\coloneqq t_{X, \Z_p}[1/p] \colon \rm{H}^{2d}(X_C, \bf{Q}_p(d))=\rm{H}^{2d}(X_C, \Z_p(d))[1/p] \to \Q_p.
\]
\end{defn}

\begin{thm}\label{thm:poincare-duality-rational} Let $X$ be a smooth proper rigid $K$-variety of pure dimension $d$. Then the pairing 
\[
\bf{R}\Gamma(X_C, \Q_p) \otimes^L_{\Q_p} \bf{R}\Gamma(X_C, \Q_p(d)) \xr{- \cup -} \bf{R}\Gamma(X_C, \Q_p(d)) \xr{t_{X, \Q_p}} \Q_p
\]
is a perfect Galois-equivariant pairing. In particular, for every integer $i\geq 0$, this induces a perfect Galois-equivariant pairing
\[
\rm{H}^i(X_C, \Q_p) \otimes_{\Q_p} \rm{H}^{2d-i}(X_C, \Q_p(d)) \xr{- \cup -} \rm{H}^{2d}(X_C, \Q_p(d)) \xr{t_{X, \Q_p}} \Q_p.
\]
\end{thm}
\begin{proof}
The result easily follows from the isomorphism $\bf{R}\Gamma(X_C, \Q_p)=\bf{R}\Gamma(X_C, \Z_p)[1/p]$, Theorem~\ref{thm:poincare-duality-integral}, and the observation that $\Q_p$ is a field. 
\end{proof}

\newpage

\appendix

\section{Adic Spaces and Berkovich Spaces}\label{appendix:berkovich}

Throughout this section, we fix a complete rank-$1$ valued field $K$ with ring of integers $\O_K$, and a pseudo-uniformizer $\varpi\in \O_K$. In this section, all adic spaces are assumed to be {\it analytic and locally strongly noetherian}. \smallskip

\begin{defn1} A morphism of analytic adic spaces $f\colon X \to Y$ is called {\it partially proper} if $f$ is locally of $+$-weakly finite type, and satisfies the valuative criterion: for every complete microbial valuation ring $k^+$ and a commutative diagram
\[
\begin{tikzcd}
\Spa(k, k^\circ) \arrow{d} \arrow{r} & X \arrow{d}{f} \\
\Spa(k, k^+) \arrow{r} & Y
\end{tikzcd}
\]
there is a unique lifting $\Spa(k, k^+) \to X$ making the diagram commute. 
\end{defn1}

\begin{rmk1} In \cite{H3}, it is important to allow partially proper morphisms to be merely locally $+$-weakly finite type as opposed to the stronger condition of being locally of finite type (see \cite[Definition 1.2.1]{H3}). However, this extra level of generality will not play any role in this paper; all morphisms we consider are always going to be locally of finite type by design.
\end{rmk1}

\begin{defn1}\label{defn:taut}(\cite[Definition 5.1.2]{H3}) An analytic adic space $X$ is called {\it taut} if $X$ is quasi-separated and, for every quasi-compact subset $U$ of $X$, the closure $\ov{U}$ of $U$ in $X$ is quasi-compact.

A morphism $f\colon X \to Y$ of analytic adic spaces is called {\it taut} if, for every taut open subspace $U$ of $Y$, the inverse image $f^{-1}(U)$ is taut. 
\end{defn1}

\begin{rmk1}\label{rmk:app-berkovich-taut-examples} We recall that any morphism between analytic adic spaces is spectral. Therefore, Definition~\ref{defn:taut} recovers \cite[Definition 5.1.2]{H3}. In particular, \cite[Lemma 5.1.3, 5.1.4]{H3} ensure that any qcqs morphism or partially proper morphism of analytic adic spaces is taut. Furthermore, any morphism between taut analytic adic spaces is taut.
\end{rmk1}

\begin{defn1} The category of {\it adic spaces locally of finite type over $\Spa(K,\O_K)$} is denoted by $(A)$. \smallskip

The category of {\it taut adic spaces locally of finite type over $\Spa(K,\O_K)$} is denoted by $(A)'$. 
\end{defn1}

We recall that any point $x\in X$ of an analytic adic space $X$ has a unique rank-$1$ generalization  $x_{\rm{gen}}$. The topological space $X_{\max}$ is defined to be the set of rank-$1$ points of $X$ with the strongest topology such that the map
\[
\omega_X \colon X \to X_{\max}
\]
\[
x\mapsto x_{\rm{gen}}
\]
is continuous. In other words, we endow $X_{\max}$ with the quotient topology from $X$.

\begin{defn1} The category of {\it hausdorff strictly $K$-analytic Berkovich spaces} is denoted by $(An)$ (we also refer to \cite[\textsection 1.2, p.\,22]{Ber} for the precise definition). 
\end{defn1}

We recall that \cite[Proposition 8.3.1, Remark 8.3.2, Lemma 5.6.8(i)]{H3} and \cite[Proposition 4.5(iv)]{H1} guarantee that the functor
\[
s'\colon (An) \to  (A)'.
\]
constructed in \cite[\textsection 8.3]{H3} is an equivalence of categories. The quasi-inverse of this functor\footnote{We also refer to \cite{Henkel} for a more direct construction of $u$.} is denoted by 
\[
u\colon (A)' \to (An).
\]
We summarize the main properties of this functor below:
\begin{lemma1}\label{lemma:berkovization}(\cite[Remark 8.3.2]{H3} or \cite[Construction 7.1]{Henkel})
\begin{enumerate}\label{berkovization}
    \item\label{berkovization-1} $u$ sends an open immersion $U=\Spa(A, A^\circ) \subset X$ to an affinoid domain $\cal{M}(A) \subset u(X)$.
    \item\label{berkovization-2} $u$ sends a morphism $f\colon \Spa(B, B^\circ) \to \Spa(A, A^\circ)$ to the corresponding $u(f)\colon \cal{M}(B) \to \cal{M}(A)$
    \item\label{berkovization-3} the underlying topological space of $u(X)$ is functorially identified with $X_{\max}$. 
\end{enumerate}
\end{lemma1}

\begin{lemma1}\label{lemma:properties-of-berkovization} Let $f\colon X\to Y$ be a morphism in $(A)'$. Suppose that $f$ is partially proper (resp.\,proper, resp.\,\'etale, resp.\,smooth). Then $u(f)$ is boundaryless (resp.\,proper, resp.\,quasi-\'etale, resp.\,quasi-smooth).
\end{lemma1}
\begin{rmk1} We refer to  \cite[Definition 1.3.20, 5.2.4, and 5.2.6]{ducros} for the definitions of boundaryless, quasi-\'etale, and quasi-smooth morphisms of Berkovich spaces. We also note that boundaryless morphisms are sometimes called {\it closed} morphisms. We prefer to call such morphisms boundaryless because this notion is {\it different} from the notion of topologically closed morphisms. 
\end{rmk1}
\begin{proof}[Proof of Lemma~\ref{lemma:properties-of-berkovization}]
{\it Suppose $f$ is partially proper}. Then it suffices to show that the morphism of germs\footnote{Look at \cite[\textsection{3.4}]{Ber} for the definition of germs of Berkovich spaces.} 
\[
    \left(u(X),x\right) \to \left(u(Y), u(f)(x)\right)
\]
is boundaryless\footnote{By definition, this means that the map is induced by a boundaryless map $X' \to u(Y)$, where $X'$ is an open neighborhood of $x$ in $u(X)$.} for any $x\in u(X)=X_{\max}$. Now \cite[Theorem 4.1]{Temkin-proper} gives that this map is boundaryless if and only if the morphism of reductions 
\[
\widetilde{\left(u(X),x\right)} \to \widetilde{\left(u(Y), f(x)\right)}
\]
is proper, i.e. it induces the bijection\footnote{Look at \cite[p.1]{Temkin-proper} for the definition of a proper morphism of reductions.}
\[
\widetilde{(u(X),x)} \to \bf{P}_{\widetilde{\cal{H}(x)}/\widetilde{K}}  \times_{\bf{P}_{\widetilde{\cal{H}(f(x))}/\widetilde{K}}} \widetilde{(u(Y), f(x))}\footnote{$\bf{P}_{\widetilde{\cal{H}(f(x))}/\widetilde{K}}$ is the set of all valuations of the residue field of $\cal{H}(f(x))$ that are trivial on the residue field of $K$. In \cite{Temkin-proper}, this is denoted just by $\bf{P}_{\widetilde{\cal{H}(f(x))}}$.}.
\]
Now we use \cite[Remark 2.6]{Temkin-proper} to see that the underlying topological space of $\widetilde{(u(X), x)}$ coincides with $\omega_X^{-1}(x) \subset X$ and similarly  for $\widetilde{(u(Y), f(x))}$. So bijectivity of the above map follows from \cite[Corollary 1.3.9]{H3}. \medskip

{\it Suppose $f$ is proper}. For the purpose of showing that $u(f)$ is proper, it suffices to treat the case of affinoid $Y=\Spa(A, A^\circ)$. Then $X$ is a qcqs rigid-analytic $K$-space, so $f$ has a formal model $\mf\colon \X \to \Y$. Now Lemma~\ref{lemma:proper-adic-formal} guarantees that $f$ is proper if and only if so is $\mf$. Then \cite[Corollary 4.4]{Temkin-proper}, and compatibility of adic and Berkovich generic fibers guarantees that $u(f)$ is proper.
\medskip

{\it Suppose $f$ is \'etale}. Then \cite[5.2.10]{ducros} and Lemma~\ref{lemma:berkovization}(\ref{berkovization-1}) imply that the claim is local on $X$ and $Y$. Therefore, \cite[Lemma 2.2.8]{H3} implies that we can assume that $X$ and $Y$ are affinoids, and that $f=g\circ j$, where $j\colon X \to \Spa(B, B^\circ)$ is an open immersion and $g\colon \Spa(B, B^\circ) \to \Spa(A, A^\circ)$ is a finite \'etale morphism. Lemma~\ref{lemma:berkovization}(\ref{berkovization-1}) implies that $u(j)$ is an analytic domain, in particular, it is quasi-\'etale. And $u(g)\colon \cal{M}(B) \to \cal{M}(A)$ is finite \'etale because $A \to B$ is finite \'etale. \medskip

{\it Suppose $f$ is smooth}. We use \cite[5.2.10]{ducros}, Lemma~\ref{lemma:berkovization}(\ref{berkovization-1}), and \cite[Corollary 1.6.10]{H3} to reduce to the case when $f\colon X \to Y$ is a map of affinoids that factors as
\[
X \xr{g} \bf{D}^N_Y  \to Y,
\]
with \'etale $g$. Then we see that $u(f)$ can be written as the composition
\[
u(X) \xr{u(g)} \bf{D}^N_{u(Y)} \to u(Y)
\]
with $u(g)$ being quasi-\'etale. Therefore, $u(f)$ is quasi-smooth since the $N$-dimensional disc $\bf{D}^N_{u(Y)}$ is quasi-smooth over $u(Y)$. 
\end{proof}

\begin{rmk1} With more work, one can upgrade the results of Lemma~\ref{lemma:properties-of-berkovization} to the ``if and only if'' results. We do not discuss it here since we will never need the ``only if'' direction. 
\end{rmk1}

\begin{cor1}\label{cor:partially-proper-etale} Let $f\colon X\to Y$ be a morphism in $(A)'$. Then
\begin{enumerate}
    \item $f$ is partially proper and \'etale if and only if $u(f)$ is \'etale;
    \item $u(f)$ is smooth if $f$ is partially proper and smooth.
\end{enumerate}
\end{cor1}
\begin{proof}
The first part is proven in the proof of \cite[Part (a), p.\,427]{H3}. For the second one, we recall that  \cite[Corollary 5.4.8]{ducros} (and \cite[Remark 9.7]{ber-smooth} to make it work in a non-necessary good strictly $K$-analytic case) implies that a morphism of strictly $K$-analytic spaces is smooth (resp. \'etale) if and only if it is quasi-smooth (resp. quasi-\'etale) and boundaryless. Therefore, the claim follows from Lemma~\ref{lemma:properties-of-berkovization}.
\end{proof}

\begin{rmk1}\label{rmk:etale-berkovich-adic} Corollary~\ref{cor:partially-proper-etale} implies that $s'$ sends \'etale morphisms to partially proper \'etale morphisms. 
\end{rmk1}

\begin{defn1} The {\it strict \'etale site} $X_{\et .s}$ of a strictly $K$-analytic space $X$ is defined as follows: 
\begin{enumerate}
    \item the underlying category $(\Ett/X)_s$ of $X_{\et.s}$ is the full subcategory of $(An)$ consisting of all $X$-objects $Y$ such that the structure morphism $f\colon Y \to X$ is an \'etale morphism of $K$-analytic spaces,
    \item A family $(Y_i \xr{f_i} Y)$ is a covering of $Y$ when $Y=\cup_{i\in I}f_i(Y_i)$. 
\end{enumerate}
\end{defn1}

Similarly, for a rigid-analytic space $X$, we denote by $\Ett/X$ the category of \'etale $X$-spaces $Y \to X$ and by $X_\et$ the (small) \'etale site of $X$.

\begin{rmk1}\label{rmk:Berkovich-different-site} We note that Berkovich considers a different \'etale site $X_\et$ in \cite{Ber}. His site $X_\et$ consists of all \'etale $X$-spaces $Y$ that are not necessary $K$-strict. However, \cite[p.\,426]{H3} shows that the natural morphism of sites
\[
X_\et \to X_{\et.s}
\]
induces an isomorphism of corresponding topoi. Therefore, all results of \cite{Ber} still hold true if one uses $X_{\et.s}$ instead of $X_\et$. 
\end{rmk1}

Now we fix a taut rigid-analytic $K$-space $X$, and consider the natural morphism of sites
\[
\theta_X\colon X_\et \to u(X)_{\et.s}
\]
induced by the functor
\[
\left(\Ett/u(X)\right)_s \to \Ett/X
\]
\[
\big(Y \to u(X)\big) \mapsto \left(s'(Y) \to s'(u(X))=X\right).
\]
This functor is well-defined due to Remark~\ref{rmk:etale-berkovich-adic}. \smallskip

If $f\colon X \to Y$ is a morphism in $(A)'$, then $\theta_X$ and $\theta_Y$ fit into the following commutative diagram:
\begin{equation}\label{diagram:etale-strict}
\begin{tikzcd}
X_{\et} \arrow{d}{f} \arrow{r}{\theta_X} & u(X)_{\et.s} \arrow{d}{u(f)}\\
Y_\et \arrow{r}{\theta_Y} & u(Y)_{\et.s}.
\end{tikzcd}
\end{equation}

\begin{thm1}\label{thm:berkozation-compact-cohomology} Let $f\colon X \to Y$ be a partially proper morphism in $(A)'$. Then there is a natural isomorphism 
\[
\alpha_{f}(K)\colon \bf{R}f_!\,\theta_X^*\F \xr{\sim} \theta_Y^*\,\bf{R}u(f)_!\F
\]
for any $\F\in \bf{D}^+(u(X)_{\et.s}, \Z)$. 
\end{thm1}
\begin{rmk1} We recall that $\bf{R}f_!$ is defined in \cite[\textsection 5.3]{H3} and $\bf{R}u(f)_!$ is defined in \cite[\textsection 5.1]{Ber}.
\end{rmk1}
\begin{proof}[Proof of Theorem~\ref{thm:berkozation-compact-cohomology}]
We note that Lemma~\ref{lemma:properties-of-berkovization} ensures that $u(f)$ is boundaryless since $f$ is partially proper. Thus \cite[Proposition 8.3.6]{H3} proves the claim (\cite[Proposition 8.3.6]{H3} is written in terms of Tate rigid spaces, but the actual proof shows the statement on the level of corresponding adic spaces). 
\end{proof}

For the next definition, we fix an analytic adic space $X$.

\begin{defn1}(\cite[Definition 8.2.1]{H3}) An \'etale sheaf of abelian groups on $X$ is {\it overconvergent} if, for every specialization $u\colon \ov{\eta}_1 \to \ov{\eta}_2$ of geometric points of $X$, the map $u^*(\F) \colon \F_{\ov{\eta}_2} \to \F_{\ov{\eta}_1}$ is an isomorphism. 

We denote the {\it category of overconvergent sheaves} of abelian groups by $\cal{A}b_{\rm{ov}}(X_\et)$.
\end{defn1}

\begin{lemma1}\label{lemma:overconv-berkovich}\cite[Theorem 8.3.5]{H3} Let $X$ be a taut rigid $K$-space. Then the functor 
\[
\theta_X^*\colon \cal{A}b(u(X)_{\et.s}) \to \cal{A}b(X_{\et})
\]
induces an equivalence of categories 
\[
\theta_X^*\colon \cal{A}b(u(X)_{\et.s}) \to \cal{A}b_{\rm{ov}}(X_{\et}).
\]
\end{lemma1}

Now let $f\colon X \to Y$ be a partially proper \'etale morphism in $(A)'$. Then \cite[Lemma 2.7.6]{H3} implies that the functor $f_!$ is left adjoint to $f^*$. In particular, there is a trace map 
\[
    t_f\colon f_!f^*\F \to \F
\]
for any $\F \in \cal{A}b(Y_\et)$. \smallskip

Similarly, Corollary~\ref{cor:partially-proper-etale} guarantees that $u(f)\colon u(X) \to u(Y)$ is an \'etale morphism. Therefore, the functor $u(f)_!$ is a left adjoint to $u(f)^*$ due to \cite[Remark 5.4.2(ii)]{Ber}. So, for any $\G\in \cal{A}b(u(Y)_{\et.s})$, there is a trace map 
\[
    t_{u(f)}^B\colon u(f)_!u(f)^*\G \to \G. 
\]

\begin{lemma1}\label{lemma:trace-berkovich-adic} Let $f\colon X \to Y$ be a partially proper \'etale morphism of taut rigid $K$-spaces. Then, for any $\G\in \cal{A}b(u(Y)_\et)$, the following diagram is commutative
\[
\begin{tikzcd}[column sep = 5em, row sep = 4em]
\theta_Y^* u(f)_!u(f)^*\G  \arrow{r}{\theta_Y^*t^B_{u(f)}} & \theta_Y^* \G \\
f_!\,\theta_X^*u(f)^*\G \arrow{u}{\alpha_f(u(f)^*\G)} \arrow{r}{\sim} & f_!f^*\theta_Y^*\G \arrow{u}{t_f}.
\end{tikzcd}
\]
\end{lemma1}
\begin{proof}
First, both functors $f^*$ and $f_!$ preserve overconvergent sheaves by \cite[Proposition 8.2.3 and Corollary 8.2.4]{H3}. Thus the result follows formally from Lemma~\ref{lemma:overconv-berkovich}, Theorem~\ref{thm:berkozation-compact-cohomology}, and the adjunctions.
\end{proof}

\section{Some Facts from Rigid Geometry}\label{rigid-appendix}

Throughout this appendix, we fix a complete rank-$1$ valuation ring $\O_K$ with fraction field $K$, maximal ideal $\m\subset \O_K$, residue field $k$, and a pseudo-uniformizer $\varpi\in \m$. For an $\O_K$-algebra $A$, we denote by $A_K$ (resp. $A_k$) the tensor product $A\otimes_{\O_K} K$ (resp. $A\otimes_{\O_K} k$). \smallskip

We note that for rigid-analytic $K$-spaces there are two possible definitions of proper and separated morphism: one is due to R.\,Huber in terms of adic spaces (see \cite[\textsection 1.3]{H3}) and one is due to R.\,Kiehl in terms of classical Tate rigid-analytic spaces (see \cite[\textsection 6.3]{B}). To avoid any confusion, we explicitly spell out that these definitions are equivalent and also equivalent to the definition via formal models: 

\begin{lemma1}\label{lemma:proper-adic-formal}(L\"{u}tkebohmert--Temkin) Let $\mf\colon \X \to \Y$ be a morphism of admissible formal $\O_K$-schemes. Then the following are equivalent:
\begin{enumerate}
    \item $\mf$ is separated (resp.~proper);
    \item the generic fiber $\mf_K\colon \X_K \to \Y_K$ is separated (resp.~proper) in the sense of \cite[\textsection 1.3]{H3};
    \item the generic fiber $\mf_K\colon \X_K \to \Y_K$ is separated (resp.~proper) in the sense of \cite[\textsection 6.3]{B}.
\end{enumerate}
\end{lemma1}
\begin{proof}
    First, \cite[Remark 1.3.18]{H3} (or \cite[Proposition 3.11.20 and 3.12.5]{H2}\footnote{Strictly speaking, \cite[Proposition 3.11.20 and 3.12.5]{H2} apply only when $\O_K$ is noetherian. However, the proofs in \textit{loc.cit.}~do not actually require this noetherian hypothesis and can be adapted to the case of admissible formal $\O_C$-schemes.})  implies (1) and (2) are equivalent. Now  \cite[Corollary 4.5]{Temkin-proper} (or \cite{Lutke-proper} if $\O_K$ is discretely valued) implies that $\mf$ is proper if and only if $\mf_K$ is Kiehl-proper. A similar argument shows that $\mf$ is separated if and only if $\mf_K$ is Kiehl-separated. This shows that (1) is equivalent to (3) (alternatively, one can use \cite[Remark 1.3.19(ii)]{H3} to say that (2) is equivalent to (3) for separated morphisms). 
\end{proof}

    

Now we discuss a slightly refined version of Elkik's algebraization result that does not change the (relative) dimension of the algebraized algebra. For this, we need a number of preliminary results. 

\begin{lemma1}\label{lemma:pure-dimension-rigid}(\cite[Lemma 1.8.6]{H3}) Let $X$ be a rigid-analytic $K$-space. Then $X$ is of pure dimension $d$ (see \cite[Definition 1.8.1]{H3}) if and only if $\dim A=d$ for any open affinoid subset $\Spa(A, A^\circ) \subset X$.
\end{lemma1}

\begin{lemma1}\label{lemma:dimension-generic-special} Let $A$ be a flat, topologically finite type $\O_K$-algebra. Then $\dim A_K =\dim A_k$.
\end{lemma1}
\begin{proof}
    Noether Normalization Theorem (see \cite[Theorem 9.2.10]{FujKato}) implies that there is an integer $d$ and a finite {\it injective} morphism
    \[
    \O_K\langle T_1, \dots, T_{d}\rangle \to A
    \]
    such that the induced morphism $k[T_1, \dots, T_{d}] \to A_k$ is (finite and) injective. Therefore, \cite[Proposition 2.2/17]{B} and \cite[Theorem 20]{M} imply that
    \[
    \dim A_K=\dim K\langle T_1,\dots, T_{d}\rangle = d = \dim k[T_1, \dots, T_{d}] = \dim A_k. \qedhere
    \]
\end{proof}

\begin{cor1}\label{cor:pure-dimension-rigid-formal} Let $\X$ be an admissible formal $\O_K$-scheme with generic fiber $X=\X_K$ of pure dimension $d$. Then the special fiber $\ov{\X}$ is of pure dimension $d$ as well.
\end{cor1}
\begin{proof}
    It suffices to show that, for every open affine $\Spf A\subset \X$, $\dim A_k=d$. Then Lemma~\ref{lemma:pure-dimension-rigid} and Lemma~\ref{lemma:dimension-generic-special} imply that
    \[
    \dim A_k = \dim A_K =d.\qedhere
    \]
\end{proof}

\begin{lemma1}\label{lemma:pure-algebraization}(Elkik's algebraization) Let $\X=\Spf B$ be an admissible affine formal $\O_K$-scheme. Suppose the generic fiber $\X_K$ is smooth of pure dimension $d$. Then there is a flat, finitely presented $\O_K$-algebra $A$ such that
\begin{enumerate}
    \item there is an isomorphism $\wdh{A} \simeq B$, where $\wdh{A}$ is the $\varpi$-adic completion of $A$;
    \item $A_K$ is $K$-smooth;
    \item $A$ is of pure relative dimension $d$.
\end{enumerate}
\end{lemma1}
\begin{proof}
    We first note that \cite[Theorem 3.1.3]{T3} (that essentially boils down to \cite[Th\'eorem\`e 7 on page 582 and Remarque 2(c) on p.588]{Elkik} and \cite[Proposition 3.3.2]{T0}) implies that there is a flat, finitely presented $\O_K$-algebra $A'$ such that $\wdh{A}'\simeq B$ and $A'_K$ is $K$-smooth. The only remaining question is to make $A'$ to be of pure relative dimension $d$. \smallskip
    
    Since $X'=\Spec A'$ and $\X=\Spf B$ have isomorphic special fibers, Corollary~\ref{cor:pure-dimension-rigid-formal} implies that $X'$ has special fiber of pure dimension $d$. Since the underlying topological space of $|\Spec A'|$ is noetherian, it has finitely many irreducible components $X'_1, \dots, X'_n$. Furthermore, $\O_K$-flatness of $\Spec A'$ implies that each irreducible component $X'_i$ is $\O_K$-flat. So it either lies inside the generic fiber $\Spec A'_K$ or surjects onto $\Spec \O_K$. \smallskip
    
    Suppose that the components $X'_1, \dots, X'_s$ lie in the generic fiber and $X'_{s+1}, \dots, X'_n$ surject onto $\Spec \O_K$. Since irreducible components are closed, we conclude that the union $X'_1\cup \dots \cup X'_s$ is closed in $\Spec A'$. Furthermore, this union is easily seen to be open using that $\Spec A'_K$ is smooth. Therefore, we can replace $\Spec A'$ with 
    \[
    \Spec A = \Spec A' \setminus \cup_{i=1}^s X'_i
    \]
    to assume that every irreducible component of $\Spec A$ surjects onto $\Spec \O_K$ (note that $\wdh{A}\simeq \wdh{A}'\simeq B$). In this case, we claim that $\Spec A \to \Spec \O_K$ is a morphism of pure relative dimension $d$. \smallskip
    
    To prove this, we write $\Spec A$ as a union of $\O_K$-flat irreducible components $\Spec A = \cup_{i=1}^m X_i$. It suffices to show that each $X_{i, K}$ is a $K$-scheme of pure dimension $d$. This follows from \cite[Lemme 14.3.10]{EGA4_3}.
\end{proof}

Now we discuss a version of the Reduced Fiber Theorem that will be crucial for some proofs and constructions of this paper. We start with some preliminary results: 

\begin{lemma1}\label{lemma:enough-constants} Let $A$ be a flat, topologically finite type $\O_K$-algebra, and let $f\in A_K$ be a non-zero element. Then there is an element $c\in K^\times$ such that $cf \in A\subset A_K$ and the residue class $\ov{cf}\in A_k$ is non-zero.
\end{lemma1}
\begin{proof}
    We choose a surjection $\alpha\colon \O_K\langle T_1, \dots, T_n\rangle \to A$ and consider the associated residue norm $|.|_\alpha$ on $A_K$ (see \cite[Proposition 3.1/5]{B}). Since $f\neq 0$, we see that $|f|_{\alpha}\neq 0$, so there is an element $c\in K^\times$ such that $|1/c| = |f|_{\alpha}$. So $cf\in A_K$ is an element with the residue norm $1$. Therefore, $cf\in A$ and its residue class $\ov{cf} \in A_k$ is non-zero. 
\end{proof}

\begin{lemma1}\label{lemma:special-fiber-reduced} Let $A$ be a flat, topologically finite type $\O_K$-algebra such that its special fiber $A_k$ is reduced. Then $A$ is reduced and $A=A_K^\circ$. 
\end{lemma1}
\begin{proof}
    Let $f\in A$ be a non-zero nilpotent element. Then Lemma~\ref{lemma:enough-constants} ensures that there exists an element $c\in K^\times$ such that $g\coloneqq cf\in A \subset A_K$ and the residue class $\ov{g}\in A_k$ is non-zero. This implies that $\ov{g}$ is a non-zero nilpotent element of $A_k$ contradicting the assumption that $A_k$ is reduced.\smallskip
    
    The equality $A=A_K^\circ$ follows from \cite[Proposition 3.4.1]{lutke-jacobian}. 
\end{proof}

\begin{cor1}\label{cor:normalization} Let $A$ be a flat, topologically finite type $\O_K$-algebra. Then 
\begin{enumerate}
    \item $A$ is integrally closed in $A_K$ if $A_k$ is reduced;
    \item $A_k$ is reduced if $A$ is integrally closed in $A_K$ and $K$ is algebraically closed.
\end{enumerate}
\end{cor1}
\begin{proof}
    The first part follows from Lemma~\ref{lemma:special-fiber-reduced} and the observation that $A_K^\circ$ is integrally closed in $A_K$ (see \cite[Theorem 3.1/17(ii)]{B}). \smallskip
    
    To show the second part, we note that the assumption that $A$ is integrally closed in $A_K$ and \cite[Theorem 3.1/17(ii)]{B} imply that $A=A_K^\circ$. Furthermore, $A_K^{\circ\circ}=\m A_K^\circ$ since $K$ is algebraically closed. Therefore, the result follows from the observation that $A_k = A_K^\circ/\m A_K^\circ = A_K^\circ/A_K^{\circ\circ}$ is always reduced since $A^{\circ\circ}_K$ is a radical ideal in $A_K^\circ$. 
\end{proof}

\begin{thm1}\label{thm:reduced-fiber-theorem}(Reduced Fiber Theorem) Let $K$ be an algebraically closed complete rank-$1$ valued field, $\X$ an admissible formal $\O_K$-scheme with reduced generic fiber $X=\X_K$. Then there is a unique finite rig-isomorphism $\pi\colon \widetilde{\X} \to \X$ such that the special fiber of $\widetilde{\X}$ is reduced. Furthermore, it satisfies the following universal property: for any admissible formal $\O_K$-scheme $\X'$ with reduced special fiber and a morphism $\mf\colon \X'\to \X$, there is a unique morphism $\widetilde{\mf} \colon \X' \to \widetilde{\X}$ such that the diagram
\[
\begin{tikzcd}
    \X' \arrow{d}{\mf} \arrow{r}{\widetilde{\mf}} & \widetilde{\X} \arrow{dl}{\pi} \\
    \X &
\end{tikzcd}
\]
commutes.
\end{thm1}

Before we embark on the proof, we want to mention that existence of $\pi$ is an easier special case of much more general Reduced Fiber Theorem (see \cite[Theorem 2.1]{BLR4}).
\begin{proof}
    We first verify the universal property of a finite rig-isomorphism $\pi\colon \widetilde{\X} \to \X$ with $\widetilde{\X}$ having reduced special fiber (in particular, it implies its uniqueness). First, we note that, for any open formal subscheme $\sU\subset \X$, the morphism $\pi|_{\pi^{-1}(\sU)}\colon \pi^{-1}(\sU) \to \sU$ is a finite rig-isomorphism with $\pi^{-1}(\sU)$ having reduced special fiber. Therefore, it suffices to verify the universal property under the additional assumption that $\X=\Spf A$ is affine. Furthermore, we can assume that $\X'=\Spf B$ is affine. \smallskip

    In this situation, Corollary~\ref{cor:normalization} implies that $\widetilde{\X}=\Spf A_K^\circ$. Therefore, the question boils down to showing that the natural morphism $A \to B$ uniquely extends to a morphism $A_K^\circ \to B$. Uniqueness is clear since $A$ and $B$ are $\O_K$-flat and $A_K^\circ \subset A_K$, while existence follows from the equality $B=B_K^\circ$ coming from Lemma~\ref{lemma:special-fiber-reduced} and the assumption that $\Spf B$ has reduced special fiber. \smallskip

    Now we show existence of $\pi\colon \widetilde{\X} \to \X$. The universal property of $\pi$ and a standard gluing argument reduce the question to the case of an affine $\X=\Spf A$. In this case, we claim that $\widetilde{\X}=\Spf A_K^\circ \to \X=\Spf A$ does the job. Indeed,  \cite[Theorem 1.3]{BLR4} implies that $\Spf A_K^\circ$ is an admissible formal $\O_K$-scheme with reduced special fiber, while \cite[Theorem 3.1/17]{B} implies that $\Spf A_K^\circ \to \Spf A$ is integral and thus is finite.
\end{proof}

For the next definition, we fix an algebraically closed rank-$1$ valued field $K$ and an admissible formal $\O_K$-scheme $\X$ with reduced generic fiber $X=\X_K$.

\begin{defn1}\label{defn:normalization} The {\it normalization of $\X$ in $\X_K$} is the finite rig-isomorphism $\pi\colon \widetilde{\X} \to \X$ constructed in Theorem~\ref{thm:reduced-fiber-theorem}. 
\end{defn1}

\begin{rmk1} The universal property established in Theorem~\ref{thm:reduced-fiber-theorem} implies that $\widetilde{\X}$ is functorial in $\X$.
\end{rmk1}

We end this appendix with a number of results concerned with the existence of some ``good'' open subsets of admissible formal schemes. 

\begin{lemma1}\label{lemma:connected} Let $\X$ be an admissible formal $\O_K$-scheme with generic fiber $\X_K=X$ and reduced special fiber $\ov{\X}$. Suppose that $T\subset X$ is a connected component of $X$. Then there is a connected component $\mathfrak T \subset \X$ such that $\mathfrak T_K =T$.
\end{lemma1}
\begin{proof}
It suffices to show every idempotent of $\Gamma(X, \O_X)$ comes from an idempotent of $\Gamma(\X, \O_\X)$. Since $\O_X$ is a sheaf on $X$ and $\X$ is $\O_K$-flat, it is easy to see that it suffices to treat the case of an affine admissible formal $\O_K$-scheme $\X=\Spf A$. In this case, the result follows from Corollary~\ref{cor:normalization}.
\end{proof}

\begin{lemma1}\label{lemma:good-open} Let $\mf\colon \X \to \Y$ be a rig-finite morphism of admissible formal $\O_K$-schemes. Suppose that the special fiber $\ov{\Y}$ of $\Y$ is geometrically reduced. Then there is an open dense formal subscheme $\mathfrak U \subset \Y$ such that $\sU$ lies in the smooth locus $\Y^{\rm{sm}}$ and the restriction $\mf|_{\X_{\mathfrak U}} \colon \X_{\mathfrak U} \to \mathfrak U$ is finite and flat.
\end{lemma1}
\begin{proof}

Since $\Y$ is $\O_K$-flat and its special fiber is geometrically reduced, we conclude that $\Y^{\rm{sm}}$ is a dense open of $\Y$, so we can replace $\Y$ with $\Y^{\rm{sm}}$ to assume that $\Y$ is smooth. Then we can pass to its connected (equivalently, irreducible) components to assume that $\Y$ is smooth and irreducible. \smallskip

Now \cite[\href{https://stacks.math.columbia.edu/tag/052B}{Tag 052B}]{stacks-project} implies that there is a dense open $U\subset \ov{\Y}$ such that $\ov{\mf}|_{\ov{\mf}^{-1}(U)}\colon \ov{\mf}^{-1}(U) \to U$ is flat. The dense open subscheme $U\subset \ov{\Y}$ defines an open dense formal subscheme $\sU \subset \Y$. The fiber-by-fiber flatness criterion implies that the morphism $\mf_n\colon \X_n \to \Y_n$ is flat over $\sU_n$ for any $n\geq 1$, and then \cite[Proposition 7.3/11]{B} guarantees that $\mf\colon \X \to \Y$ is flat over $\sU \subset \Y$. So we can replace $\Y$ with $\sU$ to assume that $\mf$ is a flat morphism and $\Y$ is smooth.  \smallskip

Now the question boils down to finding a dense open formal subscheme $\sU\subset \Y$ such that $\mf|_{\X_\sU}\colon \X_\sU \to \sU$ is finite. We note that \cite[Proposition 4.2.3]{FujKato} implies that it suffices to find a dense open $U\subset \ov{\Y}$ such that $\ov{\mf}\colon \ov{\X}_U \to U$ is finite. \smallskip

Since $\ov{\Y}$ is $k$-smooth and connected, it is of pure dimension $d$ for some integer $d$. Using  Lemma~\ref{lemma:dimension-generic-special} and finiteness of $\mf_K$, we conclude that 
\[
\dim \ov{\X} = \dim \X_K \leq \dim \Y_K = \dim \ov{\Y} = d. 
\]
This implies that $\ov{\mf}$ is quasi-finite over the generic point of $\ov{\Y}$. Since $\ov{\mf}$ is proper (see Lemma~\ref{lemma:proper-adic-formal}), \cite[Corollaire 13.1.5]{EGA4_3} implies that there is an open dense subset $U \subset \ov{\Y}$ such that $\ov{\mf}$ is quasi-finite over $U$. As $\ov{\mf}$ is proper, it is automatically finite over $\ov{\sU}$ by \cite[Théorème 8.11.1]{EGA4_3}. This finishes the proof.
\end{proof}

\begin{cor1}\label{cor:super-good-open} Let $\mf\colon \X \to \Y$ be a rig-finite morphism of admissible formal $\O_K$-schemes with geometrically reduced special fibers. Then there is an open dense formal subscheme $\mathfrak U \subset \Y$ such that $\sU$ lies in the smooth locus $\Y^{\rm{sm}}$, the restriction $\mf|_{\X_{\mathfrak U}} \colon \X_{\mathfrak U} \to \mathfrak U$ is finite and flat, and $\X_\sU$ is $\O_K$-smooth.
\end{cor1}
\begin{proof}
    Lemma~\ref{lemma:good-open} implies that we can assume that $\Y=\Y^{\rm{sm}}$ and $\mf\colon \X \to \Y$ is finite and flat. Furthermore, we can assume that $\Y$ is irreducible by passing to connected (equivalently, irreducible) components of $\Y$.
    
    Now we note that the smooth locus $\X^{\rm{sm}}\subset \X$ contains all generic points of the special fiber $|\ov{\X}|=|\X|$. Using that $\mf$ is finite and flat, we conclude that $\sU\coloneqq \Y - \mf(\X - \X^{\sm})$ is a dense open subset of $\Y$. By construction, $\mf^{-1}(\sU)\subset \X^{\sm}$, so this open subset does the job.
\end{proof}

\begin{cor1}\label{cor:iso-on-open} Let $\mf\colon \X \to \Y$ be a rig-isomorphism of admissible formal $\O_K$-schemes. Suppose that $\Y$ has geometrically reduced special fiber. Then there is an open dense formal subscheme $\mathfrak U \subset \Y$ such that $\sU$ lies in the smooth locus $\Y^{\rm{sm}}$ and the restriction $\mf|_{\X_{\mathfrak U}} \colon \X_{\mathfrak U} \to \mathfrak U$ is an isomorphism.
\end{cor1}
\begin{proof}
We use Lemma~\ref{lemma:good-open} to find an open dense subset $\mathfrak U \subset \Y^{\rm{sm}}$ such that $\mf|_{\X_{\mathfrak U}} \colon \X_{\mathfrak U} \to \mathfrak U$ is finite. We claim that it is an isomorphism, it is sufficient to check this locally. So we can assume that $\Y=\Spf A$ is affine and the morphism is given by a finite morphism $\Spf B \to \Spf A$ such that $A_K=B_K$. Then $A\to B$ is an isomorphism due to Corollary~\ref{cor:normalization} and finiteness of $A \to B$.
\end{proof}

\section{Generic Fibers of $\O_\X$-modules}\label{generic-appendix}

We collect some results about generic fibers of certain $\O_\X$-modules on an admissible, formal model $\X$. Even though all the results seem to be standard, it is hard to find a precise reference. \smallskip

Throughout this section, we fix a complete rank-$1$ {\it perfectoid} valuation ring $\O_K$ with fraction field $K$, maximal ideal $\m\subset \O_K$, and a pseudo-uniformizer $\varpi$. We also do almost mathematics with respect to $\m\subset \O_K$. \smallskip

We also fix an admissible formal $\O_K$-scheme $\X$  with generic fiber $X=\X_K$. Then we have a morphism of ringed sites 
\[
\sp_\X \colon (X_{\rm{an}}, \O_X) \to (\X_{\rm{Zar}}, \O_\X) 
\] 
that is flat by \cite[Corollary II.5.1.4]{FujKato}. \smallskip

\begin{defn1} The {\it generic fiber} of $\F\in \bf{D}(\X)$ is the complex
$
\F_K \coloneqq \bf{L}\sp_\X^*\left(\F\right).
$
\end{defn1}

\begin{rmk1} Flatness of $\sp_\X$ guarantees that $\F_K \simeq \sp_\X^*\F$ for $\F\in \bf{Mod}_\X$.
\end{rmk1}

\begin{defn1} An $\O_\X$-module $\F$ is $\varpi^\infty$-torsion if the natural morphism
\[
\colim_{n} \F[\varpi^n] \to \F
\]
is an isomorphism. 
\end{defn1}

\begin{lemma1}\label{lemma:kill-torsion} Let $\F$ be a $\varpi^\infty$-torsion $\O_\X$-module. Then $\F_K$ is the zero sheaf. More generally, if $\F\in \bf{D}(\X)$ with $\varpi^\infty$-torsion cohomology sheaves, then $\F_K$ is the zero complex.
\end{lemma1}
\begin{proof}
It suffices to show that $\F_K \simeq 0$ for a $\varpi^\infty$-torsion $\O_\X$-module $\F$. Since $\sp_\X^*$ commutes with colimits, we see that the natural morphism
\[
\colim_n (\F[\varpi^n])_K \to \F_K
\]
 is an isomorphism. Now, clearly $\F[\varpi^n]_K \simeq 0$. And so $\F_K\simeq 0$.
\end{proof}

\begin{cor1} The functor $(-)_K \colon \bf{D}(\X) \to \bf{D}(X)$ canonically descends to the functor 
\[
(-)_K \colon \bf{D}(\X)^a \to \bf{D}(X).
\]
\end{cor1}
\begin{proof}
We recall that \cite[Definition 3.4.2 and Theorem 3.4.9]{Z3} guarantee that $\bf{D}(\X)^a$ is the Verdier quotient of $\bf{D}(\X)$ by the subcategory of $\bf{D}_{\Sigma_\X}(\X)$ of complexes with almost zero cohomology sheaves. Therefore, the results follow from the universal properties of Verdier quotients, Lemma~\ref{lemma:kill-torsion}, and the observation that any almost zero $\O_\X$-module is $\varpi^\infty$-torsion. 
\end{proof}

\begin{lemma1}\label{lemma:generic-fiber-almost-coherent} The functor $(-)_K\colon \bf{D}(\X)^a \to \bf{D}(X)$ induces a functor $(-)_K\colon \bf{D}_{acoh}(\X)^a \to \bf{D}_{coh}(X)$. More precisely, if $\X=\Spf A$ is affine and $\F \simeq M^\Updelta$ for some $M\in \bf{Mod}_A^{acoh}$, then the natural morphism $\widetilde{M[1/\varpi]} \to \F_K$ is an isomorphism.
\end{lemma1}
\begin{proof}
The claim is local on $\X$, so we may assume that $\X=\Spf A$ is affine. Then using flatness of $\sp_\X$ and Lemma~\ref{lemma:kill-torsion}, we easily reduce to the case of an adically quasi-coherent, almost coherent $\O_\X$-module $\F$. Now \cite[Lemma 4.6.1]{Z3} guarantees that $\F\simeq M^\Updelta$ for some almost coherent $A$-module $M$. Clearly, $M[1/\varpi]$ is a finite $A[1/\varpi]$-module. So it suffices to show that $\F_K \simeq \widetilde{M[1/\varpi]}$. \smallskip

Choose a finitely presented $A$-module $N$ and map $N \to M$ with kernel and cokernel annihilated by $\varpi$. Then \cite[Lemma 4.5.14]{Z3} guarantees that $N^\Updelta \to M^\Updelta$ has kernel and cokernel annihilated by $\varpi$. Therefore, Lemma~\ref{lemma:kill-torsion} guarantees that 
\[
(N^\Updelta)_K \simeq (M^\Updelta)_K \simeq \F_K. 
\]
Thus the isomorphism $\widetilde{M[1/\varpi]} \simeq \widetilde{N[1/\varpi]} \simeq (N^\Updelta)_K$ finishes the proof.  
\end{proof}

\begin{lemma1}\label{coh-pullback} Let $\F \in \bf{D}^+_{coh}(X)$, then the natural morphism $\F \to (\bf{R}\sp_{\X, *} \F)_K$ is an isomorphism.
\end{lemma1}
\begin{proof}
First of all, we use flatness of $\sp_\X$ and the convergent spectral sequence
\[
\rm{E}^{p,q}_2=\rm{R}^p\sp_{\X, *}(\mathcal H^q(\F)) \Rightarrow \rm{R}^{p+q}\sp_{\X, *}(\F)
\] 
to reduce the question to the case of a coherent sheaf $\F$ concentrated in degree $0$. Then we use the vanishing of higher cohomology groups of coherent sheaves on affinoid spaces \cite[Proposition 6.5.1]{FujKato} to conclude that $\sp_{\X, *} \F=\bf{R}\sp_{\X, *} \F$. Thus, we only need to show that the natural map
\[
\F \to (\sp_{\X, *} \F)_K
\]
is an isomorphism. \smallskip

The claim is local on $\X$, so we can assume that $\X=\Spf A$ is affine. Since $\F$ is a coherent sheaf on an affinoid space, it admits a presentation
\[
\O_X^n \to \O_X^m \to \F \to 0
\]
Note that the kernels of the maps $\O_X^m \to \F$ and $\O_X^n \to \O_X^m$ are coherent, so the higher derived functors $\rm{R}^n \sp_{\X, *}$ vanish on these sheaves. Thus the sequence
\[
\sp_{\X, *}\left(\O_X^n\right) \to \sp_{\X, *}\left(\O_X^m\right) \to \sp_{\X, *}\left(\F\right) \to 0
\]
is exact. Flatness of $\sp_\X$ implies that the sequence
\[
\left(\sp_{\X, *}(\O_X^n)\right)_K \to \left(\sp_{\X, *}(\O_X^m)\right)_K \to \left(\sp_{\X, *}(\F)\right)_K \to 0
\]
is still exact. Now we use the fact that both $\sp_{\X, *}$ and $\sp_{\X}^*$ commute with finite direct sums to conclude that it suffices to show that $\O_X \to \left(\sp_{\X, *} \O_X\right)_K$ is an isomorphism. \smallskip

We note that the map $\O_\X \to \sp_{\X, *}\O_X$ is injective with the $\varpi^\infty$-torsion cokernel $\mathcal Q$. Therefore, Lemma~\ref{lemma:kill-torsion} and flatness of $\sp_\X$ imply that 
\[
\O_X \simeq (\O_\X)_K \xr{\sim} \left(\sp_{\X, *} \O_X\right)_K. \qedhere
\]
\end{proof}

Now we consider a morphism $\pi\colon \X' \to \X$ of admissible formal $\O_K$-schemes with generic fiber $\pi_K\colon X' \to X$. Then we have a commutative diagram
\[
\begin{tikzcd}
X'\arrow{r}{\sp_{\X'}} \arrow{d}{\pi_K} & \X' \arrow{d}{\pi} \\
X \arrow{r}{\sp_\X} & \X
\end{tikzcd}
\]
that defines a natural base change map 
\[
\bf{L}\sp^*_{\X} \bf{R}\pi_*\F \to \bf{R}\pi_{K, *}\bf{L}\sp^*_{\X'} \F  
\]
for any $\F \in \bf{D}(\X')$. In our notation, this can be rewritten as
\[
\left(\bf{R}\pi_*\F\right)_K \to \bf{R}\pi_{K, *}\left( \F_K\right).
\]
Similarly, the map $\left(\bf{R}\pi_*\F\right)_K \to \bf{R}\pi_{K, *}\left( \F_K\right)$ can be defined for any $\F\in \bf{D}(\X')^a$.

\begin{lemma1}\label{generic-fiber-commutes} Suppose that $\pi\colon \X' \to \X$ is a proper map. Then the base change morphism
\[
\left(\bf{R}\pi_*\F\right)_K \to \bf{R}\pi_{K, *}\left( \F_K\right)
\]
is an isomorphism for any $\F\in \bf{D}^+_{acoh}(\X')^a$ or $\F\in \bf{D}^+_{acoh}(\X')$. 
\end{lemma1}
\begin{proof}
The claim is local on $\X$, so we may and do assume that $\X=\Spf A$ is affine. We use the functor $(-)_!\colon \bf{D}^+_{acoh}(\X')^a \to \bf{D}^+_{qc, acoh}(\X')$ and Lemma~\ref{lemma:kill-torsion} to reduce to the case $\F\in \bf{D}^+_{qc, acoh}(\X')$. Finally, we use the spectral sequences 
\[
\rm{E}^{p,q}_2=\rm{R}^p\pi_*(\mathcal H^q(\F)) \Rightarrow \rm{R}^{p+q}\pi_*\,\F,
\]
\[
\rm{E}'^{p,q}_2=\rm{R}^p\pi_{K, *}(\mathcal H^q(\F_K)) \Rightarrow \rm{R}^{p+q}\pi_{K, *}(\F_K)
\]
and exactness of $(-)_K$ to reduce to the case of an adically quasi-coherent, almost coherent $\O_{\X'}$-module $\F$. \smallskip

So far, we have reduced the question to showing that the natural morphisms
\[
(\rm{R}^i\pi_* \F)_K \to \rm{R}^i\pi_{K, *}(\F_K)
\]
are isomorphisms for any adically quasi-coherent, almost coherent $\F$ and $i\geq 0$. Now \cite[Theorem 5.1.6]{Z3} implies that $\rm{H}^i(\X', \F)$ is an almost coherent $A$-module, and the natural morphism
\[
\rm{H}^i(\X', \F)^\Updelta \to \rm{R}^i\pi_*\F
\]
is an isomorphism for any $i\geq 0$. Therefore, Lemma~\ref{lemma:generic-fiber-almost-coherent} implies that the sheaves $(\rm{R}^i\pi_*\F)_K$ are coherent and canonically isomorphic to $\widetilde{\rm{H}^i(\X', \F)[1/\varpi]}$. Likewise, Lemma~\ref{lemma:proper-adic-formal} and \cite[Theorem 7.5.19]{FujKato} (and its proof) guarantee that $\rm{H}^i(X', \F_K)$ is a coherent $A$-module, and the natural morphism
\[
\widetilde{\rm{H}^i(X', \F_K)} \to \rm{R}^i\pi_{K, *}\F_K
\]
is an isomorphism for any $i\geq 0$. Therefore, the question is reduced to showing that the natural map
\[
\rm{H}^i(\X', \F)[1/\varpi] \to \rm{H}^i(X', \F_K)
\]
is an isomorphism for any adically quasi-coherent, almost coherent $\O_\X$-module $\F$ and any $i\geq 0$. \smallskip

We choose a finite covering $\X' = \cup_{j=1}^N \sU_j$ of $\X'$ by open affines $\sU_j$. Then \cite[Theorem I.7.1.1]{FujKato} implies that cohomology groups of $\F$ can be computed using the \v{C}ech complex $\check{C}^{\bullet}(\mathfrak U, \F)$ associated with $\F$. \smallskip

The complex $\check{C}^{\bullet}(\mathfrak U_K, \F_K)$ is equal to $\check{C}^{\bullet}(\mathfrak U, \F)[1/\varpi]$ by Lemma~\ref{lemma:generic-fiber-almost-coherent}. Since coherent sheaves do not have higher cohomology groups on affinoid spaces, we conclude that 
\[
\rm{H}^i(\X', \F)[1/\varpi]=\rm{H}^i\left(\check{C}^{\bullet}(\mathfrak U, \F)[1/\varpi]\right)=\rm{H}^i\left(\check{C}^{\bullet}(\mathfrak U_K, \F_K)\right)=\rm{H}^i(\X'_K, \F_K)
\]
for $i\geq 0$. This finishes the argument. 
\end{proof}

Now we assume that $K$ is of mixed characteristic $(0, p)$, and $\X$ is an admissible formal $\O_K$-model with smooth generic fiber $X=\X_K$. We consider morphisms of ringed topoi 
\begin{align*}
\mu\colon (X_\proet, \wdh{\O}^+_X) \to (X_{\rm{an}}, \O^+_X), \\
\nu\colon (X_\proet, \wdh{\O}^+_X) \to (\X, \O_\X), \\
\sp_\X\colon (X_{\rm{an}}, \O_X) \to (\X, \O_\X),
\end{align*}
and the integral version
\[
\sp^+_\X\colon (X_{\rm{an}}, \O^+_X) \to (\X, \O_\X). 
\]

\begin{lemma1}\label{lemma:integral-rational-pullback} Let $X$ be a smooth rigid $K$-space with an admissible formal model $\X$. Then the natural maps 
\[
\left(\bf{R}\nu_*\wdh{\O}^+_X\right)_K \to \left(\bf{R}\nu_*\wdh{\O}_X\right)_K \text{ and } \left(\bf{R}\nu_*\wdh{\O}_X\right)_K \to \bf{R}\mu_*\wdh{\O}_X
\]
are isomorphisms.
\end{lemma1}
\begin{proof}

We recall that $\wdh{\O}_X\simeq \wdh{\O}^+_X[1/p]$, so the cokernel $\wdh{\O}_X^+ \to \wdh{\O}_X$ is $p^{\infty}$-torsion. Thus, the cone of the map $\bf{R}\nu_*\wdh{\O}^+_X \to \bf{R}\nu_*\wdh{\O}_X$ has $p^{\infty}$-torsion cohomology sheaves. Therefore, Lemma~\ref{lemma:kill-torsion} implies that $(\bf{R}\nu_*\wdh{\O}^+_X)_K \to (\bf{R}\nu_*\wdh{\O}_X)_K$ is an isomorphism. 

The map $(\bf{R}\nu_*\wdh{\O}_X)_K \to \bf{R}\mu_*\wdh{\O}_X$ comes as the adjunction
\[
\eta \colon \left(\bf{R}\sp_{\X, *}(\bf{R}\mu_* \wdh{\O}_X)\right)_K=\bf{L}\sp^*_{\X}\bf{R}\sp_{\X, *}(\bf{R}\mu_* \wdh{\O}_X) \to \bf{R}\mu_*\wdh{\O}_X
\]
We note that the complex $\bf{R}\mu_*\wdh{\O}_X \in \bf{D}^b_{coh}(X)$ by \cite[Proposition 3.23]{Schsurvey}, so Lemma~\ref{coh-pullback} implies that $\eta$ is an isomorphism. 
\end{proof}

\section{Pro-\'Etale Trace Maps}\label{trace-appendix}

For the rest of the section, we fix a complete, algebraically closed rank-$1$ valued field $C$ of mixed characteristic $(0, p)$. We also fix a finite \'etale morphism $f\colon X' \to X$ of rigid-analytic $C$-spaces. \smallskip

The main goal for this section is to construct trace maps associated with such $f$. \smallskip

We note that a finite map $f$ is proper, so \cite[Definition 5.2.1 and Propositions 5.2.4]{H3} imply that $f_*=f_!\colon \cal{A}b(X'_\et) \to \cal{A}b(X_\et)$. Therefore, there is an adjunction $(f_*, f^{-1})$ for any finite \'etale $f$. 

\begin{defn1} We define the {\it \'etale trace map} 
\[
\rm{Tr}_{\et, f, \F} \colon f_*f^{-1}\F \to \F
\]
for any $\F \in \cal{A}b(X_\et)$ to be the counit of the adjunction $(f_*, f^{-1})$.
\end{defn1}

We will be particularly interested in the \'etale traces
\[
\rm{Tr}_{\et, f, \O_X^+}\colon f_*\,\O_{X'}^+ \to \O_X^+, \text{ and }
\]
\[
\rm{Tr}_{\et, f, \O_X^+/p^n}\colon f_* \left(\O_{X'}^+/p\right) \to \O_X^+/p.
\]
\begin{rmk1} If there is no ambiguity, we will often denote any of these maps just by $\rm{Tr}_{\et, f}$.
\end{rmk1}

Now we generalize the construction of the trace map to the pro-\'etale sheaves $\wdh{\O}^+$ and $\wdh{\O}$. We recall that we have a commutative diagram
\[
\begin{tikzcd}
 \left(X'_\proet, \wdh{\O}^+_{X'}\right) \arrow[rr, bend left, "\mu_{X'}"] \arrow{d}{f_{\proet}} \arrow{r}{\lambda_{X'}} &  \left(X'_\et, \O^+_{X'}\right) \arrow{d}{f_{\et}} \arrow{r} &  \left(X'_{\text{an}}, \O^+_{X'}\right) \arrow{d}{f_{\an}}\\
 \left(X_\proet, \wdh{\O}^+_{X}\right) \arrow[rr, bend right, "\mu_{X}"] \arrow{r}{\lambda_{X}} & \left(X_\et, \O^+_{X}\right) \arrow{r} & \left(X_{\text{an}}, \O^+_{X}\right),
\end{tikzcd}
\]
and \cite[Corollary 3.17(ii)]{Sch1} guarantees that $f_{\proet, *}\left(\O_{X'}^+/p^n\right) \simeq \lambda_X^{-1}(f_{\et, *}\O_{X'}^+/p^n)$.

\begin{defn1}\label{defn:trace-mod-p-proetale} We define the {\it (mod-$p^n$) pro-\'etale trace map} 
\[
\rm{Tr}_{\proet, f, \O_X^+/p^n}\colon f_{\proet, *}\left(\O_{X'}^+/p^n\right) \to \O_X^+/p^n
\]
as $\rm{Tr}_{\proet, f, \O_X^+/p^n} \coloneqq \lambda_X^{-1}\left(\rm{Tr}_{\et, f, \O_X^+/p^n}\right)$. If there is no ambiguity, we will often denote $\rm{Tr}_{\proet, f, \O_X^+/p^n}$ just by $\rm{Tr}_{\proet, f}$.
\end{defn1}

\begin{lemma1}\label{vse-v-deg-0} Let $f\colon X'\to X$ be a finite \'etale morphism of rigid-analytic $C$-spaces. Then the functor $f_{\proet, *}(-)$ is exact, i.e. $\bf{R}f_{\proet, *}\F$ is concentrated in degree $0$ for any $\F\in \cal{A}b(X'_\proet)$. 
\end{lemma1}
\begin{proof}
The claim is (pro-)\'etale local on $X$, so we can assume that $X' \to X$ is a split finite \'etale morphism. In this case, the claim is trivial. 
\end{proof}

We recall that the sheaf $\wdh{\O}^+_X$ is defined as $\lim_n \O_{X}^+/p^n$ on $X_\proet$, and $\wdh{\O}_X \coloneqq \wdh{\O}_X^+[1/p]$. Therefore, Lemma~\ref{vse-v-deg-0} implies that 
\[
\bf{R}f_{\proet, *}\,\wdh{\O}^+_{X'} \simeq f_{\proet, *}\,\wdh{\O}^+_{X'} \simeq \lim_n f_{\proet, *}\left(\O_{X'}^+/p^n\right), \text{ and }
\]
\[
\bf{R}f_{\proet, *}\,\wdh{\O}_{X'} \simeq f_{\proet, *}\,\wdh{\O}_{X'} \simeq\footnote{Here, we use that the pro-\'etale site of an affinoid is coherent by \cite[Proposition 3.12]{Sch1}, so cohomology commute with filtered colimits.} \left(f_{\proet, *}\wdh{\O}_{X'}^+\right)[1/p].
\]

\begin{defn1} We define the {\it (integral) pro-\'etale trace} 
\[
\rm{Tr}^+_{\proet, f}\colon  f_{\proet, *}\,\wdh{\O}^+_{X'} \to  \wdh{\O}^+_{X}
\]
as $\rm{Tr}^+_{\proet,  f} \coloneqq \lim_n \rm{Tr}_{\proet,  f,  \O^+_X/p^n}$. 
\end{defn1}

\begin{defn1} We define the {\it (rational) pro-\'etale trace} 
\[
\rm{Tr}_{\proet, f}\colon  f_{\proet, *}\,\wdh{\O}_{X'} \to  \wdh{\O}_{X}
\]
as $\rm{Tr}_{\proet,  f} \coloneqq \rm{Tr}^+_{\proet, f}[1/p]$. 
\end{defn1}

\begin{defn1}\label{defn:analytic-trace} We also define {\it pro-\'etale traces}
\[
\rm{Tr}_{\rm{an}, f}\colon \bf{R}f_{\rm{an}, *} \left(\bf{R}\mu_{X', *}\,\wdh{\O}_{X'}\right) \to \bf{R}\mu_{X, *}\,\wdh{\O}_{X}
\]
as 
\[
\rm{Tr}_{\rm{an}, f} \coloneqq \bf{R}\mu_{X, *}\left(\rm{Tr}_{\proet, f}\right)\colon \bf{R}\mu_{X, *}\left(f_{\proet,*} \wdh{\O}_{X'}\right) \to \bf{R}\mu_{X, *}\,\wdh{\O}_X.
\]
\end{defn1}

Now suppose that $f\colon X' \to X$ comes as the generic fiber of a morphism $\mf\colon \X' \to \X$ between admissible formal $\O_C$-models. Then we have a commutative diagram
\[
\begin{tikzcd}
(X'_\proet, \wdh{\O}_{X'}^+) \arrow{d}{f_\proet} \arrow{r}{\nu_{\X', *}} & (\X'_{\rm{Zar}}, \O_{\X'}) \arrow{d}{\mf} \\
(X_\proet, \wdh{\O}_X^+) \arrow{r}{\nu_{\X, *}} & (\X_{\rm{Zar}}, \O_\X).
\end{tikzcd}
\]

\begin{defn1}\label{defn:integral-proetale-trace} We define the {\it pro-\'etale trace}
\[
\rm{Tr}^+_{\rm{Zar}, \mf} \colon \bf{R}\mf_*\left(\bf{R}\nu_{\X', *}\wdh{\O}_{X'}^+\right) \to \bf{R}\nu_{\X, *}\wdh{\O}_{X}^+
\]
as
\[
\rm{Tr}^+_{\rm{Zar}, \mf} \coloneqq \bf{R}\nu_{\X, *}\left(\rm{Tr}^+_{\proet, f}\right)\colon \bf{R}\nu_{\X, *}\left(f_{\proet,*} \wdh{\O}^+_{X'}\right) \to \bf{R}\nu_{\X, *}\left(\wdh{\O}^+_X\right).
\]
\end{defn1}

\begin{defn1}\label{defn:mod-p-proetale-trace} Similarly, we define the {\it pro-\'etale trace}
\[
\rm{Tr}_{\rm{Zar}, \mf} \colon \bf{R}\mf_{0, *}\bf{R}\nu_{\X', *}\left(\O_{X'}^+/p\right) \to \bf{R}\nu_{\X, *}\left(\O_{X}^+/p\right)
\]
as
\[
\rm{Tr}_{\rm{Zar}, \mf} \coloneqq \bf{R}\nu_{\X, *}\left(\rm{Tr}_{\proet, f, \O^+_X/p}\right)\colon \bf{R}\nu_{\X, *}\left(f_{\proet,*} \O^+_{X'}/p\right) \to \bf{R}\nu_{\X, *}\left(\O^+_X/p\right).
\]
\end{defn1}

\begin{rmk1}\label{rmk:etale-pushforward-trace} Using \cite[Corollary 3.17(i)]{Sch1}, one can alternatively write $\rm{Tr}_{\rm{Zar}, \mf}$ as
\[
\bf{R}t_{\X, *}\left(\rm{Tr}_{\et, f, \O_X^+/p^n}\right)\colon \bf{R}\mf_{0, *}\bf{R}\nu_{\X', *}\left(\O_{X'}^+/p\right) \simeq \bf{R}t_{\X, *} \bf{R}f_{\et, *} \left(\O_{X'}^+/p^n\right) \to \bf{R}t_{\X, *}\left(\O_X^+/p\right) \simeq \bf{R}\nu_{\X, *} \left(\O_X^+/p^n\right),
\]
where $t\colon (X_\et, \O_X^+/p) \to (\X_{0, \rm{Zar}}, \O_{\X_0})$ is the natural morphism of topoi. 
\end{rmk1}

\begin{lemma1}\label{zar-an-trace} Let $\mf \colon \X' \to \X$ be a morphism of admissible formal $\O_C$-schemes such that its generic fiber $f\colon X' \to X$ is finite and \'etale. Then the following diagram
\[
\begin{tikzcd}[column sep = 5em]
\left(\bf{R}\mf_*\circ \bf{R}\nu_{\X', *} \wdh{\O}^+_{X'}\right)_C\arrow{r} \arrow{d}{\left(\rm{Tr}^+_{\rm{Zar}, \mathfrak f} \right)_C}& \bf{R}f_{\rm{an}, *}\left(\bf{R}\nu_{\X',  *}\wdh{\O}^+_{X'}\right)_C \arrow{r}{\bf{R}f_{\rm{an}, *}\left(\delta_{\X'}\right)} & \bf{R}f_{\rm{an}, *}\circ \bf{R}\mu_{X',*} \wdh{\O}_X \arrow{d}{\rm{Tr}_{\rm{an}, f}}\\
\left(\bf{R}\nu_{\X, *} \wdh{\O}^+_X\right)_C \arrow{rr}{\delta_{\X}} & & \bf{R}\mu_{X, *}\wdh{\O}_X
\end{tikzcd}
\]
is commutative. Furthermore, the top left arrow is an isomorphism.
\end{lemma1}
\begin{proof}
The top left arrow is an isomorphism by Lemma~\ref{generic-fiber-commutes} and almost coherence of $\bf{R}\nu_{\X', *}\,\wdh{\O}_{X'}^+$ (see \cite[Theorem 6.13.6]{Z3}). Commutativity of the diagram is easy but tedious and left to the reader.
\end{proof}

Our next goal is to get an explicit formula for the pro-\'etale trace 
\[
 \rm{Tr}_{\rm{an}, f}\colon \bf{R}f_{\rm{an}, *} \left(\bf{R}\mu_{X', *}\,\wdh{\O}_{X'}\right) \to \bf{R}\mu_{X, *}\wdh{\O}_X
\]
for a finite \'etale morphism $f\colon X' \to X$ of smooth rigid-analytic $C$-varieties.
We start with the preliminary lemma: 

\begin{lemma1}\label{trace-etale-trace-explicit-2} Let $f\colon X'=\Spa(B, B^+) \to X=\Spa(A, A^+)$ be a finite \'etale morphism of affinoid rigid-analytic $C$-spaces, $U\to X$ a pro-\'etale morphism with $\wdh{U}=\Spa(R, R^+)$ being an affinoid perfectoid space, and $U'=U\times_X X'$. Then $U'$ is an affinoid perfectoid object with $\wdh{U}'\simeq \Spa(S, S^+)$ with $S=R\otimes_A B$, and the pro-\'etale trace map 
\[
\rm{Tr}_{\proet, f}(U)\colon S=\left(f_{\proet, *} \wdh{\O}_{X'}\right)(U) \to R=\wdh{\O}_{X}(U)
\]
coincides with the explicit trace map
\[
\rm{Tr}_{S/R}\colon S \to R.
\]
\end{lemma1}
\begin{proof}
Without loss of generality we can assume that $X$ is connected. In particular, $f$ is surjective in this case (or empty). The empty case is obvious, so we assume that $f$ is surjective. \smallskip

We note that \cite[Lemma 4.5]{Sch1} implies that $U'$ is an affinoid perfectoid object, and that $\wdh{U'}= \wdh{U}\times_X X'=\Spa(S, S^+)$ for $S\cong R\wdh{\otimes}_A B$. Furthermore, the proof shows that $R\wdh{\otimes}_A B\simeq R\otimes_A B$, i.e., $R\otimes_A B$ is already complete. \smallskip

Furthermore, {\it loc.\,cit.\,} implies that, for any finite \'etale morphism $V \to U$ with $\wdh{V}=\Spa(T, T^+)$, the fiber product $V'\coloneqq U'\times_U V$ is an affinoid perfectoid space with the associated affinoid perfectoid space $\wdh{V}'=\Spa\left(T\otimes_R S, (T\otimes_R S)^+\right)$. Therefore, we can check that $\rm{Tr}_{\proet, f}(U)$ is equal to $\rm{Tr}_{S/R}$ locally in the finite \'etale topology on the $U$. In particular, we can assume that $f$ is a split torsor, in which case the claim is obvious. 
\end{proof}

In what follows, we assume that $X=\Spa(A, A^+)$ is an affinoid adic space that admits an \'etale morphism $g\colon X \to \bf{T}^d_C$ that is a composition of rational embeddings and finite \'etale morphism. In this case, $X'=\Spa(B, B^+)$ is also an affinoid adic space, and $g\circ f\colon X' \to \bf{T}^d$ is an \'etale morphism with the same properties. \smallskip

Notation~\ref{notation:perfectoid-covering} defines a commutative diagram
\[
\begin{tikzcd}
X'_\infty \coloneqq \Spa(B_\infty, B_\infty^+) \arrow{d} \arrow{r} & X_\infty\coloneqq \Spa(A_\infty, A_\infty^+) \arrow{d} \arrow{r} & \bf{T}^d_\infty \arrow{d} \\
X'=\Spa(B, B^+) \arrow{r} & X=\Spa(A, A^+) \arrow{r} & \bf{T}^d
\end{tikzcd}
\]
such that both squares are Cartesian and vertical arrows are (surjective) pro-\'etale $\Gamma=\bf{Z}_p(1)^{d}$-torsors. 

\begin{lemma1}\label{lemma:explicit-trace-etale} In the situation as above, let $\rm{Tr}_{B/A}\colon B \to A$ and $\rm{Tr}_{B_\infty/A_\infty}$ be finite locally free trace morphisms. Then the following diagram 
\[
\begin{tikzcd}[column sep = 6em]
\rm{H}^i_{\rm{cont}}(\Gamma, B) \arrow{r}{ \rm{H}^i_{\rm{cont}}\left(\Gamma, \rm{Tr}_{B/A}\right)}  \arrow{d} & \rm{H}^i_{\rm{cont}}(\Gamma, A) \arrow{d} \\
\rm{H}^i_{\rm{cont}}(\Gamma, B_\infty) \arrow{r}{ \rm{H}^i_{\rm{cont}}\left(\Gamma, \rm{Tr}_{B_{\infty}/A_{\infty}}\right)} \arrow{d} & \rm{H}^i_{\rm{cont}}(\Gamma, A_\infty) \arrow{d} \\
\rm{H}^i(X'_{\proet}, \wdh{\O}_{X'})= \rm{H}^i(X_{\proet}, f_{\proet, *}\wdh{\O}_{X'}) \arrow{r}{ \rm{H}^i(X_\proet, \rm{Tr}_{\proet, f})} \arrow{d} & \rm{H}^i(X_{\proet}, \wdh{\O}_{X})  \arrow{d}\\
\rm{H}^0(X_{\rm{an}}, f_{\rm{an}, *}\rm{R}^i\mu_{X', *}\wdh{\O}_{X'}) \arrow{r}{\rm{H}^0\left(\mathcal{H}^i(\rm{Tr}_{\rm{an}, f})\right)} & \rm{H}^0(X_{\rm{an}}, \rm{R}^i\mu_{X, *}\wdh{\O}_{X})\\
\end{tikzcd}
\]  
is commutative with vertical arrows being isomorphisms. 
\end{lemma1}
\begin{proof}
The discussion in Notation~\ref{notation:perfectoid-covering} implies that the top and middle vertical arrows are isomorphisms. We see that the left vertical arrow is an isomorphism by passing to $\rm{H}^i(-)$ in the following sequence of isomorphisms
\[
\bf{R}\Gamma(X'_{\proet}, \wdh{\O}_{X'}) \simeq \bf{R}\Gamma(X'_{\rm{an}}, \bf{R}\mu_{X', *} \wdh{\O}_{X'}) \simeq \bf{R}\Gamma(X_{\rm{an}}, f_*\bf{R}\mu_{X', *} \wdh{\O}_{X'}),
\]
where the last isomorphism uses that $f$ is finite and $\bf{R}\mu_{X', *} \wdh{\O}_{X'}$ has coherent cohomology sheaves (see Remark~\ref{rmk:coherent-proetale-etale}). A similar proof shows that the right vertical map is an isomorphism. \smallskip 

Commutativity of the top and bottom squares is obvious. The middle square commutes due to functoriality of the Cartan-Leray spectral sequence (for the pro-\'etale covers $\Spa(A_\infty, A_\infty^+) \to \Spa(A, A^+)$ and $\Spa(B_\infty^+, B_\infty) \to \Spa(B, B^+)$) and Lemma~\ref{trace-etale-trace-explicit-2}.
\end{proof}

\printbibliography

\end{document}